\documentclass[12pt]{amsart}

\usepackage{amsxtra,amssymb,amsmath,amscd,url,listings}
\usepackage{amsrefs}
\usepackage{enumitem}
\newlist{enumth}{enumerate}{1}
\setlist[enumth]{label=\emph{(\arabic*)}, ref=(\arabic*)}

\usepackage{eucal}
\usepackage{fullpage}
\usepackage{scrtime}
\usepackage[colorlinks]{hyperref}

\usepackage{mathrsfs}
\usepackage[scr=rsfs,cal=boondox]{mathalfa}

\usepackage{datetime}
\usepackage[all]{xy}
\usepackage{graphicx}
\usepackage{tikz-cd}
\usepackage{tikz-cd}
\tikzset{commutative diagrams/arrow style=math font}

\usepackage{mathrsfs}
\usepackage{xspace}

\usepackage{tensor}
\usepackage{braket}

\usepackage{todonotes}
\usepackage{tensor}

\shortdate

\newcommand{\main}{\mathrm{main}}
\newcommand{\err}{\mathrm{err}}

\DeclareMathSymbol{A}{\mathalpha}{operators}{`A}%
\DeclareMathSymbol{B}{\mathalpha}{operators}{`B}%
\DeclareMathSymbol{C}{\mathalpha}{operators}{`C}%
\DeclareMathSymbol{D}{\mathalpha}{operators}{`D}%
\DeclareMathSymbol{E}{\mathalpha}{operators}{`E}%
\DeclareMathSymbol{F}{\mathalpha}{operators}{`F}%
\DeclareMathSymbol{G}{\mathalpha}{operators}{`G}%
\DeclareMathSymbol{H}{\mathalpha}{operators}{`H}%
\DeclareMathSymbol{I}{\mathalpha}{operators}{`I}%
\DeclareMathSymbol{J}{\mathalpha}{operators}{`J}%
\DeclareMathSymbol{K}{\mathalpha}{operators}{`K}%
\DeclareMathSymbol{L}{\mathalpha}{operators}{`L}%
\DeclareMathSymbol{M}{\mathalpha}{operators}{`M}%
\DeclareMathSymbol{N}{\mathalpha}{operators}{`N}%
\DeclareMathSymbol{O}{\mathalpha}{operators}{`O}%
\DeclareMathSymbol{P}{\mathalpha}{operators}{`P}%
\DeclareMathSymbol{Q}{\mathalpha}{operators}{`Q}%
\DeclareMathSymbol{R}{\mathalpha}{operators}{`R}%
\DeclareMathSymbol{S}{\mathalpha}{operators}{`S}%
\DeclareMathSymbol{T}{\mathalpha}{operators}{`T}%
\DeclareMathSymbol{U}{\mathalpha}{operators}{`U}%
\DeclareMathSymbol{V}{\mathalpha}{operators}{`V}%
\DeclareMathSymbol{W}{\mathalpha}{operators}{`W}%
\DeclareMathSymbol{X}{\mathalpha}{operators}{`X}%
\DeclareMathSymbol{Y}{\mathalpha}{operators}{`Y}%
\DeclareMathSymbol{Z}{\mathalpha}{operators}{`Z}%

\renewcommand{\mathcal}{\mathscr}

\renewcommand{\leq}{\leqslant}
\renewcommand{\geq}{\geqslant}
 
\numberwithin{equation}{section}

\newcommand{\uple}[1]{\text{\boldmath${#1}$}}

\def\stacksum#1#2{{\stackrel{{\scriptstyle #1}}
{{\scriptstyle #2}}}}

\def\setminus{\mathchoice
    {\mathbin{\vrule height .72ex width 1.61ex depth -.38ex}}
    {\mathbin{\vrule height .72ex width 1.61ex depth -.38ex}}
    {\mathbin{\vrule height .50ex width 0.85ex depth -.28ex}}
    {\mathbin{\vrule height .20ex width 0.570ex depth -.24ex}}
}

\newcommand{\bfalpha}{\uple{\alpha}}
\newcommand{\bfgamma}{\uple{\gamma}}

\newcommand{\bfrho}{\uple{\rho}}

\newcommand{\bftheta}{\uple{\theta}}
\newcommand{\bfbeta}{\uple{\beta}}
\newcommand{\bfeta}{\uple{\eta}}

\newcommand{\Cc}{\mathbf{C}}

\newcommand{\Aa}{\mathbf{A}}

\newcommand{\Zz}{\mathbf{Z}}
\newcommand{\Pp}{\mathbf{P}}
\newcommand{\Rr}{\mathbf{R}}
\newcommand{\Gg}{\mathbf{G}}

\newcommand{\Qq}{\mathbf{Q}}

\newcommand{\Fq}{\Ff_q}
\newcommand{\Fqt}{\Ff^\times_q}

\newcommand{\kt}{{k^\times}}

\newcommand{\Ff}{\mathbf{F}}

\newcommand{\mmu}{\boldsymbol{\mu}}

\newcommand{\mcH}{\mathscr{H}}

\def\loccit{loc.\kern3pt cit.{}\xspace}
\def\cf{see\kern.3em}
\def\Cf{See\kern.3em}
\def\eg{e.g.\kern.3em}

\def\resp{\text{resp.}\kern.3em}

\newcommand{\mods}[1]{\,(\mathrm{mod}\,{#1})}

\newcommand{\what}{\widehat}
\newcommand{\whFqt}{\widehat{\Fqt}}

\newcommand{\ra}{\rightarrow}

\DeclareMathOperator{\res}{Res}

\DeclareMathOperator{\Kl}{\mathrm{Kl}}

\DeclareMathOperator{\supp}{supp}

\DeclareMathOperator{\Tr}{Tr}

\newcommand{\eps}{\varepsilon}
\renewcommand{\rho}{\varrho}

\DeclareMathOperator{\SO}{\mathbf{SO}}

\DeclareMathOperator{\Ort}{\mathbf{O}}

\newcommand{\demi}{{\textstyle{\frac{1}{2}}}}

\DeclareMathSymbol{\gena}{\mathord}{letters}{"3C}
\DeclareMathSymbol{\genb}{\mathord}{letters}{"3E}

\def\intc{\frac{1}{2i\pi}\mathop{\int}\limits}

\theoremstyle{plain}
\newtheorem{theorem}{Theorem}[section]
\newtheorem*{theorem*}{Theorem}
\newtheorem{lemma}[theorem]{Lemma}
\newtheorem{corollary}[theorem]{Corollary}

\newtheorem*{conjectureP}{Conjecture $\msP(a,b,c,d)$}

\newtheorem{proposition}[theorem]{Proposition}

\theoremstyle{remark}

\theoremstyle{definition}

\newtheorem{definition}[theorem]{Definition}

\newtheorem{remark}{Remark}[section]

\newcommand{\mcL}{\mathscr{L}}

\newcommand{\mcF}{\mathscr{F}}
\newcommand{\mcK}{\mathscr{K}}

\newcommand{\mcE}{\mathscr{E}}

\newcommand{\msP}{\mathscr{P}}

\renewcommand{\geq}{\geqslant}
\renewcommand{\leq}{\leqslant}
\renewcommand{\Re}{\mathfrak{Re}\,}

\newcommand{\ov}[1]{\overline{#1}}

\newcommand\sumsum{\mathop{\sum\sum}\limits}
\newcommand\sumdsum{\mathop{\sum\cdots\sum}\limits}
\newcommand\sumsumsum{\mathop{\sum\sum\sum}\limits}

\newcommand{\Qlb}{{\ov{\Qq}_{\ell}}}

\newcommand{\Good}{Galant }
\newcommand{\good}{galant}
\newcommand{\goodp}{galant}

\newcommand{\Hyp}{\mathrm{Hyp}}
\newcommand{\HYP}{\mathscr{H}}

\begin{document}

\title{Toroidal families and averages of $L$-functions, II: cubic moments}
 
\author{\'Etienne Fouvry}
\address{Universit\'e Paris--Saclay,   CNRS \\
Laboratoire de Math\'ematiques d'Orsay\\
  91405 Orsay  \\France}
\email{etienne.fouvry@universite-paris-saclay.fr}

\author{Emmanuel Kowalski}
\address{ETH Z\"urich -- D-MATH\\
  R\"amistrasse 101\\
  CH-8092 Z\"urich\\
  Switzerland} \email{kowalski@math.ethz.ch}

\author{Philippe Michel}
\address{EPFL/SB/TAN, Station 8, CH-1015 Lausanne, Switzerland }
\email{philippe.michel@epfl.ch}

\author{Will Sawin}
\address{Princeton University, Department of Mathematics, Fine Hall, Washington Road, Princeton, NJ 08540, USA}
\email{wsawin@math.princeton.edu}

\subjclass{11M06,11T23,11A07}

\begin{abstract}
  Generalizing our previous work on ``toroidal averages'', we study the
  average of special values of $L$-functions of the form
  $L(\demi,\chi^a)L(\demi,\chi^b)L(\demi,\chi^c)$ for integers~$a$, $b$
  and~$c$, where $\chi$ varies over Dirichlet characters of a given
  prime modulus. We highlight connections with estimates for bilinear
  forms of trace functions and with bounds for the number of solutions
  of monoidal equations in three variables in small boxes over finite
  fields.
\end{abstract}
\date{\today}
\maketitle 
\setcounter{tocdepth}{1}
\tableofcontents

		
\section{Introduction}

In this paper we continue the study initiated in \cite{FKMAA} of
evaluating moments of central values of $L$-functions in toroidal
families. In that paper, the first three named authors obtained an
asymptotic formula for second mixed moments of Dirichlet $L$-functions
at the central point $1/2$,
\begin{equation}\label{doublemoment}M_{a,b}(q):=\frac{1}{q-1}\sum_{\chi\mods q}L(1/2,\chi^a)L(1/2,\chi^b).
\end{equation}
for $a,b\in\Zz-\{0\}$ and $q\ra\infty$ along the primes. In this
paper, we consider the cubic case and aim for any asymptotic formula
as $q\rightarrow +\infty$ along the primes of 
\begin{equation}
	\label{triplemoment}
	M_{a,b,c}(q):=\frac{1}{q-1}\sum_{\chi\mods q}L(1/2,\chi^a)L(1/2,\chi^b)L(1/2,\chi^c),
\end{equation}
for $a,b,c\in\Zz-\{0\}$ some non-zero fixed integers. 

Before stating our main results, we make some basic reductions:  first we observe that it is sufficient (up to permuting $(a,b,c)$ and applying complex conjugation) to evaluate $M_{a,b,c}(q)$ and $M_{a,b,-c}(q)$ for $a,b,c\geq 1$. Next, as we will see, the value of $M_{a,b,\pm c}(q)$ depends on $d=\gcd(a,b,c)$. Changing variables, our problem is equivalent to evaluating moments of the shape $M_{ad,bd,\pm cd}(q)$ for $a,b,c$ setwise coprime integers and $d\geq 1$ another fixed integer\footnote{We may further assume that $d|q-1$; see below.}. 

Finally, we need a definition:
\begin{definition}[\Good triples]\label{defgoodtriple} Let $a,b,c\geq 1$ be setwise coprime integers. 

The triple $(a,b,c)$  is called {\em \good} unless $a+b+c=4$. 

The triples $(a,b,c)$ such that $a+b+c=4$ are called {\em oxozonic}; such triples are permutations of $(1,1,2)$. 

The triple $(a,b,-c)$ is {\em \good} unless one of the following holds: 
\begin{itemize}
	\item $(a,b,-c)$ is of the form  $(c,b,-c)$ or $(a,c,-c)$; such a triple is called {\em induced}.
	\item $(a,b,-c)=(a,b,-(a+b))$ and $a+b=c\in\{2,3,4\}$; such a triple is called  {\em solvable}. Up to the substitution $a\leftrightarrow b$, solvable triples are equal to one of the following $(1,1,-2),\ (1,2,-3),\ (1,3,-4)$.
	\item $(a,b,-c)$ is neither induced nor solvable, $a+b+c\equiv 0\mods 2$ and $$\max(a+b,c)-(a,c)-(b,c)+1=4.$$
	Up to the substitution $a\leftrightarrow b$, such triples are equal to one of the following  $$(1,2,-5),(1,4,-3),(1,6,-3),\hbox{ or }(2,3,-1)$$
	
	The triples $(1,4,-3),(1,6,-3),(2,3,-1)$ are called {\em oxozonic} and the triple $(1,2,-5)$ is called {\em sulfatic}.
\end{itemize}	

\end{definition}

\begin{remark}
  The terminology is borrowed directly from our companion
  paper~\cite{FKMSBilinear} and is connected with the shape of the
  geometric monodromy group $G_{a,b,\pm c}$ of a certain $\ell$-adic
  sheaf $\mcK_{a,b,\pm c}$ associated with the triple $(a,b,\pm
  c)$. Likewise the {\em sulfatic} case is when $G_{a,b,\pm c}=\SO_4$
  and the {\em oxozonic} case is for $G_{a,b,\pm c}=\Ort_4$.
\end{remark}

We prove the following (see Theorem \ref{mainthm} for the general version):
\begin{theorem}\label{mainthmintro} Let $a,b,c\geq 1$ be setwise coprime integers, $d\geq 1$ another integer and $q\geq 3$ be a prime. 
	
	Assume that $(a,b,\pm c)$ is \good\ or {oxozonic}. 
	
	\begin{enumth}
		\item There exists $D_{a,b,\pm c}\geq 1$ and $\eta=\eta(a,b,c,d)>0$ such that
	\begin{equation}
	    M_{ad,bd,\pm cd}(q)\geq D_{a,b,\pm c}+O_{a,b,c,d}(q^{-\eta}).
     \label{tripleRelowerbound}
	\end{equation}
\item In the special case  $a=b$, \eqref{tripleRelowerbound} is an equality:
\begin{equation}\label{asymptotica=b}
	M_{ad,ad,\pm cd}(q)=D_{a,a,\pm c}+O_{a,c,d}(q^{-\eta}).
\end{equation}

\end{enumth}

Furthermore we  have
$D_{a,b,c}=1$
and
$D_{a,b,-c}=D_{a,b,-c}(1/2)$ is the value ($>1$) at $s=1/2$ of the converging Dirichlet series $D_{a,b,-c}(s)$ defined in \eqref{defDabc}.	
\end{theorem}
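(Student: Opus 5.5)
We will follow the strategy of \cite{FKMAA}, adapted to three factors. The first step is to write each central value by the approximate functional equation,
\[
L(\tfrac12,\chi^{a})=\sum_{n\geq1}\frac{\chi^{a}(n)}{n^{1/2}}V\Bigl(\frac{n}{q^{1/2}}\Bigr)+\eps(\chi^{a})\sum_{n\geq1}\frac{\ov{\chi}^{a}(n)}{n^{1/2}}V\Bigl(\frac{n}{q^{1/2}}\Bigr),
\]
valid when $\chi^a$ is non-trivial. The characters with $\chi^{ad}$, $\chi^{bd}$ or $\chi^{cd}$ trivial number $O_{a,b,c,d}(1)$. Their total contribution to $M$ is therefore $O(q^{-1}\cdot q^{3/4+\eps})=O(q^{-\eta})$, using a convexity bound for each individual $L$-value.

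Multiplying out gives eight terms. The term with no root numbers is handled by orthogonality. The average over $\chi$ of $\chi(n_1^{ad}n_2^{bd}n_3^{\pm cd})$ detects congruences $n_1^{ad}n_2^{bd}n_3^{\pm cd}\equiv 1\mods q$, after clearing denominators in the $-c$ case. We separate the diagonal, where the corresponding equation holds over $\Zz$, from the off-diagonal.
\begin{itemize}
\item In the $(a,b,c)$ case the diagonal is only $n_1=n_2=n_3=1$, which gives the main term $1$.
\item In the $(a,b,-c)$ case the diagonal $n_1^an_2^b=n_3^c$ produces the Dirichlet series $D_{a,b,-c}(s)$, evaluated at $s=1/2$ by shifting contours. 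Its positivity and the bound $>1$ come from the $n_i=1$ term together with non-negative coefficients.
\end{itemize}
The off-diagonal congruences involve $n_i\ll q^{1/2+\eps}$. We bound them using the estimates for the number of solutions of monoidal equations in small boxes mentioned in the abstract; we take these as available and expect them to save a power of $q$.

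The seven terms containing at least one root number $\eps(\chi^{ad})$, etc., are the heart of the matter. Each $\eps(\chi^k)$ is a normalized Gauss sum $\tau(\chi^k)/q^{1/2}$. Averaging products of Gauss sums over $\chi$ turns each such term into a sum of the shape
\[
\frac{1}{q^{3/2}}\sum_{n_1,n_2,n_3}\frac{1}{\sqrt{n_1n_2n_3}}\,K_{a,b,\pm c}(n_1,n_2,n_3)\,V(\cdot),
\]
where $K_{a,b,\pm c}$ is the trace function of the sheaf $\mcK_{a,b,\pm c}$ of the companion paper \cite{FKMSBilinear}, a hyper-Kloosterman-type sum in several variables. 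In the \good\ case the geometric monodromy group is large. We then use the bilinear form estimates from \cite{FKMSBilinear} for such trace functions over short intervals $n_i\ll q^{1/2}$, grouping variables as $(n_1)$ against $(n_2,n_3)$ as needed; these give a saving of $q^{-\eta}$ and hence make the terms negligible.

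In the oxozonic case, where the group is $\Ort_4$, the same estimates should hold only up to a contribution coming from the determinant character. This contribution has a definite sign, namely it is non-negative, since it is a square-type term after the $\Ort_4$ analysis. We therefore cannot compute it and only bound it from below by $0$, which is why part (1) is an inequality. When $a=b$, a symmetry (two equal factors $L(\tfrac12,\chi^{ad})^2$) should show that this exceptional term either vanishes or can be evaluated, giving the equality \eqref{asymptotica=b}.

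The main obstacle is this last step: estimating the root-number terms with sums of length about $q^{1/2}$ in each variable. That range is exactly at the Pólya–Vinogradov threshold, so the full strength of the bilinear bounds from \cite{FKMSBilinear} is needed. Pinning down the sign of the residual term in the oxozonic case is also delicate. I would first attempt the $n_1$ versus $(n_2,n_3)$ bilinear splitting, using the Type II estimate with monodromy input.
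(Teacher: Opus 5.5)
Your proposal has a genuine gap. The mechanism you give for the inequality in part (1) is not the right one, and the analytic set-up you choose cannot produce the required savings.

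\textbf{The source of the inequality.} The off-diagonal solutions of $n_1^{a}n_2^{b}n_3^{\pm c}\equiv\xi\mods q$ in the root-number-free term are exactly what Conjecture $\msP(a,b,\pm c,d)$ controls. This is open in general: it is proved here only for $a=b$ (Theorem~\ref{thmaac}) and on average over $q$. So you cannot ``take it as available''. If you could, you would get an equality for every galant triple, which is the conditional Theorem~\ref{mainthmconj}.

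The inequality actually comes from positivity. The weights $V_\bfeta$ are strictly positive (Lemma~\ref{LemmaV}\eqref{Vpositivity}), so the off-diagonal count is non-negative and can simply be dropped. There is no ``oxozonic residual term with a definite sign''. The oxozonic case is treated exactly like the galant one, using Theorem~\ref{thmO4} in place of Theorems~\ref{thmTypeII} and~\ref{thmtriplesum} and placing an odd exponent on the appropriate variable.

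Likewise, the equality for $a=b$ does not come from a symmetry. It comes from merging $lm$ into one variable, so that Pierce's two-variable bound (Theorem~\ref{thmpiercebis}) proves $\msP(a,a,c,d)$.

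Positivity is also not immediately usable. The weights depend on $\chi(-1)$, and after orthogonality the odd part contributes $M^o_{1,a,b,\pm c}(\xi;q)-M^o_{1,a,b,\pm c}(-\xi;q)$, with a minus sign. The paper needs Proposition~\ref{differencelemma} (Poisson summation in all three variables plus the trilinear bounds) to show that the even and odd counts agree up to $O(q^{-\Delta})$.

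\textbf{The analytic set-up.} With a balanced AFE of length $q^{1/2}$ for each factor, the term with three root numbers is $q^{-1/2}\sum_{n_i\ll q^{1/2}}K_{a,b,\pm c}(n_1^an_2^bn_3^{\pm c})(n_1n_2n_3)^{-1/2}$, which is trivially $O(q^{1/4})$. You would need to save $q^{1/4+\eta}$ in a trilinear sum with all variables of size $q^{1/2}$. There P\'olya--Vinogradov saves nothing, and Theorem~\ref{thmtriplesum} saves at most about $q^{1/8}$ (take $k=2$). So ``a saving of $q^{-\eta}$'' does not make these terms negligible.

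The paper avoids this by using a single AFE for the degree-three product with the unbalanced choice $X=q^{2-\delta}$, $Y=q^{1+\delta}$ from \eqref{XYchoice}. The dual term then exceeds the trivial range only by $q^{\delta/2}$, so a tiny power saving suffices. All the remaining length is pushed to the direct side, where it is handled by positivity.

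Finally, your six mixed terms are not of the shape you claim. A single root number $\eps(\chi^a)$ produces the trace function $u\mapsto\sum_{x^a=u}e_q(x)$ of $[x\mapsto x^a]_*\mcL_\psi$, which is Kummer-induced (an additive character when $a=1$). Two root numbers with non-coprime exponents give a Kummer-induced hypergeometric sheaf. So the large-monodromy input from \cite{FKMSBilinear} does not apply to these terms.
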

Note that the main term of the asymptotic expansion \eqref{asymptotica=b} is independent of the value of $d$.	The same 
observation applies to the formulas \eqref{asymptoticconjectural}, \eqref{friday} and \eqref{Mxilowerbound***} below. This phenomenon is similar in the context of 
the asymptotic formula of $M_{a,b} (q)$ defined by \eqref{doublemoment} (see the definition of the constant $\alpha (a,b)$ in \cite{FKMAA}*{p.110}).

We  expect that \eqref{tripleRelowerbound} is always an equality and
\eqref{asymptotica=b} shows that this is the case when $a=b$. However,
 the general case rests currently on a conjectural statement
 $\msP(a,b,c,d)$ (which we  will prove for $a=b$). Given $L,M,N\geq 1$, we define
\begin{equation*}
	N_{a,b,c,d}(L,M,N;q)=\\|\{(l,m,n)\sim L\times M\times N,\ (l^am^bn^c)^d\equiv 1\mods q,\ l^am^bn^c\not=1\}|
\end{equation*}

\begin{conjectureP}\label{conjpiercetriple} Let $a,b,c$ be non-zero integers with $a,b,c$ setwise coprime and $d\geq 1$ another integer.
 
  For any prime $q\geq 3$, there exists $\eta_0>0$ (depending on $a,b,c,d$ but not on $q$) such that for $L,M,N\geq 1$  and  $\eps>0$, one has 
  $$\frac{N_{a,b,c,d}(L,M,N;q)}{\sqrt{LMN}}\ll_{\eps,a,b,c,d} q^\eps\Bigl( \frac{\sqrt{LMN}}{q}+(LMN)^{-\eta_0}\Bigr).$$
 \end{conjectureP}
 We refer to \S \ref{secoverview} for a description of how this conjecture arises and to \S \ref{subsecerrorterm} for proofs of special cases.

Assuming this conjecture we have
\begin{theorem}\label{mainthmconj} Let $a,b,c\geq 1$ be setwise coprime integers, $d\geq 1$ be another integer and $q\geq 3$ be a prime. Let 
\begin{equation}
	\label{dqdef}
	d_q:=\begin{cases}
	(d,q-1)&\hbox{ if $d$ is even,}\\
	2(d,q-1)&\hbox{ if $d$ is odd.}
\end{cases}
\end{equation}	
	If $(a,b,\pm c)$ is \good\ or {oxozonic} and if  Conjecture $\msP(a,b,\pm c,d_q)$ holds we have	
	\begin{equation}\label{asymptoticconjectural}
	M_{ad,bd,\pm cd}(q)=D_{a,b,\pm c}+O_{a,b,c,d}(q^{-\eta}).
\end{equation}
\end{theorem}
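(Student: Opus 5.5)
We will follow the standard route for moments over the character group: an approximate functional equation, orthogonality of characters, and a split into a diagonal main term and off-diagonal terms controlled either by conjecture $\msP$ or by bilinear forms in trace functions.

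\emph{Step 1: expansion.} For each factor $L(\demi,\chi^{ad})$ we use an approximate functional equation. This is legitimate only when $\chi^{ad}$ is non-trivial; the $O(d)$ characters with $\chi^{ad}$, $\chi^{bd}$ or $\chi^{cd}$ trivial contribute $O(q^{-1/2+\eps})$ after normalization by $1/(q-1)$, using convexity bounds. The expansion writes $L(\demi,\chi^{ad})$ as a sum of $\chi^{ad}(l)l^{-1/2}V(l/\sqrt q)$ plus a dual sum weighted by the root number $\eps(\chi^{ad})$, which is a normalized Gauss sum. Expanding the triple product gives eight terms indexed by which factors are dualized. We then sum over $\chi$ using orthogonality. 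Since $\chi\mapsto\chi^d$ has kernel of size $(d,q-1)$, this produces congruences $(l^am^bn^{\pm c})^{d}\equiv 1\mods q$. Parity issues for $L$-values of odd and even characters force the modulus $d_q$ defined in \eqref{dqdef}.

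\emph{Step 2: the non-dual term.} The term with no dual sums reduces to
\[\frac{1}{\sqrt{lmn}}V\Bigl(\frac{l}{\sqrt q}\Bigr)V\Bigl(\frac{m}{\sqrt q}\Bigr)V\Bigl(\frac{n}{\sqrt q}\Bigr)\]
summed over $(l^am^bn^{\pm c})^{d_q}\equiv1\mods q$. We split this into two parts.
\begin{itemize}
\item The exact solutions $l^am^bn^{\pm c}=1$ form the main term. In the $+c$ case only $l=m=n=1$ is possible, giving $D_{a,b,c}=1$. In the $-c$ case the solutions $l^am^b=n^c$ are parametrized multiplicatively. Shifting the contour of the Mellin integral produces $D_{a,b,-c}(1/2)+O(q^{-\eta})$, with $D_{a,b,-c}(s)$ as in \eqref{defDabc}.
\item The solutions with $l^am^bn^{\pm c}\neq1$ form exactly the counting function $N_{a,b,c,d_q}$. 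After a dyadic partition with $LMN\ll q^{3/2+\eps}$, conjecture $\msP(a,b,\pm c,d_q)$ bounds their contribution by $q^\eps\bigl(\sqrt{LMN}/q+(LMN)^{-\eta_0}\bigr)$.
\end{itemize}
In the second bullet, the first term is unacceptable when $LMN$ is near $q^{3/2}$, since it only gives $q^{-1/4}\cdot q^{\eps}$. This is still a power saving, but we need the weights to be adjusted so that the ranges stay at most $\sqrt q$ each. To guarantee this we use a balanced approximate functional equation with each length about $q^{1/2}$, so that $LMN\le q^{3/2+\eps}$ and $\sqrt{LMN}/q\le q^{-1/4+\eps}$. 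Small boxes are handled by the $(LMN)^{-\eta_0}$ term together with the trivial bound when $LMN<q^{\delta}$; the latter gives $q^{-\eta}$ after taking the corresponding part of the main term.

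\emph{Step 3: the dual terms (main obstacle).} Terms with at least one dual factor carry products of Gauss sums $\eps(\chi^{ad})$, and so on. After orthogonality these become normalized hyper-Kloosterman-type trace functions $K_{a,b,\pm c}$, which are the trace functions of the sheaf $\mcK_{a,b,\pm c}$, evaluated at a monomial in $l,m,n$. We must bound sums of the shape
\[\sum_{l,m,n}\frac{\alpha_l\beta_m\gamma_n}{\sqrt{lmn}}K(l^am^bn^{\pm c})\]
by $q^{-\eta}$. The key input is the result of the companion paper \cite{FKMSBilinear}: when $(a,b,\pm c)$ is \good\ or oxozonic, the geometric monodromy of $\mcK_{a,b,\pm c}$ is large, and this yields a nontrivial bilinear-form bound of Pólya--Vinogradov/Burgess-type strength beyond the $\sqrt q$ range.

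The hardest point is the balanced range where all lengths are near $\sqrt q$. We plan to group two variables into one, using the $\SO_4$/$\Ort_4$ (or larger) monodromy to get a type II estimate with saving $q^{-\eta}$. In the oxozonic case we must also check that the extra invariant from $\Ort_4$ yields no secondary main term; we expect it to cancel by parity. Collecting the eight terms gives \eqref{asymptoticconjectural}.
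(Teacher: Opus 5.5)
\paragraph{Verdict.} There is a genuine gap: your Step~3 fails as stated, and the cause is the choice of lengths you make in Step~2.

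\paragraph{Why the balanced expansion breaks Step~3.} With three separate approximate functional equations of length $\sqrt q$, the dual terms are far from the trivial range. For the term with all three factors dualized, orthogonality gives $q^{-1/2}\sum_{l,m,n}(lmn)^{-1/2}K_{a,b,\pm c}(\xi l^am^bn^{\pm c})$ with smooth weights. The dyadic box $L=M=N=q^{1/2}$ then has trivial size $q^{-1/2}(LMN)^{1/2}=q^{1/4}$.

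The mixed terms with one or two Gauss sums have the same trivial size $q^{1/4}$. Moreover, the terms with a single Gauss sum involve $u\mapsto\sum_{z^c=u}e_q(z)$, the trace function of $[x\mapsto x^c]_*\mcL_\psi$, which is not galant. So \cite{FKMSBilinear} does not even apply to them.

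You would therefore need to save more than $q^{1/4}$ in trilinear sums with all three variables of size $\sqrt q$. Nothing in \S\ref{secbilinearsums} gives this:
\begin{itemize}
\item At $L=M=N=q^{1/2}$, the bound \eqref{trilinearbound} improves on the trivial bound by at most $q^{1/8}$ (take $k=2$).
\item The bound \eqref{BtypeII}, with the third variable summed trivially, improves by only about $q^{1/96}$ at best.
\item Grouping two variables into one is not available for general exponents, since $m^bn^{\pm c}$ is not a function of $mn$.
\end{itemize}
A saving of $q^{\eta}$ with small $\eta$ is far from enough.

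\paragraph{The missing idea.} Use the choice made in \S\ref{5.4}. Apply a single approximate functional equation to the degree-three product $L(s,\chi^a)L(s,\chi^b)L(s,\chi^{\pm c})$, which has conductor $q^3$ and gives only two terms. Make it deliberately unbalanced, with $X=q^{2-\delta}$ and $Y=q^{1+\delta}$.

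Your worry about the term $\sqrt{LMN}/q$ in \eqref{Nabcwishedbound} is misplaced: it is $\leq q^{-\delta/2}$ throughout $LMN\leq q^{2-\delta}$. So Conjecture $\msP(a,b,\pm c,d_q)$, together with Lemma~\ref{triplepriercelowerbound} for tiny boxes as in Proposition~\ref{M1errlemma}, tolerates a first sum of length almost $q^2$.

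Spending that slack leaves a dual sum of length $q^{1+\delta}$, whose trivial bound is only $q^{\delta/2}$. Each range then saves more than $q^{\delta/2}$ for $\delta$ small:
\begin{itemize}
\item P\'olya--Vinogradov when one variable is $\geq q^{1/2+\delta}$;
\item Theorem~\ref{thmtriplesum} when all variables are $\geq q^{2\delta}$;
\item Theorem~\ref{thmTypeII} when one variable is $\leq q^{2\delta}$ and the other two are near $\sqrt q$.
\end{itemize}
This is Proposition~\ref{trilinearabcK}.

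In the oxozonic case there is no secondary main term to cancel. One uses Theorem~\ref{thmO4} and puts an odd exponent on the variable playing the role of $n$ in the type~II bound; this is always possible since an oxozonic triple has exactly one even exponent.

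\paragraph{What matches the paper.} The rest of your outline agrees with the paper's argument: the reduction to $M_{a,b,\pm c}(\xi;q)$ with $\xi\in\mu_{(d,q-1)}(\Fq)$, the even/odd split producing $\pm\xi$ and hence the modulus $d_q$, and the main term $D_{a,b,\pm c}$ coming from exact solutions of $l^am^bn^{\pm c}=1$.
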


In any case, the inequality \eqref{tripleRelowerbound} is sufficient for the following general triple non-vanishing result (see \S \ref{16h50} for its proof):
\begin{corollary}\label{Ldonotvanish} Let $a,b, c\neq 0$  be three  non-zero integers.  Then there exists $Q>2$ such that, uniformly for $q\geq Q
$ prime,  we  have
    $$|\{\chi\mods q,\
    L(1/2,\chi^{a})L(1/2,\chi^b)L(1/2,\chi^c)\not=0\}|\gg_{a,b,c}
    q/(\log q)^{12}.$$
\end{corollary}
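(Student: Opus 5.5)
We prove Corollary \ref{Ldonotvanish} by the standard first/second moment argument: Cauchy--Schwarz, with the lower bound \eqref{tripleRelowerbound} as the first moment and a crude upper bound for a sixth-type moment. Write $\Lambda(\chi)=L(1/2,\chi^a)L(1/2,\chi^b)L(1/2,\chi^c)$ and let $\mathcal{N}$ be the set of $\chi$ with $\Lambda(\chi)\neq 0$. Then
$$\Bigl|\sum_{\chi}\Lambda(\chi)\Bigr|^2\leq |\mathcal{N}|\sum_{\chi}|\Lambda(\chi)|^2 .$$

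\emph{Lower bound for the first moment.} We need $|\sum_\chi \Lambda(\chi)|\gg q$ for all large prime $q$.

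First we reduce to the situation of Theorem \ref{mainthmintro}. Permuting $(a,b,c)$ and applying complex conjugation (which preserves $|\mathcal{N}|$, since $\overline{L(1/2,\chi)}=L(1/2,\bar\chi)$) reduces to two cases: all exponents positive, or $a,b\geq 1$ and the third equal to $-c$ with $c\geq 1$. We write the exponents as $(ad,bd,\pm cd)$ with $d=\gcd$ and $(a,b,c)$ setwise coprime.

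If the normalized triple is galant or oxozonic, \eqref{tripleRelowerbound} gives $\Re M_{ad,bd,\pm cd}(q)\geq D_{a,b,\pm c}+O(q^{-\eta})$, hence $|\sum_\chi\Lambda(\chi)|\geq (q-1)/2$ for $q\geq Q$. This uses $D\geq 1$.

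The excluded triples need separate treatment.
\begin{itemize}
\item \emph{Induced triples} $(c,b,-c)$. Here $\Lambda(\chi)=|L(1/2,\chi^{cd})|^2L(1/2,\chi^{bd})$. We would handle these through the second moment results of \cite{FKMAA}, or through a separate evaluation given later in the paper (Theorem \ref{mainthm}, the general version).
\item \emph{Solvable and sulfatic triples.} These form a finite list. For them I expect Theorem \ref{mainthm} to still provide a lower bound for the real part of the moment, possibly with a different main term, and I would invoke it.
\end{itemize}
This case analysis is the main obstacle: one must make sure every exceptional triple still has a first moment $\gg 1$. If it fails for some triple, a fallback is to twist by a short mollifier-free weight, but I would first check the general theorem.

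\emph{Upper bound for the second moment.} Apply Hölder with exponents $(3,3,3)$:
$$\sum_\chi|\Lambda(\chi)|^2\leq \prod_{e\in\{a,b,c\}}\Bigl(\sum_\chi |L(1/2,\chi^e)|^6\Bigr)^{1/3}.$$
The map $\chi\mapsto\chi^e$ is at most $|e|$-to-one onto a subgroup of characters. Hence
$$\sum_\chi |L(1/2,\chi^e)|^6\leq |e|\sum_{\psi\bmod q}|L(1/2,\psi)|^6.$$
The trivial character contributes $O(q^{3})$ to the right-hand side, and it occurs for at most $|e|$ characters $\chi$. To avoid this loss we simply exclude from $\mathcal{N}$ the $O(1)$ characters with $\chi^{a}$, $\chi^b$ or $\chi^c$ trivial. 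Their contribution to the first moment is $O(q^{3/2}\cdot q^{-1})$ after normalization, which needs checking but is harmless if one uses the convexity bound $L(1/2,\chi_0)\ll 1$ for the principal character mod $q$.

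For the remaining characters we use the known sixth moment bound for Dirichlet $L$-functions of prime modulus,
$$\sum_{\psi\neq\chi_0}|L(1/2,\psi)|^6\ll q(\log q)^{9},$$
which follows from the large sieve / approximate functional equation (Conrey's or the Heath-Brown style argument). This yields $\sum|\Lambda|^2\ll q(\log q)^{9}$ and hence $|\mathcal{N}|\gg q/(\log q)^{9}$.

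Should only a weaker bound be available, say via the hybrid large sieve (eighth moment plus fourth moment, $q(\log q)^{16}$ and $q(\log q)^4$), Cauchy--Schwarz gives $(\log q)^{12}$. This matches the exponent stated in Corollary \ref{Ldonotvanish}, which suggests that this is the intended route: $\sum|L|^6\leq(\sum|L|^4)^{1/2}(\sum|L|^8)^{1/2}\ll q(\log q)^{10}$ or similar. I would then combine the two bounds to obtain $|\mathcal{N}|\gg q/(\log q)^{12}$.
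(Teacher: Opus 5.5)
There are two genuine gaps, one in each half of your argument.

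\emph{The upper bound.} Cauchy--Schwarz forces you to control $\sum_\chi|\Lambda(\chi)|^2$. After your H\"older step this is a sixth moment $\sum_{\psi\bmod q}|L(1/2,\psi)|^6$ for a single prime modulus. A bound $\ll q^{1+o(1)}$ for this is not known unconditionally. The large sieve applied to an approximate functional equation of length about $q^{3/2}$ only gives $q^{3/2+o(1)}$, and the known sixth-moment asymptotics require an extra average over moduli. Your eighth-moment fallback is even further out of reach, and its numerology gives $(\log q)^{10}$, not $(\log q)^{12}$.

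The remedy, which is the paper's argument, is never to square $\Lambda$. Instead apply H\"older with four exponents all equal to $4$ to $\sum_\chi\Lambda(\chi)=\sum_{\chi\in\mathcal{N}}\Lambda(\chi)$:
\[
(q-1)\bigl(1+O(q^{-\eta})\bigr)\leq \sum_{\chi\in\mathcal{N}}|\Lambda(\chi)|\leq |\mathcal{N}|^{1/4}\prod_{h\in\{a,b,c\}}\Bigl(\sum_{\chi\bmod q}|L(1/2,\chi^h)|^4\Bigr)^{1/4}.
\]
Only fourth moments now enter. Since $\chi\mapsto\chi^h$ is $(h,q-1)$-to-one, the fourth moment bound (Heath-Brown, Young) gives
\[
\sum_\chi|L(1/2,\chi^h)|^4\leq (h,q-1)\sum_\psi|L(1/2,\psi)|^4\ll q(\log q)^4 .
\]
The right side of the H\"older inequality is therefore $\ll|\mathcal{N}|^{1/4}q^{3/4}(\log q)^3$. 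This gives $|\mathcal{N}|\gg q(\log q)^{-12}$, which is exactly where the exponent $12=4\cdot 3$ comes from.

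\emph{The lower bound.} Theorem~\ref{mainthm} only covers galant or oxozonic triples. The induced, solvable and sulfatic moments are explicitly left open in the paper. The results of the earlier paper on quadratic moments concern $M_{a,b}(q)$, not $\sum_\chi|L(1/2,\chi^{cd})|^2L(1/2,\chi^{bd})$. So neither of the routes you propose for the exceptional triples exists.

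The missing observation is to use conjugation factor by factor rather than globally. Since $L(1/2,\chi^{-h})=\overline{L(1/2,\chi^{h})}$, the set $\mathcal{N}$ is unchanged when each exponent is replaced by its absolute value. After this every triple is positive, and a positive setwise coprime triple is always galant or oxozonic. Hence \eqref{tripleRelowerbound} with $D=1$ gives $\sum_\chi\Lambda(\chi)\geq(q-1)(1+O(q^{-\eta}))$ with no case analysis at all. The paper additionally sends repeated exponents to the two-factor non-vanishing result, but that is only a convenience.

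Incidentally, $L(1/2,\chi_0)=(1-q^{-1/2})\zeta(1/2)$ is bounded, so the principal character produces no $O(q^3)$ term.
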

\begin{remark}\label{proportion}
	As we will see in the proof given in  \S \ref{16h50}, the
        non-vanishing of the product
        $L(1/2,\chi^{a})L(1/2,\chi^b)L(1/2,\chi^c)$ only concerns
        even characters.  Moreover, assuming a slight variant of
        Conjecture $\msP(a,b,\pm c,d)$, one could use mollifiers to
        exhibit a positive proportion of $\chi\mods q$ for which
        $L(1/2,\chi^{a})L(1/2,\chi^b)L(1/2,\chi^c)\not=0$.
        
	Since this variant is a bit technical to state, we won't write
        it down here, here but we would like to mention that this
        variant is known in the case $a=b$.
\end{remark}

We can also obtain an asymptotic formula for the cubic mixed moments $M_{ad,bd,\pm cd}(q)$ on average over the modulus $q$. We denote by    $\msP\cap[Q,2Q)$   the set of primes $q\in  [Q,2Q)$.
\begin{theorem}\label{mainthmaverage}  Let $a,b, c\geq 1$  be three, setwise coprime, integers and let $d\geq 1$ be another integer.
	
	Assume that $(a,b,\pm c)$ is {\good}  or oxozonic. There exists $\eta=\eta(a,b,c)>0$ such that for $Q\geq 3$
	\begin{equation}\label{friday}\frac{1}{|\msP\cap[Q,2Q)|}\sum_{q\sim Q}M_{ad,bd,\pm cd}(q)=D_{a,b,\pm c}+O_{a,b,c,d}(Q^{-\eta}).\end{equation}
	
\end{theorem}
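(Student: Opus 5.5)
The plan is to follow the same route as for Theorem \ref{mainthmintro} and Theorem \ref{mainthmconj}, and to use the extra average over $q$ to dispose of the one term that needs Conjecture $\msP$ for an individual prime.

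First, the approximate functional equation writes each $L(1/2,\chi^{ad})$ as a sum of length about $q^{1/2}$ plus a dual sum involving root numbers. We expand the product and average over $\chi\mods q$ using orthogonality of the map $\chi\mapsto\chi^{d}$, whose image is the group of characters trivial on $d_q$-th roots of unity. This reduces $M_{ad,bd,\pm cd}(q)$ to a combination of sums over dyadic boxes $(l,m,n)\sim L\times M\times N$ with $LMN\ll q^{3/2+\eps}$ of two kinds:
\begin{enumerate}[label=(\roman*)]
\item sums restricted to $(l^am^bn^{\pm c})^{d}\equiv 1\mods q$;
\item dual terms in which the root numbers produce normalized Gauss-sum products, i.e.\ the trace functions of the sheaf $\mcK_{a,b,\pm c}$.
\end{enumerate}
Within (i), the solutions of the exact equation $l^am^bn^{\pm c}=1$ give the diagonal main term $D_{a,b,\pm c}$. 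Its independence of $d$ and of $q$ is already visible in the proofs above. The dual terms (ii) are bounded exactly as in the proof of Theorem \ref{mainthmintro}. Since $(a,b,\pm c)$ is \good\ or oxozonic, the monodromy of $\mcK_{a,b,\pm c}$ is large enough for the bilinear-form estimates of the companion paper to give a power saving for each $q$ individually. These contributions are therefore $O(q^{-\eta})$ pointwise, hence also after averaging over $q\sim Q$.

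The only term not controlled unconditionally for individual $q$ is the off-diagonal part of (i). After normalizing by $(LMN)^{-1/2}$ and smooth weights, it is bounded by
\[
\frac{N_{a,b,c,d_q}(L,M,N;q)}{\sqrt{LMN}}
\]
summed over dyadic boxes. This is where the average over $q$ enters. We interchange summations:
\[
\sum_{q\sim Q}N_{a,b,c,d_q}(L,M,N;q)\leq\sum_{\substack{(l,m,n)\sim L\times M\times N\\ l^am^bn^{\pm c}\neq 1}}\ \sum_{\substack{q\sim Q\\ q\mid \mathrm{num}\bigl((l^am^bn^{\pm c})^{2d'}-1\bigr)}}1,
\]
where $d'$ is a fixed multiple of all possible $d_q$; taking $d'=2d$ suffices, since $d_q\mid 2d$ and $x^{d_q}\equiv1$ implies $x^{2d}\equiv1$. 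The numerator of $x^{2d'}-1$, for $x=l^am^bn^{\pm c}$, is a nonzero integer of size at most $(LMN)^{O_{a,b,c,d}(1)}=Q^{O(1)}$. It therefore has $O_{a,b,c,d}(1)$ prime divisors $q\geq Q$. Hence
\[
\frac{1}{|\msP\cap[Q,2Q)|}\sum_{q\sim Q}\frac{N(L,M,N;q)}{\sqrt{LMN}}\ll\frac{\log Q}{Q}\sqrt{LMN}.
\]
This is $\ll Q^{-\eta}$ whenever $LMN\leq Q^{2-3\eta}$, which is the same shape as the first term in Conjecture $\msP$, now obtained on average.

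The main obstacle is the range $LMN$ between $Q^{2-3\eta}$ and $Q^{3/2+\eps}\cdot Q^{1/2}$. If the truncation of the approximate functional equation is balanced, then $LMN\leq q^{3/2+\eps}$ and the bound above already wins with room to spare, since $\sqrt{LMN}/Q\leq Q^{-1/4+\eps}$. So I would first arrange the functional-equation lengths so that every box in (i) satisfies $LMN\ll q^{3/2+\eps}$. The same uniform choice of lengths must keep the dual terms (ii) within the range of the bilinear estimates. If the shape of the approximate functional equation forces unbalanced lengths, I would fall back on the pointwise unconditional bounds from \S\ref{subsecerrorterm} (valid when one variable is short or $a=b$). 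I would combine them with the averaged divisor bound so that the two ranges overlap. Summing the $O(\log^3 Q)$ dyadic boxes then gives \eqref{friday}.
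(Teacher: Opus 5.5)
\textbf{There is a genuine gap.} It lies in your choice of lengths for the approximate functional equation, not in the averaging argument.

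You need the AFE \eqref{approxgeneric} for the degree-three product. Multiplying three separate AFEs would also create mixed terms carrying one or two root numbers, which are of neither type (i) nor (ii). For this AFE the two lengths satisfy $XY=q^3$. Your preferred arrangement, $X\le q^{3/2+\eps}$ for the sums (i), therefore forces $Y\ge q^{3/2-\eps}$ for the dual sums (ii). The dual sums can then no longer be bounded ``exactly as in the proof of Theorem \ref{mainthmintro}''.

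To see this, note that the normalized dual sum \eqref{M2def} has trivial size $(Y/q)^{1/2}\approx q^{1/4}$. For $L\asymp M\asymp N\asymp q^{1/2}$ you would have to save more than $q^{1/4}$ in $\sum_{l,m,n}K_{a,b,\pm c}(\xi l^am^bn^{\pm c})$.
\begin{itemize}
\item Completion in any number of the variables saves nothing in that box.
\item Theorems \ref{thmTypeII}, \ref{thmtriplesum} and \ref{thmO4} save at most about $q^{1/8}$.
\item Proposition \ref{trilinearabcK} and Theorem \ref{M2abcthm} are only established for $Y=q^{1+\delta}$ with $\delta$ small.
\end{itemize}
This is why the paper takes the unbalanced choice \eqref{XYchoice}, $X=q^{2-\delta}$, $Y=q^{1+\delta}$. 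This is the smallest $Y$ compatible with \eqref{1stXcontrol}, and it puts the dual terms just past the trivial range, where they are $O(q^{-\eta})$ for each individual $q$.

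Your fallback is not available. For general $(a,b,c)$, \S\ref{subsecerrorterm} contains no unconditional pointwise bound beyond Lemma \ref{triplepriercelowerbound} (tiny boxes) and Theorem \ref{thmaac} (the case $a=b$).

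The fallback is also unnecessary. With $X=q^{2-\delta}$, the first sums only involve boxes with $LMN\le q^{2-\delta+O(\delta^2)}$, up to a negligible tail. Your averaged estimate $\ll(\log Q)\sqrt{LMN}/Q$ already gives $Q^{-\delta/2+O(\delta^2)}$ uniformly on that entire range. This estimate is Proposition \ref{propfouvry}, proved there with the divisor bound instead of your count of prime factors $\ge Q$, and applied with exponent $2d$ exactly as you do; the resulting bound is Corollary \ref{M1erraverage}. So the range between $Q^{2-3\eta}$ and $Q^{2+\eps}$ that you single out as the main obstacle never occurs.

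Once you replace the balanced choice by \eqref{XYchoice}, your argument becomes the paper's proof:
\begin{itemize}
\item the main term comes from \eqref{sharp} and Proposition \ref{MTprop};
\item the dual terms are bounded pointwise in $q$ via Theorem \ref{M2abcthm} and Proposition \ref{trilinearoxozonic};
\item the off-diagonal terms are controlled on average over $q$.
\end{itemize}
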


\begin{remark} In the case $a=b$, the product $L(s,\chi^a)L(s,\chi^a)$
  can be interpreted as the $L$-function $L(s,\chi^a\times E)$ where
  $E$ is a suitable Eisenstein series. More generally given $f$ an
  holomorphic Hecke-cuspform\footnote{ So that $f$ satisfies the
    Ramanujan-Petersson conjecture.} (with trivial nebentypus) our
  methods allow to evaluate the following mixed moment
$$M_{ad,\pm cd}(f;q):=\frac{1}{q-1}\sum_{\chi\mods q}L(1/2,f\times \chi^{ad})L(1/2,\chi^{\pm {cd}}).$$
More precisely, if $(a,a,\pm c)$ is {\good}  or oxozonic) we have
$$M_{ad,\pm cd}(f;q) =D_{f,a,\pm c}+O_{f,a,c,d}(q^{-\eta})
$$ with 
 $D_{f,a,\pm c}>0$ and $\eta=\eta(a,c,d)>0$. This can be further refined to establishing the existence of a positive proportion of $\chi$'s for which $L(1/2,f\times \chi^{ad})L(1/2,\chi^{cd})\not=0$ (cf. Remark \ref{proportion}).
\end{remark}

\begin{remark} There remains a few moments which are not treated by Theorem \ref{mainthmintro}:  the {\em induced} cases 
$$M_{ad,cd,-cd}(q)=\frac{1}{q-1}\sum_{\chi\mods q}L(1/2,\chi^{ad})|L(1/2,\chi^{cd})|^2$$
and the {\em solvable} cases
$$M_{ad,bd,-(a+b)d}(q)=\frac{1}{q-1}\sum_{\chi\mods q}L(1/2,\chi^{ad})L(1/2,\chi^{bd})L(1/2,\ov \chi^{(a+b)d})$$
for
$(a,b)=(1,1),\ (1,2),\ (1,3)$ and the {\em sulfatic} cases
$M_{d,2d,-5d}(q)$. We hope to return to these questions in future works (see \cite{FKMSinduced}).
\end{remark}

\subsection{Related works}
The ``ordinary'' cubic moment
\begin{equation*}
	M_{1,1,1}(q)=\frac{1}{q-1}\sum_{\chi\mods q}L(1/2,\chi)^3
\end{equation*}
was evaluated asymptotically with a power saving error term by Petrow (unpublished); it turns out that this question is closely related to the problem of distribution of the ternary divisor function $d_3(n)$ in large arithmetic progressions modulo $q$ which was solved by Friedlander-Iwaniec \cite{FI} using bounds for higher dimensional algebraic exponential sums (furnished by Birch and Bombieri) which were consequences of  Deligne's resolution of the Weil's conjecture (see also \cites{HB,FKM3,Sharma} for further progress). 

The moment $M_{1,1,-1}(q)$ was essentially evaluated by Das and Khan \cite{DK}: more precisely, they evaluated asymptotically the closely related mixed moment
$$M_{1,-1}(f;q)=\frac{1}{q-1}\sum_{\chi\mods q}L(1/2,f\times\chi)L(1/2,\chi^{-1})$$
where $f$ is a fixed Hecke eigenform. A bit surprisingly, this case is {\em simpler} than the ordinary cubic moment and our general approach will explain why this is the case (the triple $(1,1,-1)$ is {\em induced} according to Definition \ref{defgoodtriple}). A bit later, Zacharias \cite{zacharias} evaluated more general versions of this cubic moment along with the more difficult 
$$M_{1,1}(f;q)=\frac{1}{q-1}\sum_{\chi\mods q}L(1/2,f\times\chi)L(1/2,\chi).$$

Also closely related is the work of J. Pliego \cites{Pliego1,Pliego2} in the archimedean aspect, who  evaluated certain cubic mixed moments of the Riemann $\zeta$ function along the critical line: for $a,b,c\in\Rr-\{0\}$
$$\int_{0}^T\zeta(\frac{1}2+iat)\zeta(\frac{1}2+ibt)\zeta(\frac{1}2+ict)dt,\hbox{ as }T\ra\infty.$$

Finally, very recently, F. Berta and S. zur Verth \cite{BV}, using methods  related to this paper and to the companion paper \cite{FKMSBilinear} have established non-vanishing results for $L(1/2,\chi^a)L(1/2,\chi^b)$ under some  constraint on the angle of the normalized Gauss sum of $\chi$ (see also  \cite{MAMS}*{Thm. 1.8} and \cite{Hough}).
\begin{theorem*} Given $a,b\in \Zz-\{0\}$, and $I\subset \mathbb S^1$ an interval of positive measure in the unit circle, there exists $P=P(a,b,I)>0$ such that as $q$ varies over large enough primes, one has
  $$
  \frac{1}{q-2}|\{\chi\mods q,\ \chi\not=1,\
  L(1/2,\chi^a)L(1/2,\chi^b)\not=0,\ \eps(\chi)\in I\}|\geq P.
  $$

  Here $\eps(\chi)$ denote the unitarily normalized Gauss sum.
\end{theorem*}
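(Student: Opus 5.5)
The plan is to derive the theorem from a first and a second moment, in the spirit of Theorem~\ref{mainthmintro} and Remark~\ref{proportion}, with each moment weighted by a smooth function of the root number $\eps(\chi)$. First I would fix a nonnegative trigonometric polynomial (or smooth minorant) $w$ on $\mathbb S^1$ that is supported in $I$ and has positive mean $\hat w(0)>0$; such a $w$ exists because $I$ has positive measure. Write $w(z)=\sum_{|k|\leq K}\hat w(k)z^k$ with $K$ fixed. Then, for a short mollifier $\mathcal M(\chi)=\sum_{m\leq q^{\theta}}x_m\chi^a(m)$ (or simply $\mathcal M\equiv1$ if a constant proportion is not needed), I would consider
\[
S_1=\sum_{\chi\neq1}L(1/2,\chi^a)L(1/2,\chi^b)\mathcal M(\chi)\,w(\eps(\chi)),\qquad S_2=\sum_{\chi\neq1}\bigl|L(1/2,\chi^a)L(1/2,\chi^b)\mathcal M(\chi)\bigr|^2 w(\eps(\chi)).
\]
By Cauchy--Schwarz, the number of $\chi$ with $\eps(\chi)\in I$ and $L(1/2,\chi^a)L(1/2,\chi^b)\neq0$ is at least $|S_1|^2/S_2$. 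It therefore suffices to prove $S_1\gg q$ and $S_2\ll q$.

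Next I would expand each twisted moment $\sum_\chi(\cdots)\eps(\chi)^k$ using approximate functional equations. The root numbers $\eps(\chi^a)$ and $\eps(\chi^b)$ that appear will be kept together with $\eps(\chi)^k$. Orthogonality then turns the sum over $\chi$ into sums over $n,m$ of a trace function $K_k(n^am^b\ldots)$. Here $K_k$ is a normalized hyper-Kloosterman-type sum attached to the product of Gauss sums $\tau(\chi)^k\tau(\chi^a)^{\pm1}\tau(\chi^b)^{\pm1}$, composed with a monomial map. For $k=0$ this is the toroidal computation of \cite{FKMAA}, and it yields a main term of size $\asymp q$ with positive constant. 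For $k\neq0$, the only possible main terms come from configurations where the Gauss-sum product is degenerate. I would check, using the Hasse--Davenport/Stickelberger relations, that these contribute either $0$ or terms that can be absorbed by choosing $w$ so that $\hat w(k)=0$ for the finitely many exceptional $k$. The surviving contribution in $S_1$ is then $\hat w(0)$ times the untwisted main term, which is $\gg q$. The bound $S_2\ll q$ follows since $0\le w\le\|w\|_\infty$, so $S_2$ is at most a constant times the untwisted mollified fourth-type moment, which is $\ll q$ by the methods of \cite{FKMAA} (or by a large-sieve upper bound if $\mathcal M$ is short).

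The main obstacle will be the error terms for $k\neq0$. These are bilinear (type II) sums
\[
\sum_{m\sim M}\sum_{n\sim N}\alpha_m\beta_n K_k(m^a n^b)
\]
with $MN$ close to $q$, which lies beyond the Pólya--Vinogradov range. For these I would invoke the bilinear estimates for trace functions of the companion paper~\cite{FKMSBilinear}. This requires knowing that the relevant sheaf (a Kloosterman sheaf pulled back by $x\mapsto x^a$, tensored with its $b$-analogue) has large geometric monodromy and is not exceptional, so that the sum-product / Deligne bounds give a power saving $q^{-\eta}$. I would first verify this monodromy condition for each $k\neq0$ with $|k|\leq K$, isolating any finitely many degenerate $k$ (for instance when $k$ cancels the twists coming from $a$ and $b$), and treating those separately by the argument of the previous paragraph.
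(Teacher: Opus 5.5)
The paper gives no proof of this statement. It is quoted from \cite{BV}, with only the indication that it uses methods related to this paper and to \cite{FKMSBilinear}. Your architecture fits that description: expand $w(\eps(\chi))$ in powers $\eps(\chi)^k$, turn each twisted moment into Kloosterman/hypergeometric trace functions at $l^am^b$, apply the bilinear bounds for galant sheaves, and conclude by Cauchy--Schwarz against a mollified second moment. As written, however, two steps fail.

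\textbf{The twists $k\neq 0$.} These carry main terms that do not come from degenerate Gauss-sum products, and your device for discarding them cannot work.

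The first half of the approximate functional equation gives $\sum_\chi\chi(l^am^b)\eps(\chi)^k$, a hyper-Kloosterman sum of rank $|k|$. For $k=-1$ it equals $\frac{q-1}{\sqrt q}\,e(-l^am^b/q)$. This is a rank-one Artin--Schreier trace function: it is not galant, and it does not oscillate when $a,b\geq1$ and $l^am^b$ is small compared to $q$.

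Concretely, take $a=b=1$. The functional equation gives $L(1/2,\chi)^2=\eps(\chi)|L(1/2,\chi)|^2$ for even $\chi$, and $-i\eps(\chi)|L(1/2,\chi)|^2$ for odd $\chi$. Hence $\sum_\chi L(1/2,\chi)^2\overline{\eps(\chi)}$ has size $\asymp q\log q$, which dominates the $k=0$ term of size $\asymp q$. Analogous main terms at small $k\neq0$ occur elsewhere, e.g. $(a,b)=(1,2)$ at $k=-1$, coming from rank $\leq1$ or Kummer-induced pieces in either half of the functional equation.

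Choosing $w$ with $\hat w(k)=0$ at these $k$ is impossible, for two reasons:
\begin{itemize}
\item A trigonometric polynomial cannot vanish on an open arc, so ``a nonnegative trigonometric polynomial supported in $I$'' does not exist.
\item If $w\geq0$, $w\not\equiv0$, is supported in an arc of length $\ell$ centred at $e^{i\theta_0}$ with $|k|\ell<\pi$, then $\mathrm{Re}\bigl(e^{ik\theta_0}\hat w(k)\bigr)>0$.
\end{itemize}
So the claim that $S_1$ equals $\hat w(0)$ times the untwisted main term is unjustified, and $S_1\gg q$ is not established.

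What is needed is the following. First, identify, via the monodromy classification of the relevant Kloosterman and hypergeometric sheaves (as in \S\ref{sec:hyperKloos} and \cite{FKMSBilinear}), the finitely many $k$ for which the sheaves are not galant. Second, evaluate their contributions $q\,c_k$. Third, note that Cauchy--Schwarz only involves $|w|$. You may therefore take $w$ complex-valued, supported in $I$, with $\int w\,\Phi$ bounded away from $0$ uniformly in $q$, where $\Phi(\theta)=\sum_k c_ke^{ik\theta}$.

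\textbf{The bound $S_2\ll q$.} Since a positive proportion is required, this bound is essential; without it Cauchy--Schwarz loses powers of $\log q$. Neither of your justifications gives it.
\begin{itemize}
\item Your mollifier $\sum_m x_m\chi^a(m)$ only involves $\chi^a$. It cannot mollify $L(1/2,\chi^b)$ unless $a\mid b$, so in general $S_2$ should be of order $q\log q$.
\item \cite{FKMAA} evaluates averages of products of two $L$-values, whereas $S_2$ is a mollified average of a product of four.
\item The large sieve gives nothing of the right order. For $a,b\geq1$ the relevant Dirichlet polynomial in $\chi$ has length about $q^{\max(a,b)}$, and even for $a=b=1$ it loses the mollifier length $q^\theta$.
\end{itemize}

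A workable route combines:
\begin{itemize}
\item a product mollifier $\mathcal M_a(\chi^a)\mathcal M_b(\chi^b)$;
\item the inequality $|xy|^2\leq\frac12(|x|^4+|y|^4)$;
\item the fact that $\chi\mapsto\chi^h$ is at most $|h|$-to-one;
\item the mollified fourth moment of Dirichlet $L$-functions, which rests on the twisted fourth moment with short twists of Blomer, Fouvry, Kowalski, Michel and Mili\'cevi\'c.
\end{itemize}
The twisted first moments must then be redone with both mollifier variables inside the bilinear sums, which also changes the $c_k$.

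This bound is also what lets you truncate the Fourier series of a smooth $w$ at a fixed $K$, through $\sum_\chi|L(1/2,\chi^a)L(1/2,\chi^b)\mathcal M(\chi)|\ll q$. Otherwise you would need bounds polynomial in $k$, which \cite{FKMSBilinear} does not provide (compare the paper's remark on \cite{Earnst} and \cite{KMSAnn}).
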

Closely related is the recent work of A. Earnst \cite{Earnst} who
established the non-vanishing of $L(\chi,1/2)$ with $\eps(\chi)\in I$
for a proportion $\gg \mathrm{length}(I)$ of characters $\chi$ as long
as $\mathrm{length}(I)$ is larger than a small but fixed positive
power of $q$. Earnst's result does not use \cite{FKMSBilinear} but
instead the previous work \cite{KMSAnn} which concerns bilinear sums with
hyper-Kloosterman sums $\Kl_k(\bullet)$, for which polynomial
dependency on~$k$ is known.

\subsection{Overview of the proof of Theorem \ref{mainthmintro}} \label{secoverview}

Given $a,b,c\geq 1$ setwise coprime, $d\geq 1$ an integer, let  $d'=\gcd(d,q-1)$. By Fourier analysis (see \S \ref{secreduction}) on the group $\what{\Fqt}$ we find that
$$M_{ad,bd,\pm cd}(q)=\sum_{\xi\in\mu_{d'}(\Fq)}M_{a,b,\pm c}(\xi;q)$$
where
$$\mu_{d'}(\Fq)=\{\xi\in\Fqt,\ \xi^{d'}=1\}$$
is the group of $d'$-th roots of unity and $M_{a,b,\pm c}(\xi;q)$ denotes  the twisted moment
$$M_{a,b,\pm c}(\xi;q)=\frac{1}{q-1}\sum_{\chi\mods q}\ov\chi(\xi)L(1/2,\chi^{a})L(1/2,\chi^{b})L(1/2,\chi^{\pm c}).$$

We are going to compute each term
$L(1/2,\chi^{a})L(1/2,\chi^{b})L(1/2,\chi^{\pm c})$ using the
approximate functional equation (abbreviated AFE below) for the
product $L$-functions
\[
  L(s,\chi^{a})L(s,\chi^{b})L(s,\chi^{\pm c}).
\]

The form of the AFE depends on the possible trivial entries of the
triple $(\chi^{a},\chi^{b},\chi^{c})$, and on their parity. It is
therefore natural to first decompose the sums $M_{a,b,c}(\xi;q)$ along
the {\em even} and {\em odd} characters and to write
$$
	M_{a,b,c}(\xi;q)=\frac 12 M^e_{a,b,c}(\xi;q)+\frac 12 M^o_{a,b,c}(\xi;q)
$$
where
\begin{equation*}
	M^e_{a,b,c}(\xi;q):=\frac{2}{q-1}\sum_\stacksum{\chi\mods q}{\chi(-1)=1}\ov\chi(\xi)L(1/2,\chi^a)L(1/2,\chi^b)L(1/2,\chi^c),
\end{equation*}
and the odd part $M^o_{a,b,c}(\xi;q)$ is defined similarly via the equation $\chi(-1)=-1$. 
 Observe that $M^e_{a,b,c}(\xi;q)$ and $M^o_{a,b,c}(\xi;q)$ are
 invariant under complex conjugation and hence are real numbers.

 Applying the AFEs, we are led to evaluating (up to a negligible error term) two types of sums (see \eqref{approxgeneric} below)
\begin{equation}
	\frac{1}{q-1}\sum_{\chi\mods q}\sum_{l,m,n\geq 1}\frac{\chi(l^am^bn^{\pm c}\xi^{-1})}{(lmn)^{1/2}}V(\frac{lmn}{X})
	\label{firstsum}
\end{equation}
and
\begin{equation}
\frac{1}{q-1}\sum_{\chi\mods q}\eps(\chi^a)\eps(\chi^b)\eps(\chi^{\pm c})\sum_{l,m,n}\frac{\ov\chi(l^am^bn^{\pm c}\xi)}{(lmn)^{1/2}}V(\frac{lmn}{Y}),
	\label{second-sum}
\end{equation}
 for $\xi$ ranging over $\pm \mu_{d'}(\Fq)=\mu_{d_q}(\Fq)$ where $d_q$ is defined in  \eqref{dqdef}; here $\eps(\chi^a),\eps(\chi^b),\eps(\chi^{\pm c})$
are the normalized Gauss sums, $V$ denotes a smooth function on $\Rr_{>0}$ (depending on the parities of $a,b,c$) which is  rapidly decreasing at $\infty$ and which converges to $1$ near $0$ and $X,Y\geq 1$ are parameters satisfying $XY\asymp q^3.$

\subsubsection*{Bounding solutions to congruence equations} Performing the $\chi$ summation in \eqref{firstsum} gives
\begin{equation}\label{partialsum1}
	\sum_\stacksum{l,m,n\geq 1}{l^am^bn^{\pm c}\equiv\xi\mods q}\frac{1}{(lmn)^{1/2}}V(\frac{lmn}{X}).
\end{equation}
When $\xi=1$ the contribution of the triples $(l,m,n)$ satisfying the {\em exact} equality
$l^a m^b n^{\pm c}=1$ yields a main term; we expect all the remaining terms to be error terms. Performing a dyadic partition of unity on the variables  $l,m,n$ we are led to count the number of solutions to  the congruence equation $l^am^b n^c\equiv \xi\mods q$ for $l,m,n$ varying in dyadic boxes and satisfying $l^am^b n^c\not=1$; this leads us to Conjecture $\msP(a,b,c,d_q)$ mentionned above.

Regarding Conjecture $\msP(a,b,c,d)$ we can offer the following observations. 

-- In the case $a=b=c=1$ (the usual cubic moment) we see that the
change of variable $k:=lmn$ reduces to the easy problem of counting, for $\xi\in \mu_{d}(\Fq)$, the number of solutions to the equation $$k\equiv \xi\mods q,\ 1\not= k\simeq LMN$$ weighted by the ternary divisor function $d_3(k)=k^{o(1)}$. We obtain
$$\frac{N_{1,1,1,d}(L,M,N;q)}{\sqrt{LMN}}\ll_{\eps} q^\eps\Bigl( \frac{\sqrt{LMN}}{q}+(LMN)^{-1/2}\Bigr).$$

-- If only $a=b$,  the same kind of trick applies: we write $k:=lm$ and are reduced to proving that
\begin{equation*}
	\frac{|\{(k,n)\sim LM\times N,\ k^an^{\pm c}\equiv\xi\mods q,\
          k^an^{\pm c}\not=1\}|}{\sqrt{LMN}}{\ll}q^\eps
        \Bigl( \frac{\sqrt{LMN}}{q}+(LMN)^{-\eta}\Bigr);
\end{equation*}
this essentially follows from the work of Pierce
\cite{Pierce}*{Theorem 4} and establishes Conjecture $\msP(a,a,c,d)$. Once we have bounded  the sums \eqref{second-sum}  adequately, the asymptotic formula \eqref{asymptotica=b} will follows.

-- Another simple observation is that Conjecture $\msP(a,a,c,d)$ holds on average over the primes $q$ in the interval $[Q,2Q)$;  this will eventually lead to the asymptotic formula \eqref{friday}.

-- Our final (trivial) observation is that $N_{a,b,c,d}(L,M,N;q)$ is a
sum of non-negative terms~(!); since we can arrange for
$V(\frac{lmn}{X})$ to also be non-negative, we see that
\eqref{partialsum1} is a sum of non-negative terms and therefore at
least as large as the main term; this will eventually lead to the
inequality \eqref{tripleRelowerbound}.

\subsubsection*{Bounding trilinear sums of exponential sums} We next turn to the second type of sums \eqref{second-sum}. Performing the $\chi$ summation and a dyadic partition reduce to obtaining the following bound
\begin{equation}\label{trilinearwish}
	\Sigma_{a,b,\pm c}(L,M,N):=\sum_{(l,m,n)\sim L\times M\times N}K_{a,b,\pm c}(\xi l^am^bn^{\pm c};q)\overset{?}{\ll}(LMN)^{1/2}q^{1/2-\eta}
\end{equation}
for some $\eta>0$ and
 $$LMN\ll Y=q^{1+\delta};$$
here we have set
\begin{equation}
	\label{Kabcdefintro}
	K_{a,b,\pm c}(u;q)=\frac{1}{q}\sum_{x^ay^bz^{\pm c}\equiv u\mods q}e_q(x+y+z),
\end{equation}
where $e_q$ is the usual non--trivial additive character of $\Fq$ (see \eqref{defeq}).  We call this sum the {\em monomial Kloosterman-like sum} of type $(a,b,\pm c)$. 

We will see in \S \ref{sec:hyperKloos} that $u\mapsto K_{a,b,\pm
  c}(u;q)$ is the trace function of an $\ell$-adic sheaf\footnote{
  More precisely, a {\em complex} of sheaves in the $-c$ case.}, denoted $\mcK_{a,b,\pm c}$, which is mixed of weights $\leq 0$ and has complexity bounded in terms of $a,b,c$ only; in particular, one has
\begin{equation}
	\|K_{a,b,\pm c}\|_\infty=O_{a,b,c}(1).
	\label{supnomrboundintro}
      \end{equation}
      

As in previous works, our strategies for obtaining \eqref{trilinearwish} will depend on the relative ranges of $L,M,N$. The simplest case is when $LMN\leq q^{1-\delta}$ for some $\delta>0$: the bound \eqref{supnomrboundintro} immediately gives \eqref{trilinearwish}; we are then essentially reduced to treating the range 
$LMN\asymp q$. 

To go further we will show in  Theorem \ref{th-galant-sheaf} that if $(a,b,\pm c)$ is {\good} in the sense of Definition \ref{defgoodtriple} then $\mcK_{a,b,\pm c}(\bullet)$ is {\good} in the sense of Definition \ref{defgoodsheafintro} below.

This allows us to use techniques from {\em applied $\ell$-adic cohomology} (see \cite{FKMS} for a general presentation). Specifically, if one of the three parameters $L,M,N$ exceeds somewhat the P\'olya-Vinogradov range,  for instance $$N\geq q^{1/2+\delta},$$
for some small $\delta>0$, we can use the P\'olya-Vinogradov (completion) method on the $n$-sum; if the product of two of three parameters $L,M,N$ exceeds somewhat the P\'olya-Vinogradov range while the remaining variable is not too small, for instance $$MN\geq q^{1/2+\delta}\hbox{ and }L\geq q^\delta$$
we use Theorem \ref{thmtriplesum} below; finally in the remaining case where two parameters are within the P\'olya-Vinogradov range while the remaining one is small,  for instance  
$$L\leq q^{\delta},\ q^{1/2-\delta}\leq M, N\leq q^{1/2+\delta}$$
we use Theorem \ref{thmTypeII}.

%

\subsection{Structure of the paper}
\begin{itemize}

\item In \S \ref{secbilinearsums} we review some basics about $\ell$-adic sheaves and their trace functions and list the various general bounds for bilinear and trilinear sums of trace functions from \cite{FKMSBilinear} that we will use in this paper.
	
\item In \S \ref{sec:hyperKloos} we discuss the exponential sums
  $K_{a,b,\pm c}(\bullet;q)$ which appear in the evaluation of
  cubic mixed moments and their associated sheaves $\mcK_{a,b,\pm
    c}$. We show using the Hasse-Davenport relations that
  these sheaves are closely connected (are ``forms'' of) certain
  Kloosterman and hypergeometric sheaves. In particular, they have the
  same geometric monodromy groups. Using this and Katz's work we
  conclude that if $(a,b,\pm c)$ is {\good} in the sense of Definition
  \ref{defgoodtriple}, then the bounds given in \S \ref{secbilinearsums} apply
  to $K_{a,b,\pm c}(\bullet;q)$.
	\item In \S \ref{seccubicmoment} we start the analysis of the mixed cubic moment $M_{a,b,\pm c}(q)$ and in \S \ref{secAFE} we reduce the problem to estimating two kinds of rapidly converging sums,  $M_{1,a,b,\pm c}(\xi;q)$ and $M_{2,a,b,\pm c}(\xi;q)$, when $a,b,c$ are setwise coprimes.
	\item The first sums  are handled in \S \ref{secM1} and produce the main term. This involves counting solutions of congruence equations and is related to Conjecture $\msP(a,b,c,2d)$. 
	\item The  second sums, which involve trilinear sums in the
          exponential sums $K_{a,b,\pm c}(\bullet;q)$, are treated in \S \ref{secM2}. Under the assumption that $(a,b,\pm c)$ is  {\good}  we show using the results of \S \ref{secbilinearsums} and \S \ref{sec:hyperKloos} that the contribution of the sums $M_{2,a,b,\pm c}(\xi;q)$ is an error term.
	\item In the final \S \ref{secproofmainthm} we put all these estimates together and prove Theorem \ref{mainthmintro} and its averaged version Theorem \ref{mainthmaverage}. 
        \end{itemize}

\subsection*{Notation}

For $z\in\Cc$, we denote $e(z)=e^{2i\pi z}$.

We use the notation $f\ll g$ and $f=O(g)$, for $f$ and $g$ defined on a
set~$X$, synonymously: either notation means that there exists a
constant~$C\geq 0$ such that $|f(x)|\leq Cg(x)$ for all $x\in X$.

An exponent $o(1)$ in an estimate, such as $f(x)\ll x^{o(1)}g(x)$ for
all $x\geq 1$, means that, for any $\eps>0$, the estimate
$f(x)\ll x^{\eps}g(x)$ holds for $x\geq 1$. It is then implied that the
constant $C$ in the $\ll$ notation depends on $\eps$ (in addition to
other parameters which may be specified).

\subsection*{Acknowledgements}

This research was partially supported by the SNF grants $197045$,
$219220$, the SNF-ANR ``ETIENE'' grant $10003145$, the NSF grant
DMS-2502029, and a Sloan Research Fellowship.

We are grateful to Filippo Berta, Henryk Iwaniec, Peter Sarnak, Svenja zur Verth and
Ping Xi for encouragements and useful comments.

EF thanks EPFL and ETHZ for their hospitality and for providing an
environment conductive to a fruitful collaboration.

\section{Bounds for multilinear sums of trace functions}\label{secbilinearsums}
In this section, we recall some of the main results of
\cite{FKMSBilinear} which provide non trivial bounds for multilinear
sums of a certain class of trace functions (attached to suitable
$\ell$-adic sheaves). We refer to \cite{FKMSBilinear} for the precise
description of the terms used here. 

\begin{definition}\label{defgoodsheafintro} Let $q\not=\ell$ be primes, $k/\Fq$ be a finite extension and $\mcF$ an $\ell$-adic sheaf on $\Aa^1_k$ pure of weight $0$ with underlying vector space $V_\mcF$. We say that $\mcF$ is  {\em \good}  if  its geometric monodromy group $G_\mcF=G^{\mathrm{geom}}_\mcF$ has one of the following properties
		 \begin{enumerate}
				\item\label{goodinfinite} its identity component $N=G_\mcF^0$ is a simple algebraic group acting  irreducibly on $V_\mcF$,
				
				\item\label{goodfinite} or is finite and contains, as a normal subgroup, a finite perfect group $N$ which is a central extension of a nonabelian finite simple group $S$ by an abelian group $A$, $N$ acting irreducibly  on $V_\mcF$.
			\end{enumerate} 		
			In particular $\mcF$ is geometrically irreducible.
\end{definition}

Note that we have incorporated in the definition the assumption that
the sheaf is of weight~$0$, so the notion here corresponds more
precisely to that of a \emph{gallant and light} sheaf
in~\cite{FKMSBilinear}.

\begin{remark}\label{remgoodmorphism} Suppose that $\mcF$ is {\good};
  given $f:\Pp_k^1\ra \Pp^1_{k}$, a non-constant morphism of degree
  coprime with $q$, the pull-back sheaf $f^*\mcF$ remains {\good} if
  we are in Case \eqref{goodinfinite} above or, if we are in Case
  \eqref{goodfinite}, as long as $f$ induces a cyclic covering.
\end{remark}
 
 In fact it is useful to slightly extend the above definition
 \begin{definition}\label{defgoodcomplexsheafintro} Let $q\not=\ell$ be primes, $k/\Fq$ be a finite extension and $\mcK$ be complex of $\ell$-adic sheaves on $\Aa^1_k$ which is mixed of weights $\leq 0$. We say that $\mcK$ is  {\em \good}  if its weight-$0$ part $\mcK_0$ is {\good}  in the sense of Definition \ref{defgoodsheafintro}.
\end{definition}

\subsection{Bounds for bilinear sums of trace functions}
Given $b,c\in\Zz-\{0\}$, $K:\Zz/q\Zz\to \Cc$ a complex valued, $q$-periodic arithmetic function, $M,N\geq 1$ and $\bfalpha=(\alpha_m)_{m\sim M}$, $\bfbeta=(\beta_n)_{n\sim N}$ be sequences of complex numbers, we define the ``type-II'' sum
$$B_{b,c}(\bfalpha,\bfbeta)=\sumsum_{m\sim M,n\sim N}\alpha_m\beta_n K(m^bn^c).$$
The following theorem is \cite[Th.\,1.3]{FKMSBilinear}.

\begin{theorem}\label{thmTypeII} Let $b,c$ be non-zero integers. Let
  $q$ be a prime and $K(\bullet)$ be the trace function of an
  $\ell$-adic sheaf $\mcF$ modulo~$q$. We assume that $\mcF$ is {\good}  in the sense of Definition \ref{defgoodsheafintro}.

Let $k\geq 3$ be an integer and $M,N\geq 1$ such that
	\begin{equation}
	\label{assumptypeII}M\leq q,\ q^{3/(2k)}\leq N\leq q^{1/2+3/(4k)}.
	\end{equation}

 Then, for any sequences of complex numbers $\bfalpha=(\alpha_m)_{m\sim M},\ \bfbeta=(\beta_n)_{n\sim N}$, we have
\begin{equation}
	\label{BtypeII}B_{b,c}(\bfalpha,\bfbeta)\ll q^{o(1)}\|\bfalpha\|_2\|\bfbeta\|_2(MN)^{1/2}(\frac{1}{M}+(\frac{q^{\frac{3}4(1+1/k)}}{MN})^{1/k})^{1/2};	\end{equation}
		here the implicit constant depend at most on $b$, $c$, $k$ and on  the complexity  $c(\mcF)$ of $\mcF$.
\end{theorem}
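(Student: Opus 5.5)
Theorem~\ref{thmTypeII} is quoted from \cite[Th.\,1.3]{FKMSBilinear}, but here is how I would prove it: a Type~II bilinear bound obtained by Cauchy--Schwarz followed by H\"older in the variable $m$, in the style of Fouvry--Kowalski--Michel and Kowalski--Michel--Sawin, with the geometric input supplied by the \good\ hypothesis on $\mcF$.

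\emph{Step 1: Cauchy--Schwarz and H\"older.} Apply Cauchy--Schwarz in $m$ to get
\[
|B_{b,c}|^2\leq \|\bfalpha\|_2^2\sum_{m\sim M}\Bigl|\sum_{n\sim N}\beta_nK(m^bn^c)\Bigr|^2 .
\]
To reach the $1/k$ exponent, apply H\"older instead with exponent $2k$ to the inner sum, writing $m^b=x$. This reduces matters to bounding
\[
\sum_{m\sim M}\prod_{i=1}^k K(m^bn_i^c)\ov{K(m^bn_i'^c)},
\]
summed over $2k$-tuples $(n_1,\dots,n_k,n'_1,\dots,n'_k)$ weighted by the $\beta$'s.

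\emph{Step 2: Completion in $m$.} Since $M\leq q$, complete the $m$-sum modulo $q$ (P\'olya--Vinogradov). Each complete sum
\[
\sum_{x\in\Fq}\prod_{i\le k}K(xs_i)\ov{K(xs'_i)}\,\psi(x),\qquad s_i=n_i^c,
\]
is the trace function of a tensor product of multiplicative translates of the pullback of $\mcF$ by $x\mapsto x^b$. By Deligne's theorem it is $O(q^{1/2})$ unless the tensor product contains a geometrically trivial summand; in that degenerate case it is $O(q)$.

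\emph{Step 3: The geometric input (main obstacle).} The hard part is showing that the tuple $(s_i,s'_i)$ is degenerate only when it lies in a small set, essentially when $\{s_i\}$ and $\{s'_i\}$ agree as multisets up to a bounded ambiguity. Here \good ness is used. In case (1) of Definition~\ref{defgoodsheafintro}, simplicity of $N=G^0_\mcF$ together with a Goursat--Kolchin--Ribet argument shows that the geometric monodromy of $\bigoplus_s[\times s]^*\mcF$ is the full product, except for the finitely many $s$ with $[\times s]^*\mcF\simeq\mcF\otimes\mcL$. Hence invariants occur only in the diagonal configuration. In case (2) one uses the finite simple quotient $S$ in the same way; cyclic pullbacks preserve this property (Remark~\ref{remgoodmorphism}), which handles the $x^b$ pullback. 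The number of degenerate tuples with $n_i\sim N$ is then $O(N^k)$. Combining the diagonal terms, bounded via $\|\bfbeta\|_2$, with the off-diagonal terms bounded by $q^{1/2}\log q$, gives roughly
\[
|B|^{2k}\ll \|\bfalpha\|_2^{2k}M^{k-1}\|\bfbeta\|_2^{2k}\bigl(MN^{k}+q^{1/2}N^{2k}\bigr).
\]

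\emph{Step 4: Optimization.} The estimate in Step~3 is too crude on its own. To obtain the precise exponent $q^{\frac34(1+1/k)}$ I would also shift $n\mapsto n+ah$ with $a\sim A$, $h\sim H$ (Vinogradov/Karatsuba shifting, as in FKM) before H\"older. This introduces sums of $K$ over affine-multiplicative shifts, whose correlation sums are again controlled by the monodromy argument of Step~3. One then chooses $AH\approx N$ and optimizes, which yields \eqref{BtypeII} in the range \eqref{assumptypeII}. The lower bound $N\geq q^{3/(2k)}$ makes the diagonal count dominate correctly, and the upper bound on $N$ keeps the shifts admissible.
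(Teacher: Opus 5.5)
There is a genuine gap, and it sits exactly where the content of the theorem lies. The paper gives no argument of its own here (the statement is quoted from \cite[Th.\,1.3]{FKMSBilinear}), so your sketch has to stand by itself.

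Your Steps 1--3 are a P\'olya--Vinogradov-type argument. H\"older in $m$, completion of the $m$-sum and square-root cancellation for products of multiplicative translates give, up to $q^{o(1)}$,
\[
|B_{b,c}(\bfalpha,\bfbeta)|^{2k}\ll \|\bfalpha\|_2^{2k}\|\bfbeta\|_2^{2k}(MN)^{k}\Bigl(\frac{1}{N^{k}}+\frac{q^{1/2}}{M}\Bigr).
\]
This is non-trivial only for $M\geq q^{1/2+\delta}$, whereas \eqref{BtypeII} is meant to be non-trivial down to $M\asymp N\asymp q^{3/8+\delta}$. Everything therefore rests on Step 4, and Step 4 fails as written.

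First, you cannot shift $n\mapsto n+ah$. After your Step 1 the variable $n$ is the inner one and carries the arbitrary weights $\beta_n$, while the Vinogradov--Karatsuba trick needs a variable running over an interval with smooth weight. The shape of \eqref{BtypeII} shows the right order of operations: the term $1/M$ is the diagonal $m_1=m_2$ after Cauchy--Schwarz in $n$,
\[
|B_{b,c}(\bfalpha,\bfbeta)|^2\leq \|\bfbeta\|_2^2\sum_{n\sim N}\Bigl|\sum_{m\sim M}\alpha_mK(m^bn^c)\Bigr|^2 .
\]
It is this now-smooth $n$, whose length is what \eqref{assumptypeII} constrains, that has to be shifted.

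Second, and decisively, the claim that the sums produced by the shift ``are again controlled by the monodromy argument of Step 3'' is where the argument breaks. After $n\mapsto n+uv$, with $(n+uv)^c\equiv u^c(u^{-1}n+v)^c \bmod q$, and H\"older, you face complete sums in several variables with \emph{additive} shifts, for instance
\[
\sum_{r_1,r_2,s\in\Fq}\Bigl|\sum_{v\sim V}K\bigl(r_1(s+v)^c\bigr)\ov{K\bigl(r_2(s+v)^c\bigr)}\Bigr|^{2k}
=\sum_{\mathbf v}\sum_{s\in\Fq}|R(s;\mathbf v)|^2,
\qquad
R(s;\mathbf v)=\sum_{r\in\Fq}\prod_{i=1}^kK\bigl(r(s+v_i)^c\bigr)\ov{K\bigl(r(s+v_{k+i})^c\bigr)} .
\]
Step 3 gives cancellation only in the dilation variable $r$ for each fixed $s$, so it yields the bound $q^2V^{2k}+q^3V^k$. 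Inserted into the shift--H\"older bookkeeping, this is non-trivial only when $MN\geq q^{1+1/(2k)}$, which is nothing beyond P\'olya--Vinogradov. Moreover, this particular sum consists of the non-negative terms $|R(s;\mathbf v)|^2$, each typically of size $q$, so no geometric input can improve it; the variables have to be arranged differently.

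The exponent $q^{\frac34(1+\frac1k)}$ and both endpoints of \eqref{assumptypeII} are consistent with the following choices:
\begin{itemize}
\item an inner shift of length $V=q^{3/(2k)}$;
\item an outer length $U=N/V$ with $UN\leq q$, so that, together with $M\leq q$, the multiplicities are controlled by the divisor bound;
\item complete sums bounded by $q^{3/2}V^{2k}+q^3V^k$, i.e.\ square-root cancellation in three variables at once for generic shift tuples, with the degenerate strata under control.
\end{itemize}
Proving such a higher-dimensional stratification statement for galant sheaves is the real content of \cite{FKMSBilinear}. Your Step 4 neither formulates it nor carries out the optimization, so as it stands the proposal proves only a P\'olya--Vinogradov-range bound.
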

\begin{remark} Taking $k$ large enough the bound \eqref{BtypeII} is non trivial (ie. $\ll\|\bfalpha\|_2\|\bfbeta\|_2(MN)^{1/2(1-\eta)}$ for some constant $\eta>0$) as long as $M\geq q^\delta  $ and $MN\geq q^{3/4+\delta}$ for some $\delta>0$. In particular when $M,N$ are within the {\em P\'olya-Vinogradov range} $$M\asymp N\asymp q^{1/2}$$
and in fact  $M\asymp N$ could be taken as small as $q^{\frac{3}{8}(1+\delta)}$ for some $\delta>0$. 
\end{remark}

We will also need to bound trilinear sums in $K$: given $\bfalpha= (\alpha_l)_{l\sim L}, \bfbeta=(\beta_m)_{m\sim M}, \bfgamma=(\gamma_n)_{n\sim N}$ sequences of complex numbers, we set
$$S(\bfalpha,\bfbeta,\bfgamma):=\sumsumsum_{l\sim L,m\sim M,n\sim N}\alpha_l\beta_m\gamma_n K(l^am^bn^c).$$

Next, we have \cite[Theorem 1.4]{FKMSBilinear}.

\begin{theorem}\label{thmtriplesum} Let $q$ be a prime, $a,b,c$ be non
  zero integers, and $K$ the trace function of an $\ell$-adic sheaf
  $\mcF$ modulo~$q$. We assume that $\mcF$ is {\good} in the sense of
  Definition \ref{defgoodsheafintro}.

Let $L,M,N\geq 1$ be integers satisfying
\begin{equation*}
	L,MN\leq 4q
\end{equation*}
and consider sequences of complex numbers
$$\bfalpha= (\alpha_l)_{l\sim L}, \bfbeta=(\beta_m)_{m\sim M}, \bfgamma=(\gamma_n)_{n\sim N}$$ satisfying
$$|\alpha_l|,|\beta_m|,|\gamma_n|\leq 1.$$ 
Then, for any $k\geq 2$ we have
\begin{equation}\label{trilinearbound}
S(\bfalpha,\bfbeta,\bfgamma)\ll q^{o(1)}
 LMN(\frac{q^{1/2}}{MN}+\frac{q}{L^kMN})^{1/(2k)}	
\end{equation}
 where the implicit constant depends on $c(\mcF)$, $a,b,c$ and $k$.
 \end{theorem}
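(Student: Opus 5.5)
The plan is to reduce the trilinear sum to bilinear sums in which $m$ and $n$ are merged into one variable, and then to run Hölder and completion in the $l$-variable. For fixed $l$, write
\[
S(\bfalpha,\bfbeta,\bfgamma)=\sum_{l\sim L}\alpha_l\sum_{m,n}\beta_m\gamma_n K(l^am^bn^c).
\]
Set $r=m^bn^c$, viewed as a residue class mod $q$. Let $\nu(r)=\sum_{m^bn^c\equiv r}\beta_m\gamma_n$, which is supported on at most $MN$ classes. Since $MN\leq 4q$, each class has multiplicity $\ll q^{o(1)}$ for generic $r$. So $S=\sum_l\alpha_l\sum_r\nu(r)K(l^a r)$ with $\|\nu\|_1\leq MN$ and $\|\nu\|_2^2\ll q^{o(1)}MN$. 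The multiplicity bound follows from a divisor-type count of solutions $m^bn^c\equiv r$ in boxes of size $MN\ll q$, using that $m^b n^c$ in a box is rarely repeated mod $q$.

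Next apply Hölder in $r$ to get
\[
|S|^{2k}\leq \|\nu\|_1^{2k-2}\|\nu\|_2^{2}\sum_{r\bmod q}\Bigl|\sum_{l\sim L}\alpha_l K(l^ar)\Bigr|^{2k}.
\]
This is the standard route of Fouvry--Kowalski--Michel type sums. Extending $r$ to all of $\Fq$ and expanding the $2k$-th power gives
\[
\sum_{\uple{l}}\alpha\cdots\sum_{r\in\Fq}\prod_{i\leq k}K(l_i^ar)\ov{K(l_{k+i}^ar)}.
\]
The inner sum is a correlation sum of the sheaves $[\times l_i^a]^*\mcF$.

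The key geometric input is the following. When $\mcF$ is \good\ (Definition \ref{defgoodsheafintro}), the multiplicative translates by distinct $l^a$ are generically non-isomorphic, and the Goursat--Kolchin--Ribet argument applies. Under these hypotheses the correlation sum is $\ll q^{1/2}$ unless the tuple $(l_1^a,\dots,l_{2k}^a)$ is "paired", i.e.\ consists of equal pairs up to the finite stabilizer of $\mcF$ under scaling. For paired tuples it is trivially $\ll q$. The number of paired tuples is $\ll L^k$ and the total number of tuples is $L^{2k}$.

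This gives
\[
|S|^{2k}\ll q^{o(1)}(MN)^{2k-1}\bigl(qL^k+q^{1/2}L^{2k}\bigr).
\]
Taking $2k$-th roots,
\[
S\ll q^{o(1)}LMN\Bigl(\frac{q}{L^kMN}+\frac{q^{1/2}}{MN}\Bigr)^{1/(2k)},
\]
which is \eqref{trilinearbound}.

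The main obstacle is the geometric step. One must show that for \good\ $\mcF$ the sum of $\prod K(l_i^ar)\ov{K(l_{k+i}^ar)}$ has square-root cancellation outside a diagonal set of size $O(L^k)$, uniformly in $l_i\leq L\leq 4q$. This requires identifying the group of $\lambda$ with $[\times\lambda]^*\mcF\simeq\mcF\otimes\mcL$ and ruling out exceptional coincidences. In case (2) of the definition, with finite monodromy, this needs the perfect normal subgroup $N$ to control the monodromy of the direct sum of translates.

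I would first try to quote the sum-product/Goursat machinery of \cite{FKMSBilinear} directly. The multiplicity bound for $\nu$ is the secondary technical point.
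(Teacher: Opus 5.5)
The paper does not prove this statement; it is imported from \cite[Th.\,1.4]{FKMSBilinear}. Your architecture is the natural one. Merging $(m,n)$ into $r=m^bn^c$, the H\"older inequality $|S|^{2k}\le\|\nu\|_1^{2k-2}\|\nu\|_2^2\sum_r|\sum_l\alpha_lK(l^ar)|^{2k}$, and the complete moment bound $\ll L^kq+L^{2k}q^{1/2}$ produce exactly the stated shape. Deferring the correlation-sum input to the Goursat--Kolchin--Ribet machinery of \cite{FKMSBilinear} is reasonable.

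The genuine gap is the step you call secondary, $\|\nu\|_2^2\ll q^{o(1)}MN$. A divisor-type count only works when $|b|=|c|=1$. In that case $m^bn^c\equiv m'^bn'^c$ is a congruence between products of size $O(MN)=O(q)$, so it lifts to $O(1)$ integer equations. For general exponents nothing lifts. For instance, with $(b,c)=(1,2)$ and $M=N=q^{1/2}$ you only get $mn^2-m'n'^2=tq$ with $\asymp q^{1/2}$ admissible $t$. Pointwise multiplicities are also genuinely large: for $(b,c)=(1,-1)$ and $M=N$, the class $r=1$ has $M$ preimages. Since $|\beta_m|,|\gamma_n|\le 1$ gives $\|\nu\|_2^2\le E$, what you actually need is the energy bound
\[
E=\#\{(m,n,m',n'):\ m^bn^c\equiv m'^bn'^c \bmod q\}\ll q^{o(1)}MN\qquad (MN\le 4q).
\]
This is true, but it needs a proof.

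Here is one way to prove it. Write $E=\frac{1}{q-1}\sum_\chi|S_M(\chi^b)|^2|S_N(\chi^c)|^2$ with $S_M(\psi)=\sum_{m\sim M}\psi(m)$.
\begin{itemize}
\item If $M,N\le 2q^{1/2}$, apply Cauchy--Schwarz and the fourth moment $\sum_\psi|S_M(\psi)|^4\ll q^{1+o(1)}M^2$. This is where the divisor bound legitimately enters, and it gives $E\ll q^{o(1)}MN$.
\item If $M>2q^{1/2}>N$, fix $(n,n')$. The pairs $(m,m')$ lie on at most $|b|$ lattices $m\equiv\eta m'$ with $\eta^b\equiv(n'/n)^c$. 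Each lattice meets the box in $\ll M^2/q+M/\lambda_1(\eta)+1$ points.
\item After summing over $(n,n')$, the first and last terms contribute $\ll MN$.
\item For the middle term, $\lambda_1(\eta)\le H$ produces $(u,v)$ with $0<|u|,|v|\le H$ and $u^bn^c\equiv v^bn'^c$. Minkowski gives $\lambda_1\le q^{1/2}$, so only $H\le q^{1/2}$ occurs. The balanced case then bounds the number of such $(u,v,n,n')$ by $q^{o(1)}HN$.
\item Summing over dyadic $H$ gives $M\sum_{n,n',\eta}\lambda_1(\eta)^{-1}\ll q^{o(1)}MN$.
\end{itemize}

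There is also a smaller error in the geometric step: tuples that are not paired in your sense can carry a main term.
\begin{itemize}
\item Take $\mcK_{1,1,1}$, which is $\mathrm{Kl}_3$ with monodromy $\mathrm{SL}_3$, and $k=3$. The tuple $(l_1,l_2,l_3;l_4,l_5,l_6)=(x,x,x;y,y,y)$ gives a correlation sum of size $\asymp q$, because $\wedge^3$ of the standard representation is trivial.
\item The relation on scalings must also include $[\times\lambda]^*\mcF\simeq D(\mcF)\otimes\mcL$; for $\mathrm{Kl}_3$ this happens at $\lambda=-1$.
\end{itemize}
The correct diagonal condition is that every class occurs at least twice among the $2k$ entries. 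Since $L\le 4q$ makes $l\mapsto l^a$ $O_a(1)$-to-one modulo $q$, there are still $O_k(L^k)$ such tuples, so your final bound survives.
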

\begin{remark}The bound \eqref{trilinearbound} is non trivial as long as $$L\geq q^{\delta},\ MN\geq q^{1/2+\delta}$$ and $k$ is chosen sufficiently large so that $L^kMN\geq q^{1+\delta}$.
\end{remark}

We will also deal with a situation of sheaves whose geometric
monodromy group is the orthogonal group $\Ort_4$ (such sheaves are
called {\em oxozonic}): since $\SO_4$ is not simple, such  sheaves are
not {\good}. The substitute is~\cite[Th.\,1.6]{FKMSBilinear}.

\begin{theorem}\label{thmO4} Let $q$ be a prime, $a,b,c$ be non zero integers,	and $K$ be the trace function on $\Aa^1_{\Fq}$  of an $\ell$-adic sheaf $\mcF$ whose geometric monodromy group satisfies $G_\mcF\simeq \Ort_4$ (in some faithful representation $\rho_\mcF$) and for which $G^0_\mcF=\SO_4$ acts irreducibly.

Upon making the relevant assumptions on the sizes of the parameters $L,M,N$ \begin{itemize}
\item the conclusion of Theorem \ref{thmTypeII}, ie.
\begin{equation}
	\label{BtypeIIort}
	B_{b,c}(\bfalpha,\bfbeta)\ll q^{o(1)}\|\bfalpha\|_2\|\bfbeta\|_2(MN)^{1/2}(\frac{1}{M}+(\frac{q^{\frac{3}4(1+1/k)}}{MN})^{1/k})^{1/2};	\end{equation}	
  holds if $c$ is odd,\item  while the conclusion of Theorem \ref{thmtriplesum} always holds:
\begin{equation}\label{tripleort}
	S(\bfalpha,\bfbeta,\bfgamma)\ll q^{o(1)}
 LMN(\frac{q^{1/2}}{MN}+\frac{q}{L^kMN})^{1/(2k)}.
\end{equation}	
\end{itemize}
\end{theorem}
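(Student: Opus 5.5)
The plan is to go back through the proofs of Theorems~\ref{thmTypeII} and~\ref{thmtriplesum} in \cite{FKMSBilinear} and find the one place where the hypothesis ``$\mcF$ is \good'' is used. Everything else in those arguments — Cauchy--Schwarz and H\"older, completion, the van der Corput/shift steps, counting coincidences among tuples of integers — only needs $\|K\|_\infty\ll 1$ and bounded complexity, and both hold for $\mcF$. \good ness enters through a single sheaf-theoretic input. That input is a Goursat--Kolchin--Ribet statement: for a generic tuple of multipliers $(r_1,\dots,r_{2k})$, the geometric monodromy group of
\[
\bigoplus_i [\times r_i]^*\mcF', \qquad \mcF'=[x\mapsto x^{c}]^*\mcF \ \text{(or its trilinear analogue)},
\]
should be as large as possible, namely the full product of the individual groups. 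The ``diagonal'' tuples where this fails are counted separately and produce the main terms $1/M$ and $q/(L^kMN)$. So I would isolate this input as a lemma and reprove it when $G_\mcF=\Ort_4$, $G^0_\mcF=\SO_4$.

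\emph{Step 1 (identity component).} Write $\SO_4=(\mathrm{SL}_2\times\mathrm{SL}_2)/\mu_2$, with $V_\mcF=\mathrm{std}\boxtimes\mathrm{std}$. The outer element of $\Ort_4\setminus\SO_4$ swaps the two $\mathrm{SL}_2$ factors. Goursat's lemma for a subgroup of $\prod_i \SO_4$ surjecting onto each factor reduces the question to the simple factors. It therefore suffices to rule out an isomorphism between one $\mathrm{SL}_2$-factor of $[\times r_i]^*\mcF'$ and one $\mathrm{SL}_2$-factor of $[\times r_j]^*\mcF'$ for $i\neq j$, even up to twisting by a rank-one object. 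After passing to the double cover trivializing the $\Ort_4/\SO_4$ character, these factors correspond to rank-two ``half-sheaves'' $\mcG_1,\mcG_2$ with $\mcF\simeq\mcG_1\otimes\mcG_2$ on that cover.

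\emph{Step 2 (using the swap).} Suppose the determinant character $\det\rho_\mcF$ remains nontrivial on the monodromy of the relevant pullback, so that $\Ort_4$ survives. Then the two factors $\mcG_1,\mcG_2$ are interchanged by the covering involution. Consequently a partial isomorphism $\mcG_\alpha\circ[\times r_i]\simeq \mcG_\beta\circ[\times r_j]$ propagates under the involution to a full isomorphism $[\times r_i]^*\mcF'\simeq[\times r_j]^*\mcF'\otimes\mathcal L$. Such full isomorphisms occur only for $r_i/r_j$ in a set of size bounded in terms of the complexity, exactly as in the \good\ case, and these tuples are absorbed into the diagonal count.

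\emph{Step 3 (parities).} This is where I expect the main obstacle, and where the parity of $c$ enters. Pulling back by $x\mapsto x^{c}$ with $c$ even can kill the quadratic character $\det\rho_\mcF$, presumably because it is ramified at $0$ and $\infty$ in the Kummer manner. In that case the monodromy of $\mcF'$ drops to $\SO_4$, the swap of Step~2 is lost, and the type-II argument genuinely breaks down; this is why Theorem~\ref{thmO4} restricts the type-II bound to odd $c$. For the trilinear bound I would exploit the extra variable: the proof of Theorem~\ref{thmtriplesum} applies H\"older in $l$ and considers $K(l^a\,\cdot)$ with $l$ ranging over many values. The multipliers $r_i=l_i^{a}$ then vary with $l_i$, not only through a $c$-th power pullback, so one can arrange to work with $\mcF$ itself (still $\Ort_4$) rather than $[x^{c}]^*\mcF$, or at least with a pullback of odd degree. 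To make this precise I would first check directly from the local monodromy at $0$ and $\infty$ whether $\det\rho_\mcF$ survives the relevant pullbacks. I would then rerun the trilinear argument, verifying that its diagonal count is unchanged once Step~2 is available.
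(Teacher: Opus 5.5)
The paper contains no proof of this statement. It is quoted from \cite[Th.\,1.6]{FKMSBilinear}. The only mechanism the paper records is the remark in the proof of Proposition~\ref{trilinearoxozonic}: when the exponent of the relevant variable is even, the pull-back has monodromy $\SO_4$ instead of $\Ort_4$. That is exactly your Step~3, so your outline is consistent with what the paper relies on. What your reconstruction adds is the structural reason why $\Ort_4$ suffices: an element of $\Ort_4\setminus\SO_4$ swaps the two $\mathfrak{sl}_2$-factors, so Goursat links between translates cannot be half links. The price is that it rests on a claim that cannot be checked from this paper: that the companion proofs use the galant hypothesis only through this Goursat--Kolchin--Ribet input (plus Fourier-ness and irreducibility of $G^0$, which hold here).

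To turn the outline into a proof, tighten three points.

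(i) Step~2 assumes that $[\times r_i]^*\mcF'$ and $[\times r_j]^*\mcF'$ share one double cover. This is true when $\det\mcF$ is a Kummer sheaf, as for the sheaves of \S\ref{sec:hyperKloos}, but not for an arbitrary $\mcF$ as in the statement. Argue instead in the monodromy group $\Gamma$ of the direct sum. If $\det_i=\det_j$ on $\Gamma$, every element swaps the factors of both sheaves or of neither, so a partial link becomes a full one. If $\det_i\neq\det_j$, some $\gamma\in\Gamma$ has $\det_i(\gamma)=-1$ and $\det_j(\gamma)=1$, because the two characters are distinct and nontrivial. Such a $\gamma$ turns a partial link into a link between the two factors of $\mcF_i$, contradicting surjectivity onto $\mathfrak{so}_4$. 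Note also that what you obtain, and need, is $\Gamma^0=\prod\SO_4$. You do not get the full product of the $\Ort_4$'s, which fails whenever the determinants coincide.

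(ii) The parity fact needs no local-monodromy check and no Kummer structure of $\det$. For $c$ odd, $x\mapsto x^c$ is geometrically a Galois cover with group $\mu_c$ of odd order, so no nontrivial quadratic character can become trivial on it. Hence $[x\mapsto x^c]^*\mcF$ and all its translates have monodromy exactly $\Ort_4$, with $G^0=\SO_4$ irreducible, for every $\mcF$ as in the statement.

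(iii) For \eqref{tripleort} no pull-back enters at all. Grouping $u=m^bn^c$ and applying H\"older in $l$ produces the two terms $q^{1/2}/MN$ and $q/L^kMN$, and leaves only the translates $[\times l_i^a]^*\mcF$ of $\mcF$ itself. This is why no parity condition appears there, and you should say so rather than ``or at least a pullback of odd degree''. Finally, your claim that the type~II argument ``genuinely breaks down'' for even $c$ is unproven: only your proof of Step~2 is lost. It is also unnecessary, since the statement makes no claim in that case.
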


\section{The hypergeometric sheaves for the cubic
  moment}\label{sec:hyperKloos}

\subsection{Statement of the result}

The main result of this section concerns the exponential sums which
arise in the analysis of the cubic toroidal averages. These are the
sums
\begin{equation}\label{defK}
  K_{a,b,
    c}(u;q,\psi)=\frac{1}{q}\sum_{\substack{x,y,z\in\Fqt\\x^ay^bz^c=u}}
  \psi(x+y+z),
\end{equation}
where
\begin{itemize}
\item $a$, $b$, $c$ are non-zero integers,
\item $q$ is a prime number,
\item $\psi$ is a non-trivial additive character modulo~$q$.
\end{itemize}

As in the main part of the paper, we assume throughout that $a$ and $b$
are positive integers, and we write the third parameter in the form
$\pm c$ for some positive integer~$c$ and some sign~$\pm\in\{-1,1\}$. We
also assume that $a$, $b$, $c$ are globally coprime.
If~$\psi(x)=e(x/q)$, then we sometimes omit~$\psi$ from the
notation. Similarly, we also sometimes omit~$q$ if the modulus is clear
from context.

\begin{theorem}\label{th-galant-sheaf}
  Let $a$, $b$, $c$ be coprime positive integers and let $\pm$ be
  either~$1$ or~$-1$. Let~$\ell$ be a prime number.

  For any prime~$q\not=\ell$, and any non-trivial additive
  character~$\psi$ of~$\Ff_q$, there exists a hypergeometric
  $\ell$-adic sheaf~$\mcK_{a,b,\pm c}(\psi)$ on $\Aa^1$ over~$\Ff_q$
  with the following properties:
  \begin{enumth}
  \item The sheaf~$\mcK_{a,b,\pm c}(\psi)$ is lisse and pure of
    weight~$0$ on~$\Gg_m$;
  \item The complexity of~$\mcK_{a,b,\pm c}(\psi)$, in the sense
    of~\cite{FKM1} or equivalently of~\cite{qst}, is bounded in terms
    of~$(a,b,c)$ only;
  \item The trace function of~$\mcK_{a,b,\pm c}(\psi)$ on~$\Fqt$ is
    equal to $-K_{a,b,c}(u;q,\psi)$ if the sign~$\pm$ is~$1$ and to
    \[
      -K_{a,b,-c}(u;q,\psi)+O(q^{-1/2})
    \]
    if the sign is~$-1$, where the implied constant depends only
    on~$(a,b,c)$.
  \item If $q$ is large enough, depending only on $a$, $b$, $c$, then
    the direct image to~$\Aa^1$ of the sheaf~$\mcK_{a,b,\pm c}(\psi)$ is
    {\good} (resp. oxozonic, sulfatic, induced) if and only if the triple
    $(a,b,\pm c)$ is {\good} (resp. oxozonic, sulfatic, induced) in the
    sense of \textup{Definition~\ref{defgoodtriple}}.
  \end{enumth}
\end{theorem}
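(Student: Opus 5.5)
The plan is to build the sheaf out of multiplicative convolutions of Kummer pullbacks of the Artin--Schreier sheaf. Then we identify its geometric monodromy through Katz's theory of hypergeometric sheaves and the Hasse--Davenport relation.

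\emph{Construction and trace.} For a nonzero integer $e$, let $[e]:\Gg_m\to\Gg_m$ be the $e$-th power map. Put $\mcF_e=[e]_!\mcL_\psi$, whose trace at $u$ is $\sum_{x^e=u}\psi(x)$. Define
\[
  \mcK_{a,b,\pm c}(\psi)=\mcF_a\ast_!\mcF_b\ast_!\mcF_{\pm c}[2](1),
\]
where $\ast_!$ is multiplicative $!$-convolution on $\Gg_m$. By the Grothendieck--Lefschetz trace formula, its trace at $u\in\Fqt$ is $-K_{a,b,\pm c}(u;q,\psi)$, because the convolution sums exactly over $x^ay^bz^{\pm c}=u$. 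The Tate twist supplies the factor $1/q$, and the parity of the shift supplies the sign.

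\emph{Hypergeometric description, purity and complexity.} Decompose $[e]_\ast$ using the characters $\chi$ with $\chi^{e}=1$. Then $\mcF_e$ is a sum of Kummer twists $\mcL_\chi\otimes[e]^\ast$ of Kloosterman-type pieces. The Hasse--Davenport product formula identifies it geometrically with a Kloosterman sheaf, up to a multiplicative translate, of type $\mathrm{Kl}(\psi;\chi\colon\chi^e=1)$ when $e>0$. For $e<0$ it is the analogous hypergeometric sheaf with the characters placed on the ``downstairs'' side. Convolution of hypergeometric sheaves is hypergeometric, with the character lists concatenated. This gives:
\begin{itemize}
\item for sign $+$, a Kloosterman sheaf $\mathrm{Kl}(\{\chi^a=1\}\cup\{\chi^b=1\}\cup\{\chi^c=1\})$ of rank $a+b+c$;
\item for sign $-$, a hypergeometric sheaf of type $(a+b,c)$, with upstairs characters $\{\chi^a=1\}\sqcup\{\chi^b=1\}$ and downstairs characters $\{\chi^c=1\}$.
\end{itemize}
In the $-$ case we cancel common characters between the two lists; Katz's cancellation yields an irreducible hypergeometric sheaf plus lower-weight or punctual pieces. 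Katz's results then give lisse and pure of weight~$0$ on $\Gg_m$ (after normalisation) for the remaining irreducible piece. The discarded weight $<0$ part, or the contribution at the point where $\mathrm{Hyp}$ of equal type fails to be lisse, accounts for the error $O(q^{-1/2})$ in the $-$ case. Complexity is bounded by rank and Swan conductors, which are bounded in terms of $a+b+c$. This gives (1)--(3).

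\emph{Monodromy classification (4).} This is the main obstacle. After cancellation, the characters are: upstairs, the $\chi$ with $\chi^a=1$ or $\chi^b=1$ (with multiplicity); downstairs, those with $\chi^c=1$; removing common ones. The counts are $a+b-(a,c)-(b,c)$ upstairs and $c-(a,c)-(b,c)$ downstairs, and the rank is the maximum, which matches the expression $\max(a+b,c)-(a,c)-(b,c)+1$ appearing in Definition~\ref{defgoodtriple}. I would apply Katz's classification:
\begin{itemize}
\item Kummer induction gives the induced case. This happens when the character list is stable under multiplication by a nontrivial character; using global coprimality, that occurs exactly when $a=c$ or $b=c$.
\item Katz's criterion for finite monodromy, via Beukers--Heckman type interlacing, together with Katz--Rojas-Le\'on--Tiep results for small rank, picks out the solvable and finite cases $c=a+b\le 4$.
\item Otherwise $G^0$ is determined by Katz's theorem on Lie-irreducible hypergeometrics: it is $SL$, $Sp$ or $SO$ according to self-duality. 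In rank $4$ with orthogonal self-duality one gets $\SO_4$ or $\Ort_4$, which are not simple; here we decide between them by the determinant character.
\end{itemize}
The case $+$ is a Kloosterman sheaf of rank $a+b+c$, whose $G^0$ is $SL$ or $Sp$ by Katz. It is Kummer induced only when the multiset of characters is stable under a nontrivial translate, and coprimality excludes that. The only non-simple case is rank $4$ with orthogonal-type monodromy, giving the oxozonic case $a+b+c=4$. Finally, passing to the direct image on $\Aa^1$ does not change the monodromy. I expect the delicate part to be exactly this bookkeeping of the exceptional small-rank cases (finite primitive groups versus $\SO_4$ and $\Ort_4$); everything else is a formal consequence of Katz's theory.
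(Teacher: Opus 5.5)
Your route for (1)--(3) is the paper's: convolution of Kummer push-forwards of $\mcL_\psi$, Hasse--Davenport to recognise Kloosterman and hypergeometric sheaves, and Katz's cancellation of common characters in the negative case. There is one off-by-one error. The multiset intersection $\bfrho[a,b]\sqcap\bfrho[c]$ has $(a,c)+(b,c)-1$ elements, because the trivial character occurs twice upstairs but once downstairs. So $r=a+b-(a,c)-(b,c)+1$ and $t=c-(a,c)-(b,c)+1$. Your counts do not give the maximum you quote, and the rank is exactly what the later case analysis depends on.

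The genuine gaps are in (4). First, you misroute the family $c=a+b$.
\begin{itemize}
\item Here $r=t=c-1$, and the Beukers--Heckman interlacing condition holds for \emph{every} coprime pair $(a,b)$, not only for $c\leq 4$.
\item The geometric monodromy is the finite group $S_c$ in its $(c-1)$-dimensional standard representation (Proposition~\ref{pr-case4}, transferred from the differential Galois group by Katz for $q$ large).
\item Hence for $c\geq 5$ your ``otherwise'' branch asserts something false: $G^0$ is trivial, the sheaf is not Lie-irreducible, and Katz's $SL/Sp/SO$ theorem does not apply.
\item Gallantness there comes from clause (2) of Definition~\ref{defgoodsheafintro}, through the normal subgroup $A_c$, which is perfect, nonabelian simple and still irreducible on the standard representation.
\item The solvable triples are exactly those for which $S_c$ is solvable, i.e.\ $c\leq 4$.
\end{itemize}

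Second, for $a+b\neq c$ you pass from ``not Kummer-induced'' directly to ``Lie-irreducible with $G^0$ classical''. Primitivity does not give this. Belyi induction is excluded automatically here, since it needs $r=t$, and you should say so. But you must still rule out finite primitive monodromy, and Lie-irreducible but non-simple $G^0$ in small rank.
\begin{itemize}
\item The paper uses the classification of primitive hypergeometric sheaves for $q$ large (Katz, Katz--Tiep), as packaged in Theorems~9.1 and~9.2 of \cite{FKMSBilinear}. The sheaf is \good\ unless $n\in\{4,8,9\}$. In ranks $8$ and $9$ the exceptional configurations have a prescribed $|r-t|$ and a prescribed shape of the character lists.
\item Your plan never mentions ranks $8$ and $9$. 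The paper enumerates the relevant triples $(1,2,9)$, $(1,8,3)$, $(2,7,3)$, $(4,5,3)$, $(1,3,10)$, $(1,9,4)$, $(3,7,4)$, $(5,5,4)$ and checks by inspection that none has the exceptional shape (Proposition~\ref{pr-case3}).
\item Obtaining the exact list of Definition~\ref{defgoodtriple} also requires the finiteness argument of Lemma~\ref{lemtripleclassification}: for fixed rank there are only finitely many triples besides the induced family $(n,c,c)$. You then need an explicit enumeration of the rank-$4$ triples with $a+b+c$ even.
\item Only after that does the determinant separate the sulfatic $(1,2,-5)$, where $G=\SO_4$, from the oxozonic $(1,4,-3)$, $(1,6,-3)$, $(2,3,-1)$, where $G=\Ort_4$.
\end{itemize}

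Finally, in the positive case, Kloosterman sheaves with characters can be orthogonally self-dual, so $G^0=\SO_n$ does occur, contrary to your ``$SL$ or $Sp$''. For $(1,1,2)$ it is the nontrivial quadratic determinant that forces $\Ort_4$.
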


\begin{remark}
  As the statement suggests, the situation is much simpler in the case
  of a sign $\pm=1$, and the reader may consider this case first.
\end{remark}

The proof of this theorem will involve the following steps: we first use
the formal properties of hypergeometric sheaves, as defined by Katz, to
construct~$\mcK_{a,b,\pm c}(\psi)$ and to obtain its properties (1), (2)
and~(3).  We then analyze the monodromy group of the sheaf (again,
relying on the work of Katz, and also on his recent work with Tiep \cite{katz-tiep}) to
deduce~(4).  When the sign is~$+1$, this follows immediately from our
discussion in~\cite[\S\,9]{FKMSBilinear}, but the case of the sign $-1$
requires a more delicate analysis.

\subsection{Construction of a complex}

Let~$q$ be a prime number. Let $u\colon \Gg_m \to \Aa^1$ be the open
immersion. We denote by~$*_!$ the $\ell$-adic (compactly supported)
multiplicative convolution operation on complexes of $\ell$-adic sheaves
on~$\Gg_m$ over the base field~$\Ff_q$ (see the basic surveys
in~\cite[\S\,5.1,\,5.2]{GKM} or in~\cite[\S\,8.1]{ESDE}).  Define
\begin{equation}\label{eq-conv-ta}
  \mathcal{T}_{a,b,c}(\psi)=\Bigl([x\mapsto x^{a}]_*u^*\mcL_{\psi}[1]
  \Bigr) *_!  \Bigl([x\mapsto x^{b}]_*u^*\mcL_{\psi}[1] \Bigr)*_!
  \Bigl([x\mapsto x^{c}]_*u^*\mcL_{\psi}[1] \Bigr)(2).
\end{equation}

This is a complex of $\ell$-adic sheaves on~$\Gg_m$ over the base
field~$\Ff_q$. By the Grothendieck--Lefschetz trace formula and the
formalism of convolution, the trace function of
$\mathcal{T}_{a,b,c}(\psi)$ is
\[
  u\mapsto -\frac{1}{q}\sum_{xyz=u} \sum_{x_1^{a}=x}\psi(x_1)
  \sum_{y_1^{b}=y}\psi(y_1) \sum_{z_1^c=z}\psi(z_1)=
  -\sum_{x^{a}y^bz^c=u}\psi(x+y+z)=-K_{a,b,c}(u;q,\psi).
\]

\subsection{Hypergeometric representation}

Let~$k$ be a finite extension of~$\Ff_q$ and~$\ell$ a
prime~$\not=q$. We recall that we assume fixed an isomorphism
of~$\Qlb$ and~$\Cc$, so that we can use equivalently $\ell$-adic or
complex-valued characters.

For any additive $\ell$-adic character~$\psi$ of~$\Ff_q$, we denote by
$\psi_{/k}$ the corresponding character
\[
  \psi_{/k}(x)=\psi(\Tr_{k/\Ff_q}(x))
\]
of~$k$. We then extend the definition of the exponential sums to~$k$
by
\[
  K_{a,b, c}(u;k)=\frac{1}{|k|}\sum_{\substack{x,y,z\in
      k^{\times}\\x^ay^bz^c=u}} \psi_{/k}(x+y+z).
\]

Let~$\psi$ be any additive character of~$k$. For~$a\in k$, we denote
by~$\psi_{a}$ the character defined by
\begin{equation*}
  \psi_{a}(x)=\psi(ax).
\end{equation*}

For any $\ell$-adic multiplicative character $\chi$ of~$k^{\times}$,
we denote
\[
  G(\psi,\chi)=\sum_{x\in \kt}\psi(x)\chi(x),\quad\quad
  \eps(\psi,\chi)=\frac{1}{|k|^{1/2}}\sum_{x\in \kt}\psi(x)\chi(x)
\]
the un-normalized and normalized Gauss sums.

Let~$a\geq 1$ be an integer. We denote
\[
  \bfrho[a]=\{\rho\in\what \kt,\ \rho^a=1\}
\]
the group of characters of~$k^{\times}$ of order dividing $a$.

We now assume that $a$, $b$, $c$ are coprime positive integers and
that $k/\Fq$ is such that $a,b,c$ all divide $|\kt|$.  We denote
\begin{gather*}
  \bfrho[a,b]=\bfrho[a]\sqcup\bfrho[b]
  \\
  \bfrho[a,b,c]=\bfrho[a]\sqcup\bfrho[b]\sqcup \bfrho[c],
\end{gather*}
viewed as \emph{multisets}, so that for instance
the trivial character occurs with multiplicity $2$ in~$\bfrho[a,b]$
and $3$ in~$\bfrho[a,b,c]$.

To these multisets, and to a non-trivial additive character~$\psi$, are
associated the hypergeometric complexes denoted
$\HYP_{a,b,c}(\psi)=\HYP(\bfrho[a,b,c],\emptyset,\psi)$ and
$\HYP_{a,b,-c}(\psi)=\HYP(\bfrho[a,b],\bfrho[c],\psi)$
in~\cite{ESDE}*{Chap.\,8}. If we enumerate arbitrarily
\[
  \bfrho[a,b,c]=\{\rho_1,\ldots,\rho_{a+b+c}\},\quad\quad
  \bfrho[a,b]=\{\rho_1,\ldots,\rho_{a+b}\},\quad\quad 
  \bfrho[c]=\{\eta_1,\ldots,\eta_c\},
\]
then the trace functions of these complexes are given, respectively, by
the exponential sums
\[
  \Hyp(u;\bfrho[a,b,c],\emptyset,\psi)=
  (-1)^{a+b+c}\sumdsum_{x_1\cdots x_{a+b+c}=u}
  \psi(x_1+\cdots+x_{a+b+c})\prod_{1\leq i\leq a+b+c}\rho_i(x_i)
\]
and
\begin{multline*}
  \Hyp(u;\bfrho[a,b],\bfrho[c],\psi)=
  \\
  (-1)^{a+b-c} \sumdsum_{x_1\cdots x_{a+b}=uy_{1}\cdots y_{c}}
  \psi((x_1+\cdots+x_{a+b})-(y_{1}+\cdots+y_{c})) \prod_{1\leq i\leq
    a+b}\rho_i(x_i)\prod_{1\leq j\leq c}\ov{\eta_j(y_j)}
\end{multline*}
(see~\cite{ESDE}*{8.2.7}).

We will denote these sums by $\Hyp_{a,b,c}(u;\psi)$ and
$\Hyp_{a,b,-c}(u;\psi)$ respectively.



\begin{proposition}\label{prop:hyperkloossums}
  Assume again that $a,b,c\ |\ |\kt|$. Let~$f_+$ and~$f_-$ be the
  elements of~$k^{\times}$ defined by
  \[
    f_+ =a^ab^bc^c,\quad\quad f_- =a^ab^b(- c)^{-c}.
  \]

  For any~$u\in k^{\times}$, the formula
  \[
    K_{a,b,\pm c}(f_\pm
    u;\psi)=(-1)^v\frac{\eps_{a,b,c}(\psi)}{q}\Hyp_{a,b,\pm c}(u;\psi)
  \]
  holds, where
  \begin{align*}
    \eps_{a,b,c}(\psi)&=-\eps_{a}(\psi)\eps_{b}(\psi)\eps_{c}(\psi)
    \\
    v&=a+b+|c|.
  \end{align*}
\end{proposition}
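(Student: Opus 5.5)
Both sides are functions on $k^\times$, so I will compare their multiplicative Fourier coefficients: expand each side in characters $\chi$ of $k^\times$ and match the coefficients using Gauss sums and the Hasse--Davenport product formula.

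\emph{The Kloosterman-like sum.} For $\chi\in\what{k^\times}$ I compute
\[
\sum_{u\in k^\times}\chi(u)K_{a,b,\pm c}(u;\psi)=\frac1{|k|}\,G(\psi,\chi^a)\,G(\psi,\chi^b)\,G(\psi,\chi^{\pm c}),
\]
which holds because the condition $x^ay^bz^{\pm c}=u$ disappears once we sum over $u$. Then I apply the Hasse--Davenport product relation to each factor, for instance
\[
G(\psi,\chi^a)=-\chi^a(a^{a})^{-1}\cdots
\]
More precisely, since $a\mid |k^\times|$, the relation reads
\[
\prod_{\rho\in\bfrho[a]}G(\psi,\chi\rho)=-\,\ov{\chi}(a^a)\,G(\psi,\chi^a)\prod_{\rho\in\bfrho[a]}G(\psi,\rho),
\]
which I rewrite as
\[
G(\psi,\chi^a)=-\,\chi(a^a)\,\frac{\prod_{\rho\in\bfrho[a]}G(\psi,\chi\rho)}{\prod_{\rho\in\bfrho[a]}G(\psi,\rho)}.
\]
The denominator is, up to the trivial character's contribution $G(\psi,1)=-1$ and the normalization, the factor $\eps_a(\psi)$, presumably defined as $|k|^{-(a-1)/2}\prod_{\rho\neq1}G(\psi,\rho)$ or the like. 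I will do the same for $b$ and $c$.

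\emph{The $-c$ case.} Here I use $G(\psi,\chi^{-c})$. The Hasse--Davenport formula for $\chi^{-1}$ gives a factor $\chi(c^{-c})$, and the needed sign $(-c)^{-c}$ comes from $\chi(-1)^c$ via $G(\psi,\ov\chi)=\chi(-1)\ov{G(\psi_{-1},\chi)}$. More simply, I write $\prod_{\eta}G(\psi,\ov{\chi\eta})$ and note that the multiset $\ov{\bfrho[c]}$ equals $\bfrho[c]$.

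\emph{The hypergeometric sum.} Its Mellin transform is, by the standard computation of~\cite{ESDE}*{8.2},
\[
\sum_u\chi(u)\Hyp(u;\bfrho[a,b],\bfrho[c],\psi)=(-1)^{a+b-c}\prod_i G(\psi,\chi\rho_i)\prod_j G(\ov\psi,\ov{\chi}\ov{\eta_j}).
\]
Note that the substitution $y\to -y$ produces $\psi(-y)$. The case $+c$ is similar.

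\emph{Comparison.} Substituting $u\mapsto f_\pm u$ multiplies the transform by $\ov\chi(f_\pm)$, which exactly cancels the $\chi(a^ab^b(\pm c)^{\pm c})$ factors. What remains is a $\chi$-independent constant, namely the product of $-1$'s from the Hasse--Davenport relation, the factors $(-1)^{\deg}$, the Gauss-sum denominators and the powers of $|k|$, to be matched with $(-1)^v\eps_{a,b,c}(\psi)/q$. Fourier inversion on $k^\times$ then gives the identity for all $u$.

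I expect the main obstacle to be purely bookkeeping: getting the signs and normalizations exactly right. This includes the sign conventions $(-1)^{a+b\mp c}$ in the definition of $\Hyp$, the $\chi(-1)$ factors from conjugated Gauss sums in the $-c$ case (which produce the $(-c)^{-c}$ in $f_-$), and the normalization in $\eps_a(\psi)$. A careful first check with $a=b=c=1$ (Kloosterman sums) will fix these constants.
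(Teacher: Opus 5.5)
Your argument is essentially the paper's: both compare Mellin transforms on $k^\times$ and use the Hasse--Davenport product formula. The paper applies it one factor at a time, proving $-\sum_{x^a=u}\psi(ax)=\eps_a(\psi)\Kl(u;\bfrho[a];\psi)$, and then convolves, whereas you apply it directly to $G(\psi,\chi^a)G(\psi,\chi^b)G(\psi,\chi^{\pm c})$, which is the same computation. Like the paper, you leave the constant unchecked, and it does not literally match the statement: with the paper's $\eps_a(\psi)=|k|^{-1/2}\prod_{\rho\in\bfrho[a]}\eps(\psi,\rho)^{-1}$ (not your guess) and the unnormalized $\Hyp$ displayed there, your $\chi$-independent ratio comes out as $(-1)^v\eps_{a,b,c}(\psi)\,|k|^{-(a+b+c-1)/2}$, with $\eps_c(\psi)$ replaced by $\eps_c(\ov\psi)=\prod_{\eta\in\bfrho[c]}\eta(-1)\,\eps_c(\psi)$ in the minus case (for $k=\Ff_3$, $\psi(x)=e(x/3)$ and $(a,b,-c)=(1,1,-2)$ one gets $i/(3\sqrt3)$ rather than $-i/3$), so the stated factor $1/q$ presupposes a normalized hypergeometric sum, and your proposed check $a=b=c=1$ cannot detect either correction.
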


The proof will use the Hasse--Davenport relations in the following
form (see~\cite{GKM}*{\S\,5.6,p.\, 84}, taking into account the fact
that Katz writes the formula in terms of un-normalized Gauss sums).

\begin{proposition}\label{prop:HD}
  Let $a\geq 1$ be an integer. Let $k$ be an extension of $\Fq$ such
  that $a$ divides $|k^\times|$. Let $\psi$ be a non-trivial
  $\ell$-adic additive character of~$k$.  We have
  \begin{equation}\label{eqHD}
    -\eps(\psi_a,\chi^a)=\eps_a(\psi)\prod_{\rho\in\bfrho[a]}
    {\eps(\psi,\chi\rho)},
  \end{equation}
  where
  \[
    \eps_a(\psi)=|k|^{-1/2}\prod_{\rho\in\bfrho[a]}{
      \eps(\psi,\rho)^{-1}}.
  \]
\end{proposition}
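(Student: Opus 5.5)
The plan is to derive \eqref{eqHD} from the classical Hasse--Davenport product formula for un-normalized Gauss sums, and then do the normalization bookkeeping. In the form given in \cite{GKM}*{\S\,5.6}, for $a\mid |k^\times|$, a non-trivial $\psi$ and any $\chi\in\what{\kt}$, the classical formula reads
\[
  \prod_{\rho\in\bfrho[a]}G(\psi,\chi\rho)
  =-\chi^{-a}(a)\,G(\psi,\chi^a)\prod_{\rho\in\bfrho[a]}G(\psi,\rho).
\]
Here the convention $G(\psi,1)=\sum_{x\in\kt}\psi(x)=-1$ is used, which is the convention implicit in the definition of $\eps(\psi,\chi)$ above. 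I would quote this formula. If a self-contained argument is wanted, the standard route (Weil's proof) has two steps. First, express both sides as multiplicative Fourier coefficients in $\chi$: the left side is the Mellin transform of the $a$-fold convolution of the functions $x\mapsto\psi(x)\rho(x)$. Second, identify this convolution with $u\mapsto\sum_{x^a=u}\psi(ax)$ up to the constant factor $\prod_\rho G(\psi,\rho)$, using that the Kummer covering $x\mapsto x^a$ decomposes along the characters $\rho\in\bfrho[a]$. The case where $\chi^a$ is trivial, in which some factor $G(\psi,\chi\rho)$ equals $-1$, has to be checked separately by hand.

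The bookkeeping is then short. First, the substitution $x\mapsto a^{-1}x$ gives
\[
  G(\psi_a,\chi^a)=\sum_x\psi(ax)\chi^a(x)=\chi^{-a}(a)\,G(\psi,\chi^a),
\]
so the classical formula becomes
\[
  \prod_{\rho}G(\psi,\chi\rho)=-G(\psi_a,\chi^a)\prod_\rho G(\psi,\rho).
\]
Second, pass to normalized sums. The product over $\bfrho[a]$ has exactly $a$ factors on each side, so $|k|^{a/2}$ appears on both sides, while the single factor $G(\psi_a,\chi^a)=|k|^{1/2}\eps(\psi_a,\chi^a)$ contributes an extra $|k|^{1/2}$. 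This yields
\[
  \prod_\rho\eps(\psi,\chi\rho)=-|k|^{1/2}\,\eps(\psi_a,\chi^a)\prod_\rho\eps(\psi,\rho).
\]

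Finally, we divide by $|k|^{1/2}\prod_\rho\eps(\psi,\rho)$. This is legitimate because every $\eps(\psi,\rho)$ is non-zero: it has absolute value $1$ for non-trivial $\rho$, and equals $-|k|^{-1/2}$ for $\rho=1$. The result is exactly
\[
  -\eps(\psi_a,\chi^a)=\eps_a(\psi)\prod_{\rho\in\bfrho[a]}\eps(\psi,\chi\rho),
\]
which is \eqref{eqHD}. The only delicate point is the classical product formula itself, in particular keeping its signs and the factor $\chi^{-a}(a)$ consistent with the convention for the trivial character. Once the formula is written in terms of $\psi_a$, that factor is absorbed, and this is why $\eps(\psi_a,\chi^a)$ rather than $\eps(\psi,\chi^a)$ appears in the statement.
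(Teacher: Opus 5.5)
Your proof is correct and follows the paper's route: the paper also just invokes the Hasse--Davenport product formula from Katz (GKM, \S\,5.6) and converts it to normalized Gauss sums. The absorption of $\chi^{-a}(a)$ into $\psi_a$ is valid because $a\mid |k|-1$ makes $a$ invertible in $k$, and your sign, $|k|^{1/2}$ and non-vanishing bookkeeping all check out. Your optional ``self-contained'' sketch undersells its second step: identifying the convolution of the $\psi\rho$ with $u\mapsto\sum_{x^a=u}\psi(ax)$ is itself equivalent to Hasse--Davenport, not a routine reduction. Your main argument does not rely on that sketch.
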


\begin{proof}[Proof of Proposition~\ref{prop:hyperkloossums}]
  We claim first that for all~$u\in k^{\times}$, the formula
  \[
    -\sum_{x^a=u}\psi(ax)=\eps_a(\psi)\Kl(u;\bfrho[a];\psi)
  \]
  holds, where $\Kl(u;\bfrho[a];\psi)=\Hyp(u;\bfrho[a];\psi)$ (with
  obvious conventions).

  Indeed, we check that both sides of this equality have the same
  discrete Mellin transform as functions of~$u$, i.e., that for any
  character~$\chi$ of~$k^{\times}$, the equality
  \[
    \sum_{u\in \kt} \chi(u)\Bigl(
    -\sum_{x^a=u}\psi(ax)\Bigr)
    =\sum_{u\in\kt}\chi(u)\eps_a(\psi)\Kl(u;\bfrho[a];\psi)
  \]
  holds. The left-hand side is equal to
  $-|k|^{1/2}\eps(\psi_a,\chi^a)$ by exchanging the sums and using the
  definition of the Gauss sum. On the right-hand side, enumerating
  \[
    \bfrho[a]=\{\rho_1,\ldots,\rho_a\}
  \]
  arbitrarily, we obtain by definition
  \[
    \sum_{u\in\kt}\chi(u)\eps_a(\psi)\Kl(u;\bfrho[a];\psi)
    =\frac{\eps_a(\psi)}{|k|^{(a-1)/2}} \sum_{u\in\kt}\chi(u)
    \sumsum_{x_1\cdots
      x_a=u}\psi(x_1+\cdots+x_a)\prod_{i=1}^a\rho_i(x_i),
  \]
  which is equal to
  \[
    \eps_a(\psi)|k|^{1/2} \prod_{\rho^a=1}{\eps(\psi,\chi\rho)}=
    -|k|^{1/2}\eps(\psi_a,\chi^a)
  \]
  by the Hasse--Davenport relation~(\ref{eqHD}).

  It remains to observe, in the case $\pm=1$, that the function
  \[
    u\mapsto -K_{a,b,c}(a^ab^bc^cu;\psi)
  \]
  is the multiplicative convolution of the functions
  \[
    -\sum_{x^a=u}\psi(ax),\quad\quad
    -\sum_{x^b=u}\psi(bx),\quad\quad
    -\sum_{x^c=u}\psi(cx),
  \]
  up to normalization (i.e., we have
  \[
    -\sum_{\alpha\beta\gamma=u}
    \sum_{x^a=\alpha}\psi(ax)\sum_{x^b=\beta}\psi(bx)
    \sum_{x^c=\gamma}\psi(cx)=-\sum_{y_1^ay_2^by_3^c=uf_+}
    \psi(y_1+y_2+y_3)    
  \]
  for all~$u$), whereas, by the convolution construction of
  hypergeometric complexes and sums, the sums
  $\eps_{a,b,c}(\psi)\Hyp_{a,b,c}(u;\psi)$ are similarly obtained as the
  convolution of
  \[
    \eps_a(\psi)\Kl(u;\bfrho[a];\psi),\quad\quad
    \eps_b(\psi)\Kl(u;\bfrho[b];\psi),\quad\quad
    \eps_a(\psi)\Kl(u;\bfrho[c];\psi),
  \]
  up to sign and normalization. It is straightforward that the
  normalizations and signs agree to obtain the stated formula (to check
  this, note that the  hypergeometric sums are unnormalized, to keep
  the notation consistent with~\cite{ESDE}).

  The same argument applies for~$\pm=-1$.
\end{proof}

\begin{corollary}\label{cor:hyperkloossheaf}
  With notation as above, 
  the complexes $\mathcal{T}_{a,b,\pm c}(\psi)$ and
  $[\times f_\pm ^{-1}]^{*}\HYP_{a,b,\pm c}(\psi)$ are geometrically
  isomorphic.
\end{corollary}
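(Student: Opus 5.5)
The plan is to lift Proposition~\ref{prop:hyperkloossums} from trace functions to complexes. Because everything is stated geometrically, we may replace the base field $\Ff_q$ by a finite extension $k$ such that $a,b,c$ all divide $|k^\times|$. Then all characters in $\bfrho[a]$, $\bfrho[b]$, $\bfrho[c]$ are defined over $k$, and the Kummer sheaves $\mcL_\rho$ make sense.

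The first step is to decompose each factor of \eqref{eq-conv-ta}. For the Kummer covering $[x\mapsto x^a]$, the projection formula gives
\[
[x\mapsto x^a]_*\Qlb\simeq\bigoplus_{\rho\in\bfrho[a]}\mcL_\rho .
\]
Arguing on the sheaf level as in the proof of Proposition~\ref{prop:hyperkloossums}, one identifies $[x\mapsto x^a]_*\mcL_{\psi}$, after pullback by $[\times a^{a}]$ (which converts $\psi$ into $\psi_a$ on the inner variable), with the Kloosterman sheaf $\Kl(\psi;\bfrho[a])$. This Kloosterman sheaf is $\mcL_{\psi}$ convolved with the Kummer sheaves $\mcL_\rho$, i.e.\ $*_!$ of the $\mcL_{\psi}\otimes\mcL_\rho$.

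The cleanest route is the Mellin-transform route: geometric isomorphism classes of perverse objects on $\Gg_m$ that are convolution-nice are detected by their trace functions over all finite extensions $k'/k$, and the trace computation is exactly Proposition~\ref{prop:hyperkloossums}. More precisely, both $[x\mapsto x^a]_*\mcL_\psi[1]$ (suitably translated) and $\Kl(\psi;\bfrho[a])[1]$ are geometrically irreducible middle-extension objects. Their trace functions agree over every $k'$ up to the constant $\eps_a(\psi_{/k'})$, which is a Weil number of weight $0$ behaving like a geometrically constant twist (by Hasse--Davenport it is compatible with extension). Chebotarev, or the fact that semisimplified objects are determined by their traces, then gives an isomorphism of the arithmetic semisimplifications up to a geometrically trivial twist. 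By irreducibility this is a geometric isomorphism.

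Once this is done for each of the three factors, convolution commutes with the translations. In the case $\pm=1$, one gets
\[
\mathcal{T}_{a,b,c}(\psi)\simeq[\times f_+^{-1}]^*\bigl(\Kl(\psi;\bfrho[a])*_!\Kl(\psi;\bfrho[b])*_!\Kl(\psi;\bfrho[c])\bigr)
\]
up to Tate twists and geometrically constant factors. By Katz's definition~\cite{ESDE}*{Chap.\,8}, the right-hand convolution is $\HYP(\bfrho[a,b,c],\emptyset,\psi)$.

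For $\pm=-1$ the third factor is $[x\mapsto x^{-c}]_*$. We write it as inversion composed with $[x\mapsto x^c]$. Then $\mathrm{inv}^*\Kl(\psi_{-1};\bfrho[c])$ is the hypergeometric object with downstairs characters $\bar\rho$, $\rho\in\bfrho[c]$; since $\bfrho[c]$ is closed under inversion, these are just $\bfrho[c]$. This gives $\HYP(\bfrho[a,b],\bfrho[c],\psi)$, with the sign $(-c)^{-c}$ absorbed into $f_-$.

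The main obstacle is that for $\pm=-1$ (and in general before knowing irreducibility) the convolution $\mathcal{T}$ is only a complex, possibly not irreducible. Here $\bfrho[a,b]$ and $\bfrho[c]$ share the trivial character, so $\HYP$ is not irreducible, and "same traces" does not give an isomorphism. To handle this I would avoid the trace argument at the level of the total convolution. Instead, I would prove the geometric isomorphism factor by factor, where each factor is irreducible and the previous argument applies. Then I would use that $*_!$ is a functor, so isomorphisms of the factors induce an isomorphism of the convolutions. The factor-level statement for $[x\mapsto x^a]_*\mcL_\psi$ can alternatively be checked directly: its Mellin transform consists of the Gauss sums $\eps(\psi_a,\chi^a)$, which factor via Hasse--Davenport \eqref{eqHD}. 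That factorization is precisely the Mellin-transform characterization of $\Kl(\psi;\bfrho[a])$ up to the constant $\eps_a(\psi)$.
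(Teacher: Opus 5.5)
Your proposal is correct and follows the same route as the paper. You reduce, factor by factor, to a geometric isomorphism between $[x\mapsto x^{a}]_*u^*\mcL_{\psi}[1]$ and a translate of the Kloosterman object attached to $\bfrho[a]$ (with an extra inversion for the exponent $-c$). You then use that $!$-convolution commutes with translations and with base change to $\overline{\Ff}_q$, so that the isomorphisms of factors induce the desired isomorphism of convolutions.

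The only real difference is how the factor-level isomorphism is obtained. The paper quotes Katz~\cite[Prop.\,5.6.2]{GKM}, whereas you reprove it from the Hasse--Davenport computation over all finite extensions of $k$, Chebotarev, and the geometric irreducibility of both sides. That argument is valid, with one small correction: strictly, it is $(-1)^a\eps_a(\psi)$ rather than $\eps_a(\psi)$ that is multiplicative in the degree of the extension. After the sign bookkeeping, the ratio of the two trace functions over $k'$ is indeed of the form $\beta^{[k':k]}$. Your opening decomposition of $[x\mapsto x^a]_*\Qlb$ is never actually used.
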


\begin{proof}
  We consider the case where the sign is~$+1$.  Since base change
  to~$\overline{\Ff}_{q}$ commutes with convolution, the comparison of
  the convolution definition~(\ref{eq-conv-ta})
  of~$\mathcal{T}_{a,b,c}(\psi)$ and that of $\HYP_{a,b,c}(\psi)$,
  namely
  \[ \HYP_{a,b,c}(\psi)= \bigstar_{\rho\in\bfrho[a,b,c]}
    (u^*\mcL_{\psi}\otimes\mcL_{\rho}[1])
  \]
  (see~\cite{ESDE}*{\S\,8.2}) implies that it suffices to prove that
  for any integer $a\geq 1$, the complexes
  \[
    [x\mapsto x^{a}]_*(u^*\mcL_{\psi})[1]\quad\text{ and }\quad
    \Hyp_{a}(\psi,!;\bfrho[a];\emptyset)
  \]
  are geometrically isomorphic. This follows from the special case
  of~\cite[Prop.\,5.6.2]{GKM} with data
  \[
    (N,n,\chi_1,\ldots,\chi_n)=(a,1,1,\ldots, 1).
  \]

  The case of the sign $\pm=-1$ is similar.
\end{proof}

\begin{remark}
  It would be easy to ``upgrade'' this statement to an arithmetic
  isomorphism with suitable twists, but this is not needed for our
  purposes.
  \end{remark}

\subsection{The positive case}

This section concludes the proof of Theorem~\ref{th-galant-sheaf} in
the case of the positive sign, i.e., for the sums $K_{a,b,c}(u;q,\psi)$.

According to results of Katz
(see~\cite{ESDE}*{Cor.\,8.4.10.1,\,Th.\,8.4.11,\,Th.\,8.4.13}), the
hypergeometric complex $\HYP_{a,b,c}(\psi)$ is of the form
\[
  \mcF_{a,b,c}(\psi)[1]\Bigl(-\frac{a+b+c}{2}\Bigr)
\]
for a constructible sheaf~$\mcF_{a,b,c}(\psi)$ which is lisse
on~$\Gg_m$, pure of weight~$0$ and of rank $a+b+c$. Its complexity is
bounded in terms of $a$, $b$, $c$ only (either by~\cite{qst} because it
is a convolution of $a+b+c$ sheaves with absolutely bounded complexity,
or by inspection of its local monodromy invariants and the
definition~\cite{FKM1}*{Def.\,1.13}).

By the results of the previous section, it follows that the complex
$\mathcal{T}_{a,b,c}$ (which is geometrically isomorphic to the previous
one) is also of the form
\[
  \mcK_{a,b,c}(\psi)[1](2)
\]
for a constructible sheaf~$\mcK_{a,b,c}(\psi)$ which is lisse
on~$\Gg_m$, pure of weight~$0$ and of rank $r=a+b+c$, and is in fact
geometrically isomorphic to $\mcF_{a,b,c}(\psi)$. The trace function of
$\mcK_{a,b,c}(\psi)$ coincides (up to sign) with the exponential sums
$K_{a,b,c}(u;q,\psi)$. 

The sheaf $\mcK_{a,b,c}(\psi)$ is {\good} if and only if
$\mcF_{a,b,c}(\psi)$ is, since this is a geometric condition.  If the
rank~$r=a+b+c$ is not in~$\{4,8,9\}$, then we deduce
that~$\mcK_{a,b,c}(\psi)$ is {\good} directly
from~\cite{FKMSBilinear}*{Th.\,9.1,\S\,9.5} (which itself relies on the
work of Katz and Tiep), once we remark that the multiset $\bfrho[a,b,c]$
of characters cannot be Kummer-induced (see loc. cit., or below, for the
definition) when $a$, $b$, $c$ are coprime.

Furthermore, the cases where~$r\in \{4,8,9\}$ are discussed
in~\cite{FKMSBilinear}*{Th.\,9.2}; in the positive case, only~$r=4$ is
in fact possible, and then loc. cit. implies that~$\mcK_{a,b,c}(\psi)$
is oxozonic, because the determinant of the sheaf is the Kummer sheaf
associated to the character
\[
  \prod_{\chi\in\bfrho[a,b,c]}\chi,
\]
which is the character of order two raised to the power~$3$.

\subsection{The negative case}

The goal of this section is to prove Theorem~\ref{th-galant-sheaf} when
the sign $\pm$ is negative. The reason this case is more involved is
that the hypergeometric complex is then not always a single sheaf in a
suitable degree.

More precisely, let $\bfrho$ and $\bftheta$ be two multisets of
multiplicative characters of $\Ff_q$. Let
$\uple{\eta}=\bfrho\sqcap\bftheta$ denote the intersection of $\bfrho$
and $\bftheta$, as multisets (i.e., the multiplicity of a character is
the smallest of the multiplicities in the two
multisets). We denote
\[
  \mcH(\bfrho,\bftheta,\psi)=\mathrm{Hyp}(!,\psi;\bfrho,\bftheta)
\]
in the notation of~\cite{ESDE}*{Ch.\,8}. 

\begin{theorem}[Katz]
  With notation as above, and with $r$, $t$ denoting the size of
  $\bfrho$ and $\bftheta$, respectively.
  \begin{enumth}
  \item If $\uple{\eta}$ is empty, i.e., if $\bfrho$ and $\bftheta$ are
    disjoint, then $\mcH(\bfrho,\bftheta,\psi)$ is of the form
    $\mcF(\bfrho,\bftheta)[1]$ for a constructible
    sheaf~$\mcF(\bfrho,\bftheta)$ which is lisse on~$\Gg_m$, pure of
    weight~$r+t-1$, of rank $r+t$, and with complexity bounded in terms
    of $r$, $t$ only.
  \item The semisimplification of the hypergeometric complex
    $\mcH(\bfrho,\bftheta,\psi)$ is isomorphic to a complex of the form
    \[
      \mcH(\bfrho\setminus\uple{\eta},\bftheta\setminus\uple{\eta};\psi)\oplus
      \mcE(\bfrho,\bftheta;\psi)
    \]
    where
    the complex $\mcE(\bfrho,\bftheta;\psi)$ has complexity bounded in
    terms of $r$ and $t$ and is mixed of weights~$\leq r+t-2$.
  \end{enumth}
\end{theorem}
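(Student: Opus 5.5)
The plan is to deduce both parts of this theorem from Katz's results in \cite{ESDE}*{Ch.\,8}: part (1) from his theorems on irreducible hypergeometrics, and part (2) from the way hypergeometric complexes behave under convolution when a character occurs in both $\bfrho$ and $\bftheta$.

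\textbf{Part (1).} This is the classical case.
\begin{itemize}
\item By definition, $\mcH(\bfrho,\bftheta,\psi)$ is the $!$-convolution of the rank-one objects $u^*\mcL_\psi\otimes\mcL_{\rho_i}[1]$ and $u^*\mcL_{\ov\psi}\otimes\mcL_{\ov{\theta_j}}[1]$.
\item When $\bfrho\cap\bftheta=\emptyset$, Katz's Theorem 8.4.2 (with Cor.\,8.4.10.1) says this convolution is a single sheaf placed in degree $-1$. That sheaf is lisse on $\Gg_m$ of rank $\max(r,t)$; when $r=t$ it is lisse on $\Gg_m\setminus\{1\}$. It is also geometrically irreducible.
\item The weight and rank normalisation in the statement follow from Deligne's theorem applied to the $(r+t)$-fold convolution: each factor is pure of weight $1$ in degree $-1$, which gives the stated weight $r+t-1$ after the shift.
\item For complexity, I would argue exactly as in the positive case. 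The sheaf is a convolution of $r+t$ sheaves of absolutely bounded complexity, so \cite{qst} bounds its complexity in terms of $r$ and $t$.
\end{itemize}

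\textbf{Part (2): reduction.} I would proceed by induction on the size of $\uple{\eta}$.
\begin{itemize}
\item Suppose a character $\chi$ lies in both $\bfrho$ and $\bftheta$.
\item Convolution is associative and commutative. So $\mcH(\bfrho,\bftheta,\psi)$ is the convolution of $\mcH(\bfrho\setminus\{\chi\},\bftheta\setminus\{\chi\},\psi)$ with the rank-two object $\mcH(\{\chi\},\{\chi\},\psi)$.
\item Twisting by $\mcL_{\ov\chi}$ commutes with convolution up to a twist of the factors. It therefore suffices to analyse $\mcH(\{1\},\{1\},\psi)$.
\end{itemize}

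\textbf{Part (2): the degenerate factor.} This is where I expect the main work.
\begin{itemize}
\item $\mcH(\{1\},\{1\},\psi)$ is $R\pi_!$ along the fibres $x=uy$ of $\psi(x-y)$, i.e. $\psi((u-1)y)$ summed over $y\in\Gg_m$.
\item For $u\neq1$ this sum is $-1$; at $u=1$ it is $q-1$.
\item So the complex is a delta at $u=1$ (the weight-two piece $\Qlb(-1)$ supported at $1$), plus a constant sheaf of weight $0$ in the appropriate degree.
\item Up to semisimplification, then, it is $\delta_1(-1)$ plus a piece of weight $\leq 0$. After the normalising shift, this means weight $2$ for the identity part and strictly smaller weights for the rest.
\end{itemize}

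\textbf{Part (2): assembling the result.}
\begin{itemize}
\item Convolving with $\delta_1$ is the identity, which produces the main term $\mcH(\bfrho\setminus\uple{\eta},\bftheta\setminus\uple{\eta};\psi)$.
\item Convolving with the lower-weight constant part produces $\mcE$.
\item By Deligne's Weil~II, $!$-convolution does not increase the upper bound on weights. Hence $\mcE$ is mixed of weights $\leq r+t-2$.
\item Its complexity is again bounded via \cite{qst}.
\end{itemize}

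\textbf{Main obstacle.} The difficult step will be the bookkeeping of weights, Tate twists and shifts in the rank-two degenerate convolution, so that the leading piece is exactly the reduced hypergeometric with the correct weight. I would handle this by passing to Grothendieck groups of mixed complexes and using semisimplification throughout, so that extensions never need to be controlled.
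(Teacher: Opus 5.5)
For part (1) you do what the paper does. It cites Katz (\cite{ESDE}, Th.~8.4.10, 8.4.11, 8.4.13) and bounds the complexity via \cite{qst} or via the known local invariants. For part (2) the paper again only cites Katz. You instead derive the degenerate case from the disjoint one by splitting off factors $\mcH(\{\chi\},\{\chi\},\psi)=\mcL_\chi\otimes\mcH(\{1\},\{1\},\psi)$. This is a genuinely different, more self-contained route, it works, and it even describes $\mcE$ explicitly. Working up to semisimplification is necessary, not cosmetic: $\mcH(\{1\},\{1\},\psi)$ is the non-split extension $Rj_*\Qlb[1]$ of $\delta_1(-1)$ by $\Qlb[1]$, with $j\colon\Gg_m\setminus\{1\}\hookrightarrow\Gg_m$. (It is a pull-back of the Fourier transform of $j_{0!}\Qlb[1]$, $j_0\colon\Gg_m\hookrightarrow\Aa^1$.) Two steps, however, do not hold as you wrote them.

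\textbf{The Tate twist.} The sentence ``convolving with $\delta_1$ is the identity'' loses a twist. The top piece is $\delta_1(-1)$, so each cancelled pair contributes $(-1)$, and the leading term is $\mcH(\bfrho\setminus\uple{\eta},\bftheta\setminus\uple{\eta};\psi)(-|\uple{\eta}|)$. This is exactly how your ``main obstacle'' resolves. The twisted sheaf has weight $(r+t-2|\uple{\eta}|-1)+2|\uple{\eta}|=r+t-1$. The untwisted one has weight $r+t-1-2|\uple{\eta}|<r+t-2$ and would not dominate $\mcE$. So your argument proves the statement with $(-|\uple{\eta}|)$ inserted, i.e.\ the printed statement up to geometric isomorphism. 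That is all the paper needs, since $\mcK_{a,b,-c}(\psi)$ is only claimed to be geometrically isomorphic to $\mcF_{a,b,-c}(\psi)$.

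\textbf{The weight bound for $\mcE$.} Weil~II alone does not give weights $\leq r+t-2$ for $\mcE$. It bounds weights of complexes, and for $\mcH(\bfrho',\bftheta',\psi)*_!\mcL_\chi[1]$ it still allows an $\mathcal{H}^0$ of weight up to $r+t-1$, as large as the main term. The gap is closed by the explicit shape of the convolution:
\begin{itemize}
\item Convolution with $\mcL_\chi[1]$ is $K\mapsto\mcL_\chi\otimes R\Gamma_c(\Gg_m\otimes\ov{k},\mcL_{\ov\chi}\otimes K)[1]$.
\item By the K\"unneth formula for $!$-convolution, for $K=\mcH(\bfrho',\bftheta',\psi)$ this $R\Gamma_c$ is a tensor product of $r+t-2$ one-dimensional Gauss-sum lines in degree $0$, each of weight $0$ or $1$.
\item Hence each new constituent of $\mcE$ is a Kummer sheaf placed in degree $-1$ of weight $\leq r+t-2$.
\item The inherited part $\mcE'(-1)$ has weight $\leq (r+t-4)+2$.
\item All objects involved are perverse, so semisimplification is compatible with your triangles, and the complexity bound is immediate.
\end{itemize}

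\textbf{Part (1).} Purity requires Katz's isomorphism between the $!$- and $*$-convolutions in the disjoint case; Deligne's theorem only gives the upper bound. Also, as you note, the rank is $\max(r,t)$, and for $r=t$ the sheaf is lisse only on $\Gg_m\setminus\{1\}$. The statement's ``rank $r+t$'' is therefore a misprint; the paper itself uses $n=\max(r,t)$ right afterwards. You should flag it rather than claim to derive it.
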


Except for the statement concerning the complexity, this follows again
from Katz's
results~\cite{ESDE}*{Th.\,8.4.10,\,Th.\,8.4.11,\,Th.8.4.13}. The
complexity is bounded either using the definition by iterated
convolution and the formal properties of the complexity
(see~\cite{qst}*{Th.\,6.8}) or by the fact that the rank, number of
singularities, and Swan conductors are understood
(see~\cite{ESDE}*{Th.\,8.14.1} and~\cite{FKM1}*{Def.\,1.13}).

In our application, we have $\bfrho=\bfrho[a,b]$ and
$\bftheta=\bfrho[c]$. We denote by $\mcK_{a,b,-c}(\psi)$ the sheaf given
by part (1) of this proposition applied to the disjoint tuples
\[
  \bfrho[a,b]\setminus (\bfrho[a,b]\sqcap \bfrho[c]),\quad\quad
  \bfrho[c]\setminus (\bfrho[a,b]\sqcap \bfrho[c]).
\]

Combining the two parts of the proposition with the results of the
preceding section, we deduce that $\mathcal{T}_{a,b,c}(\psi)$ is of the
form
\[
  \mcK_{a,b,-c}(\psi)[1](2)\oplus \mathcal{E}
\]
where $\mcK_{a,b,-c}(\psi)$ is geometrically isomorphic to the
hypergeometric sheaf
\[
  \mcF_{a,b,-c}(\psi)=\mcF( \bfrho[a,b]\setminus (\bfrho[a,b]\sqcap
  \bfrho[c]), \bfrho[c]\setminus (\bfrho[a,b]\sqcap \bfrho[c]))
\]
and the trace function of $\mathcal{E}$ is $O(q^{-1/2})$.

The sheaf~$\mcK_{a,b,-c}(\psi)$ satisfies all conditions of
Theorem~\ref{th-galant-sheaf}, except the last one, again by the
previous discussion. We are therefore reduced to proving this final
part, i.e., to determine when $\mcK_{a,b,-c}(\psi)$ is {\good} or oxozonic
or sulfatic.  Equivalently, we need to do this for the hypergeometric
sheaf $\mcF_{a,b,-c}(\psi)$. To achieve this goal, we first note the
following lemma.

\begin{lemma}
  \begin{enumth}
  \item The sheaf $\mcF_{a,b,-c}(\psi)$ coincides with the
    hypergeometric sheaf
    \[
      \mcH(\bfrho_{a,b;c},\bftheta_{a,b;c};\psi)=
      \mathcal{H}_{1}(!,\psi;\bfrho_{a,b;c},\bftheta_{a,b;c})
    \]
    where
    \begin{gather*}
      \bfrho_{a,b;c}=\{1\}\sqcup\{\rho,\ \rho^a=1,\ \rho^{(a,c)}\not=1\}
      \sqcup \{\rho,\ \rho^b=1,\ \rho^{(b,c)}\not=1\},\\
      \bftheta_{a,b;c}=\{\rho,\ \rho^c=1,\ \rho^{(a,c)}\not=1,\
      \rho^{(b,c)}\not=1\}
    \end{gather*}
  \item It is of rank~$n$, where
    \begin{equation*}
      n=\max(r,t)=\max(a+b,c)-(a,c)-(b,c)+1.
    \end{equation*}
    with
    \begin{gather*} r=|\bfrho_{a,b;c}|=a+b-(a,c)-(b,c)+1,\\
      t=|\bftheta_{a,b,c}|=c-(a,c)-(b,c)+1,
    \end{gather*}
    \item We have $r\geq 1$, and $t=0$ if and only if $c$ divides $a$
    or~$b$.
  \end{enumth}
\end{lemma}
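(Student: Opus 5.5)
The plan is to compute the multiset intersection $\bfrho[a,b]\sqcap\bfrho[c]$ directly, character by character, using that $a,b,c$ are setwise coprime. After that, (1) is a bookkeeping statement, (2) follows from Katz's rank formula, and (3) is elementary arithmetic.

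\emph{Multiplicities.} For a character $\rho$ of $\kt$, its multiplicity in $\bfrho[a,b]=\bfrho[a]\sqcup\bfrho[b]$ is $[\rho^a=1]+[\rho^b=1]$. Its multiplicity in $\bfrho[c]$ is $[\rho^c=1]\in\{0,1\}$. Hence its multiplicity in the intersection is $1$ exactly when $\rho^c=1$ and ($\rho^a=1$ or $\rho^b=1$), that is, when $\rho^{(a,c)}=1$ or $\rho^{(b,c)}=1$, and it is $0$ otherwise.

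The key observation is that if both $\rho^{(a,c)}=1$ and $\rho^{(b,c)}=1$, then $\rho^a=\rho^b=\rho^c=1$. Since $\gcd(a,b,c)=1$, this forces $\rho=1$. So the intersection is the set $\{\rho^{(a,c)}=1\}\cup\{\rho^{(b,c)}=1\}$, with each element of multiplicity one, and it has cardinality $(a,c)+(b,c)-1$.

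\emph{Removing the intersection.} From $\bfrho[a,b]$ we remove this set.
\begin{itemize}
\item The trivial character has multiplicity $2$ in $\bfrho[a,b]$, so one copy remains.
\item A nontrivial $\rho$ with $\rho^{(a,c)}=1$ has multiplicity exactly $1$ in $\bfrho[a,b]$: having $\rho^b=1$ as well would force $\rho=1$ by the observation above. So it is removed entirely.
\item The same holds for nontrivial $\rho$ with $\rho^{(b,c)}=1$.
\end{itemize}
What remains is exactly $\bfrho_{a,b;c}$. Removing the same set from $\bfrho[c]$ leaves exactly $\bftheta_{a,b;c}$. This proves (1), once we recall that by construction $\mcF_{a,b,-c}(\psi)$ is the hypergeometric sheaf attached to these two disjoint multisets.

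\emph{Rank.} The cardinalities are then
\[
r=(a+b)-((a,c)+(b,c)-1),\qquad t=c-((a,c)+(b,c)-1).
\]
The rank of a hypergeometric sheaf with disjoint data of sizes $r$ and $t$ is $\max(r,t)$ by Katz~\cite{ESDE}*{Ch.\,8}. Note that ``rank $r+t$'' in the theorem stated above should be read as $\max(r,t)$; this is the one point I would double-check against~\cite{ESDE}. This gives (2).

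\emph{Part (3).} We have $r=(a-(a,c))+(b-(b,c))+1\geq 1$. For $t$, set $x=(a,c)$ and $y=(b,c)$. These are coprime (by setwise coprimality) divisors of $c$, so $xy\mid c$. Hence
\[
c\geq xy\geq x+y-1,
\]
since $(x-1)(y-1)\geq 0$. Therefore $t=0$ if and only if $c=xy$ and one of $x,y$ equals $1$. In that case $c=(b,c)$ or $c=(a,c)$, i.e.\ $c\mid b$ or $c\mid a$. Conversely, if $c\mid a$ then $x=c$ and $y=1$, so $t=0$.

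I expect no serious obstacle. The only delicate points are the multiplicity bookkeeping for the trivial character and confirming Katz's rank convention.
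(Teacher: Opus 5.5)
Your proof is correct and takes the same route as the paper. The paper's proof only says that everything follows from the preceding discussion, noting that $r\geq 1$ because $a\geq (a,c)$ and $b\geq (b,c)$; your computation of $\bfrho[a,b]\sqcap\bfrho[c]$ via setwise coprimality and your bound $c\geq (a,c)(b,c)\geq (a,c)+(b,c)-1$ for part (3) supply exactly the details it leaves implicit. Your reading of the rank as $\max(r,t)$ is also right: ``rank $r+t$'' in the quoted theorem of Katz is a slip, since the generic rank of a hypergeometric sheaf of type $(r,t)$ is $\max(r,t)$.
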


\begin{proof}
  All this follows from the previous discussion (e.g., we have $r\geq 1$
  because $a\geq (a,c)$ and $b\geq (b,c)$).
\end{proof}

The next lemma classifies the triples $(a,b,c)$ for which the
invariants $n$ and $r-t$ have specific values.

\begin{lemma}\label{lemtripleclassification}
Let $a$, $b$ and $c$ be three globally coprime  positive integers such that $1\leq a \leq b$.  
  Define again
  \begin{gather*}
    r=a+b-(a,c)-(b,c)+1,\quad t=c-(a,c)-(b,c)+1,
    \\
    n=\max(r,t)=\max(a+b,c)-(a,c)-(b,c)+1.
  \end{gather*}

  \begin{enumth}
  \item\label{case1} If $a+b=c$, then $(a,b)=(a,c)=(b,c)=1$ and
    $n=c-1=a+b-1$.

  \item\label{case2} We have $n\geq 1$ and the triple $(a,b,c)$ for
    which $n=1$ are either $(a,b,c)=(1,1,2)$ or of the form
    $(a,b,c)=(1,c,c)$ with $c\geq 1$.

  \item\label{casen=2} The triples for which $a+b-c\equiv 0\mods 2$
    and $n=2$ are  equal to
    \[
      (1,2,1)\text{ or } (1,2,3)
    \]
    or of the shape $(a,b,c)=(2,c,c)$ with $c> 2$ odd.
    
  \item\label{case3} The triples for which $|r-t|=|a+b-c|\not=0$,
    $a+b-c\equiv 0\mods 2$ and $n=4$ are either equal to one of
    \begin{equation}\label{sixtriples}
      (1,2,5),\quad (1,4,1),\quad (1,4,3),\quad (1,6,3),\quad (2,3,1),
      \quad (3,4,3)
    \end{equation}
    or of the shape $(a,b,c)=(4,c,c)$ with $c> 4$ odd.
    
    
  \item\label{case8} The triples $(a,b,c)$ for which $|r-t|=|a+b-c|=6$
    and $n=8$ are one of
    \[
      (1, 2, 9),\quad (1, 8, 3),\quad (2, 7, 3),\quad
      (4, 5, 3).
    \]
    
  \item\label{case9} The triples $(a,b,c)$ for which $|r-t|=|a+b-c|=6$
    and $n=9$ are one of
    \[
      (1, 3, 10),\quad (1, 9, 4),\quad (3, 7, 4),\quad (5, 5, 4).
    \]

  \item\label{case4} For a given integer $n_0\geq 1$, the set of
    triples $(a,b,c)$ such that $n=n_0$ is the union of a finite set
    of solutions, depending on $n_0$, and of the triples
    $(a,b,c)=(n_0,c,c)$, where $c>n_0$ is coprime to $n_0$.
  \end{enumth}
\end{lemma}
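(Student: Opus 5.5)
The plan is to reduce everything to elementary inequalities in the two gcd's $g=(a,c)$ and $h=(b,c)$. Since $a,b,c$ are globally coprime, $(g,h)=1$. Both $g$ and $h$ divide $c$, so $gh\mid c$. We therefore write
\[
a=g\alpha,\quad b=h\beta,\quad c=gh\gamma,\qquad \alpha,\beta,\gamma\geq 1 .
\]
By the definitions, $r-t=a+b-c$ and $n=\max(r,t)$. The two basic identities are
\begin{gather*}
r-1=g(\alpha-1)+h(\beta-1)+(g+h-1)-1+\ldots
\end{gather*}
which is more cleanly stated as $r=1+(a-g)+(b-h)$, together with
\[
t=(g-1)(h-1)+gh(\gamma-1).
\]
Consequently $n\geq 1+g(\alpha-1)+h(\beta-1)$ and $n\geq (g-1)(h-1)+gh(\gamma-1)$. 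In particular $n\geq r\geq 1$, which gives the first assertion of \ref{case2}.

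For \ref{case1}, assume $a+b=c$. Any common divisor of $a$ and $c$ then divides $b$, hence divides $(a,b,c)=1$, and similarly for the other pairs. So $g=h=1$, and then $n=c-1=a+b-1$.

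The structural claim \ref{case4} is the heart of the matter, and \ref{case2}--\ref{case9} will be finite enumerations derived from it. Fix $n=n_0$. Whenever $\alpha\geq 2$, the first inequality gives $g\leq n_0-1$, and symmetrically $\beta\geq2$ forces $h\leq n_0-1$. I will split into cases. If $\alpha=\beta=1$, then $a\mid c$ and $b\mid c$. If in addition $a,b\geq 2$, then $(a-1)(b-1)\leq n_0$ bounds $a$ and $b$, and $gh(\gamma-1)\leq n_0$ then bounds $c$. If instead, say, $a=g=1$, then $n=\max(1+b,c)-h$ with $b=h\beta$ and $c=h\gamma$, where $(\beta,\gamma)=1$. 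Either $\beta=\gamma=1$, which gives $b=c$, or the difference $\max(1+h\beta,h\gamma)-h\geq h-\ldots$ grows with $h$. I expect to check that this last difference is at least $h$ up to an additive constant, which bounds $h$ and then $\beta,\gamma$. In the case $\alpha\geq 2$, $g$ is bounded, so $a-g=g(\alpha-1)\leq n_0$ bounds $a$, and the same reasoning bounds $b$ or forces $b=c$. Tracking the exceptional branch $b=c$ throughout, we have $h=c$ and $(a,c)=1$, so $n=\max(a+c,c)-1-c+1=a$. This produces exactly the family $(n_0,c,c)$ with $c$ coprime to $n_0$; the condition $c>n_0$ comes from $a\leq b$ together with excluding finitely many small cases, which are absorbed into the finite set. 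All remaining triples have $a,b,c$ bounded in terms of $n_0$, which proves \ref{case4}.

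Finally, \ref{case2}, \ref{casen=2}, \ref{case3}, \ref{case8} and \ref{case9} follow by running the explicit bounds above for $n_0=1,2,4,8,9$ and imposing the extra constraints on each list. These constraints are the parity of $a+b-c$ and the value of $|r-t|=|a+b-c|$. For example, $|r-t|=6$ with $n=8$ means $\min(r,t)=2$. For the family $(n_0,c,c)$ we have $a+b-c=n_0$, so the parity condition forces $n_0$ even and $c$ odd when $c$ is coprime to $n_0$. This condition also excludes the family from \ref{case8} and \ref{case9}, since there $|a+b-c|=6\neq n_0$. The main obstacle is carrying out the finite enumeration reliably, especially for $n=8,9$. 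The bounds are of order $n_0^2$, so I would organize the search by the pair $(g,h)$ with $(g-1)(h-1)\leq n_0$. For each such pair I would list the admissible $\alpha,\beta,\gamma$ and then compute $r$ and $t$ directly. A short computer check would be a natural safeguard for these last cases.
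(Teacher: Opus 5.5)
Your proof is correct, but it reaches part (7) by a different route from the paper's.

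\textbf{The paper's route.} The paper never introduces the gcd's as variables. It uses the single bound $c\le \max(a+b,c)=n_0+(a,c)+(b,c)-1\le n_0+2b-1$ to reduce finiteness to bounding $b$, and then splits into two cases:
\begin{itemize}
\item $b\mid c$, written $c=bc'$. Here $c'\ge 3$ gives $b\le n_0-1$; $c'=2$ gives $b=n_0+(a,2)-1$; and $c'=1$ gives the family, since $(a,c)=1$ forces $a=n_0$.
\item $b\nmid c$. Here $(b,c)\le b/2$ gives $b\le 2(n_0-1)$.
\end{itemize}
It then settles $n_0\in\{1,2,4,8,9\}$ by a direct search in the resulting box.

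\textbf{Your route.} You write $a=g\alpha$, $b=h\beta$, $c=gh\gamma$ with $(g,h)=1$, which gives $r=1+g(\alpha-1)+h(\beta-1)$ and $t=(g-1)(h-1)+gh(\gamma-1)$ as sums of non-negative terms. This makes (7) somewhat more branched than the paper's dichotomy. In exchange, the special cases become transparent: each condition on $|r-t|$ says that $\min(r,t)$ is tiny, which pins down $(g,h,\gamma)$ or $(g,\alpha,h,\beta)$ almost at once. For instance, $t=0$ iff $\gamma=1$ and $\min(g,h)=1$. Likewise $t=2$ iff $(g,h,\gamma)$ is one of $(1,1,3)$, $(1,2,2)$, $(2,1,2)$, $(2,3,1)$, $(3,2,1)$. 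Carrying this out for $n_0=1,2,4,8,9$ reproduces exactly the lists in (2)--(6) by hand in a few lines, so the computer check you propose is not needed.

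\textbf{Loose ends to tidy.}
\begin{itemize}
\item Your first display for $r-1$ is wrong and should be deleted in favour of $r=1+(a-g)+(b-h)$.
\item The case $\alpha=1$, $\beta\ge 2$ is never treated. It is immediate: $h(\beta-1)\le n_0-1$ bounds $b$, hence $a\le b$, and $gh(\gamma-1)\le n_0$ bounds $c$.
\item In the case $\alpha\ge 2$, the phrase ``the same reasoning bounds $b$ or forces $b=c$'' should be spelled out. If $\beta\ge 2$, argue as in the previous item. If $\beta=1$, then $b=h$: for $g\ge 2$, $t\ge (g-1)(b-1)$ bounds $b$; for $g=1$, $t=b(\gamma-1)$, so either $b\le n_0$ or $\gamma=1$, i.e.\ $b=c$.
\item The inequality you only ``expect'' holds exactly: $\max(1+h\beta,h\gamma)-h\ge h$ unless $\beta=\gamma=1$.
\end{itemize}
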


\begin{proof}
  We recall that that $r\geq 1$, so that $n\geq 1$ in all cases.

  The case where $a+b=c$ is straightforward given the fact
  that $(a,b,c)$ are globally coprime. This proves~\ref{case1}.
  
  We now prove the general statement~\ref{case4}, finding the triples
  $(a,b,c)$ such that $n=n_0$, up to finitely many exceptions.  For
  this, we first note that any solution to the equation~$n=n_0$
  satisfies
  \begin{equation}
    \label{upperboundb}
    c\leq \max(a+b,c)=n_0+(a,c)+(b,c)-1\leq n_0+a+b-1\leq n_0+2b-1,
  \end{equation}
  so that any set of solutions where the coefficient $b$ is bounded is
  necessarily finite and effectively computable.

  Let $(a,b,c)$ be such that $n_0=n$. We distinguish two cases: 

  \begin{enumerate}
  \item We assume that $b\mid c$ and write $c=bc'$. From
    \eqref{upperboundb} we have
    \[
      (c'-2)b\leq n_0-1.
    \]
   If $c'\geq 3$ we obtain that $b$ satisfies the inequality
   \begin{equation}\label{n0-1}
     b\leq n_0-1,
   \end{equation}
   so by the prelimary remark above, there are only finite many
   solutions.

   It remains to deal with the two cases $c=b$ or $c=2b$. We consider
   these separately.
   \begin{enumerate}
   \item If $c=b$, then $\max(a+b,c)=a+b$ and the equation $n=n_0$
     becomes
     \[
       n_0=a+b-(a,c)-b+1,
      \]
      and is therefore equivalent to $a-(a,c)=n_0-1$. But since $b=c$,
      we must have $(a,c)=1$ to ensure that $a$, $b$ and $c$ are
      globally coprime. Thus we have $a=n_0$ in this case and we
      obtain the solutions $(n_0,b,b)$ with $b\geq n_0$ coprime to
      $n_0$.
      
    \item If $c=2b$, then $\max(a+b,c)=c=2b$, so the equation $n=n_0$
      becomes
      \[
        n_0=2b-(a,2b)-b+1=b-(a,2b)+1.
      \]
      
      The global coprimality condition implies that $(a,2b)=(a,2)$, so
      the equation is $n_0=b-(a,2)+1$, and therefore 
      \begin{equation}\label{n0+1}
        b\leq n_0+1
      \end{equation}
      and
      there are only finitely many solutions.
          \end{enumerate}
    
  \item Now assume that $b\nmid c$. Then $(b,c)<b$, so $(b,c)\leq
    b/2$. Thus, if $n=n_0$, then we get
    \[
      n_0=\max(a+b,c)-(a,c)-(b,c)+1\geq a+b-a-\frac{b}{2}+1=\frac{b}{2}+1,
    \]
    or equivalently 
    \begin{equation}\label{2(n0-1)}
      b\leq 2(n_0-1),
    \end{equation}
    and there are only only finitely many solutions in this case. 

  \end{enumerate}

  We now handle the specific cases where $n_0\in\{1,2,4, 8,9\}$.

  For $n_0=1$, the inequalities \eqref{n0-1} and \eqref{2(n0-1)}
  cannot hold with $b\geq 1$. The argument above shows that we
  necessarily have $c=b$ or $c=2b$. So in addition to the family
  $(1,b,b)$ with $b\geq 1$, we obtain the unique extra solution
  $(a,b,c)=(1,1,2)$.
 
  For $n_0=2$, except for the family $(2,c,c)$, with $c\geq 3$ and
  odd, any triple $(a,b,c)$ satisfies $b\leq 3$  (see \eqref{n0-1}, \eqref{n0+1} and \eqref{2(n0-1)}) and $c\leq 7$  by \eqref{upperboundb}. A
  direct inspection gives the triples $(1,2,1)$ and~$(1,2,3)$.
  
  For $n_0=4$, the argument above shows that, except for the
  family $(4,c,c)$ with $c>4$ and odd, any triple $(a,b,c)$
  satisfies
  \begin{equation}\label{leqtriple}
    b\leq \max \left(n_0-1, \, n_0+1, \, 2(n_0-1)\right)
    =\max\left( n_0+1, \, 2(n_0-1)\right), 
  \end{equation}
  (see \eqref{n0-1}, \eqref{n0+1}, \eqref{2(n0-1)}). With $n_0 =4$, we
  obtain $b\leq 6$ which implies $c \leq 15$ by \eqref{upperboundb}.
  A direct computation gives the list of solutions
  in \eqref{sixtriples}.

  For $n_0\in \{8,9\}$, we impose the additional condition
  $|r-t|=6$. Since $n_0\not=6$, $(a,b,c)$ is not of the shape
  \[
    (n_0,c,c)\text{ or } (c,n_0,c)\text{ with } c\geq 1,\ (c,n_0)=1.
  \]
  A straightforward application of \eqref{leqtriple} and of
  \eqref{upperboundb} leads to the inequalities
  \[
    1\leq a\leq b\leq 2(n_0-1),\ c\leq 5n_0-3,
  \]
  and another direct computation leads to the lists above.
\end{proof}


We now recall the definition of Kummer-induced and Belyi-induced
multisets of characters.

\begin{definition}\label{def-induced}
  Let~$k$ be a finite field. Let $(\bfrho,\bftheta)$ be a pair of
  non-empty multisets of characters of $\kt$. Let~$r=|\bfrho|$ and
  $t=|\bftheta|$, and let~$n=\max(r,t)$.

  Let~$d\geq 2$ be an integer. The multiset $\bfrho$ is
  $d$-\emph{Kummer-induced} if $d$ divides $r$ and if for one (or any)
  character $\eta$ of order $d$, one has $\eta\cdot
  \bfrho=\bfrho$. The multiset is Kummer-induced if it is
  $d$-Kummer-induced for some~$d$.
  
  The pair $(\bfrho,\bftheta)$ is \emph{Kummer-induced} if there
  exists an integer $d\geq 2$ dividing $r$ and $t$ such that $\bfrho$
  and $\bftheta$ are both $d$-Kummer induced.
 
  The pair $(\bfrho,\bftheta)$ is \emph{Belyi-induced} if $r=t=n$ and
  there exists a partition $n=d+e$ into positive integers and
  characters $\alpha$ and $\beta$ of $\kt$, with $\beta\not=1$, such
  that
  \begin{gather*}
    \bfrho=\{\text{all $d$-th roots of $\alpha$}\}\cup \{\text{all
      $e$-th roots of $\beta$}\},
    \\
    \bftheta=\{\text{all $r$-th roots of $\alpha\beta$}\}.
  \end{gather*}

  The pair $(\bfrho,\bftheta)$ is \emph{primitive} if it is neither
  Kummer-induced nor Belyi-induced.
\end{definition}

The following lemma classifies the triples $(a,b,-c)$ for which the
corresponding $(\bfrho_{a,b;c},\bftheta_{a,b;c})$ are either
Kummer-induced or Belyi-induced.

\begin{lemma}
  \begin{enumth}
  \item The pair $(\bfrho_{a,b;c}, \bftheta_{a,b;c})$ is
    Kummer-induced if and only if $c=a$ or $c=b$.
  \item The pairs $(\bfrho_{a,b;c},\bftheta_{a,b;c})$ and
    $(\bftheta_{a,b;c},\bfrho_{a,b;c})$ are not Belyi-induced.
  \item In particular, the pair $(\bfrho_{a,b;c}, \bftheta_{a,b;c})$
    is primitive unless $c=a$ or $c=b$.
  \end{enumth}
\end{lemma}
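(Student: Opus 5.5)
The plan is to make the two multisets completely explicit and then test each definition directly against the cyclic group structure of characters of $\kt$.

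\emph{Setup.} Write $\mu_m$ for the cyclic group of characters of order dividing $m$. Global coprimality of $(a,b,c)$ gives $\mu_{(a,c)}\cap\mu_{(b,c)}=\{1\}$ and $\mu_a\cap\mu_b\cap\mu_c=\{1\}$. Then
\[
\bfrho_{a,b;c}=\{1\}\sqcup(\mu_a\setminus\mu_{(a,c)})\sqcup(\mu_b\setminus\mu_{(b,c)}),\qquad
\bftheta_{a,b;c}=\mu_c\setminus(\mu_{(a,c)}\cup\mu_{(b,c)}).
\]
Here $\bftheta_{a,b;c}$ is an honest set, and the only multiplicities in $\bfrho_{a,b;c}$ come from overlaps of the last two pieces. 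Since the trivial character lies in neither of the last two pieces, it has multiplicity exactly $1$ in $\bfrho_{a,b;c}$.

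\emph{Kummer induction.} First suppose $c=a$. Coprimality forces $(a,b)=1$, so $\bftheta_{a,b;c}=\emptyset$ and $\bfrho_{a,b;c}=\mu_b$. This multiset is visibly $b$-Kummer-induced, with the empty $\bftheta$ trivially so; the degenerate case $b=1$ will be checked separately, and $c=b$ is symmetric.

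For the converse, let $\eta$ of order $d\geq2$ preserve both multisets.
\begin{itemize}
\item If $t>0$, then $\bftheta_{a,b;c}\subset\mu_c$ is a union of $\langle\eta\rangle$-cosets, so $\eta\in\mu_c$ and the complement $\mu_{(a,c)}\cup\mu_{(b,c)}$ is also $\eta$-stable. Because it contains $1$, it contains $\eta$; say $\eta\in\mu_{(a,c)}$.
\item Invariance of $\bfrho_{a,b;c}$ then requires $\eta=\eta\cdot1\in\bfrho_{a,b;c}$. But $\eta\in\mu_{(a,c)}$ is excluded from the $\mu_a$-piece, so $\eta$ would have to lie in $\mu_b$ as well, giving $\eta\in\mu_a\cap\mu_b\cap\mu_c=\{1\}$, a contradiction.
\item Hence $t=0$, so $c\mid a$ or $c\mid b$ by the preceding lemma; say $c\mid a$, which forces $(b,c)=1$. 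The multiplicity function of $\bfrho_{a,b;c}$ equals $1$ at the trivial character and $0$ on $\mu_c\setminus\{1\}$, because these characters lie neither in $\mu_b$ nor in $\mu_a\setminus\mu_c$.
\item Translation invariance then forces $\langle\eta\rangle\cap\mu_c=\{1\}$. Comparing the $\langle\eta\rangle$-orbits of the pieces $\mu_a\setminus\mu_c$ and $\mu_b\setminus\{1\}$, and using $d\mid r=a+b-c$, should force $\mu_a\setminus\mu_c=\emptyset$, i.e.\ $c=a$.
\end{itemize}
I would run this last orbit comparison through generating polynomials modulo $x^{N}-1$, where $N=|\kt|$ and invariance means $P(\zeta x)=P(x)$ for a primitive $d$-th root of unity $\zeta$.

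\emph{Belyi induction.} This needs $r=t$, i.e.\ $a+b=c$, so $(a,c)=(b,c)=1$ by part~\ref{case1} of Lemma~\ref{lemtripleclassification}, with $\bftheta_{a,b;c}=\mu_c\setminus\{1\}$ and $\bfrho_{a,b;c}=\mu_a\cup\mu_b$ (trivial character counted once).
\begin{itemize}
\item For the pair in the given order, $\bftheta_{a,b;c}$ would have to be a full coset of $\mu_n$ with $n=c-1$. All pairwise quotients of its elements lie in $\mu_c\cap\mu_{c-1}=\{1\}$, forcing $c=2$. Then $n=1$ admits no partition $n=d+e$ into positive parts.
\item For the reversed pair, $\bfrho_{a,b;c}$ would have to be a coset of $\mu_n$ containing $1$, hence equal to $\mu_n$. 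This forces $a\mid b-1$ and $b\mid a-1$, so $a=1$ or $b=1$. These residual families $(1,b,b+1)$ must then be ruled out by examining whether $\mu_{b+1}\setminus\{1\}$ splits as a union of all $d$-th roots of $\alpha$ and all $e$-th roots of $\beta$ with $\alpha\beta=1$.
\end{itemize}

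\emph{Main obstacle.} The main obstacle is exactly this last residual check. For $b=2$ the decomposition $\{\omega,\omega^2\}=\{\omega\}\cup\{\omega^2\}$ appears to satisfy the Belyi shape, which suggests either that the convention on the order of the pair in Definition~\ref{def-induced} matters, or that the solvable triple $(1,2,-3)$ must be set aside. I would first recheck this convention against \cite{ESDE} and \cite{katz-tiep} before completing the argument. Part (3) then follows immediately from (1) and (2).
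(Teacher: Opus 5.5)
Your suspicion at the end is right, and it means part (2) cannot be proved as stated. For $(a,b,c)=(1,2,3)$ one has $\bftheta_{1,2;3}=\{\chi_{1/3},\chi_{2/3}\}$ and $\bfrho_{1,2;3}=\{1,\chi_{1/2}\}$. Then $d=e=1$, $\alpha=\chi_{1/3}$, $\beta=\chi_{2/3}\neq 1$ exhibit $(\bftheta_{1,2;3},\bfrho_{1,2;3})$ as Belyi-induced in the sense of Definition~\ref{def-induced}. No convention on the order rescues (2), because (2) explicitly concerns this reversed pair.

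Your reduction to the family $(1,b,b+1)$ is correct, and the residual check finishes as follows. The two cosets lie in $\mu_{b+1}$, so $d,e\mid b+1$. Since $\gcd(d,e)\mid\gcd(b,b+1)=1$, this gives $de\leq b+1=d+e+1$, i.e.\ $(d-1)(e-1)\leq 2$, leaving $b\in\{2,3,5\}$. For $b=5$, the only coset of $\mu_3$ in $\mu_6$ avoiding $1$ leaves $\{\chi_{1/3},\chi_{2/3}\}$, which is not a coset of $\mu_2$. But $b=3$ is a second counterexample: take $d=1$, $e=2$, $\alpha=\beta=\chi_{1/2}$, with $\bftheta_{1,3;4}=\mu_4\setminus\{1\}$ and $\bfrho_{1,3;4}=\mu_3$. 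So (2) holds exactly away from the reversed pairs of the solvable triples $(1,2,-3)$ and $(1,3,-4)$ (up to $a\leftrightarrow b$).

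The paper's proof goes wrong at precisely this point. It asserts that the elements of $\bfrho_{a,b;c}$ would be $n$-th roots of a non-trivial character, but the definition only imposes $\beta\neq 1$. The deduction $\alpha=1$ made for the first ordering used that the first multiset contains $1$, which fails for $\bftheta_{a,b;c}$; in fact $1\in\bfrho_{a,b;c}$ forces $\alpha\beta=1$. The damage is contained: Belyi induction needs $a+b=c$, which is treated separately via Beukers--Heckman in Proposition~\ref{pr-case4}, and these triples are excluded as solvable anyway. Part (3) remains literally true only because Definition~\ref{def-induced} tests Belyi induction in the given order.

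For (1), your argument when $t>0$ is correct and more complete than the paper's one-line claim that an inducing $d$ would divide $a$, $b$ and $c$. The case $t=0$, however, is only announced. Its target should also be $c\in\{a,b\}$ rather than $c=a$, since for $c=b=1$ one gets $\bfrho_{a,b;c}=\mu_a$.

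The step closes quickly if $P$ is the polynomial $\prod_{\chi\in\bfrho_{a,b;c}}(x-\chi(g))$, with $g$ a generator of $\kt$. This is the encoding in which your condition $P(\zeta x)=P(x)$ is right, given $d\mid r$; for an exponent generating function modulo $x^N-1$, translation by $\eta$ is instead multiplication by $x^{N/d}$. With $c\mid a$ and $c\notin\{a,b\}$ one gets $P=(x^a-1)(x^b-1)/(x^c-1)=\sum_{0\leq i<a/c}(x^{b+ic}-x^{ic})$ with $a/c\geq 2$. Since $b\neq c$, the monomial $x^c$ cannot cancel. When $c\geq 2$, neither can $x^b$, because $(b,c)=1$ gives $c\nmid b$. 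Hence $d\mid c$, and if $c\geq 2$ also $d\mid b$, so $d=1$. This also handles triples such as $(2,6,3)$, for which the paper's assertion $(a,b)=1$ is false.

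Finally, the case $b=1$ you deferred fails. Here $\bfrho_{a,1;a}=\{1\}$ has $r=1$, so the ``if'' direction of (1) is false for the rank-one triples $(a,1,a)$ and $(1,b,b)$. The paper's proof also overlooks this point.
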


\begin{proof}
  (1) If $c$ divides neither~$a$ nor~$b$, then the pair is not
  $d$-Kummer-induced for any~$d$, since $d\geq 2$ would have to divide
  $a$, $b$ and $c$, contradicting the assumption that these are
  coprime.

  If $c\mid b$, say, then we obtain $(a,b)=1$ and
  \[
    \bfrho_{a,b;c}=\bfrho[a]\sqcup(\bfrho[b]\setminus\bfrho[c]),\quad\quad
    \bftheta_{a,b:c}=\emptyset.
  \]
  
  The multiset $\bfrho[b]\setminus\bfrho[c]$ is non-empty, except when
  $b=c$, in which case $\bfrho_{a,b;c}=\bfrho[a]$ is indeed
  Kummer-induced. Otherwise, $\bfrho_{a,b;c}$ is not Kummer-induced
  since $a$ and $b$ are coprime and any inducing $d\geq 2$ would have
  to divide $a$ and $b$.

  A similar argument applies to the case $c\mid a$.

  (2) If $(\bfrho_{a,b;c},\bftheta_{a,b;c})$ or
  $(\bftheta_{a,b;c},\bfrho_{a,b;c})$ is Belyi-induced, then by
  definition, we must have $r=t=n$. This translates to $a+b=c$. The
  coprimality condition of $a$, $b$, $c$ implies that $a$, $b$ and~$c$
  are in fact pairwise coprime. Thus $n=a+b-1=c-1$, and furthermore
  \[
    \bfrho_{a,b;c}=\bfrho[a,b]\setminus\{1\},
    \quad
    \bftheta_{a,b;c}=\bfrho[c]\setminus\{1\}.
  \]

  Suppose $(\bfrho_{a,b;c},\bftheta_{a,b;c})$ is Belyi-induced, and
  let $(\alpha,\beta,d,e)$ be the parameters which, in the notation of
  Definition~\ref{def-induced}, witness the fact that the pair is
  Belyi-induced.  Since $\bfrho_{a,b;c}$ contains the trivial
  character, we must have $\alpha=1$. By definition, any
  $\theta\in \bfrho[c]\setminus\{1\}$ satisfies
  \[
    \beta=\alpha\beta=\theta^{n}=\theta^{c-1}=\theta^{-1},
  \]
  which implies that $c=2$ and $\beta$ is of order~$2$, hence $a=b=1$,
  and we obtain a contradiction since $(\{1\},\{\chi\})$ is not
  Belyi-induced for~$\chi$ of order~$2$.

  Finally, that the pair $(\bftheta_{a,b;c},\bfrho_{a,b;c})$ is not
  Belyi-induced, since the elements of
  $\bfrho_{a,b;c}=\bfrho[a,b]\setminus\{1\}$ would be $n$-th roots of
  some non-trivial character $\beta$, which is impossible as this
  multiset contains the trivial character.

  (3) Follows immediately from the previous cases.
\end{proof}

This already gives a first preliminary result towards
Theorem~\ref{th-galant-sheaf} in the negative case. We note that by
the definition $r=t$ is equivalent to~$a+b=c$.

\begin{proposition}[Case I]\label{pr-case1}
  Suppose that $a+b\not=c$ and $c\notin \{a,b\}$, then
  $\mcK_{a,b,-c}(\psi)$ is {\good} for all $q$ large enough, unless
  $n=r\in \{4,8,9\}$.
\end{proposition}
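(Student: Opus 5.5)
Since being {\good} is a geometric property, it suffices to treat the hypergeometric sheaf $\mcF_{a,b,-c}(\psi)=\mcH(\bfrho_{a,b;c},\bftheta_{a,b;c};\psi)$ of the previous lemma, which has rank $n=\max(r,t)$. The hypothesis $a+b\neq c$ means $r\neq t$, and $|r-t|=|a+b-c|$. The hypothesis $c\notin\{a,b\}$ means, by the preceding lemma, that the pair $(\bfrho_{a,b;c},\bftheta_{a,b;c})$ is primitive, i.e.\ neither Kummer-induced nor Belyi-induced; Belyi-induction is excluded anyway since $r\neq t$. So the plan is to feed this primitive hypergeometric sheaf with $r\neq t$ into the Katz--Tiep classification, in the form packaged in~\cite{FKMSBilinear}*{Th.\,9.1,\,\S\,9.5}, exactly as in the positive case.

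The first step is to separate the finite and infinite monodromy cases. For $q$ large compared to $a,b,c$, the characters in $\bfrho_{a,b;c}\sqcup\bftheta_{a,b;c}$ have orders dividing $abc$, which is prime to $q$. The wild part of the monodromy at $0$ or $\infty$ (whichever corresponds to the larger multiset) then has Swan conductor $1/|r-t|$ with slope denominator prime to $q$. By Katz's criteria this prevents the monodromy from being finite when $q$ is large, except in the small-rank situations enumerated in~\cite{FKMSBilinear}.

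The second step is to identify the connected component. By Katz's theorem on hypergeometric sheaves with $r\neq t$ (\cite{ESDE}*{Th.\,8.11.2} or the version recalled in~\cite{FKMSBilinear}), primitivity gives that $G^0$ is one of $\mathrm{SL}_n$, $\mathrm{Sp}_n$ or $\SO_n$, acting via its standard representation, or else, when $n\in\{7,8\}$ (and $n=9$), one of the exceptional groups $G_2$, $\mathrm{Spin}_7$, or $\mathrm{SL}_3\otimes\mathrm{SL}_3$. Each of these groups is simple and irreducible, and so {\good}, except for $\SO_4$, which is not simple, and the tensor-decomposable cases such as $\mathrm{SL}_2\otimes\mathrm{SL}_2$ in rank $4$, $\mathrm{SL}_2\otimes\mathrm{Sp}_4$ in rank $8$, and $\mathrm{SL}_3\otimes\mathrm{SL}_3$ in rank $9$. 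These obstructions occur only for $n\in\{4,8,9\}$, which is precisely the exceptional set in the statement. The exclusion is of $n=r$ rather than $n=t$, I think, because the relevant statement in~\cite{FKMSBilinear}*{Th.\,9.1} is formulated in terms of which side is larger. Alternatively, the authors may want only the case $r>t$ excluded, and treat $t>r$ by the inversion $u\mapsto u^{-1}$, which swaps the two multisets and preserves being {\good}. I would check carefully which of these versions of the cited theorem applies.

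The main obstacle is verifying the hypotheses of the cited classification, in particular the absence of finite monodromy for large $q$ and the exact list of exceptional ranks. I would handle the finite case through Katz--Tiep: for $q$ large, finite primitive hypergeometric monodromy forces $q$ to lie in a bounded set depending on $n$, so it is excluded. Everything else is then a direct lookup in~\cite{FKMSBilinear}*{Th.\,9.1}.
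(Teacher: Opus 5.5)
Your proposal is correct and is essentially the paper's argument: the proof there consists only of the preceding lemma (primitivity when $c\notin\{a,b\}$, with Belyi-induction impossible anyway since $r\neq t$) combined with \cite{FKMSBilinear}*{Th.\,9.1}, and your outline of what that theorem contains is fine, up to one slip: the wild part has all slopes $1/|r-t|$, so its Swan conductor is $1$, not $1/|r-t|$. On the ``$n=r$'' question, your reading that the exceptional set is $n\in\{4,8,9\}$ is the right one, whichever of $r,t$ is larger, because inversion preserves being \good; for example $(1,2,5)$ has $r=2<t=4$ and is sulfatic by Proposition~\ref{pr-case3}, so drop the alternative in which only the case $r>t$ is excluded.
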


\begin{proof}
  This follows from the previous lemma
  and~\cite{FKMSBilinear}*{Th.\,9.1}.
\end{proof}

The next case to consider is when $a+b\not=c$ and $n=r\in\{4,8,9\}$.

\begin{proposition}[Case II]\label{pr-case2}
  Suppose that $a+b\not=c$, that $c\notin \{a,b\}$, and that
  $n=r\in \{4,8,9\}$. Then $\mcK_{a,b,-c}(\psi)$ is {\good} for all $q$
  large enough if $a+b+c$ is odd.
\end{proposition}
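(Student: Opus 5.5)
The plan is to reduce the statement to the finite list of triples from Lemma~\ref{lemtripleclassification}, and then to use the parity of $a+b+c$ to rule out the exceptional monodromy groups singled out in~\cite{FKMSBilinear}*{Th.\,9.2}. As in the proof of Proposition~\ref{pr-case1}, the pair $(\bfrho_{a,b;c},\bftheta_{a,b;c})$ is primitive, since $c\notin\{a,b\}$ and $a+b\neq c$. The only obstruction to applying~\cite{FKMSBilinear}*{Th.\,9.1} directly is therefore that the rank $n=r$ lies in $\{4,8,9\}$. In those ranks, \cite{FKMSBilinear}*{Th.\,9.2} describes the possible non-{\good} monodromy groups: $\Ort_4$ or $\SO_4$ in rank $4$, and certain finite groups (related to $2^{1+6}.W(E_7)$, Weyl-type or $\mathrm{Sp}$-type groups) in ranks $8$ and $9$. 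These occur only under a condition on $r-t$ (namely $r-t$ small and even, e.g.\ $|r-t|=6$ in ranks $8,9$) together with a condition on the determinant character.

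The first step is to observe that $r-t=a+b-c$. Its parity is that of $a+b+c$. If $a+b+c$ is odd, then $r-t$ is odd, and I expect every exceptional configuration of~\cite{FKMSBilinear}*{Th.\,9.2} to require $r-t$ even.

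\begin{itemize}
\item In rank $4$, the groups $\Ort_4$ and $\SO_4$ arise only when the sheaf is self-dual, which for hypergeometric sheaves forces $r-t$ to be even; otherwise the local monodromy at $\infty$ has a wild part of odd Swan slope $1/(r-t)$, incompatible with an orthogonal autoduality.
\item In ranks $8$ and $9$, the finite exceptions need $|r-t|=6$, as recorded in parts~\ref{case8} and~\ref{case9} of Lemma~\ref{lemtripleclassification}, and $6$ is even.
\end{itemize}

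This is exactly why that lemma lists only triples with $a+b-c\equiv 0 \mods 2$ or $|a+b-c|=6$.

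The next step is to check that, for $a+b+c$ odd and $n=r\in\{4,8,9\}$, none of the triples of Lemma~\ref{lemtripleclassification} satisfy the hypotheses of the exceptional cases. Hence the alternatives of~\cite{FKMSBilinear}*{Th.\,9.2} other than ``{\good}'' are excluded, and we conclude that the geometric monodromy group is as in Definition~\ref{defgoodsheafintro}. This holds for $q$ large in terms of $(a,b,c)$, which is needed so that $q$ avoids the small characteristics where Katz's classification fails and so that the characters in $\bfrho_{a,b;c}$ are distinct as required.

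The main obstacle I expect is verifying precisely that the hypotheses in~\cite{FKMSBilinear}*{Th.\,9.2} really are stated, or can be reformulated, in terms of the parity of $r-t$ (self-duality) and $|r-t|=6$. If that theorem is stated instead in terms of autoduality of the pair $(\bfrho,\bftheta)$, the fallback is a direct check. One would show that $\mcK_{a,b,-c}(\psi)$ cannot be self-dual because its $I_\infty$-representation contains a wild component of dimension $r-t$ odd with slope $1/(r-t)$, whose dual is not isomorphic to itself when $r-t$ is odd and $q\neq 2$. This removes the symplectic and orthogonal possibilities and leaves the simple, irreducibly acting identity component ($\mathrm{SL}_n$), which is {\good}.
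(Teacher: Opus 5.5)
Your argument is essentially the paper's: primitivity of $(\bfrho_{a,b;c},\bftheta_{a,b;c})$ from the preceding lemma, the observation that $r-t=a+b-c$ (equivalently $r+t$) has the same parity as $a+b+c$, and then a direct appeal to \cite{FKMSBilinear}*{Th.\,9.2}, whose non-{\good} alternatives require $r\pm t$ even. The extra check against the triples of Lemma~\ref{lemtripleclassification} and the self-duality fallback are not needed, and your remark that $q$ large makes the characters of $\bfrho_{a,b;c}$ distinct is inaccurate (these are genuine multisets, e.g.\ when $(a,b)>1$), but neither point affects the argument.
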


\begin{proof}
  From the previous lemma, the pair $(\bfrho_{a,b;c},\bftheta_{a,b;c})$
  is primitive. Noting that $a+b+c$ has the same parity as $r+t$, the
  result follows directly from~\cite{FKMSBilinear}*{Th.\,9.2}.
\end{proof}

Thus, there remain two cases: the case $a+b+c$ is even and $a+b\not=c$ and the case 
$a+b=c$. These are handled in the following two propositions.

\begin{proposition}[Case III]\label{pr-case3}
  Suppose that $a+b\not=c$, that $c\notin \{a,b\}$, that
  $n=r\in \{4,8,9\}$, and that $a+b+c$ is even.

  Then $\mcK_{a,b,-c}(\psi)$ is {\good} for all $q$ large enough unless
  $(a,b,c)$ is one of the triples
  \[
    (1,2,5),\quad (1,4,3),\quad (1,6,3),\quad (2,3,1),
  \]
  in which case $n=r=4$ and, for~$q$ large, the sheaf is sulfatic in
  the case $(1,2,5)$ and oxozonic otherwise.
\end{proposition}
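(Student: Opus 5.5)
Since $a+b\neq c$ and $c\notin\{a,b\}$, the preceding lemma shows that the pair $(\bfrho_{a,b;c},\bftheta_{a,b;c})$ is primitive. We also have $r\neq t$ with $|r-t|=|a+b-c|$, and $r-t\equiv a+b+c\equiv 0\mods 2$. The plan is to apply \cite{FKMSBilinear}*{Th.\,9.2}, which treats primitive hypergeometric sheaves of rank $n\in\{4,8,9\}$ with $r-t$ even. That theorem (relying on Katz--Tiep) says such a sheaf is {\good} unless it lies in a short list of exceptional configurations. For $n=4$ the exceptions are $G^0=\SO_4$ or $\Ort_4$. For $n=8,9$ the exceptions are the finite-monodromy or $\mathrm{Spin}_7$ / $\mathrm{SL}_3\times\mathrm{SL}_3$-type cases, and these occur only for specific values of $|r-t|$, essentially $|r-t|=6$ with particular character data. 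So the proof reduces to enumerating the triples that can fall into an exceptional configuration and checking each one.

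\textbf{Step 1 ($n=4$).} Here $|r-t|=|a+b-c|$ is even and nonzero, so case~\ref{case3} of Lemma~\ref{lemtripleclassification} applies. It yields the six triples \eqref{sixtriples} together with the family $(4,c,c)$, after ordering $a\leq b$ (we also restore the symmetric versions under $a\leftrightarrow b$). We discard $(1,4,1)$, $(3,4,3)$ and $(4,c,c)$, since $c\in\{a,b\}$ there. This leaves $(1,2,5)$, $(1,4,3)$, $(1,6,3)$ and $(2,3,1)$, and for each we check that $n=r=4$ rather than $n=t$. We then decide between $\SO_4$ and $\Ort_4$ by the determinant criterion of \cite{FKMSBilinear}*{Th.\,9.2}, exactly as in the positive case. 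For $(1,2,5)$ we have $\bfrho=\{1,1,\chi_2\}$ minus the common part with $\bfrho[5]$; the product of all characters in $\bfrho$ and $\bftheta$ (suitably normalized) should come out geometrically trivial, giving $\SO_4$, i.e.\ the sulfatic case. For the other three triples we compute $\bfrho_{a,b;c}$ and $\bftheta_{a,b;c}$ explicitly from the lemma. In each case the determinant should be the quadratic Kummer sheaf, giving $\Ort_4$, i.e.\ the oxozonic case. Any other triple with $n=4$ is not in the exceptional list and is therefore {\good}.

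\textbf{Step 2 ($n\in\{8,9\}$).} The exceptional cases in Th.\,9.2 require $|r-t|=6$, so cases~\ref{case8} and~\ref{case9} of Lemma~\ref{lemtripleclassification} reduce us to eight explicit triples. For each one we write down $\bfrho_{a,b;c}$ and $\bftheta_{a,b;c}$ and check that it does not match the specific character data required by the exceptional configurations of Katz--Tiep. Alternatively, we check that $n=t$ rather than $n=r$ (for instance $(1,2,9)$ has $c>a+b$, so $t>r$), which excludes it from the case $n=r$ treated here. Since the hypothesis is $n=r$, only the triples with $a+b>c$ remain: $(1,8,3)$, $(2,7,3)$, $(4,5,3)$, $(1,9,4)$, $(3,7,4)$ and $(5,5,4)$, where $(5,5,4)$ is excluded by coprimality. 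Their tuples contain the trivial character in $\bfrho$ with multiplicity $1$ and have $t=n-6\in\{2,3\}$. We compare these with the exceptional cases, which have specified $\bftheta$ (characters of order dividing $2$ or $3$ in particular patterns) and specified $\bfrho$. I expect the mismatch to follow from the order of the characters present in $\bfrho$.

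\textbf{Main obstacle.} The main difficulty is Step~2, together with the $\SO_4$ versus $\Ort_4$ determination in Step~1. Both require matching explicit multisets of characters against the precise exceptional lists in \cite{FKMSBilinear}*{Th.\,9.2}. This is a finite but fiddly computation, and I would carry it out by tabulating $\bfrho_{a,b;c}$ and $\bftheta_{a,b;c}$ by character order for each triple. The requirement ``$q$ large'' enters only to ensure that $a,b,c$ are prime to $q$ and that the Katz--Tiep classification applies in large characteristic.
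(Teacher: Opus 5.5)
Your strategy is the paper's: apply \cite{FKMSBilinear}*{Th.\,9.2} to the primitive pair $(\bfrho_{a,b;c},\bftheta_{a,b;c})$, use Lemma~\ref{lemtripleclassification} to reduce to finitely many triples, decide $\SO_4$ versus $\Ort_4$ by the determinant in rank~$4$, and inspect the character data in ranks~$8$ and~$9$. That finite bookkeeping is the whole proof, and yours has errors that either fail or skip cases.

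In Step~1, $(1,2,5)$ has $\bfrho_{1,2;5}=\{1,\chi_{1/2}\}$ and $\bftheta_{1,2;5}=\bfrho[5]\setminus\{1\}$. So $r=2<t=4$, and your check ``$n=r=4$ rather than $n=t$'' fails. The statement's ``$n=r$'' is inaccurate here; the paper's proof records $r=2$, $t=4$, so the hypothesis must be read as ``the rank $n=\max(r,t)$ lies in $\{4,8,9\}$''. Because $t>r$, Th.\,9.2 has to be applied after inversion. The determinant is then the product of the conjugates of the characters of the larger multiset $\bftheta_{1,2;5}$ alone, which is trivial, whence $\SO_4$. Any recipe that also multiplies in $\bfrho_{1,2;5}$, such as your ``product of all characters in $\bfrho$ and $\bftheta$'', gives $\chi_{1/2}$ and would wrongly predict $\Ort_4$.

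In Step~2 you are inconsistent. Having kept $(1,2,5)$ although $t>r$, you cannot discard $(1,2,9)$ and $(1,3,10)$ on the ground that $t>r$. The exceptional configurations of Th.\,9.2 are allowed with $\bfrho$ and $\bftheta$ exchanged, and the paper does examine $(1,2,9)$.

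These checks are not vacuous, because the configurations hold only up to a twist $\eta$. Indeed $\chi_{1/4}\cdot\{1,\chi_{1/2}\}=\{\chi_{1/4},\chi_{3/4}\}$, and $\bfrho_{1,3;10}=\{1,\chi_{1/3},\chi_{2/3}\}$ is literally the three-element side of the rank-$9$ configuration. So the mismatch has to be found on the large side:
\begin{itemize}
\item for $(1,2,9)$, the set $\eta\cdot(\bfrho[9]\setminus\{1\})$ with $\eta=\chi_{1/4}^{\pm1}$ contains no pair $\{\chi,\bar\chi\}$;
\item for $(1,3,10)$, take $\eta\in\bfrho[3]$. The element $\eta\chi_{1/2}$ cannot lie in a set of square roots, since these come in $\bfrho[2]$-cosets and its partner $\eta$ is absent. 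So it would have to equal $\bar\mu^2$ for some $\mu\in\eta\cdot\bfrho[10]$, i.e.\ lie in $\eta\cdot\bfrho[5]$, which it does not.
\end{itemize}

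Finally, $(5,5,4)$ is \emph{not} excluded by coprimality: $\gcd(5,5,4)=1$, and it satisfies every hypothesis ($r=9$, $t=3$, $a+b+c=14$, $c\notin\{a,b\}$). It is easily ruled out, since $\bftheta_{5,5;4}=\{\chi_{1/4},\chi_{1/2},\chi_{3/4}\}$ is not a twist of $\{1,\chi_{1/3},\chi_{2/3}\}$, but it has to be checked.

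With these repairs your argument proves the proposition, with $n$ read as the rank.
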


\begin{proof}
  We apply again~\cite{FKMSBilinear}*{Th.\,9.2}.

  If $r=4$, we obtain the conclusion from loc. cit., and we only need
  check that the listed triples are those which give $r=4$ (more
  precisely, one first checks that in all cases, the connected
  component of the geometric monodromy group is~$\SO_4$; one then
  computes the determinant, taking into account that in loc. cit., it
  is assumed that $r>t$, and hence inversion is needed to deal with
  cases where~$r<t$; for instance, in the case $(1,2,5)$, we have
  $r=2$, $t=4$, and the determinant is the product of the conjugate of
  the characters from~$\bftheta_{a,b;c}$, which is trivial).

  It remains to show that the case with $r=8$ or $r=9$ for which the
  geometric monodromy group is not {\good} can not occur in our situation.

  \textbf{Case 1.} This is the case where $\Ff_q$ contains either the fourth
  roots or cube roots of unity, $r=8$, $|r-t|=6$, and there exists a character $\eta$
  and $\chi$ such that $\chi$ is not of exact order~$4$, and 
\begin{enumerate}
	\item either  \[
    \eta\cdot \bfrho_{a,b;c}=\{\chi,\bar{\chi}\}\cup \{\text{cube roots
        of }\chi,\bar{\chi}\},\quad \eta\cdot
      \bftheta_{a,b;c}=\{\chi_{1/4},\chi_{3/4}\}
  \]
  where $\chi_{1/4}$ and $\chi_{3/4}$ are the characters of exact
  order~$4$ (or the same with $\bfrho_{a,b;c}$ and $\bftheta_{a,b;c}$
  exchanged). 
  \item or   \[
    \eta\cdot \bfrho_{a,b;c}=\{\chi,\bar{\chi}\}\cup \{\text{cube roots
        of }\chi,\bar{\chi}\},\quad \eta\cdot
      \bftheta_{a,b;c}=\{\chi_{1/3},\chi_{2/3}\}
  \]
  where $\chi_{1/3},\ \chi_{2/3}$ are the characters of exact
  order~$3$ (or the same with $\bfrho_{a,b;c}$ and $\bftheta_{a,b;c}$
  exchanged). 
\end{enumerate}

  Up to exchanging $a$ and $b$, the conditions $r=8$ and $|r-t|=6$ imply
  the $(a,b,c)$ is one of
  \[
    (1,2,9),\quad (1,8,3),\quad (2,7,3),\quad (4,5,3).
  \]

  By inspection, the corresponding $\bfrho_{a,b;c}$ and
  $\bftheta_{a,b;c}$ are not of the indicated types: indeed let $\chi_{1/2}$ be the unique character of exact order $2$, we have $$\bfrho_{1,2;9}=\{1,\chi_{1/2}\}\not= \{\chi_{1/3},\chi_{2/3}\},\ \{\chi_{1/4},\chi_{3/4}\}$$ while for $(1,8,3),\ (2,7,3),\ (4,5,3)$, we have indeed
  $$\bftheta_{a,b;c}=\{\chi_{1/3},\chi_{2/3}\}$$
  but  $\bfrho_{a,b;c}$ equal 
  $$\bfrho[8],\ \{\chi_{1/2}\}\sqcup \bfrho[7]\hbox{ or } 
  \{\chi_{1/4},\chi_{1/2},\chi_{3/4}\}\sqcup \bfrho[5]$$ and does  not have the requested shape.

  \textbf{Case 2.} This is the case where $\Ff_q$ contains the third
  roots of unity, $r=9$, $|r-t|=3$, and there exists characters $\eta$,
  $\chi$, $\rho$, $\xi$, where the last are not of order dividing~$3$
  and satisfy $\chi\rho\xi=1$, such that
  \[
    \eta\cdot \bfrho_{a,b;c}=\{\chi,\rho,\xi\}\cup \{\text{square roots
        of }\bar{\chi},\bar{\rho},\bar{\xi}\},\quad\quad \eta\cdot
      \bftheta_{a,b;c}=\{1,\chi_{1/3},\chi_{2/3}\}
  \]
  where $\chi_{1/3},\chi_{2/3}$ are the characters of exact order~$3$, or the same with
  $\bfrho_{a,b;c}$ and $\bftheta_{a,b;c}$ exchanged.

  Up to exchanging $a$ and $b$, the conditions $r=8$ and $|r-t|=6$ imply
  the $(a,b,c)$ is one of
  \[
    (1,3,10),\quad (1,9,4),\quad (3,7,4),\quad (5,5,4),
  \]
  and again by inspection we check that none of these satisfy the
  conditions above.
\end{proof}

Finally, we address the case where $a+b=c$.

\begin{proposition}[Case IV]\label{pr-case4} Suppose that
   $a+b=c$. Then for $q$ large
  enough, the geometric monodromy group of $\mcK_{a,b;-c}(\psi)$ is
  the finite symmetric group $S_{c}$ acting on its
  $(c-1)$-dimensional irreducible representation. In particular, this
  sheaf is {\good} if~$c\geq 5$.
\end{proposition}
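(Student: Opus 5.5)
The plan is to identify $\mcK_{a,b,-c}(\psi)$ geometrically with a finite-monodromy sheaf coming from an explicit cover of $\Pp^1$. We then compute the monodromy group of that cover directly.

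\textbf{Setup.} Suppose $a+b=c$. By Lemma~\ref{lemtripleclassification}\ref{case1}, $a$, $b$, $c$ are pairwise coprime. Hence
\[
\bfrho_{a,b;c}=\bfrho[a,b]\setminus\{1\},\quad \bftheta_{a,b;c}=\bfrho[c]\setminus\{1\},\quad r=t=c-1 .
\]
So $\mcF_{a,b,-c}(\psi)$ is a hypergeometric sheaf of type $(c-1,c-1)$. For such sheaves, by Katz (ESDE, \S\,8.5 and 8.17), geometric isomorphism classes are determined by the pair of multisets up to multiplicative translation. It therefore suffices to produce some other lisse sheaf on an open set of $\Gg_m$ whose Mellin transform is, up to a constant twist, the product $\prod_{\rho}G(\psi,\chi\rho)/\prod_\theta G(\psi,\chi\theta)$.

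\textbf{Key steps.}
\begin{enumerate}
\item Consider the degree-$c$ map $\pi\colon\Pp^1\to\Pp^1$, $\pi(x)=x^a(1-x)^b$, and the sheaf $\mcG=\pi_*\Qlb/\Qlb$ of rank $c-1$. Its trace function at $u$ is $|\{x:\pi(x)=u\}|-1$.
\item Compute the Mellin transform of $\mcG$. For a nontrivial character $\chi$ one has
\[
\sum_u\chi(u)\bigl(|\pi^{-1}(u)|-1\bigr)=\sum_x\chi^a(x)\chi^b(1-x)=J(\chi^a,\chi^b).
\]
Write this Jacobi sum as $G(\chi^a)G(\chi^b)/G(\chi^{c})$. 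Then apply the Hasse--Davenport relation (Proposition~\ref{prop:HD}) to each of the three Gauss sums. The result, up to a constant and a multiplicative translation, is the Mellin transform of $\HYP(\bfrho[a,b]\setminus\{1\},\bfrho[c]\setminus\{1\},\psi)$. The trivial characters cancel between numerator and denominator, consistent with the removal of $\bfrho[a,b]\sqcap\bfrho[c]=\{1\}$.
\item Pass from Mellin transforms to sheaves. Both sides are geometrically irreducible; for $\mcG$ this follows from step 4 below. Matching Mellin transforms over all finite extensions then gives a geometric isomorphism $\mcK_{a,b,-c}(\psi)\simeq[\times\lambda]^*\mcG$ for some $\lambda$. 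Consequently $G_{\mcK}$ is the geometric monodromy group of the Galois closure of $\pi$, acting on the standard representation.
\item Compute that monodromy group, using that $q>c$ makes all ramification tame. The branch points are:
\begin{itemize}
\item $\infty$, whose inertia is a $c$-cycle;
\item $0$, whose inertia has cycle type $(a,b)$;
\item the single critical value $\pi(a/c)$, whose inertia is a transposition, since $a/c$ is a simple critical point.
\end{itemize}
The group is transitive because the cover is connected. It is primitive: a block system of block size $d$ with $d\mid c$ and $1<d<c$ would have to be compatible with an element of cycle type $(a,b)$ and with $\gcd(a,c)=\gcd(b,c)=1$, and one checks this is impossible. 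A primitive group containing a transposition is $S_c$ by Jordan's theorem.
\item Conclude galantness. For $c\geq5$, the group $A_c$ is a normal, perfect subgroup of $S_c$, simple (so a central extension with trivial kernel), and it acts irreducibly on the $(c-1)$-dimensional standard representation. Hence the sheaf is {\good} in the sense of Definition~\ref{defgoodsheafintro}, case \eqref{goodfinite}.
\end{enumerate}

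\textbf{Main obstacle.} I expect step 3 to be the delicate one. The Jacobi-sum computation must be matched exactly with Katz's normalization of hypergeometric sheaves, including the translation $\lambda$, which should be $f_-=a^ab^b(-c)^{-c}$ or its inverse. We must also make sure that only a geometric isomorphism is needed, not an arithmetic one.

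Here I would use Katz's rigidity/uniqueness for irreducible hypergeometrics (ESDE 8.5.3). Alternatively, the local monodromy of $\mcG$ can be read off directly: at $0$ its characters are $\bfrho[a]\sqcup\bfrho[b]$ minus one trivial, at $\infty$ they are $\bfrho[c]$ minus the trivial, and the critical value gives a pseudoreflection. This is exactly the characterization of hypergeometric sheaves among lisse sheaves on $\Gg_m\setminus\{\lambda\}$, so Katz's recognition theorem (ESDE 8.5.3) then finishes the identification.
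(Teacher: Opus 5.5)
Your argument is correct, but it takes a genuinely different route from the paper's. The paper's proof is purely by citation. Having noted that $n=r=t=c-1$ and that only the trivial character is common to $\bfrho[a,b]$ and $\bfrho[c]$, it applies the Beukers--Heckman criterion over $\Cc$ to the parameters $\mmu_a\cup(\mmu_b\setminus\{1\})$ and $\mmu_c\setminus\{1\}$, which gives $S_c$ in its $(c-1)$-dimensional representation. It then transfers this to the $\ell$-adic geometric monodromy via Katz's comparison (ESDE, \S8.17), for $q$ large.

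You instead realize the sheaf in characteristic $q$ as $\pi_*\Qlb/\Qlb$ for $\pi(x)=x^a(1-x)^b$, and compute the monodromy of this three-point cover by hand. This is the degenerate case $\alpha=\beta=1$ of Belyi induction, which Definition~\ref{def-induced} excludes by requiring $\beta\neq 1$. Your route is self-contained, explains where $S_c$ comes from, and works for every $q>c$.

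Your ``one checks'' for primitivity is indeed quick. Take $g$ of cycle type $(a,b)$. The blocks meeting the $a$-cycle form one $\langle g\rangle$-orbit of length $\ell_a\mid a$, and each such block meets the $a$-cycle in $a/\ell_a$ points; the same holds for the $b$-cycle. If these two orbits of blocks coincide, then $\ell_a=\ell_b$ divides $(a,b)=1$, so there is a single block. If they are disjoint, the block size divides both $a$ and $b$, hence equals $1$.

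Finally, the identification you flag as the main obstacle needs no Hasse--Davenport bookkeeping. The substitution $x=zs$, $y=zt$ from the paper's closing remark on $K_{1,k-1,-k}$ works for any $a+b=c$. Since $(s,t)\mapsto s^at^b$ has all fibres of size $q-1$ when $(a,b)=1$, it gives
\[
K_{a,b,-c}(u;q)=\bigl|\{x\in\Fq,\ x^a(1-x)^b=(-1)^cu\}\bigr|-1+\frac{1}{q},\qquad u\in\Fqt,
\]
and likewise over every finite extension. So, up to $O(q^{-1/2})$, the trace function of $\mcK_{a,b,-c}(\psi)$ is minus that of $[\times(-1)^c]^*(\pi_*\Qlb/\Qlb)$. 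For two geometrically irreducible sheaves pure of weight $0$, Deligne's theorem then forces a geometric isomorphism, and the translation parameter is simply $(-1)^c$.
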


\begin{proof} Since $a$, $b$ and $c$ are postive integers, $c$ is different from $a$ and $b$. 
  As recalled in~\cite{FKMSBilinear}*{\S\,9.7}, there is a criterion due
  to Beukers and Heckman to check that a hypergeometric sheaf has finite
  monodromy, and a classification of the corresponding monodromy group,
  and we will check that this applies in our case.

  First, since $a+b=c$, the coprimality assumption implies that $a$, $b$
  and $c$ are pairwise coprime, so that
  \[
    n=r=t=a+b-1=c-1.
  \]

  In particular, only the trivial character occurs in the intersection
  of $\bfrho[a,b]$ and $\bfrho[c]$. Thus
  \[
    \bfrho_{a,b;c}=\bfrho[a]\sqcup (\bfrho[b]\setminus \{1\}),\quad\quad
    \bftheta_{a,b;c}=\bfrho[c]\setminus \{1\}.
  \]

  To these tuples, we associate the corresponding sets
  \[
    \mmu_{a}\cup (\mmu_b\setminus\{1\}),\quad\quad \mmu_c\setminus \{1\}
  \]
  of roots of unity, following the recipe
  in~\cite{ESDE}*{8.17}. According to the Beukers--Heckman
  criterion~\cite{BH}*{Prop. 5.9}, the hypergeometric differential
  equation with corresponding parameters has its differential Galois group
  isomorphic to $S_{c}$ acting by its $(c-1)$-dimensional
  representation. By the results of Katz~\cite{ESDE}*{\S\,8.17.3}, the
  geometric monodromy group of the corresponding hypergeometric sheaf
  is the same for $q$ large.

  For~$c\geq 5$, the alternating group $A_{c}\subset S_{c}$ is simple
  and still acts irreducibly in the $(c-1)$-dimensional representation,
  which shows that the sheaf $\mcK_{a,b;-c}(\psi)$ is \goodp.
\end{proof}

\begin{remark}
  In almost all cases, one can be more precise and in fact determine
  exactly the geometric monodromy group of the sheaf
  $\mcK_{a,b,c}(\psi)$.

  In the cases where the sheaf has finite monodromy but it not
  \goodp, or in the induced case, one can also explicitly compute the
  corresponding exponential sums. 

  For instance, consider the Kummer-induced case $(a,c,-c)$, where $a$
  and~$c$ are positive and coprime.  Setting $z=\lambda y$, we get
  \begin{align*}
    K_{a,c,-c}(u;\psi)
    &=\frac{1}{q}
      \sumsum_\stacksum{x,y,z\in\Fqt}{x(y/z)^c=u}\psi(x+y+z)\\
    &=\frac{1}{q}
      \sum_{x^a=u\lambda^c}\psi(x)\sum_{y\in\Fqt}\psi(y(1+\lambda))=
      \frac{1}{q}
      \sumsum_\stacksum{x,\lambda\in\Fqt}{x^a=u\lambda^c}
      \psi(x)(q\delta_{\lambda=-1}-1)\\
    &=\sum_{x^a=u(-1)^c}\psi(x)+O(q^{-1/2})
  \end{align*}
  by bounding Gauss sums.

  In the ``finite'' case $(a,b,-c)=(1,k-1,-k)$ for $k\geq 2$, we have
  \[
    K_{1,k-1,-k}(u;\psi)=\frac{1}{q}
    \sum_\stacksum{x,y,z\in\Fqt}{xy^{k-1}=uz^{k}}\psi(x+y+z).
  \]

  Since $xy^{k-1}=uz^{k}$ if and only if $(y/x)^{k-1}=u(z/x)^{k}$, a
  change of variable leads to
  \begin{multline*}
    K_{1,k-1,-k}(u;\psi)
    =\frac{1}{q}\sum_\stacksum{x,y,z\in
      \Fqt}{y^{k-1}=uz^{k}}\psi(x(1+y+z))
    \\
    =
    |\{(y,z)\in\Fqt\times \Fqt,\ y^{k-1}=uz^{k},\ 1+y+z=0\}|-
    \frac{1}{q}|\{(y,z)\in{\Fqt\times \Fqt},\ y^{k-1}=uz^{k}\}|.
  \end{multline*}

  Since
  \[
    |\{(y,z)\in\Fqt\times \Fqt,\ y^{k-1}=uz^{k}\}|=|\{(t,y)\in\Fqt\times \Fqt,\
    t^k/y=u\}|=q-1,
  \]
  we obtain
  \[
    K_{1,k-1,-k}(u;\psi)=\nu(P_{k,u};\Fq)-1+\frac{1}{q},
  \]
  where~$\nu(f;\Fq)$ is the number of roots of a polynomial~$f$
  in~$\Ff_q$, with
  \[
    P_{k,u}=uX^k+(-1)^{k}(X+1)^{k-1}.
  \]
\end{remark} 

 \section{Analysis of the cubic mixed moment}\label{seccubicmoment}
 Our purpose now  is to initiate the proof of Theorem   \ref{mainthm}, which contains Theorem \ref{mainthmintro}
  as a particular case ($\xi =1$). 

\subsection{Reduction to the coprime case}\label{secreduction} Let $a,b,c$ be non-zero setwise coprime integers (for the moment we make no assumption on the signs of $a,b,c$) and  $d\geq 1$ be another integer. 

We have
$$(\chi^{ad},\chi^{bd},\chi^{cd})=(\rho^{a},\rho^{b},\rho^{c}),\hbox{ for } \rho:=\chi^d.$$
Consider the $d$-th power map
\begin{equation}
	\label{dpower}
	\chi\in\whFqt\mapsto \chi^d\in\whFqt.
\end{equation}
Since $\whFqt$ is cyclic of order $q-1$ its image is 
$(\whFqt)^{d'}$ where $d'=(d,q-1)$ and its kernel is $$\whFqt[{d'}]=\{\chi\in\whFqt,\ \chi^{d'}=1\}.$$
The characteristic function of its image is given by
$$\rho\in\whFqt\mapsto 
\frac{1}{d'}\sum_{\xi^{d'}=1}\ov\rho(\xi);$$
moreover $d'$ is also the size of the non-empty fibers of \eqref{dpower}. 

By the definition \eqref{triplemoment} it follows that 
\begin{align}
	M_{ad,bd,cd}(q)&=\frac{1}{q-1}\sum_{\rho^{d'}=1}d'L(1/2,\rho^{a})L(1/2,\rho^{b})L(1/2,\rho^{c})\nonumber 
	\\&=\frac{1}{q-1}\sum_{\rho\mods q}(\sum_{\xi^{d'}=1}\ov\rho(\xi))L(1/2,\rho^{a})L(1/2,\rho^{b})L(1/2,\rho^{c})\label{d'decomposition}\\
	&=\sum_{\xi^{d'}=1}M_{a,b,c}(\xi;q)\nonumber
	\end{align}
where for $\xi\in \Fqt$, we have set
\begin{equation}
	\label{triplemomentxi}
	M_{a,b,c}(\xi;q):=\frac{1}{q-1}\sum_{\chi\mods q}\ov\chi(\xi)L(1/2,\chi^a)L(1/2,\chi^b)L(1/2,\chi^c).
\end{equation}
We  adjust notations and use $d$ in place of $d'$; we are then reduced to evaluating \eqref{triplemomentxi} for $a,b,c$ some fixed non-zero integers setwise coprime, $q$ a prime $q\equiv 1\mods d$ and some $\xi\in\mu_d(\Fq)$.
 
We can further decompose this moment along the even/odd characters: 
\begin{equation}\label{alreadydone}M_{a,b,c}(\xi;q)=\frac12M^{e}_{a,b,c}(\xi;q)+\frac12M^{o}_{a,b,c}(\xi;q),
\end{equation} 
with 
\begin{equation}\label{Mevendef}
	M^{e}_{a,b,c}(\xi;q)=\frac{2}{q-1}\sum_\stacksum{\chi\mods q}{\chi(-1)=1}\ov\chi(\xi)L(1/2,\chi^a)L(1/2,\chi^b)L(1/2,\chi^c)
\end{equation}
and
\begin{equation}
	M^{o}_{a,b,c}(\xi;q)=
\frac{2}{q-1}\sum_\stacksum{\chi\mods q}{\chi(-1)=-1}\ov\chi(\xi)L(1/2,\chi^a)L(1/2,\chi^b)L(1/2,\chi^c).\label{Modddef}
\end{equation}
Moreover if $a,b,c$ are of the same sign, up to conjugation we may and will  assume that $a,b,c\geq 1$; likewise, if $a,b,c$ have different signs up to conjugation and up to permutation we may and will  assume that $a,b\geq 1$ and $c\leq -1$. Consequently in the sequel we will restrict our evaluation to the the cubic moments $M_{a,b,c}(\xi;q)$ and $M_{a,b,-c}(\xi;q)$ and their even/odd variants  for $a,b,c\geq 1$ setwise coprime integers and $d|(q-1)$.

Another basic observation, is that, by the identity
  $$\overline{\ov\chi(\xi)L(1/2,\chi^a)L(1/2,\chi^b)L(1/2,\chi^c)}=\chi(\xi)L(1/2,\ov\chi^a)L(1/2,\ov\chi^b)L(1/2,\ov\chi^c)$$ and the change of variable $\chi \rightarrow \overline \chi$, we see that
  \eqref{triplemomentxi},  \eqref{Mevendef} and \eqref{Modddef} are  invariant by complex conjugation and therefore the moments $M_{a,b,c} (\xi, q)$,  $M_{a,b,c}^{e} (\xi, q)$ and  $M_{a,b,c}^{o} (\xi, q)$ are all real numbers. 
  
  In \S \ref{secproofmainthm} we will prove the following 

\begin{theorem}\label{mainthm} Let $a,b,c,d\geq 1$ be integers with $a,b,c$ setwise coprime. Let $q$ be a prime and $\xi\in\Fqt$  such that $\xi^d\equiv 1 \bmod q$.

	Assume that $(a,b,\pm c)$ is {\good} or oxozonic. Then there exists $\eta=\eta(a,b,c,d)>0$ and $D_{a,b,\pm c}\geq 1$ (with $D_{a,b,+c}=1$) such that
	\begin{equation}\label{2147}
	    M^e_{a,b,\pm c}(\xi;q)\geq \delta_{\xi^2=1}D_{a,b,\pm c}+O_{a,b,c,d}(q^{-\eta}),
	\end{equation}
	\begin{equation}
	    M_{a,b,\pm c}(\xi;q)\geq \delta_{\xi=1}D_{a,b,\pm c}+O_{a,b,c,d}(q^{-\eta}),
     \label{Mxilowerbound}
	\end{equation}
	and
	\begin{equation}
	    M_{ad,bd,\pm cd}(q)\geq D_{a,b,\pm c}+O_{a,b,c,d}(q^{-\eta}).
     \label{Mxilowerbound*}
	\end{equation}
		If in addition $a=b$ we have 
		\begin{equation}
	\label{momenta=basymp}
	M^e_{a,a,\pm c}(\xi;q)= \delta_{\xi^2=1}D_{a,a,\pm c}+O_{a,c,d}(q^{-\eta}).
\end{equation}
as well as
\begin{equation}
	\label{momenta=basympodd}
	M^o_{a,a,\pm c}(\xi;q)= \delta_{\xi=1}D_{a,a,\pm c}-\delta_{\xi=-1}D_{a,a,\pm c}+O_{a,c,d}(q^{-\eta}),
\end{equation}
and
\begin{equation}
	    M_{a,a,\pm c}(\xi;q)= \delta_{\xi=1}D_{a,a,\pm c}+O_{a,c,d}(q^{-\eta}),
     \label{Mxilowerbound**}
	\end{equation}
	and
	\begin{equation}
	    M_{ad,ad,\pm cd}(q)= D_{a,a,\pm c}+O_{a,c,d}(q^{-\eta}).
     \label{Mxilowerbound***}
	\end{equation}

\end{theorem}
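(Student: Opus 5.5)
We follow the strategy of \S\ref{secoverview}. First apply the approximate functional equation to $L(s,\chi^a)L(s,\chi^b)L(s,\chi^{\pm c})$ separately for even and odd $\chi$. The entries $\chi^a,\chi^b,\chi^{\pm c}$ of a fixed parity class all have determined parities, so the gamma factors and the weight $V$ depend only on that class. The finitely many $\chi$ for which some of $\chi^a,\chi^b,\chi^c$ is trivial contribute $O(q^{-1+\eps}\cdot q^{1/2})$ by convexity, which is negligible. For each parity we obtain $M^{e/o}_{a,b,\pm c}(\xi;q)=M_1+M_2+O(q^{-A})$. Here $M_1$ is of the shape \eqref{firstsum}, with length $X$, and $M_2$ is of the shape \eqref{second-sum}, with length $Y$, root number $\eps(\chi^a)\eps(\chi^b)\eps(\chi^{\pm c})$, and $XY\asymp q^3$. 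We take $X=q^{2-\delta}$, $Y=q^{1+\delta}$ for a small $\delta>0$. We choose $V\geq 0$; this is possible by taking the AFE weight $V(y)=\frac{1}{2\pi i}\int G(u)\gamma\text{-ratio}\,y^{-u}du/u$ with $G$ an even nonnegative-type kernel, or by splitting symmetrically so that positivity holds.

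\emph{Step 1: $M_1$.} Orthogonality over characters of a fixed parity restricts to $l^am^bn^{\pm c}\equiv\pm\xi\pmod q$. The terms with $l^am^bn^{\pm c}=1$ exactly occur only if $\xi=\pm1$, i.e.\ $\xi^2=1$; for the odd part they come with sign $\chi(\xi)$. These terms give $\sum_{l^am^bn^{\pm c}=1}(lmn)^{-1/2}V(lmn/X)$. In the $+c$ case this sum is just $V(1/X)=1+O(q^{-A})$, giving $D_{a,b,c}=1$. In the $-c$ case, a contour shift of the Mellin integral against the Dirichlet series $D_{a,b,-c}(s)$ gives $D_{a,b,-c}(1/2)+O(X^{-\eta})$; this series converges near $s=1/2$ because the solutions of $l^am^b=n^c$ are sparse. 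The remaining terms are nonnegative since $V\geq0$, which yields the lower bounds. When $a=b$ they are $O(q^{-\eta})$ by Conjecture $\msP(a,a,c,d)$, which follows from \cite{Pierce}*{Theorem 4} after the substitution $k=lm$, applied on dyadic boxes with $LMN\leq X^{1+\eps}$. Here we use $\sqrt{LMN}/q\leq q^{-\delta/2}$ and $(LMN)^{-\eta_0}$, while small boxes contain no nontrivial solutions since $l^am^bn^c\equiv\xi$ with $l^am^bn^c\neq 1$ forces $\max\geq q^{1/(a+b+c)}$.

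\emph{Step 2: $M_2$.} Evaluating the $\chi$-sum turns the Gauss sums into $K_{a,b,\pm c}(\pm\xi l^am^bn^{\pm c};q)$. After dyadic decomposition we must prove \eqref{trilinearwish} for $LMN\leq q^{1+\delta+\eps}$. Theorem~\ref{th-galant-sheaf} identifies $K$, up to $O(q^{-1/2})$ (harmless since it contributes $\ll (LMN)^{1/2}q^{-1/2}\cdot q^{1/2}\cdot q^{-1/2}$), with the trace function of a \good\ or oxozonic sheaf. If $LMN\leq q^{1-2\delta}$, use \eqref{supnomrboundintro}. Otherwise, order the variables so that $N\geq M\geq L$. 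If $N\geq q^{1/2+\delta}$, apply Pólya--Vinogradov completion in $n$ (the sheaf is geometrically irreducible and not Artin--Schreier after pulling back by $n\mapsto n^c$, by Remark~\ref{remgoodmorphism}). If $L\geq q^{\delta'}$ and $MN\geq q^{1/2+\delta}$, apply Theorem~\ref{thmtriplesum}, or \eqref{tripleort} in the oxozonic case. In the remaining case $L\leq q^{\delta'}$ and $M,N\approx q^{1/2}$, apply Theorem~\ref{thmTypeII} with $\alpha$ the combined $(l,m)$-variable; in the oxozonic case use \eqref{BtypeIIort}, choosing the variable carrying an odd exponent. This parity requirement is the main obstacle. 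For the oxozonic triples $(1,1,2),(1,4,-3),(1,6,-3),(2,3,-1)$ we must check that some exponent among the two long variables is odd, or otherwise fall back on the trilinear bound, and the choice of $\delta'$ versus $\delta$ must be balanced so that every range is covered.

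\emph{Step 3: assembly.} Adding the even and odd parts via \eqref{alreadydone} gives the main term $\tfrac12(\delta_{\xi^2=1}+\delta_{\xi=1}-\delta_{\xi=-1})D=\delta_{\xi=1}D$. The lower bounds survive because the odd-part main term carries the sign $\chi(\xi)$; when $\xi=-1$ the even-part nonnegative surplus is handled by pairing the terms $\equiv\pm\xi$ together. Summing over $\xi\in\mu_{d'}$ via \eqref{d'decomposition} gives \eqref{Mxilowerbound*} and \eqref{Mxilowerbound***}.
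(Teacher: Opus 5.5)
Your Steps~1 and~2 follow the paper: positivity of the first-type sums, the main term via $D_{a,b,-c}(s)$, Pierce for $a=b$, and the P\'olya--Vinogradov / trilinear / type-II case split for the dual sums. They do give \eqref{2147} and the $a=b$ asymptotics. The gap is in Step~3, that is, in \eqref{Mxilowerbound} and therefore in \eqref{Mxilowerbound*}.

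Your weights depend on the parity class, and the odd part contains $-M^o_{1,a,b,\pm c}(-\xi;q)$. Adding the even and odd parts therefore gives
\[
M_{a,b,\pm c}(\xi;q)=\tfrac12\bigl(M^e_{1,a,b,\pm c}(\xi;q)+M^o_{1,a,b,\pm c}(\xi;q)\bigr)+\tfrac12\sum_{l^am^bn^{\pm c}\equiv-\xi\mods{q}}\frac{(V_e-V_o)(lmn/X)}{\sqrt{lmn}}+O(q^{-\eta}).
\]
``Pairing the terms $\equiv\pm\xi$'' produces exactly the weight $V_e-V_o$, and this weight has no favourable sign. Write $V_\bullet(y)=\Pr(T_\bullet\geq y)$, where $T_\bullet$ is a product of three independent variables $(G/\pi)^{1/2}$, with $G$ gamma-distributed of shape $1/4$ or $3/4$ according as $\eta_i=+1$ or $-1$. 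Shape $3/4$ stochastically dominates shape $1/4$, and for odd $\chi$ at least one $\eta_i=-1$. Hence $V_o\geq V_e$, so the last sum is $\leq 0$. Its size is roughly $X^{-1/2}$ times the number of solutions of $l^am^bn^{\pm c}\equiv-\xi$ with $lmn\asymp X=q^{2-\delta}$. That is the open counting problem $\msP$ at the top of the range. The problem arises for every $\xi$, including $\xi=1$ (through the class $-1$), not only for $\xi=-1$.

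The paper closes this gap with Proposition~\ref{differencelemma}, which your outline lacks:
\begin{enumerate}
\item Since $(V_e-V_o)(y)\ll y^{1/2-\eps}$, only boxes with $LMN$ close to $X$ matter.
\item After separating variables by Mellin inversion, Poisson summation in all three variables turns the count into sums of $K_{a,b,\pm c}(\xi l^am^bn^{\pm c};q)$ over dual boxes of total size $q^3/LMN\geq q^{1+\delta/2}$.
\item The degenerate frequencies ($q\mid lmn$) are handled with Gauss sums and P\'olya--Vinogradov.
\item The remaining frequencies are bounded again by Theorems~\ref{thmTypeII}, \ref{thmtriplesum} and~\ref{thmO4}. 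This is a second, essential use of the \good/oxozonic hypothesis.
\end{enumerate}

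An alternative repair is a non-symmetric approximate functional equation whose first weight is parity-free. For example, integrate $L(\tfrac12+u,\chi^a)L(\tfrac12+u,\chi^b)L(\tfrac12+u,\chi^{\pm c})e^{u^2}X^u\,du/u$, so that all gamma factors move into the dual weight. Then the classes $\equiv-\xi$ cancel identically, and positivity of $W(y)=\frac{1}{2\pi i}\int_{(2)}e^{u^2}y^{-u}\frac{du}{u}$ gives the lower bound. That is not what you set up, however.

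A minor point: in the one-small-variable range you cannot merge $(l,m)$ into a single variable of Theorem~\ref{thmTypeII}, since the exponents differ. Instead, fix $l$, apply the theorem to $x\mapsto K(\xi l^ax)$, and sum trivially over $l$.
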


\section{Approximate functional equations for triples of Dirichlet $L$-functions}\label{secAFE}
In this section we use ``approximate functional equation''
(abbreviated AFE below) techniques to represent the cubic moments
$M^e_{a,b,c}(\xi;q)$, $M^o_{a,b,c}(\xi;q)$ as sums of several rapidly
converging series. There are many possibilities and we use one which
is due to Rubinstein~\cite{Rub} and discussed at length by Belabas and
Cohen in \cite[Chap. 8 \& 9 Thm 9.3.3]{BelCoh}.

Let
$$\gamma_+(s)=\pi^{-s/2}\Gamma(s/2),\ \gamma_-(s)=\gamma_+(s+1).$$
For $r\geq 1$ and $\bfeta=(\eta_1,\cdots,\eta_r)\in\{\pm 1\}^r$ we define
\begin{equation}\label{defgammaiota}
\gamma_\bfeta(s)=\prod_{i=1}^r\gamma_{\eta_i}(s),\ \iota_\bfeta:=\prod_{i=1}^ri^{\frac{\eta_i-1}{2}}.
\end{equation}
and let
\begin{equation*}
	V_\bfeta(y)=\frac{1}{2\pi i}\int_{(3)}\frac{\gamma_\bfeta(\frac{1}2+u)}{\gamma_\bfeta(\frac{1}2)}y^{-u}\frac{du}u
\end{equation*}

\begin{lemma}\label{LemmaV} For $\bfeta=(\eta_1,\cdots,\eta_r)\in\{\pm 1\}^r$, the function $y\mapsto V_\bfeta(y)$ is smooth on $\Rr_{>0}$  and takes real values. Furthermore it   satisfies 
\begin{enumth}
	
\item \label{Vlimzero} For any $y\in(0,1]$ and any $\eps>0$
$$V_\bfeta(y)=1+O_\eps(y^{1/2-\eps}).$$
\item \label{Vliminf} For any $A\geq 1$ any $y\geq 1$,
$$V_\bfeta(y)=O_A(y^{-A}).$$
\item \label{Vlimder}  For any $y>0$, for any $i\geq 1$
$$y^iV_\bfeta^{(i)}(y)\ll_A (1+y)^{-A}.$$
\item \label{Vpositivity} For any $y>0$,
$$V_\bfeta(y)>0.$$
\end{enumth}	
	\label{Vprop}
\end{lemma}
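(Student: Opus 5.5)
We work directly with the Mellin integral defining $V_\bfeta$ and shift contours; write $G(u)=\gamma_\bfeta(\tfrac12+u)/\gamma_\bfeta(\tfrac12)$. Since $\gamma_\pm$ are real on the real axis and satisfy $\overline{\Gamma(\bar s)}=\Gamma(s)$, we have $\overline{G(\bar u)}=G(u)$. Pairing the points $u$ and $\bar u$ on the line $\Re u=3$ then shows that $V_\bfeta(y)$ is real. Smoothness, together with \ref{Vlimder}, comes from differentiating under the integral sign. Indeed, $y^iV^{(i)}_\bfeta(y)$ equals the same integral with $y^{-u}/u$ replaced by $(-u)(-u-1)\cdots(-u-i+1)y^{-u}/u$. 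Stirling's formula gives exponential decay of $G(3+it)$ (and of $G(\sigma+it)$ on any vertical strip avoiding poles) in $|t|$, so the integral converges absolutely for every $i$.

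\ref{Vliminf}: we shift the contour to $\Re u=A$. No poles are crossed, since $\gamma_\bfeta(\tfrac12+u)$ has its poles at $\Re u\leq -\tfrac12$. This gives $V_\bfeta(y)\ll_A y^{-A}$. The same shift applied to the differentiated integral gives the bound $y^iV^{(i)}\ll y^{-A}$ for $y\geq 1$. For $y\leq 1$ we shift the differentiated integral to $\Re u=-\tfrac12+\eps$. The pole at $u=0$ disappears once $i\geq 1$, because the factor $u$ cancels. This gives $y^iV^{(i)}\ll 1$, which proves \ref{Vlimder}.

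\ref{Vlimzero}: for $y\leq1$ we move the original contour to $\Re u=-\tfrac12+\eps$. The only pole crossed is the simple pole at $u=0$, with residue $G(0)=1$. The remaining integral is bounded by $y^{1/2-\eps}\int|G(-\tfrac12+\eps+it)|\,|u|^{-1}dt\ll_\eps y^{1/2-\eps}$, since $G$ is holomorphic and exponentially decaying on that line. Note that $\gamma_-(s)=\gamma_+(s+1)$ has its first pole further left, so the bound holds uniformly for every $\bfeta$.

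\ref{Vpositivity} is the main obstacle, because it is not a formal consequence of contour shifting. The plan is to write $V_\bfeta$ as an integral of a positive density. With the change of variable $x\mapsto x^2$, each factor $\Gamma(s/2)\pi^{-s/2}$ is the Mellin transform of $2e^{-\pi x^2}$. Hence $\gamma_{\eta_i}(\tfrac12+u)$ is the Mellin transform at $u$ of a positive function $w_i(x)=2x^{1/2}e^{-\pi x^2}$ or $2x^{3/2}e^{-\pi x^2}$. Products of Mellin transforms correspond to multiplicative convolutions, so $\gamma_\bfeta(\tfrac12+u)=\int_0^\infty W(x)x^{u}\frac{dx}{x}$ with $W=w_1\star\cdots\star w_r>0$. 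Then $\frac{1}{2\pi i}\int_{(3)}(x/y)^u\frac{du}{u}=\mathbf 1_{x>y}$, with Fubini justified on $\Re u=3$. This yields
$$V_\bfeta(y)=\frac{1}{\gamma_\bfeta(\tfrac12)}\int_y^\infty W(x)\frac{dx}{x}.$$
The right-hand side is strictly positive, because $W>0$ and $\gamma_\bfeta(\tfrac12)>0$. This representation also gives a second proof of reality and of monotonic decrease.
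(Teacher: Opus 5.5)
Your proposal is correct and follows the paper's proof. Smoothness, reality and (1)--(3) come from contour shifts plus Stirling. For (4) you use the same Mellin-convolution argument with the positive functions $2x^{(2-\eta_i)/2}e^{-\pi x^2}$ and $\mathbf{1}_{[0,1]}$; your formula $V_\bfeta(y)=\gamma_\bfeta(\frac12)^{-1}\int_y^\infty W(x)\,\frac{dx}{x}$ is exactly the normalized convolution $W\star\mathbf{1}_{[0,1]}$.

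The only slip is the claim that Fubini is justified on $\Re u=3$. That double integral is not absolutely convergent, because $\int_{(3)}|du|/|u|$ diverges. Instead, use Tonelli to compute the Mellin transform of $y\mapsto\int_y^\infty W(x)\,\frac{dx}{x}$, which is $\gamma_\bfeta(\frac12+u)/u$ for $\Re u>0$. Then apply Mellin inversion, which is legitimate since this function is absolutely integrable on vertical lines.
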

\proof Smoothness along with properties \eqref{Vlimzero}, \eqref{Vliminf}, \eqref{Vlimder} are obtained by standard contour shifts along the line $\Re u=-1/2+\eps$ together with with Stirling's formula (see \cite[p.129]{FKMAA}).
For \eqref{Vpositivity} we will use classical properties of the Mellin's transform $\mathcal M (f)$  of a function $f$ satisfying regularity properties  that we do not recall  (see
\cite[p.\,332]{BelCoh} or  \cite[Appendice A]{BaDBaLaSa} for more details)~:
$$
[\mathcal M (f)](s) := \int_0^\infty f(x) x^s \frac {dx}x.
$$ By inversion of integrations one checks that 
the Mellin transform satisfies the convolution property \begin{equation}\label{Mellinconvol}
\mathcal M( f_1) \cdot \mathcal M (f_2) = \mathcal M(f_1 \star f_2),
\end{equation}
where 
$$
(f_1\star f_2)(x)  = \int_0^\infty f_1\left( \frac xy\right)f_2 (y)\, \frac{dy}y,
$$
is the multiplicative convolution of $f_1$ and $f_2$.

The inverse Mellin transform $\mathcal M^{-1} (\varphi)$ is given by an integral in the complex plane, along some vertical line 
$$
[ \mathcal M^{-1}(\varphi) ](x) =\frac 1{2\pi i} \int_{c-i\infty}^{c+i\infty} x^{-s} \varphi (s) \, ds.
$$
Applying $\mathcal M^{-1}$ to both sides of \eqref{Mellinconvol}, we deduce the equality
$$\mathcal M^{-1}\left[ \mathcal M( f_1) \cdot \mathcal M (f_2) \right]= f_1 \star f_2.
$$
Defining $f_1=\mathcal M^{-1}(f)$ and $f_2=\mathcal M^{-1} (g),$ we obtain 
\begin{equation}\label{positivestar}
\mathcal M^{-1}(f\cdot g)  = \mathcal  M^{-1}(f)\star \mathcal M^{-1} (g),
\end{equation}
As for positivity we observe that $V_\bfeta(y)$ is the inverse Mellin transform of the product of the functions
\begin{equation}
	\label{gammalist}
	\frac{\gamma_{\eta_1}(1/2+u)}{\gamma_{\eta_1}(1/2)},\frac{\gamma_{\eta_2}(1/2+u)}{\gamma_{\eta_2}(1/2)},\cdots, \frac{\gamma_{\eta_r}(1/2+u)}{\gamma_{\eta_r}(1/2)},\ \frac{1}u
\end{equation} 
Now $\gamma_\eta(1/2)>0$ and $\gamma_\eta(1/2+u)$ is the Mellin transform 
of
$y\mapsto 2e^{-\pi y^2}y^{(2-\eta)/2}$ and  $1/u$ is the Mellin transform of $y\mapsto 1_{[0, 1]}(y)$(see \cite[\S 17.43]{GraRy} for instance).
It follows that $V_\bfeta(y)$ is the multiplicative convolution on $\Rr_{>0}$ of the inverse Mellin transforms of the functions in \eqref{gammalist}; as these inverse Mellin transforms are all non-negative and non zero, $V_\bfeta$ is positive after an iterated application of \eqref{positivestar}.\qed 

\subsection{The AFE in the generic case}\label{5.1} In what follows (i.e. in  \S\ref{5.1}, \S\ref{5.2}, \S\ref{5.3} and \S\ref{5.4}), $a$, $b$ and $c$ are non--zero setwise coprime integers, of any sign.  Let $\chi$ be a character such that $\chi^a,\chi^b,\chi^c\not=\chi_0$ (we call such a character {\em generic}) and $$\bfeta=\bfeta_{a,b,c}(\chi):=(\chi^a(-1),\chi^b(-1),\chi^c(-1))\in\{\pm 1\}^3.$$
Recall (see \cite{IK})
 $$\Lambda(s,\chi^a,\chi^b,\chi^c):= \gamma_{\bfeta}(s)L(s,\chi^a)L(s,\chi^b)L(s,\chi^c)$$
satisfies the functional equation
$$\Lambda(s,\chi^a,\chi^b,\chi^c)=\iota_\bfeta\eps(\chi^a)\eps(\chi^b)\eps(\chi^c)\Lambda(1-s,\ov\chi^a,\ov\chi^b,\ov \chi^c)$$
Here 
$$\eps(\chi)=\frac{1}{\sqrt q}\sum_{x\in\Fqt}\chi(x)e_q(x)$$ denotes the normalized Gauss sum relative to
the usual non-trivial additive character of $\Fq$
\begin{equation}\label{defeq}
e_q(\bullet)=\exp(\frac{2\pi i\bullet}q).
\end{equation}
Let $X,Y\geq 1$ such that
\begin{equation}
	\label{XYrelation}
	XY=q^{3},
\end{equation}
by contour integration
we have the equality 
\begin{multline}\label{approxgeneric}
	L(1/2,\chi^a)L(1/2,\chi^b)L(1/2,\chi^c)=\\
	\sum_\stacksum{l,m,n\geq 1}{(lmn,q)=1}\frac{\chi(l^am^bn^c)}{(lmn)^{1/2}}V_\bfeta(\frac{lmn}X)+\iota_\bfeta\eps(\chi^a)\eps(\chi^b)\eps(\chi^c)
\sum_\stacksum{l,m,n\geq 1}{(lmn,q)=1}\frac{\ov \chi(l^am^bn^c)}{(lmn)^{1/2}}V_\bfeta(\frac{lmn}Y).
\end{multline}

\begin{remark}\label{remfcteqn} Notice that given three setwise coprime integers $a,b,c\in\Zz-\{0\}$, the triple $\bfeta=\bfeta_{a,b,c} (\chi)$, the function $V_\bfeta(\bullet)$ and the sign $\iota_\bfeta$ depend only on the parity of $\chi$; moreover for $\chi$ even we have $\bfeta=(1,1,1)$ and $\iota_\bfeta=1$. In the sequel we will write $$V_\bfeta(y)=:V_{e}(y)\hbox{ or }V_{o}(y)$$ depending on whether  $\chi$ is even or odd and likewise we write $\iota_\bfeta=\iota_e=1$ or $\iota_o\in i^\Zz$.  In particular we have
\begin{equation*}
\iota_e =1 \text{ and } \iota_o =
\begin{cases}
-i  & \text{ if }  \nu (a,b,c) =1,\\
-1 & \text{ if }  \nu (a,b,c) =2,\\
i & \text{ if }  \nu (a,b,c) =3,
\end{cases}
\end{equation*} 
where $\nu (a,b,c)  $ is the number of odd elements in the list of coprime integers $a,\, b, \, c$ (see \eqref{defgammaiota}).
\end{remark}

\subsection{The non-generic case: when one of $\chi^a,\chi^b,\chi^c$ equals $\chi_0$}\label{5.2}
In that case the equality \eqref{approxgeneric} does not hold. However this situation occurs for at most $\vert a\vert +\vert b\vert +\vert c\vert $  characters $\chi$'s modulo $q$ and we will  evaluate  both sides of \eqref{approxgeneric} separately.

Regarding the left hand side, the central $L$-values involving the trivial character are absolutely bounded (there is at least one of them) and for the ones associated with a non-trivial character (there are at most two), we use the convexity bound  (see \cite[Theorem 5.23]{IK}) and obtain 
\begin{equation}
	L(1/2,\chi^a)L(1/2,\chi^b)L(1/2,\chi^c)\ll q^{1/2}.\label{convexnongeneric}
\end{equation}

Regarding the right hand side, we use trivial estimates and find that for any such $\chi$
\begin{multline}
	\sum_{l,m,n\geq 1}\frac{\chi(l^am^bn^c)}{(lmn)^{1/2}}V_\bfeta(\frac{lmn}X)\\+\iota_\bfeta\eps(\chi^a)\eps(\chi^b)\eps(\chi^c)
\sum_{l,m,n\geq 1}\frac{\ov \chi(l^am^bn^c)}{(lmn)^{1/2}}V_\bfeta(\frac{lmn}Y)\ll q^{o(1)}( X^{1/2}+(Y/q)^{1/2}).\label{sumnongeneric}
\end{multline}
Notice that in order to bound the second sum, we have also used that for $\chi=\chi_0$ the trivial character the normalized Gauss sum is small: $$\eps(\chi_0)=-1/\sqrt q.$$
\begin{remark}\label{remtrivialchar} Observe that when $\chi=\chi_0$ is the trivial character, \eqref{sumnongeneric} can be evaluated more precisely: it is equal to
	$$
		\sum_{(lmn,q)= 1}\frac{1}{(lmn)^{1/2}}V_\bfeta(\frac{lmn}X)+O(q^{o(1)}(\frac{Y}{q^3})^{1/2}).
	$$
	Using the definition of $ V_\bfeta$ and shifting contours, we see that the first term above equals
	\begin{align}\nonumber
		\sum_{(k,q)= 1}\frac{d_3(k)}{k^{1/2}}V_\bfeta(\frac{k}X)&=\intc_{(3)}(\zeta^{(q)})^3(s)\frac{\gamma_\bfeta(s)}{\gamma_\bfeta(\frac{1}2)}X^{s-1/2}\frac{ds}{s-1/2}\\
		&=P_{\bfeta}(\log X)X^{1/2}+O((\log q)^2\frac{X^{1/2}}{q}+X^{1/4})\label{chi0sum2}
	\end{align}
where $P_{\bfeta}(T)$ denotes  a polynomial of degree $2$ with leading coefficient $2\frac{\gamma_\bfeta(1)}{\gamma_\bfeta(\frac{1}2)}>0$. In particular the upper bound \eqref{sumnongeneric} is sharp.
\end{remark}
\subsection{Summing the approximate functional equations}\label{AFEaverage}\label{5.3}
As pointed out in Remark \ref{remfcteqn} above, the general shape of the approximate functional equation \eqref{approxgeneric} depends only on the parity of $\chi$ (and for $\chi$ odd, on the parities of the entries of the given triple $(a,b,c)$). Recall \eqref{alreadydone}, where  we split the sum $M_{a,b,c}(\xi;q)$ into  its subsums over even and odd characters:
\begin{equation}
	\label{evenodddecomp}
	M_{a,b,c}(\xi;q)=\frac12M^{e}_{a,b,c}(\xi;q)+\frac12M^{o}_{a,b,c}(\xi;q)
\end{equation} 
where these two terms are defined in \eqref{Mevendef} and \eqref{Modddef}. 

Let us evaluate $M^{e}_{a,b,c}(\xi;q)$ (the treatment of $M^{o}_{a,b,c}(\xi;q)$ is identical). We have the decomposition
$$M^{e}_{a,b,c}(\xi;q)=M^{e,gen}_{a,b,c}(\xi;q)+M^{e,n-gen}_{a,b,c}(\xi;q)$$
where $M^{e,gen}_{a,b,c}(\xi;q)$ is the contribution of the generic characters (the characters $\chi$ such that $\chi^a,\chi^b,\chi^c\not=\chi_0$) and $M^{e,n-gen}_{a,b,c}(\xi;q)$ is the contribution of the non-generic characters. 

By \eqref{convexnongeneric} we have
$$M^{e,n-gen}_{a,b,c}(\xi;q)\ll_{a,b,c}q^{-1/2}.$$

As for the generic portion $M^{e,gen}_{a,b,c}(\xi;q)$, we use \eqref{approxgeneric}; we then complete the sum over the generic $\chi's$ by adding and substracting the right hand side of \eqref{approxgeneric} for the non-generic characters and obtain
\begin{equation}\label{Megenabc}
	M^{e,gen}_{a,b,c}(\xi;q)=\frac{2}{q-1}\sum_\stacksum{\chi\mods q}{\chi(-1)=1}S^e_{a,b,c}(\xi;\chi)\\
	-\frac{2}{q-1}\sum_\stacksum{\chi_0\in\{\chi^a,\chi^b,\chi^c\}}{\chi(-1)=1}S^e_{a,b,c}(\xi;\chi)
\end{equation}
where
\begin{multline*}
	S^e_{a,b,c}(\xi;\chi)=\sum_{l,m,n\geq 1}\frac{1}{\sqrt{lmn}}\chi(\ov\xi l^am^bn^c)V_e(\frac{lmn}{X})
	\\
	+\iota_e\sum_{l,m,n\geq 1}\frac{1}{\sqrt{lmn}}\eps(\chi^a)\eps(\chi^b)\eps(\chi^c)\ov \chi(\xi l^am^bn^c)V_e(\frac{lmn}{Y}).
\end{multline*}
By \eqref{sumnongeneric} and the formula 
\begin{equation*}
\delta_{\chi(-1)=\pm1}=\frac{1\pm  \chi(-1)}{2}
\end{equation*}
we obtain the equalities
\begin{multline}
	\label{Mevenabcbasicdecomp}
	M^{e}_{a,b,c}(\xi;q)=M^e_{1,a,b,c}(\xi;q)+ M^e_{1,a,b,c}(-\xi;q)+M^e_{2,a,b,c}(\xi;q)+ M^e_{2,a,b,c}(-\xi;q)\\+O(q^{-1/2+o(1)}
	+( {X }/{q^2})^{1/2}q^{o(1)}+( {Y }/q^{3})^{1/2}q^{o(1)})
\end{multline}
\begin{multline}
	\label{Moddabcbasicdecomp}
	M^{o}_{a,b,c}(\xi;q)=M^o_{1,a,b,c}(\xi;q)- M^o_{1,a,b,c}(-\xi;q)+M^o_{2,a,b,c}(\xi;q)- M^o_{2,a,b,c}(-\xi;q)\\+O(q^{-1/2+o(1)}
	+( {X }/{q^2})^{1/2}q^{o(1)}+( {Y }/q^{3})^{1/2}q^{o(1)})
\end{multline}
where for $\bullet\in\{e,o\}$
\begin{eqnarray}\nonumber
	M^\bullet_{1,a,b,c}(\xi;q)&=&\frac{1}{q-1}\sum_{\chi\mods q}\sum_{l,m,n\geq 1}\frac{1}{\sqrt{lmn}}\chi(\ov\xi l^am^bn^c)V_\bullet(\frac{lmn}{X})\\
	&=&\sum_\stacksum{(lmn,q)=1}{l^am^bn^c\equiv \xi\mods q}\frac{1}{\sqrt{lmn}} V_\bullet(\frac{lmn}{X}),\label{M1abcdef}
\end{eqnarray}
(where  the second equality follows from the orthogonality relations) and 
\begin{eqnarray}\nonumber
	M^\bullet_{2,a,b,c}(\xi;q)&=&\frac{\iota_\bullet}{q-1}\sum_{\chi\mods q}\sum_{l,m,n\geq 1}\frac{1}{\sqrt{lmn}}\eps(\chi^a)\eps(\chi^b)\eps(\chi^c)\ov \chi(\xi l^am^bn^c)V_\bullet(\frac{lmn}{Y})\\
	&=&\frac{\iota_\bullet}{q^{1/2}}\sum_\stacksum{l,m,n\geq 1}{(lmn,q)=1}\frac{1}{\sqrt{lmn}}K_{a,b,c}(\xi l^am^bn^c)V_\bullet(\frac{lmn}{Y})
\label{M2def}\end{eqnarray}
(where the second equality follows from opening the Gauss sums $\eps(\chi^a),\eps(\chi^b),\eps(\chi^c)$ and switching summations).

 
  \begin{remark}
  	For notational simplicity we have not displayed the  dependency in parameters $X$ and $Y$ in the sums $M_{1,a,b,c}^\bullet(\xi;q)$ and $M_{2,a,b,c}^\bullet(\xi;q)$. Similarly, $  K_{a,b,c}(u)$ stands for the sum $K_{a,b,c}(u; q,e_q)$ 
	as defined in \eqref{defK}.
  \end{remark}	
 \subsection{Choice of $X,Y$}\label{5.4}
 
  In this section we describe the shape of the parameters $X$ and $Y$ subject to the relation \eqref{XYrelation}. 
 
Recall that the contributions to $M^{e}_{a,b,c}(\xi;q)$ and $M^{o}_{a,b,c}(\xi;q)$ of the non-generic characters is bounded by
 $$ \ll q^{-1/2+o(1)}
	+( {X }/{q^2})^{1/2}q^{o(1)}+( {Y }/q^{3})^{1/2}q^{o(1)}.$$
	For this bound to be admissible, it is sufficient that
	\begin{equation}
		\label{1stXcontrol}
		q^\delta\leq X\leq q^{2-\delta}
	\end{equation}
	for some fixed $\delta>0$.
\begin{remark} As we have seen in Remark \ref{remtrivialchar} \eqref{chi0sum2}, the contribution to \eqref{Megenabc} of the trivial character $\chi_0$ is asymptotic to 
$- \frac{X^{1/2}(\log X)^2}q$, so our choice $X\leq q^{2-\delta}$ is  necessary to insure that this term is an $O( q^{-\eta})$ (for any $\eta\in(0,\delta/2)$).
\end{remark}
	
To sharpen our choice of $X$, we consider $M^\bullet_{2,a,b,c}(\xi;q)$ which we expect to  satisfy (at least under suitable conditions)
 \begin{equation}
 	\label{M2need}
 	M^\bullet_{2,a,b,c}(\xi;q)\ll q^{-\eta}
 \end{equation}
 for some $\eta>0$. We have seen in \eqref{supnomrboundintro} that the sum  $K_{a,b,c}(u)$ is bounded independently of $q$: we have
 \begin{equation}\label{universalforK}
 K_{a,b,c}(u)\ll_{a,b,c} 1.
 \end{equation}
 This bound together with \eqref{M2def} and Lemma \ref{Vprop} \eqref{Vliminf} give
 \begin{equation}
 	\label{M2trivialbound}
 	M^\bullet_{2,a,b,c}(\xi;q)=\frac{\iota_\bullet}{q^{1/2}}\sum_\stacksum{l,m,n\geq 1}{(lmn,q)=1}\frac{1}{\sqrt{lmn}}K_{a,b,c}(\xi l^am^bn^c)V_\bullet(\frac{lmn}{Y})\ll_{a,b,c} \frac{Y^{1/2+o(1)}}{q^{1/2}}
 \end{equation}
  Such a bound would be enough if we could choose $Y\leq q^{1-\delta}$; unfortunately, from \eqref{XYrelation} and \eqref{1stXcontrol} the minimal possible value for $Y$ is $Y=q^{1+\delta}$. Although \eqref{M2trivialbound} is not quite enough to obtain \eqref{M2need}, it comes close (especially as $\delta$ gets smaller) and all we need is to be able to exploit the oscillations of the function 
 $$(l,m,n)\mapsto K_{a,b,c}(\xi l^am^bn^c)$$
 within the range $$lmn\asymp Y=q^{1+\delta}.$$
In Proposition \ref{trilinearabcK}, we  show that this is possible if $(a,b,c)$ is \good\ and $\delta$ is sufficiently small:  we will therefore take $X,Y$ of the shape
  \begin{equation}
  	\label{XYchoice}
  	X=q^{2-\delta},Y=q^3/X=q^{1+\delta}
  \end{equation}
for $\delta\in (0,1/2)$ sufficiently small possibly depending on $a,b,c,d$ but not on~$q$.

In the next sections we discuss the evaluation of all the terms present in \eqref{Mevenabcbasicdecomp} and \eqref{Moddabcbasicdecomp}.

We start with the fifth terms appearing on the right--hand side of 
\eqref{Mevenabcbasicdecomp} and \eqref{Moddabcbasicdecomp}: these are bounded by
\begin{equation*}
	O(q^{-1/2+o(1)}+( {X }/{q^2})^{1/2}q^{o(1)}+( {Y }/q^{3})^{1/2}q^{o(1)})=O( q^{-\delta/2+o(1)})
\end{equation*}
which is admissible.

\section{Analysis of $M^\bullet_{1,a,b,\pm c}(\xi;q)$}
\label{secM1}
Let $a,b,c\geq 1$ be setwise coprime integers ; let $d\geq 1$ be another integer  and let $q$  be a prime and $\xi \in \mu_d(\Fq)$ be a $d$-th root\footnote{The $d$ in the present section is equal to the $d$ of the previous section if $d$ is even or twice this value if $d$ is odd} of $1$ modulo $q$. 

Our aim in this section is to discuss the evaluation of the sums
\begin{equation*}
M^\bullet_{1,a,b,\pm c}(\xi;q):=\sum_\stacksum{(lmn,q)=1}{l^am^bn^{\pm c}\equiv  \xi \mods q}\frac{1}{\sqrt{lmn}} V_\bullet(\frac{lmn}{X})
\end{equation*}
for $\bullet=e$ or $o$ and for $X$ given by \eqref{XYchoice}.


\subsection{Isolating  the main term}\label{subsecmainterm}
We expect $M^\bullet_{1,a,b,\pm c}(\xi;q)$ to be small (ie. $O(q^{-\eta'})$ for some $\eta'>0$ depending at most on $a,b,c,d$) excepted when $\xi=1$. In that case, we expect a main term coming from the contribution of the $l,m,n$ satisfying one of the two equalities (in $\Zz$)
\begin{equation}\label{lmn=1lm=n}
l^am^bn^c=1\hbox{ or } l^am^b=n^c.
\end{equation}
For the first equality, the only possibility is $l=m=n=1$ which gives by Lemma \ref{Vprop}\eqref{Vlimzero} the equality
\begin{equation}\label{sharp}
M^{\bullet,\main}_{1,a,b,c}(1;q)=V_\bullet(\frac{1}{X})=1+O(X^{-1/2+\eps}).
\end{equation}
For the second equality of \eqref{lmn=1lm=n} we have
$$M^{\bullet,\main}_{1,a,b,-c}(1;q)=\sum_{l^am^b=n^c}\frac{1}{(lmn)^{1/2}}V_\bullet(\frac{lmn}{X}).$$
We define the error term as the complement:
\begin{equation} 
	\label{MTETdecomp}M^{\bullet}_{1,a,b,\pm c}(1;q) =M^{\bullet,\main}_{1,a,b,\pm c}(1;q)+ M^{\bullet,\err}_{1,a,b,\pm c}(1;q)
\end{equation}

\begin{proposition}\label{MTprop} Let $a$, $b$ and $c$ be three positive setwise coprime integers with $c\not\in\{a,b\}$. Then
for $\bullet\in\{e,o\}$ and    for $X$ tending to infinity, 
we have  the equality 
\begin{equation}
	\label{M1mainasymp}
	M^{\bullet,\main}_{1,a,b,- c}(1;q)=D_{a,b,- c}+O(X^{-1/6+o(1)}).
\end{equation}	
where $D_{a,b,- c}>1$.
\end{proposition}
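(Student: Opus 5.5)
We study the Dirichlet series
\[
D_{a,b,-c}(s)=\sum_{l^am^b=n^c}\frac{1}{(lmn)^{s}},
\]
which is multiplicative. The plan has three steps: locate its abscissa of convergence, check that $D_{a,b,-c}(1/2)>1$, and then pass from the smoothed sum to $D_{a,b,-c}(1/2)$ by a contour shift.

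\emph{Step 1: the local factors and convergence.} The condition $l^am^b=n^c$ splits into the conditions $a\alpha+b\beta=c\gamma$ on the $p$-adic valuations $(\alpha,\beta,\gamma)=(v_p(l),v_p(m),v_p(n))$. So
\[
D_{a,b,-c}(s)=\prod_p\sum_{(\alpha,\beta,\gamma)\in\Lambda}p^{-(\alpha+\beta+\gamma)s},
\qquad
\Lambda=\{(\alpha,\beta,\gamma)\in\Zz_{\geq0}^3,\ a\alpha+b\beta=c\gamma\}.
\]
The rank-$2$ monoid $\Lambda$ has finitely many generators. For convergence at $s=1/2$ and a bit beyond, we need the minimal value $w_0$ of $\alpha+\beta+\gamma$ over nonzero $(\alpha,\beta,\gamma)\in\Lambda$ to satisfy $w_0\geq 3$; then the Euler product converges absolutely for $\Re s>1/w_0$.

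\begin{itemize}
\item A point with $\alpha+\beta+\gamma=1$ would force one coordinate to be $1$ and the others $0$, which is impossible since $a,b,c\geq1$.
\item A point with $\alpha+\beta+\gamma=2$ has one of the following shapes:
  \begin{itemize}
  \item $(1,0,1)$, giving $a=c$;
  \item $(0,1,1)$, giving $b=c$;
  \item $(2,0,0)$ or $(0,2,0)$, both impossible;
  \item $(1,1,0)$, impossible;
  \item $(0,0,2)$, impossible.
  \end{itemize}
\end{itemize}
Hence the hypothesis $c\notin\{a,b\}$ is exactly what guarantees $w_0\geq3$. So $D_{a,b,-c}(s)$ converges absolutely for $\Re s>1/3$, and in particular at $s=1/2$ with room to spare.

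\emph{Step 2: positivity.} All terms of $D_{a,b,-c}(1/2)$ are positive, and the term $l=m=n=1$ contributes $1$. There is at least one further solution, for example $l=2^{c}$, $m=1$, $n=2^{a}$, since then $l^a=2^{ac}=n^c$. Therefore $D_{a,b,-c}(1/2)>1$.

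\emph{Step 3: the asymptotic.} Inserting the Mellin definition of $V_\bullet$ gives
\[
M^{\bullet,\main}_{1,a,b,-c}(1;q)=\frac{1}{2\pi i}\int_{(3)}\frac{\gamma_\bfeta(\frac12+u)}{\gamma_\bfeta(\frac12)}\,D_{a,b,-c}\Bigl(\frac12+u\Bigr)X^{u}\frac{du}{u}.
\]
This is legitimate because $D_{a,b,-c}$ converges absolutely there. We then shift the contour to $\Re u=-1/6+\eps$. This line lies strictly inside the half-plane $\Re(1/2+u)>1/3$, where $D_{a,b,-c}(1/2+u)$ is bounded.

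\begin{itemize}
\item The pole at $u=0$ gives the residue $D_{a,b,-c}(1/2)$.
\item The Gamma factors decay exponentially in vertical strips by Stirling's formula, so the horizontal segments vanish.
\item The integral on the new line is $O(X^{-1/6+\eps})$.
\end{itemize}
This gives \eqref{M1mainasymp} with $D_{a,b,-c}=D_{a,b,-c}(1/2)$.

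The only genuinely delicate point is the weight computation in Step 1, namely showing that $c\notin\{a,b\}$ forces every nontrivial solution to have $\alpha+\beta+\gamma\geq3$. One should also check that the Euler product is uniformly bounded on $\Re s\geq1/3+\eps$. This follows from the fact that the number of points of $\Lambda$ with $\alpha+\beta+\gamma=k$ grows only polynomially in $k$, while the first non-trivial exponent is $w_0\geq 3$.
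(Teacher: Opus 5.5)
Your proposal is correct and follows essentially the same route as the paper. Both arguments use the Euler product, in which the hypothesis $c\notin\{a,b\}$ rules out local solutions with $\alpha+\beta+\gamma\in\{1,2\}$, giving absolute convergence for $\Re s>1/3$; both then use the Mellin representation and shift the contour into $-1/6<\Re u<0$, picking up the residue $D_{a,b,-c}(1/2)$ at $u=0$. Your explicit extra solution $(l,m,n)=(2^{c},1,2^{a})$ also properly justifies the strict inequality $D_{a,b,-c}(1/2)>1$, which the paper attributes only to positivity of the terms.
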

\proof Consider the Dirichlet series (converging for $\Re s\gg 1$)
\begin{equation}\label{defDabc}D_{a,b,-c}(s)=\sum_{l^am^b=n^c}\frac{1}{(lmn)^s}=\prod_pD_{a,b,-c;p}(s),
\end{equation}
where
$$D_{a,b,-c;p}(s)=\sumsum_\stacksum{\alpha,\beta,\gamma\geq 0}{\alpha a+\beta b=\gamma c}\frac{1}{p^{(\alpha+\beta+\gamma)s}}=1+\sum_{k\geq 1}\frac{c_{a,b,-c}(k)}{p^{ks}}$$
say. Uniformly for $k \geq 1$, we have $0\leq c_{a,b,-c}(k)\ll_{a,b,c} k^2$; therefore $D_{a,b,-c}(s)$
 is absolutely converging for $\Re s> 1$. In addition
 given $k\geq 0$, the system of two equations in non-negative integers $(\alpha,\beta,\gamma)$
$$
\begin{cases}\alpha+\beta+\gamma=k,\\ \ \alpha a+\beta b-\gamma c=0,
\end{cases}
$$
has only one solution for $k=0$ namely $(0,0,0)$ ; no solution for $k=1$ and no solution for $k=2$ unless $a=c$ or $b=c$ in which case there is only one solution (either $(1,0,1)$ or $(0,1,1)$). This shows that for $c\not=a$ and $b$, the series  $D_{a,b,-c}(s)$ is absolutely converging for $\Re s>1/3.$ In particular we have the inequality
\begin{equation}\label{>1}
D_{a,b,-c} (1/2)>1,
\end{equation}
following from the positivity of the terms in definition \eqref{defDabc}.\\

Recall that
$$V_\bullet(y)=\frac{1}{2\pi i}\int_{(3)}\frac{\gamma_\bullet(\frac{1}2+u)}{\gamma_\bullet(\frac{1}2)}y^{-u}\frac{du}u$$
so that
\begin{equation}
	\label{Mmainint}
	M^{\bullet,\main}_{1,a,b,-c}(1;q)=\intc_{(3)}D_{a,b,-c}(u+1/2)\frac{\gamma_\bullet(\frac{1}2+u)}{\gamma_\bullet(\frac{1}2)}X^{u}\frac{du}u,
\end{equation}

Shifting the line of integration  in \eqref{Mmainint} to  the vertical line $\Re u =u_0$ where $u_0 \in (-1/6,0)$ is fixed,  we hit a simple pole at $u=0$,  and  obtain that for any $\eps>0$ 
\begin{align*}\nonumber M^{\bullet,\main}_{1,a,b,-c}(1;q)&=\res_{u=0}\left\{D_{a,b,-c}(1/2+u) \frac{\gamma_\bullet(\frac{1}2+u)}{\gamma_\bullet(\frac{1}2)}\frac{X^{u}}{u}\right\}+O_\eps(X^{-1/6+\eps})\\
&=D_{a,b,-c}(1/2)+O_\eps(X^{-1/6+\eps}).
\end{align*}
The definition $D_{a,b,-c} = D_{a,b,-c}(1/2) $ and the inequality \eqref{>1} complete the proof of  Proposition~\ref{MTprop}.
\qed

\begin{remark} If  $c=a$ or $c=b$ (say $c=a$) we have
 $$D_{a,b,-a;p}(s)=1+\frac{1}{p^{2s}}+ \sum_{k\geq 3}\frac{c_{a,b,-a}(k)}{p^{ks}}$$
and we see that 
\begin{equation*}
D_{a,b,-a}(s)=\zeta(2s)E_{a,b,-a}(s),
\end{equation*}
with $E_{a,b,-a}(s)$ absolutely converging for $\Re s>1/3$ and defining an holomorphic function there. In that case the term $D_{a,b,-c}$ has to be replaced by $P_{a,b,-a}(\log X)$ for $P_{a,b,-a}(T)$ a polynomial of degree $1$.
\end{remark}

\subsection{Analyzing the error terms}\label{subsecerrorterm}

Recall that
$$
	M^{\bullet,\err}_{1,a,b,\pm c}(1;q):=M^{\bullet}_{1,a,b,\pm c}(1;q)-M^{\bullet,\main}_{1,a,b,\pm c}(1;q)
$$
and for $\xi\not=1$ we set
 \begin{equation}\label{M1=M1}
 M^{\bullet,\err}_{1,a,b,\pm c}(\xi;q):=M^{\bullet}_{1,a,b,\pm c}(\xi;q).
 \end{equation}
 
 The treatment of these terms, after performing a dyadic partition of the variables, $l$, $m$ and $n$ rests on counting solutions to monomial congruence equations in small boxes.

 We recall the following conjecture from the introduction.
 \begin{conjectureP}  Let $a,b,c$ be non-zero setwise coprime integers and $d\geq 1$ be another integer. For $q$ a prime and $L,M,N\geq 1$  let
$$
	N_{a,b,c,d}(L,M,N;q):=\\|\{(l,m,n)\sim L\times M\times N,\ (l^am^bn^c)^d\equiv 1\mods q,\ l^am^bn^c\not=1\}|.
$$
 
There exists $\eta_0>0$ (depending on $a,b,c,d$ but not on $q$) such that   for any $\eps>0$, one has 
\begin{equation}
	\label{Nabcwishedbound}
	\frac{N_{a,b,c,d}(L,M,N;q)}{\sqrt{LMN}}\ll_{\eps,a,b,c,d} q^\eps\Bigl( \frac{\sqrt{LMN}}{q}+(LMN)^{-\eta_0}\Bigr).
\end{equation}
 \end{conjectureP}
 
 \begin{remark} The bound \eqref{Nabcwishedbound} holds if $LMN$ is
   sufficiently large compared to $q$ (sometimes
   even an asymptotic formula for~$N_{a,b,c,d}(L,M,N;q)$). For instance this is obvious if
   $\max(L,M,N)\geq q+(LMN)^{1/2+\delta}$ for some fixed
   $\delta>0$. Next, usual Fourier completion meth\-ods together with
   Weil's bound for one variable exponential sums, we obtain
   \eqref{Nabcwishedbound} (even an asymptotic formula) for
   $LMN\geq q^{9/4+\delta}$ while the much more sophisticated
   stratification theorems of Fouvry and Katz imply
   \eqref{Nabcwishedbound} for
   $L\asymp M\asymp N\asymp q^{2/3+\delta}$ (cf. \cite{FK}*{Cor. 1.5}
   and~\cite[Prop.\,4.1]{FKMAA}). Observe that the latter comes close
   to the critical range $LMN= q^{2}$, below which the first term
   $\sqrt{LMN}/q$ on the right-hand side of \eqref{Nabcwishedbound}
   begins to decay. It turns out that the techniques of
   \S~\ref{diffsection} allow us to pass this critical range and to
   obtain \eqref{Nabcwishedbound} when $(a,b,c)$ is \good\ or oxozonic
   and $$LMN\geq q^{2-\delta}$$ with $\delta>0$ sufficiently small
   (possibly depending on $a,b,c,d$). We plan to pursue the study of   Conjecture $\msP(a,b,c,d)$ in a later work.
 \end{remark}
 
 We discuss the conjecture further in the next subsections. For now, we show that it implies satisfactory bounds for $M^{\bullet,\err}_{1,a,b,\pm c}(\xi;q)$: 
 
 \begin{proposition}\label{M1errlemma}
 Notations and conventions being as in Theorem  \ref{mainthm}; let $X=q^{2-\delta}$ for some fixed $\delta\in (0,2).$ 
Suppose that Conjecture $\msP(a,b,\pm c,d)$ holds; for any $\xi\in \mu_{d}(\Fq)$ we have 
\begin{equation}
	\label{desiredboundforM1err}
	M^{\bullet,\err}_{1,a,b,\pm c}(\xi;q)\ll q^{-\Delta}
\end{equation}
for some $\Delta>0$ depending on $a,b,c,d,\delta$.
\end{proposition}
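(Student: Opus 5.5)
Fix $\xi\in\mu_d(\Fq)$ and $\bullet\in\{e,o\}$. The plan is to split the sum into smooth dyadic pieces and bound each piece by Conjecture $\msP(a,b,\pm c,d)$.

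\emph{Dyadic decomposition.} Insert a smooth dyadic partition of unity in each of $l,m,n$. This writes $M^{\bullet,\err}_{1,a,b,\pm c}(\xi;q)$ as a sum of $O((\log q)^3)$ pieces indexed by $(L,M,N)$. Each piece is a sum over $(l,m,n)\sim L\times M\times N$ satisfying $l^am^bn^{\pm c}\equiv\xi\mods q$, $(lmn,q)=1$, and $l^am^bn^{\pm c}\neq 1$ when $\xi=1$, with weight $(lmn)^{-1/2}V_\bullet(lmn/X)$.

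\emph{Reduction to the counting function.} For $\xi=1$ the condition $l^am^bn^{\pm c}\neq 1$ is exactly what removes the main term $M^{\bullet,\main}$; in the $-c$ case this is the equality $l^am^b=n^c$. For $\xi\neq1$ with $\xi^d=1$, every solution automatically satisfies $l^am^bn^{\pm c}\not\equiv 1$, hence $l^am^bn^{\pm c}\neq1$. In both cases each solution satisfies $(l^am^bn^{\pm c})^d\equiv1\mods q$ and $l^am^bn^{\pm c}\neq 1$. Since $\xi$ and $(-1)^{\cdot}$ are among the $d$-th roots (recall $d$ here is the even $d_q$), each dyadic piece is bounded in absolute value by
\[
(LMN)^{-1/2}\,\sup_{y\asymp LMN/X}|V_\bullet(y)|\,N_{a,b,\pm c,d}(L,M,N;q).
\]

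\emph{Small and large ranges.} By Lemma~\ref{Vprop}\eqref{Vliminf}, boxes with $LMN\geq Xq^{\eps}$ contribute $O(q^{-A})$. This uses the trivial bound $N_{a,b,\pm c,d}\leq LMN$, which loses only a polynomial factor against the rapid decay of $V_\bullet$. For $LMN\leq Xq^{\eps}$ we use $|V_\bullet|\ll1$ and apply \eqref{Nabcwishedbound}. Each such piece is then
\[
\ll q^{\eps}\Bigl(\frac{\sqrt{LMN}}{q}+(LMN)^{-\eta_0}\Bigr)\ll q^{\eps}\bigl(q^{-\delta/2+\eps/2}+(LMN)^{-\eta_0}\bigr),
\]
using $X=q^{2-\delta}$.

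\emph{Main obstacle: small boxes.} The term $(LMN)^{-\eta_0}$ is not small when $LMN$ is small, so small boxes need a separate argument. If $LMN\leq q^{\kappa}$ with $\kappa$ small, say $\kappa<1/(2\max(a,b,c)d)$, then $|(l^am^bn^{\pm c})^d|$ (or its numerator and denominator in the negative case) is $<q$. The congruence $(l^am^bn^{\pm c})^d\equiv1\mods q$ then forces an exact equality: $(l^am^b)^d=(n^{c})^d$ in the $-c$ case, or $(l^am^bn^c)^d=1$ in the $+c$ case. Taking positive $d$-th roots gives $l^am^bn^{\pm c}=1$, which these pieces exclude. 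Hence $N_{a,b,\pm c,d}(L,M,N;q)=0$ whenever $LMN\leq q^{\kappa}$.

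For $LMN>q^{\kappa}$ we have $(LMN)^{-\eta_0}\leq q^{-\kappa\eta_0}$. Summing over the $O((\log q)^3)$ boxes and choosing $\eps$ small gives \eqref{desiredboundforM1err} with, for instance, $\Delta=\tfrac12\min(\delta/2,\kappa\eta_0)$.
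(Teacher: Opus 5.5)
Your proposal is correct and follows the paper's proof. Both use a smooth dyadic decomposition, bound each box by $N_{a,b,\pm c,d}(L,M,N;q)/\sqrt{LMN}$ using $\xi^d=1$, discard small boxes because the count vanishes there, and apply Conjecture $\msP(a,b,\pm c,d)$ to the remaining boxes. The differences are minor: you handle the tail $LMN\geq Xq^{\eps}$ explicitly through the decay of $V_\bullet$, and you prove the vanishing for $LMN\leq q^{\kappa}$ directly by observing that $(l^am^b)^d$ and $(n^{c})^d$ are positive integers below $q$. That direct argument is a cleaner substitute for the paper's Lemma~\ref{triplepriercelowerbound}, and your parenthetical remark that $d$ must be even is not needed.
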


Before starting the proof of this proposition, we observe that Conjecture $\msP(a,b,\pm c,d)$ is automatic if $LMN$ is sufficiently small: 
 \begin{lemma}\label{triplepriercelowerbound} If
 \begin{equation}\label{LMN<1/6} LMN\leq \frac16\, q^{\frac{1}{2}\cdot \frac1{d\max(|a|,|b|,|c|)}}.
 \end{equation}
 we have
 $$N_{a,b,c,d}(L,M,N;q)=0$$

  \end{lemma}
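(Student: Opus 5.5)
The plan is to argue purely over the integers. A nontrivial solution of the congruence forces $q$ to divide a nonzero integer whose size is controlled by a power of $LMN$. The hypothesis \eqref{LMN<1/6} makes that integer smaller than $q$ in absolute value, which is a contradiction.

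\emph{Clearing denominators.} Let $(l,m,n)\sim L\times M\times N$ be a triple with $(l^am^bn^c)^d\equiv 1\mods q$ and $l^am^bn^c\not=1$. Since $a,b,c$ may have any sign, I separate the factors with positive and negative exponents. Set
$$A=\prod_{e\in\{a,b,c\},\,e>0}x_e^{e},\qquad B=\prod_{e\in\{a,b,c\},\,e<0}x_e^{|e|},$$
where $x_a=l$, $x_b=m$, $x_c=n$. Then $A,B$ are positive integers with $l^am^bn^c=A/B$. The condition $l^am^bn^c\not=1$ means $A\not=B$, so $A^d\not=B^d$.

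\emph{Reducing to a divisibility.} From $l,m,n\geq 1$ and the bound \eqref{LMN<1/6}, each of $l,m,n$ is less than $q$, so all of them are invertible modulo $q$. The congruence therefore reads $A^d\equiv B^d\mods q$. Hence $q$ divides the nonzero integer $A^d-B^d$, and since $A,B\geq 1$ this gives
$$q\leq |A^d-B^d|<\max(A,B)^d .$$

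\emph{Size bound.} Put $k=\max(|a|,|b|,|c|)$. Because $l\leq 2L$, $m\leq 2M$, $n\leq 2N$ and all variables are at least $1$, both $A$ and $B$ are at most $(8LMN)^k$. Consequently $q<(8LMN)^{dk}$. By \eqref{LMN<1/6} we have $8LMN\leq \tfrac43 q^{1/(2dk)}$, and hence
$$q<(4/3)^{dk}q^{1/2}.$$
This is equivalent to $q<(4/3)^{2dk}$.

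\emph{Conclusion.} The only delicate point, and it is mild, is to make sure the constant $\tfrac16$ rules out small $q$ without any extra assumption. It does: since $LMN\geq 1$, hypothesis \eqref{LMN<1/6} forces $q^{1/(2dk)}\geq 6$, that is $q\geq 6^{2dk}>(4/3)^{2dk}$. This contradicts the previous inequality, so no such triple exists and $N_{a,b,c,d}(L,M,N;q)=0$.
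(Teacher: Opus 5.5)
Your proof is correct. It uses the same principle as the paper: a nonzero integer divisible by $q$ has absolute value at least $q$. For $c>0$ it is exactly the paper's argument; the difference is in how you handle mixed signs.

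The paper first reduces to $a,b>0$. For $c<0$ it writes the congruence as $l^am^b\equiv \xi n^{|c|}\mods q$ with $\xi\in\mu_d(\Fq)$ and treats $\xi=1$, $\xi=-1$ and $\xi\neq\pm1$ separately. The last case uses the bound $\lambda_\xi\geq q^{1/d}$ for the minimum of the lattice $\Lambda_\xi=\{(x,y)\in\Zz^2,\ x\equiv\xi y\mods q\}$, and that bound itself rests on $q\mid x^d-y^d\neq 0$.

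By clearing denominators to $A^d\equiv B^d\mods q$, you avoid both the sign reduction and the split over $\xi$, and you get $q<(8LMN)^{dk}$ in one line. Incidentally, this shows the conclusion already holds whenever $LMN\leq\frac18 q^{1/(dk)}$, a condition implied by the stated one. The factor $\frac12$ in the paper's exponent comes from its lossy step from $l^am^b+n^{|c|}\geq q^{1/d}$ to $\max(l^a,m^b,n^{|c|})\geq\frac13 q^{1/(2d)}$.

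Your explicit check that $LMN\geq 1$ forces $q\geq 6^{2dk}$ also makes the bookkeeping with the constant $\frac16$ fully rigorous, which the paper leaves implicit. The paper's lattice formulation fits its context, since $\Lambda_\xi$ and its minimum reappear in the treatment of $N_{1,-1}(\xi;M,N)$ via the Lipschitz principle. For this lemma, though, your route is shorter and more uniform.
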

  
 \proof
Without loss of generality we may assume that $a,b>0$. 

Assume that $N_{a,b,c,d}(L,M,N;q)>0$.

\subsubsection*{The case $c>0$.}
Given $(l,m,n)$ such that $(l^am^bn^c)^d\equiv 1\mods q$ and $l^am^bn^c\not=1$; then 
$$(l^am^bn^c)^d\geq q+1$$ 
 which implies our claim.

\subsubsection*{The case $c<0$.}
 Replacing $c$ by $-c$ we are looking at the solutions of the congruence
$$l^am^b=\xi.n^c\mods q$$
where $(lmn,q)=1$ and $\xi$ ranges over the group of $d$-th roots of unity $\mu_d(\Fq)$.

In other words, we are looking for any $\xi\in \mu_d(\Fq)$, at the integral points of the lattice
$$\Lambda_\xi=\{(x,y)\in \Zz^2,\ x-\xi.y\equiv 0\mods q\}$$
whose coordinates are of the shape $(x,y)=(l^am^b,n^c)$.

If $\xi\not=\pm 1$, its minimum satisfies
$$\lambda_\xi=\min(|x|+|y|,\ (x,y)\in \Lambda_\xi-\{(0,0)\})\geq q^{1/d}.$$
In particular we have
$$l^am^b+n^c\geq q^{1/d}\Longrightarrow \max(l^a,m^b,n^c)\geq \frac{1}{3}q^{1/2d}$$
which implies our claim.

If $\xi=-1$ we have
$$l^am^b+n^c\equiv 0\mods q$$ and since 
$l^am^b+n^c>0$ we have
$$l^am^b+n^c\geq q \Longrightarrow \max(l^a,m^b,n^c)\geq \frac{1}{3}q^{1/2}$$
which implies what we want.

If $\xi=1$ we have
$$l^am^b-n^c\equiv 0\mods q.$$
Since, by hypothesis, we have  $l^am^b-n^c\not=0$ we deduce
$$|l^am^b-n^c|\geq q$$ which again implies what we want. 
\qed 
 \proof (of Proposition  \ref{M1errlemma}). By a smooth partition of unity (see for instance \cite{FouvryCrelle}*{Lemme 2}), we have 

\begin{equation}\label{defMbulleterr}M^{\bullet,\err}_{1,a,b,\pm c}(\xi;q)=\sum_{L,M,N\geq 1} \sumsum_\stacksum{l^am^bn^{\pm c}\equiv \xi\mods q}{l^am^bn^{\pm c}\not= 1}\frac{1}{(lmn)^{1/2}}V_\bullet(\frac{lmn}{X})U(\frac{l}L)U(\frac{m}M)U(\frac{n}N),
\end{equation}
where $(L,M,N)$ ranges over $O(\log^3 q)$ triples of real numbers satisfying
$$L,M,N\geq 1,\ LMN\ll X=q^{2-\delta}$$
and $U(x)$ a smooth function, compactly supported in $[1/2,2]$ and satisfying $$x^{k}U^{(k)}(x)\ll_k 1$$
for any $k\geq 0$.

The $(L,M,N)$ that are contributing satisfy $LMN\ll X\leq q^{2-\delta}$ and we have
\begin{equation}\label{2519}
\sumsum_\stacksum{l^am^bn^{\pm c}\equiv \xi\mods q}{l^am^bn^{\pm c}\not= 1}\frac{1}{(lmn)^{1/2}}V_\bullet(\frac{lmn}{X})U(\frac{l}L)U(\frac{m}M)U(\frac{n}N)\ll \frac{N_{a,b,\pm c,d}(L,M,N;q)}{\sqrt{LMN}}
\end{equation}
since the congruence $l^am^bn^{\pm c}\equiv \xi\mods q$ implies that $ (l^am^bn^{\pm c})^d\equiv 1\mods q$.
However by Lemma \ref{triplepriercelowerbound} we need only to consider the $(L,M,N)$ that do not satisfy  \eqref{LMN<1/6}.  In that situation 
Conjecture $\msP(a,b,\pm c,d)$ gives the bound 
$$\frac{N_{a,b,\pm c,d}(L,M,N;q)}{\sqrt{LMN}}\ll \left( q^{-\delta/2}+q^{-\eta_0/(2d\max(\vert a \vert, \vert b\vert, \vert c\vert))}
\right) q^\varepsilon.$$
Summing over the corresponding $(L,M,N)$ gives \eqref{desiredboundforM1err} and completes the proof of Proposition \ref{M1errlemma}.
\qed

At the moment we are unable to prove Conjecture $\msP(a,b,\pm c,d)$ in full generality but at least we observe that Lemma \ref{Vprop} \eqref{Vpositivity}  implies  the lower bound
\begin{equation*}
	M^{\bullet,\err}_{1,a,b,\pm c}(1;q):=\sumsum_\stacksum{l^am^bn^{\pm c}\equiv \xi\mods q}{l^am^bn^{\pm c}\not= 1}\frac{1}{(lmn)^{1/2}}V_\bullet(\frac{lmn}{X})\geq 0.
\end{equation*}

 \begin{remark}\label{remdiffprop}In Proposition \ref{differencelemma} below, we establish an unconditional variant of Proposition \ref{M1errlemma}: we show that if $(a,b,c)$ is \good\ or oxozonic and $X=q^{2-\delta}$ for $\delta>0$ sufficiently small, one has 
  $$M^{e,\err}_{1,a,b,c}(\xi;q)-M^{o,\err}_{1,a,b,c}(\xi;q)\ll_{a,b,c,d,\delta}q^{-\Delta}$$
for some $\Delta=\Delta(\delta)>0$.  The proof uses the methods developed in the next section.

 \end{remark}

\subsection{Proof of the conjecture  when $a=b$}
\label{seca=b}

 \begin{theorem}\label{thmaac} Let $a,c$ be non-zero coprime integers and let $d\geq 1$ be another integer then Conjecture $\msP(a,a,c,d)$ holds.
 
 That is, there exists $\eta_0>0$ (depending on $a,c,d$) such that for  any prime $q$, $L,M,N\geq 1$ and any $\eps>0$, one has 
\begin{equation}
	\label{Naacwishedbound}
	\frac{N_{a,a,c,d}(L,M,N;q)}{\sqrt{LMN}}\ll_{\eps,a,c,d} q^\eps( \frac{\sqrt{LMN}}{q}+(LMN)^{-\eta_0}).
\end{equation}

 \end{theorem}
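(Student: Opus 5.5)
Write $k=lm$, so $l^am^an^c=k^an^c$, and each $k\asymp LM$ has at most $d_2(k)\ll q^{\eps}$ representations $k=lm$ (the relevant $k$ are $\ll q^{O(1)}$; if $LMN$ is larger than a power of $q$ the bound is trivial). Hence
\[
N_{a,a,c,d}(L,M,N;q)\ll q^{\eps}\,\bigl|\{(k,n)\asymp K\times N,\ (k^an^c)^d\equiv 1 \mods q,\ k^an^c\neq 1\}\bigr|,\qquad K:=LM,
\]
and we split the congruence over $\xi\in\mu_d(\Fq)$ (at most $d$ values). After moving the negative exponent to the other side, the problem is to count pairs $(k,n)$ in a box $K\times N$ with $k^{a}\equiv \xi n^{\pm c}$ (or $k^an^c\equiv\xi$), excluding the exact solutions. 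This is a two-variable monomial congruence of the type handled by Pierce~\cite{Pierce}*{Theorem 4}, which bounds the number of solutions of $x^{\alpha}\equiv \lambda y^{\beta}\mods q$ in boxes.

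The plan is to combine three regimes. First, when $KN$ is small, Lemma~\ref{triplepriercelowerbound} (whose proof carries over verbatim to the pair $(k,n)$) gives no solutions at all once $KN\leq \frac16 q^{1/(2d\max(|a|,|c|))}$. Second, for $KN$ large, say $KN\geq q^{1+\delta}$ relative to the box sides, completion (additive characters modulo $q$ in $k$ and $n$, with Weil's bound for the resulting complete sums $\sum_{x}e_q(hx+h'y)$ over the curve $x^a=\xi y^{\pm c}$) gives the count $KN/q+O(q^{1/2+\eps})$, and since $\sqrt{LMN}/q\cdot\sqrt{LMN}=KN/q$ the main part is admissible, while the $q^{1/2}$ error is admissible as soon as $KN\geq q^{1+2\eta_0}$ (up to $\eps$). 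Third, the intermediate range $q^{\kappa}\leq KN\leq q^{1+\delta}$: here I would invoke Pierce's theorem in the form: for coprime exponents and a box of sides $K,N$, the number of nontrivial solutions is $\ll q^{\eps}\bigl(KN/q + (KN)^{1/2-\eta}\bigr)$ for some $\eta>0$ depending on $a,c$ — this is exactly the shape $\sqrt{KN}\,(\sqrt{KN}/q+(KN)^{-\eta})$ required in \eqref{Naacwishedbound}. If Pierce's statement is only for $\lambda=1$, I would first reduce the general $\xi$ case by noting $\xi$ lies in a bounded-order group: raising to the $d$-th power turns $k^a\equiv\xi n^{\pm c}$ into $k^{ad}\equiv n^{\pm cd}$, i.e.\ apply the result with exponents $(ad,cd)$ and then handle the loss of coprimality by taking $\gcd$ and the (nonzero, bounded) integer $k^{a}n^{\pm c}$ structure.

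The main obstacle is this intermediate range and, specifically, matching Pierce's hypotheses: the removal of the exact solutions $k^an^c=1$ (for $c<0$ these are the diagonal-type solutions $k^a=n^{|c|}$, i.e.\ $k=t^{|c|},n=t^a$, of which there are $\ll \min(K^{1/|c|},N^{1/a})\leq (KN)^{1/2-\eta}$ anyway, so they may even be absorbed), and the unbalanced boxes $K\neq N$. For unbalanced boxes I would first try the lattice argument of Lemma~\ref{triplepriercelowerbound}: for fixed $n$, $k^a\equiv \xi n^{\pm c}$ has at most $a$ residues, giving the trivial bound $\ll N(K/q+1)$ (and symmetrically $K(N/q+1)$), which suffices when $\min(K,N)\leq (KN)^{1/2-\eta}$; only the roughly balanced case then needs Pierce's power saving, and the final $\eta_0$ is the minimum of the savings across the three regimes.
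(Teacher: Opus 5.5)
There is a genuine gap, and it is the degenerate pair $(a,c)=(1,-1)$. Apart from that, your outline follows the paper's proof: merge $lm$ via the divisor bound, use the trivial count for unbalanced boxes, double Poisson plus Weil when $KN\geq q^{1+2\eta}$, Pierce's theorem in the balanced range, and Lemma~\ref{triplepriercelowerbound} for tiny $KN$.

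\textbf{Why the case $(a,c)=(1,-1)$ fails.} This case is allowed by the statement. It means counting $(k,n)$ with $k\equiv \xi n\mods{q}$ and $k\neq n$, and none of your three tools applies.
\begin{enumerate}
\item The curve $x=\xi y$ is a line. The complete sums are therefore $\sum_{y\in\Fqt}e_q((h\xi+h')y)$, which have size $q$ whenever $h\xi+h'\equiv 0\mods{q}$. So the bound $O(q^{1/2})$ you assert for every $(h,h')\neq(0,0)$ is false here.
\item In the negative-sign case, Pierce's theorem requires $a/c\notin\Zz$, possibly after swapping the two variables. For $(1,-1)$ both $a/c$ and $c/a$ are integers.
\item No bound of the form $(KN)^{1/2-\eta}$ can hold for the full count. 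For $\xi=1$ the diagonal $k=n$ alone contributes $\asymp\min(K,N)$, which is $\asymp\sqrt{KN}$ for balanced boxes. This is exactly where your claim $\min(K^{1/|c|},N^{1/a})\leq (KN)^{1/2-\eta}$, and that the exact solutions ``may even be absorbed'', breaks down. When $a=|c|=1$ they cannot be absorbed and must be removed by hand.
\end{enumerate}

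\textbf{The missing argument.} This case needs a separate lattice-point argument, which is what the paper does.
\begin{itemize}
\item For $\xi=1$, a non-exact solution has $|k-n|\geq q$. For $\xi=-1$, one has $q\mid k+n>0$, so $k+n\geq q$. In both cases there are no solutions unless $\max(K,N)\geq q/4$, and then the trivial bound gives $\sqrt{KN}/q$ after normalizing.
\item For $\xi\in\mu_d(\Fq)\setminus\{\pm1\}$ there are no exact solutions at all. You are counting points of the lattice $\Lambda_\xi=\{(k,n)\in\Zz^2:\ k\equiv \xi n\mods{q}\}$, of covolume $q$, inside the box.
\item Its minimum satisfies $\lambda_\xi\gg q^{1/d}$. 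Indeed, a nonzero vector $(r,s)$ of length $<q$ gives $q\mid r^d-s^d$, and $r^d-s^d\neq 0$ because $r=\pm s$ would force $\xi\equiv\pm1$.
\item The Lipschitz principle then gives the count $KN/q+O((K+N)/\lambda_\xi+1)$. Divided by $\sqrt{KN}$ this is $\ll \sqrt{KN}/q+\bigl((K/N)^{1/2}+(N/K)^{1/2}\bigr)q^{-1/d}+(KN)^{-1/2}$.
\item This is a power saving once $K/N,N/K\leq q^{1/d}$, using $KN\leq q^2$ to convert $q^{-1/(2d)}$ into $(KN)^{-1/(4d)}$. The remaining unbalanced boxes are covered by your trivial bound.
\end{itemize}
With this case added, the rest of your argument goes through.
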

 
 \proof
Without loss of generality we may assume that $a(=b)\geq 1$.  

We observe again that
$$((lm)^an^c)^d\equiv 1\mods q\Longrightarrow \exists \xi\in \mu_d(\Fq),\ (lm)^an^c\equiv \xi \mods q.$$

The main idea is to group the two variables $lm$ into a single variable (which we still denote by $m$). By the divisor bound we have therefore
\begin{equation}
	\label{a=breduction}
	N_{a,a,c,d}(L,M,N;q)\ll_{\eps} (LM)^\eps d\max_{\xi\in\mu_d(\Fq)}N_{a,c}(\xi;LM,N)
\end{equation}
with 
$$
N_{a,c}(\xi;M,N;q)=|\{(m,n)\sim M\times N,\ m^an^c\equiv \xi\mods q,\ m^an^c\not=1\}|.$$
To simplify notation we write $M$ for $LM$.

Observe that if $MN\geq q^2$ then $\max(M,N)\geq q$ and
$$\frac{N_{a,c}(\xi;M,N;q)}{\sqrt{MN}}\ll_{a,c}\frac{\min(M,N)}{\sqrt{MN}}(\frac{\max(M,N)}q+1)\ll \frac{\sqrt{MN}}q. $$
From now on we assume that $$MN\leq q^2.$$

The case $a=c=1$ is easy. Indeed, appealing to the classical divisor function $d_2$ we have the bound
$$
N_{1,1} (\xi;M,N) \leq \sum_{1\leq t \leq 4MN\atop t\equiv \xi (\bmod q)} d_2 (t) \ll (MN)^\varepsilon (\frac{MN}q +1),
$$
which leads to \begin{equation*}
\frac{N_{1,1}(\xi; M, N)}{\sqrt{MN}} \ll (MN)^\varepsilon (   \frac {\sqrt{MN}} q+(MN)^{-1/2}).
\end{equation*}
This  together with \eqref{a=breduction}	gives Theorem \ref{thmaac} in the case $a =c=1$. \\ 
Thus for the rest of the proof we assume that the integers $a$ and $c$ satisfy
\begin{equation}\label{2568}
a\geq 1, \ c\neq 0, (a,c)\neq (1,1) \text{ and g.c.d.}(a,c) =1.
\end{equation}

By the same argument as in Proposition \ref{triplepriercelowerbound} we have $N_{a,c}(\xi;M,N)=0$ if 
$$MN\leq \frac 14 q^{\frac{1}{d\max(a,|c|)}}.$$
Therefore we assume that
$$\frac 14q^{\frac{1}{d\max(a,|c|)}}\leq MN\leq q^2.$$

\subsubsection*{Consequence of the trivial bound}
Write
\begin{equation}
	\label{gammadef}
	M=\sqrt{MN}^{\,1-\gamma},\ N=\sqrt{MN}^{\,1+\gamma},\ \gamma\in[-1,1]
\end{equation}
so that 
$$M/N
 =\sqrt{MN}^{\,-2\gamma},\ N/M=\sqrt{MN}^{\,2\gamma}.$$
We have the trivial bound obtained just by counting congruences 
$$\frac{N_{a,c}(\xi;M,N)}{\sqrt{MN}}\leq \frac{\min(M,N)}{\sqrt{MN}}(\frac{\max(M,N)}{q}+1)=\frac{\sqrt{MN}}q+\sqrt{MN}^{\, -|\gamma|}.$$
This bound is satisfactory as long as $|\gamma|\geq \eta$ for some $\eta>0$ to be chosen sufficiently small later.

We may therefore assume that 
\begin{equation}\label{gamma<}|\gamma|\leq \eta,
\end{equation} which means that
$M$ and $N$ are close to one another:
\begin{equation}
	\label{closebyMN}
	M/N,N/M\leq (MN)^{|\gamma|}\leq q^{2\eta}.
\end{equation}

The case $(a,c)=(1,-1)$ will require a different kind of argument based on the properties of the lattice $\Lambda_\xi$ (see \S\,  \ref{case11}). Here we deal first with the case $(a,c)\not=(1,-1)$.

\subsubsection{The case $MN$ large} 
If $MN$ is large by \eqref{closebyMN}, both $M$ and $N$ are large; we  then smoothen  the $m$ and $n$ variables (which is possible since the expression we want to evaluate is a sum of non-negative terms),  and  we apply the Poisson summation formula to both $m$ and $n$. Thus  we obtain
$$\frac{N_{a,c}(\xi;M,N)}{\sqrt{MN}}\ll \frac{\sqrt{MN}}{q^2}\sumsum_{m'\ ,\ n'}({1+|m'|M/q})^{-2}({1+|n'|N/q})^{-2}\bigl|\sumsum_\stacksum{x,y\mods q}{x^ay^{c}\equiv \xi\mods q}e(\frac{m'x+n'y}{q})\bigr|.$$

The contribution of the frequencies  $(m',n')=(0,0)$  is bounded by
$$\ll \frac{\sqrt{MN}}{q^2}q=\frac{\sqrt{MN}}q.$$

Since $(a,c)\not=(1,-1)$, the contribution  of the terms with $(m',n')\not=(0,0)$ is bounded using Weil type bounds for algebraic exponential sums in one variable (and sometimes just bounds for Gauss sums). More precisely, we have 
$$\sumsum_\stacksum{x,y\mods q}{x^ay^{c}\equiv \xi\mods q}e(\frac{m'x+n'y}{q})\ll_{a,c} q^{1/2}.$$
We obtain  
\begin{align*}
	\frac{N_{a,c}(\xi;M,N)}{\sqrt{MN} }&\ll \frac{\sqrt{MN}}{q}+\frac{\sqrt{MN}}{q^2}(1+\frac{q}M)(1+\frac{q}N)q^{1/2}\\
	&\ll \frac{\sqrt{MN}}{q}+(\frac{M/N+N/M}{q})^{1/2}+\frac{\sqrt{MN}}{q^{3/2}}+(\frac{q}{MN})^{1/2}\\
	&\ll \frac{\sqrt{MN}}{q}+(\frac{q}{MN})^{1/2}+q^{\eta-1/2}.
\end{align*} 
This inequality reduces to 
\begin{equation}\label{2234}
	\frac{N_{a,c}(\xi;M,N)}{\sqrt{MN} }\ll  \frac{\sqrt{MN}}{q}+(MN)^{-\eta/(1+2\eta)},
\end{equation}
as long as one assumes the inequality
$$MN\geq q^{1+2\eta}$$
for any $\eta\in(0,1/4)$.

\subsubsection{The case $MN$ small}
It remains to bound $N_{a,c}(\xi;M,N)$ for $M,N$ satisfying 
\begin{equation}
	\label{MNbound2}
	(1/4)\,q^{1/d\max(a,|c|)}\leq MN\leq q^{1+2\eta},\ M/N,N/M\leq q^{2\eta}.
\end{equation}
In particular we have the inequalities 
$$M,N\leq q^{1/2+2\eta}.$$
We now recall the following result of L. Pierce~\cite[Th.\,4]{Pierce} that was already used in \cite{FKMAA}.

\begin{theorem}[Pierce]\label{thmpiercebis}
  Given $a,c\geq 1$ be positive integers. Let $q$ be a prime, $\xi\in\Fqt$ and $k\geq 1$ be an integer. Let $M,N\geq 1$ be  such that
  \begin{equation*}
    M\leq \demi q^{\frac{k+1}{2k}},\ N\leq \tfrac{q}{4}, 
  \end{equation*}
then   we have
  $$
  N_{a,c}(\xi;M,N)\ll 
  M^{\frac{k}{k+1}}N^{\frac{1}{2k}}\log q
  $$
where the implied constant depends only
  on~$(a,c,k)$. Moreover if $a/c$ is not an integer, the same bound holds for  $
  N_{a,-c}(\xi;M,N)$.

\end{theorem}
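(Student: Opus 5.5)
This is Pierce's result \cite[Th.\,4]{Pierce}, and the simplest course is to quote it, taking $a,c$ in the roles of her exponents. If I had to reprove it, I would proceed as follows. For $\xi\in\Fqt$ and a residue $t$, the congruence $m^a\equiv \xi t$ has at most $a$ solutions modulo $q$. Since $M<q$ (because $M\leq \demi q^{(k+1)/(2k)}\leq \demi q$), each $n$ contributes at most $a$ values of $m$, and the count is trivially $\ll N$. The trivial count in the other direction (at most $c$ values of $n$ for each $m$, using $N\leq q/4$) gives $\ll M$. To beat both I would use Hölder combined with a multiplicative energy bound, in the spirit of Burgess.

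\textbf{Set-up.} Write $A=\{\xi n^{-c}\bmod q:\ n\sim N\}$, so that
$$N_{a,c}(\xi;M,N)\leq c\,|\{m\sim M:\ m^a\in A\}|.$$
I would raise this to the $k$-th power, passing to $k$-tuples $(m_1,\dots,m_k)$ with every $m_i^a\in A$, and record only the product $u=\prod_i m_i^a$. Let $\rho(u)$ be the number of such $k$-tuples with $\prod_i m_i^a\equiv u$, and let $\sigma(u)$ be the number of $k$-tuples $(n_1,\ldots,n_k)$ with $\xi^k\prod_i n_i^{-c}\equiv u$. Then
$$N_{a,c}(\xi;M,N)^k\ll \sum_u \rho(u)\sigma(u).$$
Applying H\"older once more, with a $1/(2k)$ power falling on the $\sigma$-side, reduces the problem to two inputs.

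\textbf{First input: an energy bound for the $m$-side.} I need to bound
$$\sum_u\rho(u)^2=\Bigl|\Bigl\{(m_1,\dots,m_{2k}):\ \Bigl(\prod_{i\leq k}m_i\Bigr)^a\equiv \zeta\Bigl(\prod_{i>k}m_i\Bigr)^a,\ \zeta\in\mu_a(\Fq)\Bigr\}\Bigr|.$$
The key observation is that the hypothesis $M\leq \demi q^{(k+1)/(2k)}$, i.e. $M^{2k}\ll q^{k+1}$, is exactly what makes the lattice
$$\{(x,y)\in\Zz^2:\ x\equiv \zeta' y \bmod q\},$$
with $\zeta'$ an $a$-th root of $\zeta$ and $x,y\leq M^k$, contain few points. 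For $\zeta'=1$ the congruence forces the integer equality $x=y$. For $\zeta'\neq 1$ I would argue as in the lattice arguments of Lemma~\ref{triplepriercelowerbound}. Either way, combined with the divisor bound, this gives an energy of roughly $M^k q^{o(1)}+M^{2k}/q$. The constraint $M^{2k}\ll q^{k+1}$ is what keeps the off-diagonal part under control.

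\textbf{Second input and conclusion.} On the $n$-side the trivial bound $\sum_u\sigma(u)\leq N^k$, together with $\max_u\sigma(u)\ll N^{k-1}$, suffices. Combining the two inputs and optimizing the H\"older exponents should produce the shape $M^{k/(k+1)}N^{1/(2k)}$. The $\log q$ factor comes from dyadic or divisor-type losses.

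The negative case $N_{a,-c}$ is handled identically, writing the congruence as $m^a\equiv\xi n^c$. The hypothesis that $a/c$ is not an integer is needed to exclude degenerate diagonal solutions of the form $m=n^{c/a}$, for which the count would genuinely be of size $\min(M,N)$.

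The main obstacle is the energy bound in the first input, in particular controlling the contribution of the nontrivial roots $\zeta'$ so that it matches the range $M^{2k}\ll q^{k+1}$. I would follow Pierce's treatment there rather than reinvent it.
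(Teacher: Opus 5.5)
\textbf{The citation route.} Quoting \cite{Pierce} is what the paper does, but you leave out the caveat the paper adds: Pierce's Theorem 4 is proved only for $\xi=1$. Citing it with $a,c$ as her exponents therefore does not give the statement for general $\xi\in\Fqt$. You also have to observe that her argument goes through unchanged for arbitrary $\xi$; the paper asserts this and refers also to \cite{moreira}.

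\textbf{The fallback reproof.} This cannot work as described, and the difficulty is not where you place it.

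The energy input is actually trivial. For each $y\leq B=(2M)^k$ and each $\zeta'$ there are at most $B/q+1$ integers $x\leq B$ with $x\equiv\zeta' y\mods q$. With the divisor bound this gives $\sum_u\rho(u)^2\ll q^{o(1)}(M^k+M^{2k}/q)$, with no lattice argument and without using $M\leq\demi q^{(k+1)/(2k)}$.

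The real problem is the $n$-side. Everything you use there remains true if $n$ runs over an arbitrary set of $N$ nonzero residues instead of an interval: $\sum_u\sigma(u)\leq N^k$, $\max_u\sigma(u)\ll N^{k-1}$, and at most $c$ values of $n$ per residue. For such sets the theorem is false. Take $c=1$ and let $n$ range over $\{\xi m^{-a}\bmod q:\ m\sim M\}$. Then every $m\sim M$ gives a solution, so the count is $\geq M$. On the other hand, with $N\leq M$,
\[
M^{k/(k+1)}N^{1/(2k)}\log q\leq M^{k/(k+1)+1/(2k)}\log q=o(M)
\]
for $k\geq 2$ and, say, $M=q^{1/2}$.

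Concretely, put $\mathcal{M}=\{m\sim M:\ m^a\in A\}$. Cauchy--Schwarz in your set-up gives
\[
|\mathcal{M}|^{2k}\leq\Bigl(\sum_u\rho(u)^2\Bigr)\Bigl(\max_u\sigma(u)\Bigr)N^k.
\]
This can never beat $\sqrt{MN}$, because any energy bound valid for the full interval is $\gg M^k$. With $\max_u\sigma(u)\ll N^{k-1}$ it yields only $(M^k+M^{2k}/q)^{1/(2k)}N^{1-1/(2k)}q^{o(1)}$. That is worse than the trivial bound $\ll N$ when $M\asymp N$, which is exactly the regime \eqref{MNbound2} where the paper needs a saving over $\sqrt{MN}$.

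So no choice of H\"older exponents will produce the factor $N^{1/(2k)}$. You need an additional step that genuinely uses the fact that $n$ runs over an interval. That step is the real content you would have to take from Pierce's argument; it is not the energy bound.
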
 
Actually, Pierce proved Theorem \ref{thmpiercebis} for $\xi=1$ but the proof is valid  for general $\xi$'s (see also \cite{moreira}*{Lemma 1.4}).

\begin{remark} Since in Theorem \ref{thmaac},  $a$ and $c$ are coprime $a/c$ is an integer only if $c=1$.
\end{remark}
With the notations \eqref{gammadef} we have
\begin{equation}\label{Mk}
\frac{M^{\frac{k}{k+1}}N^{\frac{1}{2k}}}{\sqrt{MN}}=(\sqrt{MN})^{-\mathcal{E}_{k,\gamma}},
\end{equation}
with
$$\mathcal{E}_{k,\gamma}=\frac{k-1+\gamma(2k^2-k-1)}{2k(k+1)}.$$
 If $\eta$ satisfies 
$$\eta\leq \frac{k-1}{8k^2}$$ the bound
$|\gamma|\leq \eta$  (see \eqref{gamma<}) implies that
\begin{equation}
	\label{Ekgammalower}\mathcal{E}_{k,\gamma}\geq \frac{k-1}{4k(k+1)}.
\end{equation}
In order to apply Theorem \ref{thmpiercebis} we need to check that
 $M\leq \demi q^{\frac{k+1}{2k}}.$
 For this it is sufficient that (cf. \eqref{gamma<} and \eqref{MNbound2})
$$M\leq \sqrt{MN}^{1+|\gamma|}\leq q^{\frac12(1+2\eta)^2}\leq \frac12q^{\frac{k+1}{2k}}$$
or
$$\eta < \frac{1}{2}((\frac{k+1}{k})^{1/2}-1).$$
Choose $k=3$ and  take 
$\eta= 1/36$, so that
$\mathcal{E}_{3,\gamma}\geq 1/24$ by \eqref{Ekgammalower}. By \eqref{Mk} we have  the upperbound 
\begin{equation}
	\label{desiredbound}
	\frac{N_{a,c}(\xi;M,N)}{\sqrt{MN}}\ll \sqrt{MN}^{\, -1/24+o(1)}.
\end{equation}

This reasoning is also valid for $N_{a,-c}(\xi;M,N)$ as long as $a/c$ is not an integer. However, by the restriction \eqref{2568},  either $a/c$ or $c/a$ is not an integer so up to switching $m,n$ \eqref{desiredbound} also holds with $N_{a,-c}(\xi;M,N)$ (notice that the lower bound \eqref{Ekgammalower} is independent of the sign of $\gamma$).

Combining   \eqref{2234} with \eqref{desiredbound} and recalling that $\eta = 1/24$,   we obtain Theorem \ref{thmaac} with $$\eta_0=\min (2\eta/(1+2 \eta), 1/24) =1/38,$$ 
when $a$ and $c$ are coprime, different from $(1,-1)$.

\subsubsection{Bounding $N_{1,-1}(\xi;M,N)$}\label{case11} The case $(a,c)=(1,-1)$ is a bit degenerate and requires slightly different arguments taken from \cite{FKMAA}*{\S 4}.

 As we have already observed, we have
$${N_{1,-1}(\xi;M,N)}=0$$
for $M,N\leq q/4$ since for $1\leq m,n<q/2$
$$m\equiv n\mods q \Longrightarrow m=n.$$
Likewise, we have $N_{1,-1}(-1;M,N)=0$ for $M,N\leq q/4$.

For $\max(M,N)\geq q/4$ the trivial bound gives
$$\frac{N_{1,-1}(\pm 1;M,N)}{\sqrt{MN}}\ll \frac{\sqrt{MN}}q.$$
This concludes the proof for $(a,c,\xi)=(1,-1,\pm 1)$. 

 We now appeal to the notations \eqref{gammadef}. For $\xi\in\mu_d(\Fq)-\{\pm 1\}$, we have by the trivial bound 
$$\frac{N_{1,-1}(\xi,M,N)}{\sqrt{MN}}\leq \frac{\sqrt{MN}}q+\sqrt{MN}^{\, -|\gamma|}$$
which is satisfactory as long as $|\gamma|\geq \eta$. 

If $|\gamma|\leq \eta$ we observe that
our problem amounts to counting the number of points in the intersection of the box $[M,2M]\times [N,2N]$ and of the lattice of covolume $q$
$$\Lambda_\xi=\{(m,n)\in\Zz^2,\ m-\xi n\equiv 0\mods q\}.$$

  By the Lipschitz principle (see \cite{FKMAA}*{Prop 4.5}) we have
$$\frac{N_{1,-1}(\xi;M,N)}{\sqrt{MN}}\ll \frac{\sqrt{MN}}{q}+\frac{1}{\lambda_\xi}\cdot \frac{M+N}{\sqrt{MN}}\ll \frac{\sqrt{MN}}{q}+\frac{1}{\lambda_\xi}(\frac{M}{N}+\frac{N}{M})^{1/2}$$
where $\lambda_\xi$ is the minimum of $\Lambda_\xi$. Since $\xi\in\mu_d(\Fq)-\{\pm 1\}$ we have
$$\lambda_\xi\gg q^{1/d}$$ and (since $MN\leq q^2$) we have
$$\frac{N_{1,-1}(\xi;M,N)}{\sqrt{MN}}\ll \frac{\sqrt{MN}}{q}+\frac{\sqrt{MN}^{|\gamma|}}{q^{1/d}}\ll \frac{\sqrt{MN}}{q}+{\sqrt{MN}^{\, \eta-1/d}}.$$
Taking $\eta=1/2d$ we obtain Theorem \ref{thmaac} for $(a,c)=(1,-1)$ with $\eta_0=1/2d$.\\
The proof of Theorem \ref{thmaac} is now complete.

\subsection{Proof of conjecture $\msP(a,b,c,d)$ on average}

\begin{proposition}
	\label{propfouvry} Given $a,b,c\in\Zz-\{0\}$ setwise coprime, $d\geq 1$ another integer and $Q\geq 2$; then  for any $\eps>0$, we have 
 	$$\frac{1}{|\msP\cap [Q,2Q)|}\sum_{q\in \msP\cap [Q,2Q)}\frac{N_{a,b,c,d}(L,M,N;q)}{\sqrt{LMN}}\ll_{a,b,c,d,\eps} Q^\eps\frac{\sqrt {LMN}}{Q}.$$
\end{proposition}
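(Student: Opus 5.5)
Interchange the sum over primes with the sum over triples. For each $(l,m,n)\sim L\times M\times N$ with $l^am^bn^c\neq 1$, count the primes $q\in[Q,2Q)$ for which $(l^am^bn^c)^d\equiv 1\mods q$. This gives
$$\sum_{q\in\msP\cap[Q,2Q)}N_{a,b,c,d}(L,M,N;q)=\sum_{\substack{(l,m,n)\sim L\times M\times N\\ l^am^bn^c\neq 1}}\bigl|\{q\in\msP\cap[Q,2Q),\ q\mid \mathcal{N}(l,m,n)\}\bigr|,$$
where $\mathcal{N}(l,m,n)$ is the nonzero integer obtained by clearing denominators. Write $l^am^bn^c=A/B$ with $A,B\geq 1$ coprime, which covers negative exponents. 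Then $(A/B)^d\equiv 1\mods q$ with $q\nmid AB$ means $q\mid A^d-B^d$. This integer is nonzero because $A\neq B$ and $A,B>0$. Triples with $q\mid lmn$ contribute nothing, since then the congruence is meaningless or fails, so they may be discarded.

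The key step is to bound the number of prime divisors $q\geq Q$ of $A^d-B^d$. We have $|A^d-B^d|\leq (2L)^{d|a|}(2M)^{d|b|}(2N)^{d|c|}\leq (LMN)^{O(1)}$, with the implied exponent depending on $a,b,c,d$. Since $|A^d-B^d|\ge 1$, the number of its prime factors that are at least $Q$ is at most $\log|A^d-B^d|/\log Q\ll_{a,b,c,d}\log(2LMN)/\log Q$.

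Multiplying by the number $\leq 8LMN$ of triples and dividing by $|\msP\cap[Q,2Q)|\gg Q/\log Q$ gives
$$\frac{1}{|\msP\cap[Q,2Q)|}\sum_q\frac{N_{a,b,c,d}(L,M,N;q)}{\sqrt{LMN}}\ll \frac{\log Q}{Q}\cdot\sqrt{LMN}\cdot\frac{\log(2LMN)}{\log Q}=\frac{\sqrt{LMN}\,\log(2LMN)}{Q}.$$
It remains to absorb $\log(2LMN)$ into $Q^\eps$.

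This absorption is the only real obstacle, and it matters only when $LMN$ is huge compared with $Q$. In that regime I would use the trivial bound
$$N_{a,b,c,d}(L,M,N;q)\ll_{a,b,c,d}(LMN)\Bigl(\frac1L+\frac1q\Bigr)$$
for each $q$, with $L$ the largest of the three ranges: for fixed $(m,n)$ there are $O_{a,d}(1)$ residues of $l$ mod $q$, hence at most $O(1+L/q)$ values of $l\sim L$. For $\max(L,M,N)\geq Q$ this gives $N/\sqrt{LMN}\ll\sqrt{LMN}/Q$ directly. If instead $\max(L,M,N)<Q$, then $LMN<Q^3$, so $\log(2LMN)\ll\log Q\ll_\eps Q^\eps$, and the bound above suffices. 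In all cases this yields the claimed $Q^\eps\sqrt{LMN}/Q$.
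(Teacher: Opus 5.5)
Your argument is correct and follows essentially the paper's route: interchange the sums, observe that $q$ must divide the nonzero integer $A^d-B^d$ (the paper uses $(l^am^b)^d-n^{|c|d}$ directly), and count the admissible primes. The differences are minor. You bound the number of prime divisors $q\geq Q$ by $\log|A^d-B^d|/\log Q$ rather than by $d_2(\cdot)$. Your trivial-bound step for $\max(L,M,N)\geq Q$ also makes explicit the passage from an $(LMN)^{\eps}$ (or $\log(2LMN)$) loss to $Q^{\eps}$. The paper leaves this implicit, which is harmless there since $LMN\leq Q^{2}$ in its application.
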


\proof We may assume that $a,b\geq 1$. 

We discuss the case $c\leq -1$; for $q\in \msP\cap [Q,2Q)$ and
$(lmn,q)=1$, we have the  equivalence
$$(l^am^bn^c)^d\equiv 1\mods q\Longleftrightarrow
(l^am^b)^d-n^{|c|d}=qk,\ k\in\Zz.$$

If $k=0$, then $l^am^b=n^{|c|}$ which is excluded from the count.

The sum we aim to evaluate is therefore bounded by
$$\ll \frac{\log Q}{Q}\sum_\stacksum{l,m,n\sim L\times M\times N}{(l^am^b)^d-n^{|c|d}\not=0}\frac{d_2((l^am^b)^d-n^{|c|d})}{(LMN)^{1/2}}\ll_{a,b,c,d,\eps} Q^\eps\frac{\sqrt{LMN}}{Q}$$
where $d_2$ denotes the divisor function.

For $c\geq 1$, it suffices to replace $(l^am^b)^d-n^{|c|d}$ by
$(l^am^bn^c)^d-1$ in the above argument.  \qed
 
 By gathering Proposition \ref{propfouvry}, the definition \eqref{defMbulleterr} and the inequality \eqref{2519} we obtain:
 \begin{corollary} \label{M1erraverage} Let $a,b,c\geq 1$ be setwise coprime integers and $d\geq 1$ be another integer; let $q\geq 3$ be a prime. 
 We adopt the notations of  Theorem  \ref{mainthm}.
 
Given $\delta\in (0,2)$ and $Q\geq 2$, let $X =Q^{2-\delta}$, then  for any $\eps>0$, we have 
\begin{equation*}
	\frac{1}{|\msP\cap[Q,2Q)|}\sum_{q\sim Q}\sum_{\xi\in \mu_d(\Fq)}|M^{\bullet,\err}_{1,a,b,\pm c}(\xi;q)|\ll_{a,b,c,d,\eps} Q^{-\delta/2+\eps}.
\end{equation*}
 \end{corollary}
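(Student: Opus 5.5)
The plan is to combine the dyadic decomposition \eqref{defMbulleterr} with the pointwise bound \eqref{2519}, and then average over $q$ using Proposition \ref{propfouvry}. Fix $q\sim Q$ and $\xi\in\mu_d(\Fq)$. By \eqref{defMbulleterr}, $M^{\bullet,\err}_{1,a,b,\pm c}(\xi;q)$ is a sum over $O(\log^3 q)$ dyadic triples $(L,M,N)$ with $LMN\ll X=Q^{2-\delta}$. By \eqref{2519}, each block is bounded in absolute value by $\ll N_{a,b,\pm c,d}(L,M,N;q)/\sqrt{LMN}$. For $\xi\neq 1$ the condition $l^am^bn^{\pm c}\neq 1$ is automatic, and for $\xi=1$ it is exactly the removal of the main term. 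The quantity $N_{a,b,\pm c,d}$ already counts all congruences $(l^am^bn^{\pm c})^d\equiv1$, so it covers every $\xi\in\mu_d(\Fq)$ simultaneously. Summing over $\xi$ therefore costs at most a factor $d$, because a given triple $(l,m,n)$ satisfies $l^am^bn^{\pm c}\equiv\xi$ for exactly one $\xi$. This gives
\[
\sum_{\xi\in\mu_d(\Fq)}|M^{\bullet,\err}_{1,a,b,\pm c}(\xi;q)|\ll \sum_{L,M,N}\frac{N_{a,b,\pm c,d}(L,M,N;q)}{\sqrt{LMN}}.
\]
One should make sure the dyadic families are chosen independently of $q$, which is possible since $X=Q^{2-\delta}$ depends only on $Q$; then the sum over $(L,M,N)$ can be taken outside the average over $q$.

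Next, average over primes $q\in[Q,2Q)$ and interchange with the $(L,M,N)$-sum. Proposition \ref{propfouvry} bounds each averaged term by $\ll_\eps Q^{\eps}\sqrt{LMN}/Q\ll Q^{\eps}X^{1/2}/Q=Q^{-\delta/2+\eps}$. Summing over the $O(\log^3 Q)$ triples only costs a power of $\log Q$, which is absorbed into $Q^\eps$.

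There is one point to check. The main statements restrict to $(lmn,q)=1$, whereas Proposition \ref{propfouvry} uses the equivalence with $(l^am^b)^d-n^{|c|d}=qk$. In the error count one may simply drop the coprimality condition, since the congruence forces it anyway; for the negative sign, where $n^{-c}$ requires $n$ invertible, it is also automatic. The only delicate step is therefore the bookkeeping showing that \eqref{2519} holds uniformly with the same smooth partition for all $q\sim Q$. I expect this to be routine, since $V_\bullet$ and $U$ are non-negative and bounded.
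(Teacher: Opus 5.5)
Your proposal is correct and follows the paper's own argument: the corollary is obtained by combining the dyadic decomposition \eqref{defMbulleterr}, the pointwise bound \eqref{2519}, and the averaged count of Proposition \ref{propfouvry}, which gives $Q^{\eps}\sqrt{LMN}/Q\ll Q^{\eps}X^{1/2}/Q=Q^{-\delta/2+\eps}$ for each of the $O(\log^3 Q)$ boxes. The paper leaves two of your points implicit: that the boxes must be chosen independently of $q$ (possible since $X=Q^{2-\delta}$), and that the $\xi$-sum costs nothing because each triple lies over exactly one $\xi\in\mu_d(\Fq)$.
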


\section{The sums  $M_{2,a,b,\pm c}(\xi;q)$}\label{secM2}
Let $a,b,c\geq 1$ be  integers, $a,b,c$ setwise coprime and let $q$ be a prime  and $\xi\in\Fqt$.

Let $M^\bullet_{2,a,b,\pm c}(\xi;q)$ be the sum defined in \eqref{M2def} with $Y=q^{1+\delta}$ for $\delta\in (0,1/2)$  suitably small but fixed. The main objective of this section is to prove the 

\begin{theorem}
	\label{M2abcthm} Notations as above and assume that $(a,b,\pm c)$ is {\good}  or oxozonic in the sense of Definition \ref{defgoodtriple}. 
	
	For any $\delta>0$ sufficiently small (depending on $a,b,c$), there exists $\eta=\eta(\delta)>0$ such that for any $\xi\in \Fqt$ we have
	\begin{equation}
	M^\bullet_{2,a,b,\pm c}(\xi;q)\ll q^{-\eta}.
	\label{M2boundgoal}
\end{equation}
Here the implicit constant depends only on $a,b,c$.
\end{theorem}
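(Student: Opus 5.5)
We bound $M^\bullet_{2,a,b,\pm c}(\xi;q)$ by a dyadic decomposition followed by a case analysis on the sizes of the three variables. By a smooth partition of unity in $l,m,n$, as in the proof of Proposition~\ref{M1errlemma}, we write $M^\bullet_{2,a,b,\pm c}(\xi;q)$ as a sum of $O(\log^3 q)$ terms
\[
\frac{1}{q^{1/2}(LMN)^{1/2}}\sumsumsum_{l,m,n}K_{a,b,\pm c}(\xi l^am^bn^{\pm c})\,W\Bigl(\frac{l}{L},\frac{m}{M},\frac{n}{N}\Bigr),
\]
where $W$ is smooth and incorporates $V_\bullet(lmn/Y)$ together with the weights $(LMN)^{1/2}(lmn)^{-1/2}$. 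The decay of $V_\bullet$ (Lemma~\ref{Vprop}) lets us restrict to $LMN\le q^{1+2\delta}$. The weights in $W$ are separated by Mellin inversion into bounded coefficients $\alpha_l\beta_m\gamma_n$, at the cost of a factor $q^{o(1)}$. The task is then to show that the normalized trilinear sum $S/(LMN)$ is at most $(LMN)^{-1/2}q^{1/2-\eta}$.

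We then use the trace function structure. By Theorem~\ref{th-galant-sheaf}, $-K_{a,b,\pm c}(u)$ equals, up to an error $O(q^{-1/2})$ in the negative case, the trace function at $u$ of a sheaf $\mcK_{a,b,\pm c}$ with bounded complexity. This sheaf is \good\ or oxozonic exactly when the triple is, and $u\mapsto \mcK(\xi u)$ keeps these properties. The $O(q^{-1/2})$ error contributes at most $q^{-1}(LMN)^{1/2}\le q^{-1/2+\delta}$, which is admissible.

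The argument then splits by the sizes of $L,M,N$.
\begin{enumerate}
\item If $LMN\le q^{1-\delta'}$, the bound \eqref{universalforK} gives $q^{-\delta'/2}$ directly.
\item Otherwise $LMN\ge q^{1-\delta'}$. If some variable, say $N$, satisfies $N\ge q^{1/2+\delta'}$, we complete the $n$-sum by P\'olya--Vinogradov. The correlation sums $\sum_x K(\xi l^am^b x^{\pm c})e(hx/q)$ are $O(q^{1/2})$ because the sheaf is geometrically irreducible of rank at least $2$ and is therefore not of the form $\mcL_\psi$ pulled back. The $n$-sum is then $O(q^{1/2}\log q)$, and the total is $\ll q^{-1/2+o(1)}(LM/N)^{1/2}\ll q^{o(1)}(LMN)^{1/2}/N$, which is small.
\item If no variable is that large but some pair, say $M,N$, has $MN\ge q^{1/2+\delta'}$ and $L\ge q^{\delta'}$, we apply Theorem~\ref{thmtriplesum}, or \eqref{tripleort} in the oxozonic case, with $k$ large enough that $L^kMN\ge q^{1+\delta'}$. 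This gives a saving $q^{-\eta'}$ over the trivial bound, and this saving beats the loss $q^{\delta}$ once $\delta$ is small.
\item In the remaining configuration, some variable $L\le q^{\delta'}$ and both other variables lie in $[q^{1/2-2\delta'},q^{1/2+\delta'}]$. We absorb $l$ into the coefficient sequence via $u\mapsto K(\xi l^a u)$; this is a multiplicative shift, so goodness is preserved. We then apply Theorem~\ref{thmTypeII} to the bilinear sum in $(m,n)$ with $k$ large, which is nontrivial since $MN\ge q^{3/4+\delta'}$. In the oxozonic case we use \eqref{BtypeIIort}, which requires an odd exponent. The oxozonic triples $(1,1,2)$, $(1,4,-3)$, $(1,6,-3)$, $(2,3,-1)$ always have an odd entry, so we order the variables to put an odd exponent in the role of $c$. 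Alternatively, when $L$ is tiny we can merge $l$ with the odd-exponent variable.
\end{enumerate}

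The main obstacle is checking that these three regimes cover every $(L,M,N)$ with $q^{1-\delta'}\le LMN\le q^{1+2\delta}$, and that the savings survive the loss $q^{\delta}$. The dangerous case is when two variables are near $q^{1/2}$ and the third is near $q^{\delta'}$, where cases (3) and (4) must overlap. We plan to choose $\delta\ll\delta'\ll 1/k$ in that order. The oxozonic parity constraint in case (4) needs care for each of the four triples.
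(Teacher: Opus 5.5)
Your proposal is correct and follows essentially the paper's route: a dyadic and Mellin reduction to trilinear sums, then four regimes. These are the trivial range, P\'olya--Vinogradov when one variable exceeds $q^{1/2+\delta}$, Theorem~\ref{thmtriplesum} (or \eqref{tripleort}) when no variable is small, and Theorem~\ref{thmTypeII} (or \eqref{BtypeIIort} with an odd exponent on $n$) when one variable is below a small power of $q$. The differences are cosmetic: you use an auxiliary threshold $\delta'$, so the case-(3) value of $k$ must be of size about $1/\delta'$ and then $\delta\ll(\delta')^2$, and you treat the $O(q^{-1/2})$ lower-weight part explicitly; the paper instead ties everything to $\delta$ with $k=2[1/24\delta]$ and absorbs that part via Definition~\ref{defgoodcomplexsheafintro}.
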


\begin{remark}
Here, contrary to the previous section, we do not need to assume that $a=b$ or that $\xi\in\mu_{d}(\Fq)$ for some fixed $d\geq 1$.
\end{remark}

\subsection{Dyadic decomposition of $M^\bullet_{2,a,b,\pm c}(\xi;q)$}\phantom{.}\label{M2decomp}

Recall the definition \eqref{XYchoice} of $Y$.
We perform a smooth dyadic partition on the $l,m,n$ sums in $M^\bullet_{2,a,b,\pm c}(\xi;q)$  and  we are reduced to bounding $O(\log^3 q)$ sums of the  shape:   
$$\Sigma_{a,b,\pm c}(L,M,N):=\sum_{l,m,n}U(\frac{l}L)U(\frac{m}M)U(\frac{n}N)V(\frac{lmn}Y)K_{a,b,\pm c}(\xi l^am^bn^{\pm c}),$$
for $\xi\in\mu_{2d}(\Fq)$, $V=V_\bullet$, $U$ some smooth function, compactly supported in $(1/2,2)$ and satisfying for any $j\geq 0$
\begin{equation}
\label{Ubounds}
	x^jU^{(j)}(x)\ll_j 1,
\end{equation}
 and $L,M,N\geq 1$ satisfy
\begin{equation}
	LMN\leq Y^{1+\delta^3} (\leq q^{1+\delta+\delta^2})
	\label{LMNM2bound}
\end{equation}
for some parameter $\delta>0$. Indeed by the universal bound \eqref{universalforK} and by the decay properties of $V$ (cf. Lemma \ref{LemmaV}) the contribution of the $l,m,n$ such that $lmn\gg Y^{1+\delta^3}$ is negligible.

In order to prove Theorem \ref{M2abcthm} it is sufficient to show that for all  $L,M,N$ satisfying \eqref{LMNM2bound}, we have
\begin{equation*}
	\frac{\Sigma_{a,b,\pm c}(L,M,N)}{q^{1/2}\sqrt{LMN}}\ll_{a,b,c,\delta} q^{-\eta}
	\end{equation*}
for some $\eta=\eta(\delta)>0$.

For {\good} triples this will be a consequence of Proposition \ref{trilinearabcK} below and of Theorem \ref{th-galant-sheaf} which shows that $\mcK_{a,b,\pm c}$ satisfies the assumption of this Proposition.

\subsection{Bounds for trilinear sums with monomials}\label{trilinearwithmonomials}

Given three non zero integers $a,b,c\in\Zz-\{0\}$ not necessarily positive nor coprime and $K:\Zz/q\Zz\to \Cc$ a $q$--periodic function, let
$$\Sigma_{a,b,c}(L,M,N):=\sum_{l,m,n}U(\frac{l}L)U(\frac{m}M)U(\frac{n}N)V(\frac{lmn}Y)K(l^am^bn^{c})$$
where $U,V$ are as above.

\begin{proposition}
	\label{trilinearabcK} There exists $\delta_0>0$ (absolute) such that for any  $\delta\in (0,\delta_0]$, there exists $\eta=\eta(\delta)>0$ such that the following holds.
	
	For any prime $q$, any $\ell$-adic  sheaf $\mcF$, mixed of weight $\leq 0$ with trace function $K$, complexity $c(\mcF)$ and such that  $\mcF$ is {\good} in the sense of Definition \ref{defgoodcomplexsheafintro}, one has for any $L,M,N\geq 1$ with$$LMN\leq Y^{1+\delta^3}$$
	the inequality
\begin{equation}
	\label{boundM2}
	\frac{\Sigma_{a,b,c}(L,M,N)}{q^{1/2}\sqrt{LMN}}\ll q^{-\eta}
	\end{equation}
where the implicit constant depends on $a,b,c,\delta$ and $c(\mcF)$.
\end{proposition}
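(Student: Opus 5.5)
Our plan is a case analysis on the relative sizes of $L,M,N$, following the strategy in \S\ref{secoverview}. First we reduce to a sheaf $\mcF$ pure of weight $0$. Write $K=K_0+K_{<0}$, where $K_0$ is the trace function of the weight-$0$ part $\mcF_0$ (which is {\good}) and $K_{<0}=O(q^{-1/2})$ by the bounded complexity. The contribution of $K_{<0}$ is trivially $\ll q^{-1/2}LMN\cdot(q^{1/2}\sqrt{LMN})^{-1}=\sqrt{LMN}/q\ll q^{(\delta+\delta^2-1)/2}$, which is admissible. Next we note that $\|K\|_\infty\ll 1$, so the trivial bound for the ratio in \eqref{boundM2} is $\sqrt{LMN}/q^{1/2}$. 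This is admissible once $LMN\leq q^{1-\delta}$. We may therefore assume
\[
q^{1-\delta}\leq LMN\leq q^{1+2\delta}.
\]
By symmetry we may also assume $L\leq M\leq N$ (the exponents $a,b,c$ are arbitrary nonzero integers, so permuting them is harmless). Before applying the bilinear and trilinear bounds we separate the variables in $U(l/L)U(m/M)U(n/N)V(lmn/Y)$. We do this by Mellin inversion of $V$: the integral is over a vertical line and, thanks to the derivative bounds of Lemma~\ref{LemmaV}, can be truncated at height $q^{\eps}$ at the cost of $q^{\eps}$. This writes the sum as an average of sums with product coefficients $\alpha_l\beta_m\gamma_n$ of modulus $\leq 1$.

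\emph{Case A: $N\geq q^{1/2+\delta_1}$.} We apply P\'olya--Vinogradov completion in $n$. For fixed $l,m$, the map $n\mapsto K(l^am^bn^c)$ is the trace function of the pull-back $[n\mapsto l^am^bn^c]^*\mcF$. This is geometrically irreducible and not geometrically isomorphic to an Artin--Schreier sheaf, because it is {\good} of rank $\geq 2$ by Remark~\ref{remgoodmorphism} (Kummer pull-backs are cyclic coverings). Its complete sums twisted by additive characters are therefore $\ll q^{1/2}$. Completion then gives, for each $(l,m)$, the bound $\ll q^{1/2}\log q$ for the smooth $n$-sum, which is $\ll N q^{-\delta_1}\log q$ since $N\geq q^{1/2+\delta_1}$; summing over $(l,m)$ gives a total $\ll LMN q^{-\delta_1+o(1)}$. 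Dividing by $q^{1/2}\sqrt{LMN}$ and using $LMN\leq q^{1+2\delta}$, we get a ratio $\ll q^{\delta-\delta_1+o(1)}$. This is a saving as soon as $\delta<\delta_1$.

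\emph{Case B: $N\leq q^{1/2+\delta_1}$ and $L\geq q^{\delta_2}$.} Then $MN\geq LMN/q^{1/2+\delta_1}\cdot\ldots$; more usefully, $MN=LMN/L$, and since $L$ is the smallest of the three variables, $L\leq (LMN)^{1/3}$. Hence $MN\geq (LMN)^{2/3}\geq q^{2/3-\delta}\geq q^{1/2+\delta}$. If $MN\leq 4q$ (and $L\leq 4q$ trivially), Theorem~\ref{thmtriplesum}, applied with $k$ large so that $L^kMN\geq q^{1+\delta'}$, gives a saving of $q^{-\eta'}$ over $LMN$. Since $LMN\asymp q^{1+O(\delta)}$, this is a saving of $q^{-\eta'+O(\delta)}$ relative to $q^{1/2}\sqrt{LMN}$, which suffices for $\delta$ small. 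If instead $MN>4q$, we fall back on Theorem~\ref{thmTypeII}. Grouping $(l,m)$ into a single variable is awkward because of the exponents; instead, with $N\leq q^{1/2+\delta_1}$ we would have $M>4q^{1/2-\delta_1}$, and since $M\leq N$ both $M$ and $N$ are $\asymp q^{1/2+O(\delta_1)}$. We treat this in Case C.

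\emph{Case C: $L\leq q^{\delta_2}$, or $M,N$ both near $q^{1/2}$.} In this case we fix $l$ and apply Theorem~\ref{thmTypeII} to the $(m,n)$ sum, using the sheaf $[\times l^a]^*\mcF$. Multiplicative translation keeps it {\good} with the same complexity. We have $MN\geq q^{1-\delta-\delta_2}\geq q^{3/4+\delta}$ and $N\leq q^{1/2+\delta_1+\delta_2}\leq q^{1/2+3/(4k)}$ for a suitable $k$, so \eqref{BtypeII} saves a power $q^{-\eta''}$ with $\|\bfalpha\|_2\|\bfbeta\|_2(MN)^{1/2}\leq MN$; summing trivially over $l\leq L$ keeps this saving.

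\textbf{The main obstacle} is choosing $\delta,\delta_1,\delta_2,k$ consistently so that every triple $(L,M,N)$ falls into some case with a positive saving, and in particular handling the boundary where $MN$ exceeds $4q$, since Theorem~\ref{thmtriplesum} requires $MN\leq 4q$. We would try $\delta_1=\delta_2=10\delta$ and $k$ large in terms of $\delta$, and check the overlaps of the cases by a short case analysis.
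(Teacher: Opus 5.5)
Your argument is the paper's proof. It uses the same regimes and the same tools:
\begin{itemize}
\item the trivial bound for $LMN\le q^{1-\delta}$;
\item P\'olya--Vinogradov in the longest variable when $N\ge q^{1/2+\delta_1}$;
\item Theorem~\ref{thmtriplesum} when the shortest variable satisfies $L\ge q^{\delta_2}$;
\item Theorem~\ref{thmTypeII} in $(m,n)$, with $l$ summed trivially, when $L<q^{\delta_2}$.
\end{itemize}
The paper takes $\delta_1=\delta$ and $\delta_2=2\delta$. Your weight-$0$ reduction and Cases~A and~C are fine. In Case~C a fixed $k$, for instance $k=6$, already gives an absolute saving $q^{-1/96+O(\delta)}$, which beats the loss $\sqrt{LMN/q}\le q^{\delta/2+O(\delta^2)}$. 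The step that is argued incorrectly is Case~B, and it is tied to the ``main obstacle'' you single out.

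Normalize by $q^{1/2}\sqrt{LMN}$. The term $q/(L^kMN)$ in \eqref{trilinearbound} then contributes exactly
$\sqrt{LMN/q}\,\bigl(q/(L^kMN)\bigr)^{1/(2k)}=(MN/q)^{(k-1)/(2k)}$, whatever $L$ is.

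\begin{itemize}
\item The net gain in Case~B is therefore never a fixed power independent of $\delta$. It is positive exactly when $MN\le q^{1-\eta}$, and it then equals $q^{-\frac{k-1}{2k}\eta}$.
\item So ``a saving $q^{-\eta'}$ against a loss $q^{O(\delta)}$, which suffices for $\delta$ small'' is not a valid justification. Taking $k$ large does not help.
\item What makes Case~B work is that $L\ge q^{\delta_2}$ with $\delta_2>\log_q(LMN)-1$ forces $MN=LMN/L\le q^{1+\delta+\delta^2-\delta_2}$. Note that $\log_q(LMN)-1\le\delta+\delta^2$.
\item That same inequality shows your subcase $MN>4q$ is empty, so the obstacle you name is not one.
\end{itemize}

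This is why the paper puts the small-variable threshold at $q^{2\delta}$. That gives $MN\le q^{1-\delta+\delta^2}$ and a net $q^{-\delta/2+O(\delta^2)}$, which is the ``$-\delta/2$'' in its computation.

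With your $\delta_2=10\delta$ one gets $MN\le q^{1-9\delta+\delta^2}$. Then $k=2$ already yields a net $q^{-9\delta/4+O(\delta^2)}$ from this term. The other term $q^{1/2}/MN$ gives $q^{-1/24+O(\delta)}$, since $MN\ge (LMN)^{2/3}$. So your proof does close, but because $MN$ stays a power below $q$, not for the reason you give.
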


The arguments in the proof of \eqref{boundM2}  depends on the relative sizes of $L,M,N$ and $q$. Let $0<\delta<1/2026$ be some constant to be chosen sufficiently small.

\subsubsection{The trivial range}\label{trivialrange}

Since 
$$K(x)=O_{c(\mcF)}(1)$$
a  trivial estimate gives \eqref{boundM2} (with $\eta=\delta/2$) as soon as
$$LMN\leq q^{1-\delta}.$$
From now on, we will assume that 
\begin{equation}\label{lowerupperM2}
	q^{1-\delta}\leq LMN\leq q^{1+\delta+\delta^2}.
\end{equation}
As we will see, our assumptions and our arguments are symmetric in the three variables $l,m,n$, we may and will then assume that
\begin{equation}\label{L<M<N}L\leq M\leq N.
\end{equation}
In particular $L\leq q^{(1+\delta+\delta^2)/3}$.

\subsubsection{One  large variable}\label{onelargerange}

The most basic non-trivial argument is the P\'olya-Vinogradov completion method. Since $\mcF$ is {\good}, for any $l,m,n\in\Fqt$, the pull-back sheaves associated to the trace functions
$$x\mapsto K(l^am^bx^c), K(l^ax^bn^c),\ K(x^am^bn^c)$$ 
are still {\good} and therefore {\em Fourier}. In particular their naive Fourier transforms satisfy 
\begin{equation}
	\label{trivialFourier}
	\what K(l^am^b\bullet^c )(y)=\frac{1}{q^{1/2}}\sum_{x\mods q}K(l^am^b x^c)e_q(-xy)=O_{c(\mcF),c}(1)
\end{equation}
and similarly for the second and third arguments.
 
Applying Poisson's summation formula on the $n$-variable, using Lemma \ref{LemmaV}, \eqref{Ubounds}  and  \eqref{trivialFourier}, we obtain 
\begin{equation*}
	\frac{\Sigma_{a,b,c}(L,M,N)}{q^{1/2}\sqrt{LMN}}\ll  \frac{LMq^{1/2}}{q^{1/2}\sqrt{LMN}}=(\frac{LM}{N})^{1/2}\leq \frac{q^{(1+\delta+\delta^2)/2}}{N}\ll q^{-\delta/4}
\end{equation*}
 as long as 
\begin{equation*}
	N\geq q^{1/2+\delta}.
\end{equation*}
We may therefore assume that 
$$L\leq M\leq N\leq q^{1/2+\delta}.$$
From this point on it is be useful to separate the variables $l,m,n$. We use inverse Mellin transform:
\begin{multline*}
	\Sigma_{a,b,\pm c}(L,M,N)=\intc_{\Re u=\delta}\frac{\gamma_\bullet(1/2+u)}{\gamma_\bullet(1/2)}(\frac{LMN}{Y})^{-u}\\\times\sum_{l,m,n}U(\frac{l}L)U(\frac{m}M)U(\frac{n}N)(\frac{lmn}{LMN})^{-u}K_{a,b,\pm c}(\xi l^am^bn^{\pm c};q)\frac{du}u.
\end{multline*}

\subsubsection{No small variable}\label{nosmallvariable}
Suppose now that
\begin{equation}\label{newineq}
q^{2\delta}\leq L\leq M\leq N\leq q^{1/2+\delta}.
\end{equation}
From \eqref{lowerupperM2}  and  \eqref{L<M<N} we have
\begin{equation}\label{<MN<}q^{2/3-2\delta/3}\leq MN \leq q^{1-\delta +\delta^2}.
\end{equation}
Applying Theorem \ref{thmtriplesum} with 
$$\alpha_l=(\frac{l}{L})^uU(\frac{l}L),\ \beta_m=(\frac{m}{M})^uU(\frac{m}M),\ \gamma_n=(\frac{n}{N})^uU(\frac{n}N)$$
we obtain for $k=2[1/24\delta]\geq 4$ the inequality  
\begin{equation}
	\frac{\Sigma_{a,b,c}(L,M,N)}{q^{1/2}\sqrt{LMN}}\ll q^{2\delta^2}(\frac{LMN}{q})^{1/2}(\frac{q^{1/2}}{MN}+\frac{q}{L^kMN})^{1/(2k)}.\label{2896}
	 \end{equation}
To bound this expression we incorporate the inequalites
$$
\frac {LMN} q < q^{\delta +\delta^2},
$$
(consequence of \eqref{lowerupperM2})
$$
\frac {q^{1/2}}{MN} < q^{-1/6+2\delta/3},
$$
(consequence of \eqref{<MN<}) and
$$\frac q{L^kMN} \leq q^{1-(1-\delta) -2\delta (k-1)} = q^{-\delta (2k-3)}, $$
(consequence of \eqref{lowerupperM2} and \eqref{newineq}).Thus the right hand--side of \eqref{2896} is
\begin{equation}\label{2910}
\ll q^{2\delta^2} (q^{-1/(12k) +\delta (1/2+1/(3k) )+\delta^2/2} +q^{\delta (-1/2+3/(2k)) +\delta^2/2}) \ll q^{-\delta/3}
\end{equation}
 at least for $\delta>0$ sufficiently small: indeed by the definition of $k$, we have the equality 
\begin{equation}\label{1/k=}\frac{1}{k}=\frac{12\delta}{1+O(\delta)}={12\delta}+O(\delta^2),
\end{equation}
so that the exponents of  $q$ in the right--hand side of \eqref{2910}  are given up to terms of size $O(\delta^2)$ by
$$-\delta+\delta/2=-\delta/2.$$ 
Returning to \eqref{2896}, we proved \eqref{boundM2} is that case.

\subsubsection{One small variable}\label{onesmallvariable}
The remaining case is  $L\leq q^{2\delta}$; in which  case we have 
$$q^{1-3\delta}\leq MN\leq q^{1+\delta +\delta^2}/L,\ q^{1/2-4\delta}\leq M\leq N\leq q^{1/2+\delta},\ N\geq q^{1/2-3\delta/2}.$$
For $k=2[1/24\delta]\leq 1/(12\delta)$ we have
$3/(4k)\geq 9\delta$ so that
$M\leq N \leq q^{1/2+3/(4k)}$ hence $M$ and $N$ satisfy \eqref{assumptypeII} and Theorem \ref{thmTypeII}  gives 
\begin{align*}
	\frac{\Sigma_{a,b,c}(L,M,N)}{q^{1/2}\sqrt{LMN}}&\ll q^{o(1)}(\frac{LMN}{q})^{1/2}\bigl(\frac{1}{N^{1/2}}+(\frac{q^{\frac{3}4(1+1/k)}}{MN})^{1/(2k)})\\
	&\ll q^{o(1)+\frac12\delta(1+\delta)}(q^{-\frac14(1-3\delta)}+q^{\frac{3}{8k}(1+\frac{1}{k})-\frac{1-3\delta}{2k}}).
\end{align*}
Again appealing to \eqref{1/k=}
we see 
 that the exponents in $q$ in the above bound are (up to $o(\delta^2)$ terms) of the form 
$$\frac{1}2\delta-\frac{1}4+\frac{3}4\delta=-\frac{1}4+\frac 54\delta\hbox{ and }\frac{1}2\delta+(\frac{3}{8}-\frac{1}2)12\delta =-\delta.$$
Hence upon choosing a small enough $\delta>0$ we obtain \eqref{boundM2} with $\eta=\delta/4$.
\qed

\subsection{The oxozonic case}

In this section, we complete the discussion of Theorem \ref{M2abcthm} by studying  the specific case of $(a,b,\pm c)$  being an  oxozonic triple (which means that the geometric monodromy group  of $\mcF=\mcK_{a,b,\pm c}$ is $$G_{a,b,\pm c}\simeq \Ort_4$$ acting by the standard representation). We recall that up to permutation (see Definition~\ref{defgoodtriple})
we have $$(a,b,c)=(1,1,2), \ (1,4,-3),\ (1,6,-3) \hbox{ or }(2,3,-1).$$
Let
$$\Sigma_{a,b,\pm c}(L,M,N):=\sumsumsum_{l\sim L,m\sim M,n\sim N}U(\frac{l}L)U(\frac{m}M)U(\frac{n}N)V(\frac{lmn}{LMN})K_{a,b,\pm c}(l^am^bn^{\pm c}).$$

\begin{proposition}
	\label{trilinearoxozonic} There exists $\delta_0>0$ (absolute) such that for any  $\delta\in (0,\delta_0]$, there exists $\eta=\eta(\delta)>0$ such that the following holds.
	
	For any prime $q>2$, and $(a,b,\pm c)$ an oxozonic triple, one has for any $L,M,N\geq 1$ with
	$$LMN\leq q^{1+\delta}$$
	the inequality 
\begin{equation*}
	\frac{\Sigma_{a,b,c}(L,M,N)}{q^{1/2}\sqrt{LMN}}\ll q^{-\eta}
	\end{equation*}
where the implicit constant depends on $a,b,c$ and $\delta$. 
\end{proposition}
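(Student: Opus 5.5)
We follow exactly the case analysis of the proof of Proposition \ref{trilinearabcK}, replacing each appeal to Theorems \ref{thmTypeII} and \ref{thmtriplesum} by Theorem \ref{thmO4}. We keep track of which of the three variables carries the exponent being treated as the ``$c$'' of the type-II sum, since Theorem \ref{thmO4} only gives the bilinear bound \eqref{BtypeIIort} when that exponent is odd. By Theorem \ref{th-galant-sheaf}, for $q$ large the sheaf $\mcK_{a,b,\pm c}$ has geometric monodromy $\Ort_4$ with $\SO_4$ acting irreducibly. It is lisse and pure of weight $0$ on $\Gg_m$ (up to a complex of weight $\leq -1$ in the negative case, whose contribution $O(q^{-1/2})$ to $K$ is trivially admissible since $LMN\leq q^{1+\delta}$). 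So Theorem \ref{thmO4} applies.

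\emph{Trivial range and one large variable.} If $LMN\leq q^{1-\delta}$, the bound $K\ll 1$ suffices. Otherwise order the variables as $L\leq M\leq N$, keeping their exponents attached. If $N\geq q^{1/2+\delta}$, we complete the sum in $n$ as in \S\ref{onelargerange}. This only needs the Fourier transform of $x\mapsto K(\alpha x^{e})$ to be bounded, for $e\in\{a,b,\pm c\}$. That holds because $[x\mapsto x^e]^*\mcK$ still has no Artin--Schreier component: its monodromy contains the finite-index subgroup $\SO_4$, which acts irreducibly, so the sheaf is geometrically irreducible of rank $4>1$. This gives the saving $q^{-\delta/4}$.

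\emph{No small variable.} If $q^{2\delta}\leq L\leq M\leq N\leq q^{1/2+\delta}$, we separate variables by the Mellin integral and apply \eqref{tripleort}, which holds unconditionally in the oxozonic case. The numerology is then identical to \eqref{2896}--\eqref{2910}.

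\emph{One small variable.} This is the main obstacle. When $L\leq q^{2\delta}$ and $M,N\approx q^{1/2}$, we need the type-II bound \eqref{BtypeIIort} with $(m,n)$ as the two bilinear variables. Theorem \ref{thmO4} requires the exponent of the variable playing the role of ``$c$'' to be odd. Since we may absorb $l$ into either coefficient sequence and choose which of $m,n$ plays the role of ``$c$'', it suffices that at least one of the two exponents attached to $m,n$ is odd. In each oxozonic triple $(1,1,2)$, $(1,4,-3)$, $(1,6,-3)$, $(2,3,-1)$ exactly one entry is even, so any two entries contain an odd one. We therefore arrange that the odd exponent among those of $m,n$ is the ``$c$'' of Theorem \ref{thmO4}. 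The restriction \eqref{assumptypeII} is symmetric enough: $M\leq q$ and $q^{3/(2k)}\leq N'\leq q^{1/2+3/(4k)}$ for whichever variable $N'$ carries the odd exponent, since both $M$ and $N$ lie in $[q^{1/2-4\delta},q^{1/2+\delta}]$. The resulting numerology is then that of \S\ref{onesmallvariable}, giving $\eta=\delta/4$.

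If this step fails, for instance if the small variable $l$ carries the only odd exponent, as in $(2,3,-1)$ with $l$ attached to $3$, the parity argument above shows that this cannot happen in a way that blocks us. As a fallback, we would absorb $l$ by writing $K(l^a\cdot)$ as a twisted sheaf: the pullback by multiplication preserves the monodromy group, so we apply the bilinear bound uniformly in $l$.
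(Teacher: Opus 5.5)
Your proposal is correct and follows the paper's proof. The trivial, one-large-variable and no-small-variable ranges go through unchanged: the pulled-back sheaves remain Fourier because $\SO_4$ still acts irreducibly, and \eqref{tripleort} holds with no parity condition. In the one-small-variable range of \S\ref{onesmallvariable}, you place whichever of the exponents attached to $m,n$ is odd in the ``$c$'' slot of Theorem~\ref{thmO4}; this is always possible because each oxozonic triple has exactly one even entry.

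Your final ``fallback'' paragraph is unnecessary, and the scenario it guards against cannot occur: every oxozonic triple has two odd entries, so $l$ never carries ``the only'' odd exponent. The mechanism it describes is nonetheless the right one, and it belongs in the main step in place of ``absorbing $l$ into a coefficient sequence''. That is, fix $l$, apply the bilinear bound to the multiplicative translate $x\mapsto K(l^a x)$ (which has the same geometric monodromy), and then sum trivially over $l$.
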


\proof Much of the arguments of \S \ref{secM2}  carries over and we only need some slight adaptation. For this we will use Theorem \ref{thmO4} in place of Theorem \ref{thmTypeII}  and Theorem \ref{thmtriplesum}, excepted, when the trace function we want apply the theorem to
$$x\mapsto K_{a,b,\pm c}(l^am^bx^c), K_{a,b,\pm c}(l^ax^bn^c),\hbox{ or } K_{a,b,\pm c}(x^am^bn^c)$$
has $c,b$ or $a$ even is even (for then the geometric monodromy group of the corresponding pull-back is $\SO_4$ instead of $\Ort_4$). In particular, this argument is no longer exactly symmetric in the three variables $l,m,n$. 

Still the arguments of \S \ref{trivialrange} and \ref{onelargerange} continue to hold (even if a pullback has smaller geometric monodromy group $\SO_4$ the sheaf is still Fourier) so we may still reduce to the case 
$$q^{1-\delta}\leq LMN\leq q^{1+\delta+\delta^2},\ q^{1/3+\delta/3}\leq L,M,N\leq q^{1/2+\delta}.$$
Likewise, the argument in \S \ref{nosmallvariable} continue to hold unchanged since \eqref{tripleort} holds independently of the values of $a,b,c$.

 The only issue that could possibly occur then is related to \S \ref{onesmallvariable}: 
suppose to fix ideas that $L\leq M, N$ and that
$$L\leq q^{2\delta},\ q^{(1-3\delta)/2}\leq M, N\leq q^{(1+4\delta)/2}.$$
If the exponent $a$ (corresponding to the first variable $l$) is even, then $b$ and $\pm c$ are odd and we can apply \eqref{BtypeII}. If $a$ is odd, then only one of the two exponents $b,c$ is odd but since both variables $m$ and $n$ are smooth we can choose the one with odd exponent for the $n$ variable in the application of  \eqref{BtypeII}. 
\qed

Obviously Proposition \ref{trilinearoxozonic} implies a version of
Theorem \ref{M2abcthm} where the hypothesis that the triple is \good\ is replaced by oxozonic.

 \section{A variant of Proposition \ref{M1errlemma}}\label{diffsection}

In this section we analyse the difference between the even and odd portions of our moment 
$$
M^{e,\err}_{1,a,b,\pm c}(\xi;q)-M^{o,\err}_{1,a,b,\pm c}(\xi;q)
$$ and show that it is negligible. 
\begin{proposition}
\label{differencelemma} Notations and conventions are  as in Theorem
\ref{mainthm}, and as  in  \eqref{MTETdecomp}. We assume that the
triple $(a,b,\pm c)$ is \good\ or oxozonic. There exists $\delta_0=\delta({a,b,c,d})\in (0,1)$ such that for $0< \delta\leq \delta_{0}$ and 
\begin{equation}\label{X=q}
X=q^{2-\delta},
\end{equation} we have
\begin{equation}
	\label{desiredboundforM1eoerr}
	M^{e,\err}_{1,a,b,\pm c}(\xi;q)-M^{o,\err}_{1,a,b,\pm c}(\xi;q)\ll q^{-\Delta}
\end{equation}
and
\begin{equation}
	\label{desiredboundforM1}
	M^{e}_{1,a,b,\pm c}(\xi;q)-M^{o}_{1,a,b,\pm c}(\xi;q)\ll q^{-\Delta}
\end{equation}
for some $\Delta=\Delta(\delta)>0$.
\end{proposition}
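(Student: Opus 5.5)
The plan is to reverse the orthogonality that produced $M^\bullet_{1,a,b,\pm c}$. I would rewrite the difference $M^{e}_{1}-M^{o}_{1}$ as an average over characters of a smoothed triple $L$-value, apply the functional equation, and so convert it into sums of the shape $M^\bullet_{2,a,b,\pm c}$. Theorem \ref{M2abcthm} (or Proposition \ref{trilinearoxozonic} in the oxozonic case) then controls those sums.

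\textbf{Step 1: the two bounds are equivalent.} The main terms $M^{e,\main}_{1,a,b,\pm c}(1;q)$ and $M^{o,\main}_{1,a,b,\pm c}(1;q)$ both equal $D_{a,b,\pm c}+O(X^{-1/6+o(1)})$, by \eqref{sharp} and Proposition \ref{MTprop}. Indeed, the residue at $u=0$ does not depend on $\bullet$. So \eqref{desiredboundforM1eoerr} and \eqref{desiredboundforM1} are equivalent, and it suffices to prove \eqref{desiredboundforM1}.

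\textbf{Step 2: the weight $W$.} Put $W=V_e-V_o$, so that
\[
W(y)=\frac{1}{2\pi i}\int_{(3)}H(u)\,y^{-u}\,du,\qquad
H(u)=\Bigl(\frac{\gamma_e(\frac12+u)}{\gamma_e(\frac12)}-\frac{\gamma_o(\frac12+u)}{\gamma_o(\frac12)}\Bigr)\frac1u .
\]
The key point is that $H$ is holomorphic at $u=0$, since the bracket vanishes there. Hence $H$ is holomorphic in $\Re u>-1/2$ and decays rapidly on vertical lines by Stirling. Consequently $W(y)\ll\min(y^{1/2-\eps},y^{-A})$, so the difference carries no "$1$" near $0$.

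\textbf{Step 3: back to characters and the functional equation.} By orthogonality,
\[
M^{e}_{1}(\xi)-M^{o}_{1}(\xi)=\frac{1}{q-1}\sum_{\chi}\ov\chi(\xi)\,\frac{1}{2\pi i}\int_{(3)}L(\tfrac12+u,\chi^a)L(\tfrac12+u,\chi^b)L(\tfrac12+u,\chi^{\pm c})H(u)X^u\,du .
\]
\begin{itemize}
\item \emph{Exceptional characters.} The trivial character and the $O_{a,b,c}(1)$ non-generic characters are treated as in \S\ref{5.2}. The trivial one contributes $\ll X^{1/2+o(1)}/q\ll q^{-\delta/2+o(1)}$ through the pole of $\zeta^3$ at $u=1/2$, and the others contribute $\ll q^{-1/2+o(1)}X^{o(1)}$ by convexity.
\item \emph{Generic characters.} I shift the contour to $\Re u=-\sigma$ with $\sigma\in(0,1/2)$ fixed, crossing no pole thanks to Step 2. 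On that line I apply the functional equation of $\Lambda(s,\chi^a,\chi^b,\chi^{\pm c})$ after splitting $\chi$ by parity with $(1\pm\chi(-1))/2$.
\item \emph{Dual sums.} Expanding the dual Dirichlet series gives sums of length $\asymp q^3/X=q^{1+\delta}$. They carry weights $\widetilde W_\bullet(lmn/Y)$, obtained as inverse Mellin transforms of $H(u)\gamma_\bullet(\frac12-u)/\gamma_\bullet(\frac12+u)$, which decay rapidly past $Y$.
\end{itemize}

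\textbf{Step 4: conclusion via Theorem \ref{M2abcthm}.} Opening the Gauss sums $\eps(\chi^a)\eps(\chi^b)\eps(\chi^{\pm c})$ and summing over $\chi$ yields expressions of exactly the shape \eqref{M2def}:
\[
q^{-1/2}\sum_{l,m,n}\frac{K_{a,b,\pm c}(\pm\xi l^am^bn^{\pm c})}{\sqrt{lmn}}\,\widetilde W_\bullet\Bigl(\frac{lmn}{Y}\Bigr).
\]
After a dyadic decomposition, Proposition \ref{trilinearabcK} (via Theorem \ref{th-galant-sheaf}) or Proposition \ref{trilinearoxozonic} bounds these by $q^{-\eta}$ once $\delta$ is small. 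Its proof only uses the size of the weight and the derivative bounds \eqref{Ubounds}, and separates variables through a Mellin integral, so it applies to $\widetilde W_\bullet$ as well. The non-generic characters re-added when completing the $\chi$-sum are again $O(q^{-\delta/2+o(1)})$.

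\textbf{Main obstacle.} I expect the delicate step to be the bookkeeping in Step 3. One has to check that, after the parity split and the functional equation, the weights $\widetilde W_\bullet$ have the required uniform decay and derivative bounds. One must also check that no main term leaks, which hinges on the holomorphy of $H$ at $0$ and the choice of $\sigma$. If the shift to $\Re u<0$ causes trouble, I would instead take $\Re u=0^+$ and apply the AFE symmetrically.
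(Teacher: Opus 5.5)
Steps~1--2 are fine. The gap is in Steps~3--4: the dual weights $\widetilde W_\bullet$ do not decay rapidly past $Y$, and for odd $\chi$ the dual series cannot even be opened on your contour.

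\textbf{The parity mismatch.} For a fixed parity the functional equation supplies a single gamma factor: $\gamma_e$ for even $\chi$, $\gamma_o$ for odd $\chi$. But $H(u)$ contains both $\gamma_e(\demi+u)$ and $\gamma_o(\demi+u)$, so only the matching half cancels. Let $\nu\geq 1$ be the number of odd entries of $(a,b,c)$.
\begin{itemize}
\item For odd $\chi$, the leftover ratio $\gamma_e(\demi+u)/\gamma_o(\demi+u)$ has a pole of order $\nu$ at $u=-\demi$. Indeed $\gamma_e$ has a triple pole there, while $\gamma_o$ has one of order only $3-\nu$.
\item For even $\chi$, the leftover ratio $\gamma_o(\demi+u)/\gamma_e(\demi+u)$ has a pole of order $\nu$ at $u=-\frac32$.
\end{itemize}
Hence $\widetilde W_o(y)$ decays only like $y^{-1/2}(\log y)^{\nu-1}$, and $\widetilde W_e(y)$ only like $y^{-3/2}(\log y)^{\nu-1}$.

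\textbf{The contour problem for odd $\chi$.} On $\Re u=-\sigma$ with $0<\sigma<\demi$, the series $L(\demi-u,\ov\chi^a)L(\demi-u,\ov\chi^b)L(\demi-u,\ov\chi^{\pm c})$ is not absolutely convergent. To expand it you must cross $u=-\demi$. For odd $\chi$ this produces a residue of size about $Y^{1/2}(\log q)^{\nu-1}$, carrying the factor $\eps(\chi^a)\eps(\chi^b)\eps(\chi^{\pm c})L(1,\ov\chi^a)L(1,\ov\chi^b)L(1,\ov\chi^{\pm c})$. Averaged over $\chi$, this is roughly
$q^{\delta/2}(\log q)^{\nu-1}\sum_{l,m,n}K_{a,b,\pm c}(\pm\xi l^am^bn^{\pm c})/(lmn)$.
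That sum is dominated by small $l,m,n$ and has no reason to be small. It is cancelled only by a dual sum whose weight now blows up like $y^{-1/2}$ (up to logarithms) as $y\to 0$.

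Either way, you never reach a sum of the shape \eqref{M2def} with a bounded weight essentially supported on $lmn\leq Y^{1+\delta^3}$. So Theorem~\ref{M2abcthm} and Proposition~\ref{trilinearabcK} cannot be invoked. The parity dependence of the functional equation is exactly what stops $V_e-V_o$ from dualizing cleanly.

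\textbf{What the paper does instead.} It does not return to $L$-functions here. It keeps the difference as the congruence sum weighted by $(V_e-V_o)(lmn/X)$. It uses $(V_e-V_o)(y)\ll y^{1/2-\eps}$ only to reduce to bounding the smoothed dyadic counts, $S_{a,b,c}(\xi;L,M,N)\ll X^{1/2}q^{-\Delta}$. It then separates variables by Mellin inversion and applies Poisson summation in all three variables $l,m,n$. Poisson summation does not see parity, and its dual weights $\widehat{U_u}$ decay rapidly, so no mismatch occurs.

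The price is that the dual boxes are long: $L'\leq q^{1+\delta^2}/L$, etc., with $L'M'N'$ up to $q^{3+3\delta^2}/(LMN)$, far beyond $Y$. So the paper first evaluates the degenerate frequencies $q\mid lmn$ explicitly with Gauss sums. It then runs a fresh case analysis using P\'olya--Vinogradov, Theorem~\ref{thmTypeII} and Theorem~\ref{thmtriplesum} (Theorem~\ref{thmO4} in the oxozonic case), rather than citing Theorem~\ref{M2abcthm}.
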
  

\proof


First remark that \eqref{desiredboundforM1} is a direct consequence of \eqref{desiredboundforM1eoerr}, of the decomposition \eqref{MTETdecomp}, of the definition \eqref{M1=M1} and of the equalities \eqref{sharp} and \eqref{M1mainasymp} applied with $\bullet = e$ and $o$.

So we restrict ourselves to the proof of \eqref{desiredboundforM1eoerr}. 
By the definitions  \eqref{MTETdecomp} and \eqref{M1=M1} we have the equality
$$M^{e,\err}_{1,a,b,\pm c}(\xi;q)-M^{o,\err}_{1,a,b,\pm c}(\xi;q)=\sumsum_\stacksum{l^am^bn^{\pm c}\equiv \xi\mods q}{l^am^bn^{\pm c}\not= 1}\frac{1}{(lmn)^{1/2}}(V_e-V_o)(\frac{lmn}{X})$$
where we recall that $X=q^{2-\delta}$.

By Lemma \ref{LemmaV} (1) and (2), $V_e-V_o$ is  smooth function on 
$\Rr_{>0}$  satisfying  for $y>0$ and any $A\geq 1$
\begin{equation*}
(V_e-V_o)(y)\ll_{\eps,A}\frac{y^{1/2-1/A}}{(1+y)^A}.
\end{equation*}

Hence, applying this formula with a very large value of $A$ (in terms of $\eps$) and performing a smooth partition of unity we have for any $\eps>0$ the inequality 
$$M^{e,\err}_{1,a,b,\pm c}(\xi;q)-M^{o,\err}_{1,a,b,\pm c}(\xi;q)\ll_\eps  q^{-2026}+ q^\eps \sup_{(L,M,N)}\frac{|S_{a,b,c}(\xi;L,M,N)|}{X^{1/2}}$$ where
$$S_{a,b,c}(\xi;L,M,N)=\sumsum_\stacksum{l^am^bn^{\pm c}\equiv \xi\mods q}{l^am^bn^{\pm c}
\not= 1}W(\frac{lmn}{X})U(\frac{l}L)U(\frac{m}M)U(\frac{n}N)$$
where $U,W$ are smooth functions on $\Rr_{>0}$ satisfying
\begin{equation*}
	y^iU^{(i)}(y),\ y^iW^{(i)}(y)\ll_i 1, 
\end{equation*}
 $\supp U\subset (1/2,2)$ and the parameters $(L,M,N)$ satisfy
\begin{equation}
	\label{LMNrange}
	1\leq L,M,N,\ LMN\leq Xq^{\delta^2}
\end{equation}

Our objective is to prove that for such $(L,M,N)$ we have
\begin{equation}
	\label{whatweneed}
	S_{a,b,c}(\xi ;L,M,N)\ll X^{1/2}q^{-\Delta},
\end{equation}
for some $\Delta=\Delta(\delta)>0$. We will see that
$$\Delta:=\delta/3$$
works when $\delta$ is sufficiently small.
Next, we observe that for $\max(L,M,N)\geq q$, the trivial bound gives
$$
S_{a,b,c}(\xi;L,M,N)\ll \frac{LMN}{\max(L,M,N)}(\frac{\max(L,M,N)}q+1)\ll LMN/q\leq q^{1-\delta+\delta^2}\leq X^{1/2}q^{-\delta/3}.$$ This follows from \eqref{X=q} and  \eqref{LMNrange}. Thus \eqref{whatweneed} is proved in that case (as soon as  $\delta_0$ is picked sufficiently small).

From now on we assume that $$1\leq \max(L,M,N)\leq q.$$

As in \S \ref{onelargerange}, we separate the variables $l,m,n$ from one another (in the expression $W(\frac {lmn} X)$) via Mellin transform; we are reduced to proving \eqref{whatweneed}
 for the following modified  sums
 $$S_{a,b,c,u}(\xi;L,M,N):=\sumsum_\stacksum{l^am^bn^{\pm c}\equiv \xi\mods q}{l^am^bn^{\pm c}\not= 1}U_u(\frac{l}L)U_u(\frac{m}M)U_u(\frac{n}N)$$
 where
 $$U_u(x):=U(x)x^u\hbox{ for }\Re u=\delta^{10}.$$
 We then apply the Poisson summation formula to each of the variables $l,m,n$: we obtain the equality
 \begin{equation}\label{3083}
 S_{a,b,c,u}(\xi;L,M,N)=\frac{LMN}{q^2}\sumsum_{l,m,n}\what{U_u}(\frac{l}{L^*})\what{U_u}(\frac{m}{M^*})\what{U_u}(\frac{n}{N^*})K_{a,b,\pm c}(\xi, l,m,n;q),
 \end{equation}
 where
 $$L^*=q/L\geq 1,\ M^*=q/M\geq 1,\ N^*=q/N\geq 1$$
 and
 $$K_{a,b,\pm c}(\xi, l,m,n;q):=\frac{1}{q}\sum_{x^ay^bz^{\pm c}\equiv \xi\mods q}e_q(lx+my+nz).$$
 Also we observe that by \eqref{LMNrange} we have
\begin{equation*}
	L^*M^*N^*= \frac{q^3}{LMN}\geq q^{1+\delta-\delta^2}\geq q^{1+\delta/2}
\end{equation*}
 
 This sum ressembles very much the sum $\Sigma_{a,b,c}(L^*,M^*,N^*)$ discussed in \S \ref{trilinearwithmonomials} excepted that $l,m,n$ could be zero or negative. 
 
 \subsection{The degenerate terms}
 We first handle the contribution of the $(l,m,n)$ such that
 $lmn\equiv 0\mods q$. To simplify some formulas, we make the convention that $C$ denotes some absolute positive constant, that is useless to specify. The value of $C$ may change 
 at different occurrences.
 
 \subsubsection{The case $l\equiv m\equiv n\equiv 0\mods q$}
 The contribution of such terms  to the right--hand side of  \eqref{3083} is trivially bounded by
 $$\ll_\delta q^{C\delta^{10}}\frac{LMN}{q^2}|K_{a,b,\pm c}(\xi, 0,0,0;q)|$$
 with
 $$K_{a,b,\pm c}(\xi, 0,0,0;q)=\frac 1q \cdot |\{(x,y,z)\in (\Fqt)^3,\ x^ay^bz^{\pm c}\equiv \xi\}|\ll_{a,b,c}q.$$ 
Hence the contribution of these terms to $S_{a,b,c,u}(\xi;L,M,N)$ (see \eqref{3083}) is bounded by
 $$\ll q^{C\delta^{10}}\frac{LMN}{q}\ll X^{1/2}q^{-\delta/2+\delta^2+C\delta^{10}}\leq X^{1/2}q^{-\delta/3}, $$
 which fits  \eqref{whatweneed}.  
 
 In the following two subsections, we will use the classical formula giving the number of solutions of a monomial congruence
 \begin{equation}\label{monomialequation}
 \vert \{x \bmod q, x^k \equiv t\}\vert = \sum_{\chi \bmod q \atop \chi^k =\chi_0} \chi (t),
 \end{equation}
 which is valid for any integer $k\geq 1$ and any integer $t$ not divisible by $q$. 
 \subsubsection{Exactly two of $l,m,n$ are $\equiv 0\mods q$} 
 If $l,m\equiv 0\mods q$ and $n\not\equiv 0\mods q$ (say) we have
$$K_{a,b,\pm c}(\xi, 0,0,n;q)=\frac{1}{q}\sum_{x,y\in\Fqt}\sum_{x^ay^bz^{\pm c}\equiv \xi}e_q(nz).$$
Let $a'=\gcd(a,q-1)$, $b'=\gcd(b,q-1)$. The map
$x\in\Fqt\ra x^a\in \Fqt$ has image $(\Fqt)^{a'}$ and the fibers of that map have size $a'$ and likewise for $x\ra x^b$ so the  above $K$--sum  equals
$$K_{a,b,\pm c}(\xi, 0,0,n;q)=\frac{1}{q}\sum_{x,y\in\Fqt}\sum_{x^{a'}y^{b'}z^{\pm c}\equiv \xi}e_q(nz).$$
Applying twice the equality \eqref{monomialequation} we deduce the relations
\begin{align}
K_{a,b,\pm c}(\xi, 0,0,n;q)&=\frac{1}{q}\sum_{\chi \bmod q \atop \chi^{a'} =\chi_0} 
\sum_{\psi \bmod q \atop \psi^{b'} =\chi_0} \sum_{u\in\Fqt}  \sum_{v\in\Fqt} \chi (u) \psi (v) \sum_{uvz^{\pm c}\equiv \xi\mods q}e_q(nz)\nonumber\\
& = \frac{1}{q}\sum_{\chi \bmod q \atop \chi^{a'} =\chi_0} \chi (\xi)\sum_{\psi \bmod q \atop \psi^{b'} =\chi_0} 
 (\sum_{z\in \Fqt} \chi^{\mp c}(z)e_q (nz) )  (\sum_{v\in \Fqt} (\overline{\chi}
\psi )(v)).\label{easy}
\end{align}
We observe that the innermost $v$-sum is zero unless $\psi=\chi$. So, in the above sum,  we are searching for pairs $(\chi, \psi)$
such that $\overline{\chi} \psi =\chi_0$.
Introduce  $e:=\gcd(a',b') $. The set of pairs $(\chi,\psi)\mods q$ with $\chi^{a'}=\psi^{b'}=\chi_0$ and satisfying ${\chi}= \psi $ is the set of pairs 
$$\{(\chi, \chi),\ \chi^e = \chi_0\}.$$ After this remark, we transform 
   \eqref{easy} into  the equality
$$
K_{a,b,\pm c}(\xi, 0,0,n;q) = \frac{q-1}{q}\sum_{\chi^e =\chi_0} \chi (\xi) 
 (\sum_{z\in \Fqt} \chi^{\mp c}(z)e_q (nz) ).
$$
Observe that $\gcd(e,c)=1$ (since $\gcd(a,b,c)=1$) so if we denote by $c'\in [1,e)$ the multiplicative inverse of $c\mods e$, we have
\begin{equation}\label{18}
K_{a,b,\pm c}(\xi, 0,0,n;q)=\frac{q-1}{q}\sum_{\chi^e =\chi_0} \chi (\xi') 
 (\sum_{z\in \Fqt} \chi(z)e_q (nz) ),
\end{equation}
where $\xi'=\xi^{\mp c'}$.
The $z$-sum equals
$$\sum_{z\in \Fqt} \chi(z)e_q (nz)=\begin{cases}
	-1&\hbox{ if }\chi=\chi_0,\\
	\ov\chi(n)G(\chi) &\hbox{ if }\chi\not=\chi_0.
\end{cases}$$
with
$$G(\chi)=\sum_{x\in \Fqt}\chi(x)e_q(x)$$
the un-normalized Gauss sum.

Return to \eqref{3083}.The contribution (via \eqref{18})   to $
S_{a,b,c,u}(\xi;L,M,N)$ of the character $\chi=\chi_0$ is bounded by
\begin{equation*}
	\ll q^{C\delta^{10}}\frac{LMN}{q^2}N^*\ll q^{C\delta^{10}}\frac{LM}{q}\leq q^{C\delta^{10}}\frac{LMN}{q}\ll X^{1/2}q^{-\delta/2+\delta^2+C\delta^{10}}\leq X^{1/2}q^{-\delta/3}.
\end{equation*}

The contribution of the remaining $e-1$ non-trivial  characters $\chi$ is treated by the Polya-Vinogradov bound, applied to the $\ov\chi(n)$-factor. It is bounded by
$$
\ll eq^{C\delta^{10}}\frac {LMN}{q^{2}}\min(N^*,q^{1/2})q^{1/2}\leq q^{C\delta^{10}}\frac {LMN}{q}\ll X^{1/2}q^{-\delta/3} 
$$
(if $\delta\leq \delta_0$ small enough)  which is \eqref{whatweneed}. 

  \subsubsection{Exactly one of $(l,m,n)$ is $\equiv 0\mods q$}
  If $l\equiv 0\mods q$ and $mn\not\equiv 0\mods q$ we have
$$K_{a,b,\pm c}(\xi, 0,m,n;q)=\frac{1}{q}\sum_{x\in\Fqt}\sum_{x^ay^bz^{\pm c}\equiv \xi}e_q(my+nz)=\frac{1}{q^{1/2}}\sum_{x\in\Fqt}K_{b,\pm c}(\xi x^{-a}m^bn^{\pm c})$$
where
$$K_{b,\pm c}(u)=\frac{1}{q^{1/2}}\sum_{y^bz^{\pm c}=u}e_q(y+z).$$
Let $a':=\gcd(a,q-1) $;
 we have
  $$K_{a,b,\pm c}(\xi,0,m,n;q)=\frac{a'}{q^{1/2}}\sum_{t\in(\Fqt)^{a'}}K_{b,\pm c}(\xi tm^bn^{\pm c})=\frac{1}{q^{1/2}}\sum_{\chi^{a'}=\chi_0}\sum_{t\in\Fqt}\chi(t)K_{b,\pm c}(\xi tm^bn^{ \pm c}).$$
  Writing $t=y^bz^{\pm c}/(\xi m^bn^{\pm c})$ we see that the innermost $t$-sum equal
  $$\frac{1}{q^{1/2}}\sum_{t\in\Fqt}\chi(t)K_{b,\pm c}(\xi tm^bn^{-c})=\ov\chi(\xi)\ov\chi^b(m)\ov\chi^{\pm c}(n)\frac{G(\chi^b)G(\chi^{\pm c})}{q}.$$
  Observe that if $\chi^b=\chi_0$ the factor $\ov\chi(m)^b$ does not oscillate but $G(\chi^b)=-1$ while for $\chi^b\not=\chi_0$, $|G(\chi^b)|=q^{1/2}$ but we can use the Polya-Vinogradov method on the factor $\ov\chi^b(m)$ to bound the $m$-sum by $q^{1/2+C\delta^{10}}$.
  
  From this, we conclude that the contribution to $S_{a,b,c,u}(\xi;L,M,N)$ (see \eqref{3083}) of this  kind of terms is bounded by a sum of four terms depending on whether $\chi^b$ and $\chi^c$ are trivial or not. More precisely this contribution is    bounded by
\begin{gather*}
	\ll q^{C\delta^{10}}\frac{LMN}{q^2}(\frac{q^2}{MN}\cdot \frac{1}{q}+\frac{q}{M}\cdot \frac{1}{q^{1/2}}\cdot q^{1/2}+\frac{q}{N}\cdot \frac{1}{q^{1/2}}\cdot q^{1/2}+q^{1/2}\cdot q^{1/2})\\
	\leq q^{C\delta^{10}}(\frac{L}{q}+\frac{LN}{q}+\frac{LM}{q}+\frac{LMN}{q})\ll X^{1/2}q^{-\delta/2+\delta^2+C\delta^{10}},
\end{gather*}
 which again implies \eqref{whatweneed}.
\subsection{The non-degenerate terms}
In this section we handle the contribution of the $(l,m,n)$ in \eqref{3083} such that $lmn\not\equiv 0\mods q$. This contribution is given by
 \begin{equation*}
 S^{ndeg}_{a,b,c,u}(\xi;L,M,N):=\frac{LMN}{q^2}\sumsum_{lmn\not\equiv 0\mods q}\what{U_u}(\frac{l}{L^*})\what{U_u}(\frac{m}{M^*})\what{U_u}(\frac{n}{N^*})K_{a,b,\pm c}(\xi l^am^bn^c;q),
 \end{equation*}
where $K_{a,b,\pm c}(u;q)$ is defined in \eqref{Kabcdefintro}.

We perform again a smooth dyadic partition of unity and decompose the sum into $O(\log^3q)$ dyadic sums of the shape
$$\frac{LMN}{q^2}\sumsum_\stacksum{l,m,n\sim L'\times M'\times N'}{lmn\not\equiv 0\mods q}V_u(\frac{l}{L'})V_u(\frac{m}{M'})V_u(\frac{n}{N'})K_{a,b,\pm c}(\xi l^am^bn^c;q)=\Sigma'_{a,b,c}(L',M',N')$$
with
$$L'\leq q^{1+\delta^2}/L, M'\leq q^{1+\delta^2}/M,\ N'\leq q^{1+\delta^2}/N.$$
The trivial bound gives 
\begin{equation}\label{trivialboundS'}
	\Sigma'_{a,b,c}(L',M',N')\ll q^{C\delta^{10}}\frac{LMN}{q^2}L'M'N'
\end{equation}
($C\geq 0$ an absolute constant). This is bounded by $O(X^{1/2}q^{-\Delta})$
as long as
$$q^{C\delta^{10}}L'M'N'\leq \frac{q^3}{LMN}q^{-\delta-\Delta}.$$
We may therefore assume that 
\begin{equation}
	\label{L'M'N'upperlower}
	q^{1-\Delta-\delta^2-C\delta^{10}}\leq \frac{q^{3}}{LMN}q^{-\delta-\Delta-C\delta^{10}}\leq L'M'N'\leq \frac{q^{3+3\delta^2}}{LMN},
\end{equation}
where the first inequality follows from \eqref{X=q} and \eqref{LMNrange}. 

We set 
\begin{equation}\label{lambdamunu}
q^\lambda=L',\ q^\mu=M',\ q^\nu=N'
\end{equation}
with $\lambda,\mu,\nu\geq 0$ (since $L,\, M,\, N \geq 1.$). By \eqref{L'M'N'upperlower} we have
\begin{equation}\label{rabbit}
1-\Delta-\delta^2-C\delta^{10}\leq \lambda+\mu+\nu\leq 3+3\delta^2.
\end{equation}
Suppose that within the range \eqref{L'M'N'upperlower}, we can improve  the trivial bound \eqref{trivialboundS'} by a factor $q^{-\Delta'}$, we obtain 
\begin{equation*}
\Sigma'_{a,b,c}(L',M',N')\ll q^{C\delta^{10}}\frac{LMN}{q^2}L'M'N'q^{-\Delta'}\leq q^{1+4\delta^2-\Delta'}=X^{1/2}q^{\delta/2+4\delta^2-\Delta'}
\end{equation*}
which is $O(X^{1/2}q^{-\Delta})$ as long as
\begin{equation}
	\label{Delta'lowerbound}
	\Delta'\geq \delta/2+ 4\delta^2+\Delta.
\end{equation}
We will see that $\Delta'$ can be taken to be an absolute constant.

Let $k\geq 32$ be some fixed integer and suppose that
$$\max(\lambda,\mu,\nu) \geq {1/2+3/(4k)}.$$

Since the trace functions $$x\mapsto K_{a,b,\pm c}(\xi x^am^bn^c;q),\ x\mapsto K_{a,b,\pm c}(\xi l^ax^bn^c;q),\ x\mapsto K_{a,b,\pm c}(\xi l^am^bx^c;q)$$ are attached to Fourier sheaves, the Polya-Vinogradov method saves a factor $$\gg q^{-3/(4k)+C\delta^{10}}$$ over the trivial bound, so that \eqref{Delta'lowerbound} holds for any $\delta\leq \delta_0$ chosen sufficiently small (depending on $k$).
 
We may therefore assume that
\begin{equation}\label{bingo}
\lambda,\mu,\nu< 1/2+3/(4k).
\end{equation}

\subsubsection{When $(a,b,\pm c)$ is \good}\label{firstcase}
To fix ideas, we  assume that $$\lambda\leq\mu\leq \nu$$ (as we will see the forthcoming argument is symmetric in $l,m,n$).

Suppose that
\begin{equation}
	\label{typeIIcond}
	3/(2k)\leq\mu,\ \mu+\nu\geq 3/4+1/k
\end{equation}
We appeal to  Theorem \ref{thmTypeII}  combined with a trivial summation over $l$. Doing that way we save  a factor
$$
\gg \max\{ q^{-3/(4k)}, q^{-1/(8k^2)}\} \, q^{C\delta^2}= q^{-1/(8k^2) +C\delta^2},
$$
over the trivial bound which is again satisfactory for any $\delta\leq \delta_0$ chosen sufficiently small (depending on $k$).

If \eqref{typeIIcond} does not hold then we have either $\mu+\nu\leq 3/4+1/k$ or $\mu\leq 3/(2k)$. \\
The later cannot occur: otherwise we would have $\lambda+\mu\leq 3/k$ (by \eqref{bingo}) and then (by \eqref{rabbit} and the value of $k$)  $$\nu\geq 1-\Delta-\delta^2-C\delta^{10}-3/k\geq 1/2+3/(4k)$$ (if $\delta$ is sufficiently small) which we have previously excluded by \eqref{bingo}.

Thus we are in the situation where
$$
\mu +\nu \leq 3/4 +1/k \text{ and } \mu \geq 3/(2k).$$

By \eqref{rabbit} and the value of $k$, it follows that
\begin{equation}\label{board1}
\lambda\geq \frac14-(\Delta+\delta^2)-C\delta^{10}-\frac{1}{k}\geq \frac{7}{32}-\Delta-\delta^2-C\delta^{10},
\end{equation}
and
\begin{align}
	\mu+\nu&\geq \frac{1}3(1-\Delta-\delta^2-C\delta^{10})+\frac14-(\Delta+\delta^2)-C\delta^{10}-\frac 1k\nonumber\\
	&=\frac7{12}-\frac1k-\frac{4}3(\Delta+\delta^2)-\frac 43 C\delta^{10}\geq\frac{1}2+\frac{5}{96}-\frac{4}3(\Delta+\delta^2)-C\delta^{10}.\label{board2}
\end{align}

For $\delta$ sufficiently small, we  can then use Theorem \ref{thmtriplesum} with some parameter $k'$ (instead of $k$) fixed but sufficiently large so that 
$$k'\lambda+\mu+\nu\geq 11/10$$
(say) and we obtain a saving by a factor
$$q^{-\Delta'+C\delta^{10}}$$
for  $\Delta'=\Delta'(k,k')>0$.

This establishes Proposition \ref{differencelemma} when $(a,b,\pm c)$ is \good, since we investigated all the sorts of terms participating in $ S_{a,b,c,u}(\xi;L,M,N)$ (see \eqref{3083}).
\subsubsection{When $(a,b,\pm c)$ is oxozonic.}
 
If $(a,b,\pm c)$ is oxozonic we have to use Theorem \ref{thmO4} in place of Theorems  \ref{thmTypeII} and \ref{thmtriplesum}. In particular our argument is no longer exactly symmetric in the three variables (because of the requirement that the exponent $c$  be odd to obtain the bound \eqref{BtypeIIort} in Theorem \ref{thmO4}).  
What is making it possible to deal with  the oxozonic case, is that up to exchanging  $a$ and $b$, we have $(a,b,c) =(1,1,2)$ or $(a,b,-c)=(1,4,-3),\ (1,6,-3)$ or $(2,3,-1)$; we then observe that  exactly  one of these three exponents is even (see Definition \ref{defgoodtriple}).

To set-up the discussion we increasingly reorder the exponents  (see \eqref{lambdamunu}) 
$$\{\lambda,\mu,\nu\}=\{\lambda', \mu', \nu'\}.$$
 with $$\lambda'\leq \mu'\leq \nu'.$$
We now discuss with  which exponent is associated the unique even  element in the oxozonic triple $(a,b,\pm c)$.\\
$\bullet$\  Suppose that the two exponents (ie. $a,b$ or $\pm c$) corresponding to the largest ranges $\mu', \nu'$ are both {\em odd}, then the argument given above goes through: if $(\mu',\nu')$ satisfy \eqref{typeIIcond} we apply the bound \eqref{BtypeIIort} on the corresponding variables (averaging trivially over the remaining variable) and the bound \eqref{tripleort} if they don't. For instance this is the case when $(a,b,-c)=(2,3,-1)$ and $\lambda\leq \mu\leq \nu$. \\
$\bullet $\ In fact  this discussion works if we only assume that   the exponent corresponding to $\nu'$ is  odd. For instance when $(a,b,-c)=(1,6,-3)$ and $\lambda\leq \mu\leq \nu$.\\
$\bullet$ \ Suppose now that the exponent corresponding to the largest range $\nu'$ is even: for instance if $(a,b,-c)=(1,4,-3)$ and $L'\leq N'\leq M'$.

-- If $(\mu',\nu')$ satisfies  \eqref{typeIIcond} we have (since $\mu'\leq \nu'$)
\begin{equation}
	\label{typeIIcondoxo}
	3/(2k)\leq \nu',\ \mu'+\nu'\geq 3/4+1/k,
\end{equation}
and the bound \eqref{BtypeIIort} is valid for the "$m$" variable corresponding to that of the range $\sim q^{\nu'}$ and the "$n$" variable  corresponding to that of the range $q^{\mu'}$.

-- On the other hand, if \eqref{typeIIcondoxo} does not hold, we then have (see the inequalities \eqref{board1} and \eqref{board2} in the discussion in \S \ref{firstcase})
$$\lambda'\geq \frac{7}{32}-\Delta-\delta^2-C\delta^{10},$$
and
$$
	\mu'+\nu'\geq \frac{1}2+\frac{5}{96}-\frac{4}3(\Delta+\delta^2)-C\delta^{10};
$$
We can then use the bound \eqref{tripleort} which has no requirement on the parity of  $a,b,c$.

This completes the proof of Proposition 
\ref{differencelemma} in the oxozonic case. 
\qed

\section{Completion of the proofs } 
We now put all these estimates together in order to prove Theorems \ref{mainthm}, \ref{mainthmconj} and \ref{mainthmaverage} and Corollary \ref{Ldonotvanish}.

\subsection{Proof of Theorem \ref{mainthm}}

\label{secproofmainthm}

We   summarize the strategy we worked out to deal with the triple moment $M_{ad,bd,\pm cd} (q)$ of values of functions $L$ at the central point $s=1/2$ (see \eqref{triplemoment}).
Here $(a,b,c)$ is a triple of setwise positive   integers, supposed to
be \good\ or oxozonic and $ d\geq 1$ is an integer. \\
In order to come down to coprime exponents, we decompose $M_{ad,bd,\pm cd} (q)$ in a sum of the twisted moments $M_{a,b,\pm c}(\xi ;q)$ over $\xi\in \mu_{(d,q-1)}(\Fq)$ (see \eqref{d'decomposition} and \eqref{triplemomentxi}). The set of such $\xi$ always contains $\xi =1$. \\
We decompose $M_{a,b,\pm c} (\xi; q)$ into the subsum over even and odd characters $\chi$, respectively called $M_{a,b,\pm c}^\bullet (\xi; q)$ with $\bullet = e$ or $o$ (see \eqref{alreadydone}).\\
We apply the AFE, with the choice of parameters 
$$X=q^{2-\delta},\ Y=q^{1+\delta}$$ for some fixed $\delta$ sufficiently small (see \eqref{XYchoice}).  We obtain formulas \eqref{Mevenabcbasicdecomp} and \eqref{Moddabcbasicdecomp} which have similar shapes. In particular the fifth terms of these identities are both of size $O(q^{-\eta})$ (with $\eta =\eta (\delta) >0$ hence is negligible) with the above choice of $X$ and $Y$.\\

Likewise, using Theorem \ref{M2abcthm} and Proposition \ref{trilinearoxozonic} we see that the four terms  of type $$M_{2,a,b,\pm c}^{\bullet} (\eps.  \xi;q),\ \bullet\in \{e, o\},\ \eps\in\{1,-1\}$$ present in \eqref{Mevenabcbasicdecomp} and \eqref{Moddabcbasicdecomp} are  also bounded by $O( q^{-\eta})$ hence are negligible.\\
Thus we  have produced  the equalities
\begin{equation}\label{3354}	
\begin{aligned}
	M^{e}_{a,b,\pm c}(\xi;q)&=M^e_{1,a,b,\pm c}(\xi;q)+ M^e_{1,a,b,\pm c}(-\xi;q)+O(q^{-\eta}) \\
	M^{o}_{a,b,\pm c}(\xi;q)&=M^o_{1,a,b,\pm c}(\xi;q)- M^o_{1,a,b,\pm c}(-\xi;q)+O(q^{-\eta}).
\end{aligned} 
\end{equation}
The terms $M^{\bullet}_{1,a,b,\pm c}(\pm \xi;q)$ are of a special nature : they count (with a smooth positive  weight) the number of solutions of some congruence equation modulo $q$.  In particular these terms are sums of positive numbers (see 
\eqref{M1abcdef}). By dropping any of these terms  we obtain a lower bound for  $M_{1,a,b,\pm c}^{\bullet} (\pm\xi;q)$. 
For instance, for any $\xi$,  we have the inequality $M_{1,a,b,\pm c}^{\bullet} (\pm \xi;q)\geq 0$ and  the first line of \eqref{3354} leads to the inequality
\begin{equation}\label{zerolowerbound}
	M_{a,b,\pm c}^{e} (\pm \xi;q)\geq O (q^{-\eta}).
\end{equation}

This is exactly \eqref{2147} in the case $\xi^2\neq 1$.

\begin{remark}
	This lower bound does not apply to $M_{a,b,\pm c}^{o} (\xi;q)$ because of the presence of the minus sign on the second line of \eqref{3354}.
\end{remark}	 

Consider the case $\xi =1$; in $M^{\bullet}_{1,a,b,c}(1;q)$, we keep only the contribution of the triples $(l,m,n)$ satisfying term $l^am^bn^{\pm c}=1$ which we call $M^{\bullet, \main}_{1,a,b,\pm c}(1;q)$; by positivity we obtain the inequality $$
M^{\bullet}_{1,a,b,\pm c}(1;q)\geq M^{\bullet, \main}_{1,a,b,\pm c}(1;q).$$
Equation \eqref{sharp} and Proposition \ref{MTprop} produce the asymptotic formula
$$
M^{\bullet, \main}_{1,a,b,\pm c}(1;q)=D_{a,b,\pm c} +O( q^{-\eta}).
$$
We combine this relation and \eqref{zerolowerbound}  into the following inequality: for $\bullet=e\hbox{ or }o$ we have
\begin{equation}\label{sunny}
M^{\bullet}_{1,a,b,\pm c}(\xi;q)\geq
\begin{cases}
D_{a,b,\pm c}+ O (q^{-\eta}) &\text{if } \xi =1,\\
0&\text{ otherwise.}
\end{cases}
\end{equation}
 Returning to the first line of \eqref{3354} and using the non-negativity of $M_{1,a,b,\pm c}^{e} (-1;q)$ we obtain   the inequality
 \eqref{2147} for $\xi =1$ or $-1$. This completes the proof of \eqref{2147}. 
 
 We now turn to the proof of  \eqref{Mxilowerbound}. Returning to the decomposition \eqref{evenodddecomp} and summing the two lines of \eqref{3354}, we obtain the equality
 $$
 M_{a,b,\pm c}(\xi;q) =\frac 12 (  M^e_{1,a,b,\pm c}(\xi;q)+ M^o_{1,a,b,\pm c}(\xi;q) ) + \frac 12( M^e_{1,a,b,\pm c}(-\xi;q)- M^o_{1,a,b,\pm c}(-\xi;q)) +O (q^{-\eta}).
 $$
Using the inequality \eqref{desiredboundforM1} of Proposition \ref{differencelemma}  this simplifies to
\begin{equation*}
 M_{a,b,\pm c}(\xi;q) =\frac 12 (  M^e_{1,a,b,\pm c}(\xi;q)+ M^o_{1,a,b,\pm c}(\xi;q) )   + O (q^{-\eta}).
 \end{equation*}
 The lower bound \eqref{sunny} completes the proof of \eqref{Mxilowerbound}.
 \vskip .3cm
The lower bound  \eqref{Mxilowerbound*} is a direct consequence of \eqref{d'decomposition} and of  \eqref{Mxilowerbound} since the sum contains the term corresponding to $\xi =1$ and negligible terms. 
\vskip .3cm
The proofs of the four last relations \eqref{momenta=basymp},  \eqref{momenta=basympodd}, \eqref{Mxilowerbound**} and \eqref{Mxilowerbound***} are simple variations of the above using the further assumption $a=b$. Indeed by Theorem  \ref{thmaac} we know that Conjecture $\msP(a,a,\pm c,d)$ holds and Proposition
\ref{M1errlemma}  allows us to replace \eqref{sunny} by the more precise equalities
\begin{equation}\label{sunny*}
M^{\bullet}_{1,a,a,\pm c}(\xi;q)=
\begin{cases}
D_{a,a,\pm c}+ O (q^{-\eta}) &\text{if } \xi =1,\\
 O (q^{-\eta})&\text{ otherwise,}
\end{cases}
\end{equation}
 for $\bullet = e$ or $o$.  Inserting \eqref{sunny*} in \eqref{3354} we complete the proof of \eqref{momenta=basymp} and 
 \eqref{momenta=basympodd}.  When summing  \eqref{momenta=basymp} and 
 \eqref{momenta=basympodd}, we obtain \eqref{Mxilowerbound**} thanks to \eqref{alreadydone}. \\
 Finally \eqref{Mxilowerbound***}  follows from \eqref{Mxilowerbound**} and the summation formula
 \eqref{d'decomposition}. \\ The proof of  Theorem \ref{mainthm} is complete.

 \subsection{Proof of  Theorem \ref{mainthmconj}}  Proposition \ref{M1errlemma} gives a suitable upperbound for the error term
 $M_{1,a,b,\pm c}^{\bullet, \err}(\xi; q)$   as soon as Conjecture $\msP(a,b,\pm c,d )$  holds and  $\xi$ satisfies   $\xi^d =1$. 
 When expanding $M_{ad,bd,\pm cd} (q)$ we exhibit linear combination of terms $M_{a,b,\pm c}^{\bullet} (\xi;q)$ and $M_{a,b,\pm c}^{\bullet}(-\xi;q)$, with $\xi^{(d, q-1)} =1.$ In order to control these terms, we require the truth of Conjecture $\msP(a,b,\pm c,d')$
 for any  $d'$ such that $\xi^{d'}=1$ and $(-\xi)^{d'}=1$. The choice $d'=d_q$ satisfies this requirement. And the proof is the same as above in the case of \eqref{Mxilowerbound***}.
 \subsection{Proof of Theorem \ref{mainthmaverage}} We follow the same steps as in the proof of   Theorem \ref{mainthmconj} using Corollary \ref{M1erraverage} (in place of Proposition
\ref{M1errlemma}) for the tuple $(a,b,c,2d)$ (a consequence on the fact that Conjecture $\msP(a,b,c,2d)$ holds on average over $q\in[Q,2Q)$).  

\subsection{Proof of Corollary \ref{Ldonotvanish}}\label{16h50}
We start from a triple $(a,b,c)$ of non zero  integers.  We now make some reductions 
on the parameters $a$, $b$ and $c$. Since $L(1/2, \chi^{-a}) \neq 0 \iff L(1/2, \chi^a)\neq 0$, we may assume that $a$, $b$ and $c$ are positive and distinct (indeed if $a=b$, the non--vanishing of the triple product  reduces to the non--vanishing of the double product $L(1/2,\chi^a) L(1/2, \chi^c)$ and is covered by
\cite{FKMAA}*{p. 111}).

So we are led to study the case where $a>b>c \geq 1.$  Let $d$ be the g.c.d. of  these three integers. Write
$$
a=da_1, \ b= db_1, \, c= dc_1;$$
the triple $(a_1,b_1,c_1)$ is made of coprime integers, satisfying $a_1>b_1>c_1\geq 1.$ By Definition \ref{defgoodtriple}, the triple $(a_1,b_1,c_1)$ is  necessarily {\good} and Theorem \ref{mainthm}  \eqref{Mxilowerbound*} applies with $D_{a_1,b_1,c_1}=1$: we have
$$\frac{1}{q-1}\sum_{\chi\mods q}L(1/2,\chi^a)L(1/2,\chi^b)L(1/2,\chi^c)\geq 1+O_{a,b,c}(q^{-\eta}).$$
  Set
$$\Xi_{a,b,c}(q)=\{\chi\mods q,\  L(1/2,\chi^a)L(1/2,\chi^b)L(1/2,\chi^c)\not=0\}$$
Using \eqref{Mxilowerbound*}    and H\"older's inequality we have
\begin{align*}
    1+O_{a,b,c}(q^{-\eta})&\leq \frac{1}{q-1}\sum_{\chi \mods q} L(1/2,\chi^a)L(1/2,\chi^b)L(1/2,\chi^c)\\
    &\leq (\frac{1}{q-1}\Xi_{a,b,c}(q))^{1/4}\prod_{h\in\{a,b,c\}}(\frac{1}{q-1}\sum_{\chi\mods q}|L(1/2,\chi^h)|^4)^{1/4}\\
    &\ll_{a,b,c} (\log q)^{3}(\frac{1}{q-1}\Xi_{a,b,c}(q))^{1/4}.
\end{align*}
In the last step we have used that, for $h\neq 0$, one has 
$$\frac{1}{q-1}\sum_{\chi\mods q}|L(1/2,\chi^h)|^4\leq \frac{(h,q-1)}{q-1}\sum_{\chi\mods q}|L(1/2,\chi)|^4\ll (\log q)^4$$
(see \cite{young} for an asymptotic formula for the fourth moment with a power saving error term). 
We deduce the lower bound $\vert \Xi_{a,b,c}(q)\vert \gg q(\log q)^{-12}$. 
\qed

\end{document}